\documentclass[12pt,twoside]{article}
\pdfoutput=1

    \usepackage{graphicx}
    \usepackage{styleset}
    \usepackage{macros}
    \pagestyle{numbered}

    \usepackage[colorinlistoftodos]{todonotes}
    \usepackage{enumitem}
    \let\subsubsection\subparagraph

    \title  {Protocorks and monopole Floer homology}

    \author{Roberto Ladu}

		\date{\vspace{-2ex}}

 \begin{document}

\maketitle            

\begin{abstract}
We introduce and study a class of compact 4-manifolds with boundary that we call protocorks.  Any exotic pair of simply connected closed 4-manifolds is related by a protocork twist, moreover, any cork is supported by a protocork. We prove a theorem on the relative Seiberg-Witten invariants of a protocork before and after twisting and a splitting theorem 
on the Floer homology of protocork boundaries.
As a corollary we improve a theorem by Morgan and Szab\'{o} regarding the variation of Seiberg-Witten invariants with an upper bound which depends only on the topology of the data. 
Moreover, we generalize the result that only the reduced Floer homology of a cork boundary contributes to the variation of the Seiberg-Witten invariants  under a cork twist to more general cut and paste operations  where the pieces involved are $1$-connected and homeomorphic relative to the boundary.
\end{abstract}

%\tableofcontents

\section{Introduction}

Corks  \cite{Akbulut91, AkbulutBook}  are a class of contractible 4-manifolds with boundary endowed with an involution of their boundaries that can be used to produce any  simply connected exotic pair  \cite{CHFS, Matveyev}.
In this paper,  inspired by \cite{CHFS, Matveyev, MorganSzabo99}, we define and  study a class of 4-manifolds with boundary that we call \emph{protocorks} which also relate any simply connected exotic pair of 4-manifolds  and which support any cork.
This informally means that the protocork determines the effect of the cork twist.
Consequently, protocorks may be used as a device to prove general theorems about \emph{all} corks, or directly to investigate exotica.

\paragraph{Main result.}
Let $X_0$ be a  closed 4-manifold decomposed as $X_0 = M\bigcup_Y N_0$ where $N_0$ is a protocork with boundary $Y$  and $M$ is a 4-manifold with $\partial M =  -Y$.
Performing a protocork twist we obtain $X_1 = M\bigcup_Y N_1$ where $N_1$ is another manifold with $\partial N_1 = Y$ which we call \emph{reflection} of $N_0$.
The difference between the Seiberg-Witten invariants of $X_0$ and $X_1$ is governed by a difference element $\Delta$, depending only on $(N_0,N_1)$, which belongs to the monopole Floer homology of $Y$, $ \HMfrom(Y)$ \cite{KM}, a module over $\ZU$.
Our main result regards  this element.

\begin{theorem}\label{thm:DeltaInHMred} Let $N_0$ be a protocork with boundary $Y$, let $N_1$ be its reflection and  let $\hat{1}\in \HMfrom(\SS^3)$ be the standard generator of $\HMfrom(\SS^3)$. Remove a ball from $N_i$, $i=0,1$ and regard the result as a cobordism $N_i \setminus \ball:\SS^3\to Y$. Choose $\mu_0$, an homology orientation for $N_0 \setminus \ball$. Then:
\begin{enumerate}
\item \label{thm:DeltaInHMred:item:Delta} there is an homology orientation $\mu_1$ on $N_1\setminus \ball$ such that, setting
\begin{equation}\label{def:relativeSWInvariantPcork}
		x_i := \HMfrom(N_i \setminus \ball, \mu_i)(\hat 1) \in \HMfrom (Y)
\end{equation}
for $i=0,1$, the difference element $\Delta := x_0-x_1$ belongs to the reduced homology $\HMred(Y)$. In particular $\Delta$ is $U$-torsion. 
\item \label{thm:DeltaInHMred:item:NMS}Let $d_\Delta \geq 0 $ be  the  smallest natural number such that $U^{d_\Delta}$ annihilates $\Delta$. Fix a metric and perturbation on $Y$ as in Subsection~\ref{subsec:CriticalPoints} and define the Morgan-Szab\'{o} number $n_{MS}$ as $2$ plus the maximum relative degree between two irreducible generators of the chain complex 
relative to the trivial \spinc structure. Then $n_{MS}~\geq~2 d_\Delta$.
\item\label{thm:DeltaInHMred:item:HomOrientation} If $N_0$ is a symmetric protocork, then $\mu_1$ realizing the first item is the  pushforward of $\mu_0$ via the diffeomorphism $\hat \tau: N_0\to N_1$ defined in Subsection~\ref{subsec:involutions}. 
\end{enumerate} 
\end{theorem}
The proof is given  in Section~\ref{sec:VariationSW}. The inequality $n_{MS}\geq 2d_\Delta$ of item \ref{thm:DeltaInHMred:item:NMS} 
leads to the following improvement of the main technical result of  Morgan-Szab\'{o} in  \cite[Theorem 1.2	]{MorganSzabo99} regarding the variation of Seiberg-Witten invariants.
 
\begin{corollary}\label{corollary:MorganSzaboImprovement} Let $d_\Delta$ be the $U$-torsion order of the class $\Delta$, i.e. the least $k\geq 0$, such that $U^{k}\Delta  =0$,
and let $X_0$ be an oriented, closed, simply connected $4$-manifold with $b_2(X_0)>1$ decomposed as $X_0 = M\bigcup_Y N_0$ 
	where 	$M$ is an oriented $4$-manifold with boundary $-Y$. Fix $\sstruc_0$, a \spinc structure on $X_0$ and  an homology orientation $\mu_0$ for $X_0$.
	 Let $X_1 = M\bigcup_Y N_1$ be the result of the protocork twist operation on $X_0$ and denote by $\sstruc_{1}$ the $\spinc$ structure induced by $\sstruc_0$ (unique up to isomorphism).
	Suppose that 
	 \begin{equation}
	  	d(\sstruc_0) = \frac 1 4 (c_1^2(\sstruc_0) - 2\chi (X_0)-3\sigma(X_0)) \geq 2 d_\Delta
	 \end{equation}
	 then 
	\begin{equation}
		SW(X_0,\sstruc_0, \mu_0) = SW(X_1, \sstruc_1, \mu_1)
	\end{equation}
	for an homology orientation $\mu_1$ on $X_1$.
\end{corollary}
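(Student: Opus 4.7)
The plan is to express the difference $SW(X_0,\sstruc_0,\mu_0) - SW(X_1,\sstruc_1,\mu_1)$ as a Floer-theoretic pairing involving $\Delta$ and then force this pairing to vanish using the $U$-torsion statement of Theorem~\ref{thm:DeltaInHMred}\ref{thm:DeltaInHMred:item:Delta}.

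First, I would invoke the Kronheimer-Mrowka gluing formula for Seiberg-Witten invariants along the splitting $X_i = M \cup_Y N_i$. Removing a ball from $M$ and viewing $M' := M \setminus \ball$ as a cobordism $Y \to \SS^3$, this writes
\begin{equation*}
	SW(X_i, \sstruc_i, \mu_i) = \langle \Psi_M, x_i\rangle_Y,
\end{equation*}
where $\Psi_M$ is a relative Floer invariant of $(M, \sstruc_0|_M)$ constructed from KM's cobordism map of $M'$ (with a homology orientation determined by $\mu_0$), $x_i$ is as in Theorem~\ref{thm:DeltaInHMred}, and $\langle\cdot,\cdot\rangle_Y$ is the natural $U$-equivariant pairing on the appropriate Floer groups at $Y$. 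The orientation $\mu_1$ on $X_1$ is assembled from $\mu_0$ using the same $\mu_M$ on $M$ together with the orientation on $N_1$ produced by Theorem~\ref{thm:DeltaInHMred}\ref{thm:DeltaInHMred:item:Delta}. Subtracting,
\begin{equation*}
	SW(X_0,\sstruc_0,\mu_0) - SW(X_1,\sstruc_1,\mu_1) = \langle \Psi_M, \Delta\rangle_Y.
\end{equation*}

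Next, I would argue that the dimension hypothesis $d(\sstruc_0) \ge 2 d_\Delta$ forces $U^{d_\Delta}$-divisibility of $\Psi_M$. Since the protocork $N_0$ contributes no $b_2^+$ (from the structure theory of protocorks established earlier in the paper), the entire dimension budget $d(\sstruc_0)$ of the Seiberg-Witten moduli space on $X_0$ is carried by the $M$-side of the splitting. KM's grading-shift formula for the cobordism $M'$ then places $\Psi_M$ at least $2 d_\Delta$ deep inside the tower of the relevant Floer group, so that $\Psi_M = U^{d_\Delta}\Psi'_M$ for some class $\Psi'_M$. Combined with $U^{d_\Delta}\Delta = 0$ and the $U$-equivariance of the pairing,
\begin{equation*}
	\langle \Psi_M, \Delta\rangle_Y = \langle \Psi'_M, U^{d_\Delta}\Delta\rangle_Y = 0.
\end{equation*}

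\textbf{Main obstacle.} The key technical step is the $U^{d_\Delta}$-divisibility of $\Psi_M$. This requires carefully matching the Seiberg-Witten dimension $d(\sstruc_0)$ with KM's absolute $\mathbb{Q}$-grading on $\HMfrom(Y)$, using $b_2^+(N_0) = 0$ from the protocork structure together with $b_2^+(M) \ge 1$ coming from $b_2(X_0) > 1$, and verifying that the image of the cobordism map lands in a portion of the Floer group where ``grading at least $2k$ above the bottom of the tower'' implies $U^k$-divisibility. Book-keeping of the homology orientations $\mu_0, \mu_1, \mu_M$ so that the algebraic subtraction $\Delta = x_0 - x_1$ matches the geometric subtraction of the two Seiberg-Witten numbers is also a delicate point that must be checked using Theorem~\ref{thm:DeltaInHMred}\ref{thm:DeltaInHMred:item:Delta}.
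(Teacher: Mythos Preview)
Your overall strategy matches the paper's: apply the Kronheimer--Mrowka gluing formula to the splitting $X_i = M \cup_Y N_i$, subtract to get an expression involving $\Delta$, and kill it using $U^{d_\Delta}\Delta = 0$. Where you diverge is in the handling of the $U$-power, and this is precisely the step you flag as the ``main obstacle.''

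In the paper's proof there is no obstacle here at all. The KM gluing formula \cite[Proposition~3.6.1]{KM} already contains the factor $U^{d(\sstruc_0)/2}$ explicitly: one has
\[
SW(X_i,\sstruc_i) = \big\langle \overrightarrow{HM}_*(M\setminus\ball,\sstruc_M)\big(U^{d(\sstruc_0)/2}\, x_i\big),\ \check 1\big\rangle,
\]
so the difference is $\big\langle \overrightarrow{HM}_*(M\setminus\ball)\big(U^{d(\sstruc_0)/2}\Delta\big),\check 1\big\rangle$, and $U^{d(\sstruc_0)/2}\Delta = 0$ is immediate from $d(\sstruc_0)/2 \ge d_\Delta$. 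No divisibility statement about the $M$-side invariant is needed.

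Your route instead tries to manufacture the $U$-power by proving that $\Psi_M$ is $U^{d_\Delta}$-divisible, arguing via the absolute $\mathbb{Q}$-grading. This is both unnecessary and, as sketched, not sound: sitting at a certain grading in $\HMfrom(Y)$ (or its dual) does not in general imply $U$-divisibility --- reduced classes, for instance, can sit at arbitrary gradings without being in the image of $U$. So if you insist on this formulation you would need a genuine structural argument, not just a degree count. The fix is simply to quote the gluing formula in the form where the $U^{d(\sstruc_0)/2}$ is already present and let it act on $x_i$ (equivalently on $\Delta$) rather than on $\Psi_M$; then the proof is two lines.
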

  Theorem 1.2. of \cite{MorganSzabo99}, rephrased in the protocork terminology, corresponds to Corollary~\ref{corollary:MorganSzaboImprovement} with $n_{MS}$, as defined in Theorem~\ref{thm:DeltaInHMred} \ref{thm:DeltaInHMred:item:NMS}, in place of $2d_\Delta$.
  Our result improves \cite[Theorem 1.2	]{MorganSzabo99} because by item \ref{thm:DeltaInHMred:item:NMS}  our upper bound $d_\Delta$ is at least as good as Morgan-Szab\'{o}'s
 and has  the advantage of being a purely topological quantity, whilst $n_{MS}$ depends on the choice of Riemannian metric and perturbation. Moreover, it can be easily  expressed in terms of $\HMfrom(Y)$ and has  an upper bound, given by the least $k\geq 0$ such that $U^k$ annihilates $\HMred(Y)$, that can be computed without knowing $\Delta$.

\begin{proof}[Proof of Corollary~\ref{corollary:MorganSzaboImprovement}] Since the result of gluing a \spinc structure over $M$ to one over $N_0$ is unique up to isomorphism (see Proposition~\ref{propProtocork}), the gluing formula \cite[Proposition 3.6.1]{KM} applies to each \spinc structure of $X_i$ separately and we have that
\begin{spliteq}
		SW(X_i,\sstruc_i) &=  \big \langle \overset{\longrightarrow}{HM}_*( U^{d(\sstruc_0)/2} \ |\  M\setminus\ball, \sstruc_M) \circ \widehat{HM}_*(\ N_i\setminus \ball ) (\hat 1), \check 1 \big \rangle\\
									&=  \big \langle \overset{\longrightarrow}{HM}_*(M\setminus\ball, \sstruc_M)(U^{d(\sstruc_0)/2} x_i), \check 1 \big \rangle\\
	\end{spliteq}	
	where $\sstruc_M$ is the restriction of $\sstruc_0$ to $M\setminus \ball$, the \spinc structure over $N_i\setminus \ball$ is the trivial one  (see Proposition~\ref{propProtocork}), the homology orientations of $M\setminus \ball$ and $N_0\setminus \ball$ are such that their composition is $\mu_0$ and $N_1\setminus \ball$ has the homology orientation given by
	Theorem~\ref{thm:DeltaInHMred}.
	In particular we have that 
	\begin{equation} \label{eq:splittingOfSW}
		SW(X_0,\sstruc_0)- SW(X_1,\sstruc_1) =  \big \langle \overset{\longrightarrow}{HM}_*(M\setminus\ball, \sstruc_M) U^{d(\sstruc_0)/2} \Delta, \check 1 \big \rangle
	\end{equation}
	from which the thesis follows because $ U^{d(\sstruc_0)/2} \Delta = 0$ as $d(\sstruc_0)\geq 2 d_\Delta$.
	\end{proof}

It was shown in the proof of \cite[Theorem 8.1]{LinRubermanSaveliev2018} that, in the case of the Akbulut cork, 
only the reduced Floer homology of the cork boundary  contributes to the variation in Seiberg-Witten invariants due to a cork twist.
In the next corollary we generalize this result to a larger class of $4$-manifolds including all corks.
We willl assume all the manifolds to be connected, oriented and homeomorphisms to preserve the orientation.
\begin{corollary}\label{intro:corollaryCorkDiff} Let $C_0, C_1$ be compact,  simply-connected $4$-manifolds with connected boundary $Y$,   $f: \partial C_0\to \partial C_1$  a diffeomorphism and $\sstruc_0 \in \Spinc(C_0)$.
If $f$ extends to a  homeomorphism $\tilde f: C_0\to C_1$, then  setting $\sstruc_1 := \tilde f_*\sstruc_0$ and
\begin{equation}
		x_i := \HMfrom(C_i\setminus \ball, \sstruc_i)(\hat 1) \in \HMfrom(Y, {\sstruc_i}_{|Y}),
\end{equation}
it holds that  $x_1 - f_* x_0 \in \HMred(Y, {\sstruc_1}_{|Y})$ for some homology orientations on $C_0,C_1$.
\end{corollary}

\begin{proof}[Proof of Corollary~\ref{intro:corollaryCorkDiff}] Proposition~\ref{prop:CorkObstruction} ensures the existence of a symmetric protocork $N_0\subset \mathrm{int}(C_0)$  supporting $(C_0, C_1,f)$.
This means that if $T = C_0\setminus \mathrm{int}(N_0)$ and we denote by $(C_1)_{f}:\emptyset\to Y$ the cobordism  with underlaying manifold $C_1$ and $f:Y\to \partial C_1$ as map identifying the outcoming boundary, then as a cobordism  
\begin{equation}\label{eq:intro_cork_supported}
		(C_1)_{f}\simeq T\circ (N_0)_\tau,
\end{equation}
where $\tau: \partial N_0\to \partial N_0$ is the twisting map of Subsection~\ref{subsec:involutions}, moreover,  the rider to Proposition~\ref{prop:CorkObstruction}, implies that the diffeomorphism  of \eqref{eq:intro_cork_supported} sends ${\sstruc_0}_{|_T}$ to  $\sstruc_1|_{F(T)}$.
Fixing an homology orientation on $C_0$, we have that 
\begin{equation}
		x_0 = \HMfrom(C_0\setminus \ball,\sstruc_0)(\hat 1) = \HMfrom(T,{\sstruc_0}_{|_T})\circ \HMfrom(N_0 \setminus \ball)(\hat{1}),
\end{equation}
for some homology orientations  $\mu_T, \mu_{N_0}$ on $T,N_0$. By \eqref{eq:intro_cork_supported}, 
\begin{equation}
	 f^{-1}_* x_1 = \HMfrom((C_1\setminus \ball)_f, \sstruc_1 )(\hat 1) =  \HMfrom(T, {\sstruc_0}_{|_T}) \circ \tau_* \HMfrom(N_0\setminus \ball)(\hat{1}),
\end{equation}
for an homology orientation on $C_1$ obtained gluing up $\mu_T, \mu_{N_0}$ on $T,N_0$.
Now, $\HMfrom(N_0\setminus \ball)(\hat{1}) - \tau_*\HMfrom(N_0\setminus \ball)(\hat{1}) $ is the difference element $\Delta$ of Theorem~\ref{thm:DeltaInHMred} in the case of a symmetric protocork.
Thus the thesis follows from $\Delta \in \HMred(\partial N_0)$ and the fact that cobordism maps send reduced homology to reduced homology.\end{proof}
We remark that Corollary~\ref{intro:corollaryCorkDiff}  was already known when $b^+(C_0)>0$ by \cite[Proposition 3.5.2]{KM}, our addition is the case when $C_0$ is negative semi-definite.
This relates to Seiberg-Witten invariants as follows. Consider closed $4$-manifolds $X_0 = M\bigcup C_0$, $X_1 = M\bigcup_f C_1$
where $b^+(M)\geq 2$. Let $\sstruc_M\in \Spinc(M)$ and $\sstruc_i \in \Spinc(C_i)$, then under the assumptions of Corollary~\ref{intro:corollaryCorkDiff},   \cite[Proposition. 3.6.1]{KM} implies that, for some choice of homology orientations,
\begin{equation}
\sum_{\substack{\sstruc|_M = \sstruc_M \\
							\sstruc|_{C_1} = \sstruc_0}} SW(X_1, \sstruc) - \sum_{\substack{\sstruc|_M = \sstruc_M \\
							\sstruc|_{C_0} = \sstruc_1}} SW(X_0, \sstruc) = \big \langle \overset{\longrightarrow}{HM}_*(M\setminus\ball, \sstruc_M)(U^{d(\sstruc)/2} (f^{-1}_*x_1-x_0)), \check 1 \big \rangle,
\end{equation}
where $d(\sstruc)$ can be computed with any \spinc structure on the left hand side.
Corollary~\ref{intro:corollaryCorkDiff} then tells us that the difference of the sum of the Seiberg-Witten invariants on the left hand side depends only 
on an element in $\HMred(Y)$. This equation should be compared with \eqref{eq:splittingOfSW}, in that case the sum contained only one \spinc structure thanks to Proposition~\ref{propProtocork}.

\paragraph{Second result.} In order to prove  the main theorem we prove the following splitting theorem
 for the monopole Floer homology of protocork boundaries which determines it up to $U$-torsion elements.\\
It 	roughly says that the Floer homology is a direct sum of the Floer homology of a connected sum of $\SS^1\times \SS^2$  and the reduced homology $\HMred$  \cite[Definition 3.6.3]{KM}. 
The former is well known \cite[Proposition 36.1.3]{KM} and the latter has finite rank.

{\newcommand{\Sone}{\#_{1=1}^{b_1(Y)}(\SS^1\times \SS^2)}
\begin{theorem}\label{Intro:thm:Splitting} Let $Y$ be the boundary of a protocork, then as $\ZU$-modules
\begin{spliteq}
	& \HMfrom(Y) \simeq \HMfrom(\Sone) \oplus \HMred(Y)\\
	& \HMto(Y) \simeq \HMto(\Sone) \oplus \HMred(Y)\\
	& \HMbar(Y) \simeq \HMbar(\Sone).
	\end{spliteq}
In this splitting, the Floer homology of $\Sone$ is supported in the (unique up to isomorphism) torsion \spinc structure of $Y$ and the isomorphism preserves the absolute $\Q$-grading.
Moreover,	with respect to this splitting, the long exact sequence
	\begin{equation}
		\dots \to \HMto(Y)\overset{j_*}{\longrightarrow}\HMfrom(Y)\overset{p_*}{\longrightarrow}\HMbar(Y)\to\dots
	\end{equation}
	becomes the direct sum of the long exact sequence of  $\Sone$ and 
	\begin{equation}
		\dots \to \HMred(Y)\overset{id}{\longrightarrow}\HMred(Y){\longrightarrow}0\to \dots.
	\end{equation}
	
\end{theorem}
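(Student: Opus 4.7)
The plan is to identify $\HMbar(Y)$ topologically using Kronheimer--Mrowka's structural theorem, then use the cobordism from a protocork to explicitly construct the ``connected-sum'' submodule of $\HMfrom(Y)$, and finally deduce the splittings of the long exact sequence from this construction.

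\textbf{Stage 1: identifying $\HMbar(Y)$.} The structural theorem for $\HMbar$ asserts that, in the torsion $\spinc$ structure, $\HMbar(Y)$ depends only on $H^1(Y;\mathbb{Z})$ and the triple cup product, and vanishes on non-torsion $\spinc$ structures. Using the handle-theoretic description of protocorks established earlier in the paper, I would verify that $H_1(Y;\mathbb{Z})$ is torsion-free of rank $b_1(Y)$ and that the triple cup product vanishes identically---properties shared with $\#^{b_1(Y)}(\SS^1\times\SS^2)$. The structural theorem then yields the grading-preserving isomorphism $\HMbar(Y)\simeq \HMbar(\#^{b_1(Y)}(\SS^1\times\SS^2))$, supported in the unique torsion $\spinc$ structure of $Y$.

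\textbf{Stage 2: constructing the connected-sum submodule.} Setting $x_0:=\HMfrom(N_0\setminus\ball)(\hat 1)\in\HMfrom(Y)$ via the cobordism $N_0\setminus\ball:\SS^3\to Y$, let $V\subseteq\HMfrom(Y)$ be the submodule generated by $x_0$ under the $U$-action and the $H_1(Y)$-module structure. I would then establish: $V\simeq \HMfrom(\#^{b_1(Y)}(\SS^1\times\SS^2))$ as graded modules; $V\cap\HMred(Y)=0$, since $p_*|_V$ is injective; and $V+\HMred(Y)=\HMfrom(Y)$, since the image of $p_*:\HMfrom(Y)\to\HMbar(Y)$ matches, under Stage 1, that of the corresponding map for $\#^{b_1(Y)}(\SS^1\times\SS^2)$. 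Together these give $\HMfrom(Y)\simeq \HMfrom(\#^{b_1(Y)}(\SS^1\times\SS^2))\oplus\HMred(Y)$; the analogous splitting of $\HMto(Y)$ follows from Poincar\'{e} duality for monopole Floer homology. Compatibility with the long exact sequence is then automatic: $p_*$ vanishes on $\HMred(Y)$ by definition and restricts on $V$ to the classical $p_*$ for the connected sum, while $j_*$ restricts to the identity on the $\HMred$ summand and to the classical $j_*$ on the connected-sum part.

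\textbf{Main obstacle.} The crux is Stage 2: verifying that the submodule $V$ generated from the cobordism element $x_0$ is genuinely the analog of $\HMfrom(\#^{b_1(Y)}(\SS^1\times\SS^2))$ inside $\HMfrom(Y)$. This requires identifying the $H_1(Y)$ classes arising from meridians of the 1-handles of $N_0$ with the exterior-algebra generators implicit in $\HMbar(\#^{b_1(Y)}(\SS^1\times\SS^2))$, and then showing---likely via a surgery exact triangle or a direct cobordism comparison---that $\HMfrom(N_0\setminus\ball)$ realizes the full connected-sum submodule under these identifications. Once this is pinned down, the rank count and injectivity of $p_*|_V$ make the remaining verifications formal.
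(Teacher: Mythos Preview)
Your Stage~1 agrees with the paper: both use the vanishing of the triple cup product on $Y$ (established earlier from the plumbing description) together with the Kronheimer--Mrowka structural theorem to identify $\HMbar(Y)\simeq\HMbar(\#^{b_1}\SS^1\times\SS^2)$.

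Stage~2 is where you diverge, and where the real difficulties lie. The paper does \emph{not} attempt to generate a submodule $V$ from $x_0$ via the $H_1$-action. Instead it builds two explicit cobordisms $W:\#^{b_1}\SS^1\times\SS^2\to Y$ and $Q:Y\to\#^{b_1}\SS^1\times\SS^2$ from the Kirby diagram of $N_0$, and verifies by handle cancellation that $Q\circ W$ is the trivial cobordism. This immediately gives a retraction $\HMfrom(Q)$ to $\HMfrom(W)$, hence a splitting $\HMfrom(Y)=\hat G\oplus\ker\HMfrom(Q)$ with $\hat G=\mathrm{Im}\,\HMfrom(W)$; the same retraction works for $\HMto$ and $\HMbar$ simultaneously and is automatically compatible with the long exact sequences. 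The identification $\ker\HMfrom(Q)=\HMred(Y)$ then falls out of a diagram chase once one knows $\bar Q=0$ (from Stage~1 plus a rank count). In short, the left-inverse cobordism $Q$ replaces all of your ``main obstacle'' analysis with a single functoriality argument.

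Your route could perhaps be completed, but several of your steps are not as formal as you suggest. Showing that the $H_1(Y)$-action on $x_0$ produces a copy of the full exterior algebra inside $\HMfrom(Y)$ is essentially the hard content, and is not obviously easier than what the paper does. Your claim that $V+\HMred(Y)=\HMfrom(Y)$ ``since the image of $p_*$ matches'' presupposes that $p_*(V)$ already equals $\mathrm{Im}\,p_*$, which is precisely what needs proving. Most seriously, your appeal to Poincar\'e duality for the $\HMto$ splitting does not work as stated: duality relates $\HMto_*(Y)$ to $\HMfrom^{-*}(-Y)$, not to $\HMfrom(Y)$, so it does not transport your splitting of $\HMfrom(Y)$ to one of $\HMto(Y)$. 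The paper avoids this entirely because the cobordism maps induced by $W$ and $Q$ act on all three flavors at once.
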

}
We prove this theorem in Subsection~\ref{subsec:FloerHomologyCobordismSplitting}, the proof relies on the functoriality of Floer homology and on the construction of some special cobordisms. 

\paragraph{Result on moduli spaces of monopoles over a protocork.} 
{\newcommand{\ntop}{N^{\mathrm{red}}}
We include a geometric result  about moduli spaces of reducible monopoles which can also be used to give an alternative, geometric, proof of our main theorem but which  is also interesting in its own right.
Recall that the relative Seiberg-Witten invariant \eqref{def:relativeSWInvariantPcork} of a protocork $N_0$  are computed by counting points in the moduli spaces $M(N_0^*, \beta)$ of perturbed monopoles over $N_0^*$ ($N_0$ with cylindrical ends attached) limiting to perturbed $3D$-monopole $\beta$ on $Y=\partial N_0$.
In Section~\ref{sec:4DModulispaces}, using the perturbations defined in Section~\ref{sec:FloerHomologyBoundary}, we show that there is only one limiting \emph{reducible} solution denoted as $[\gota_{\ntop,-1}]$, that can contribute to the relative Seiberg-Witten invariant (more \emph{irreducible} solutions are possible), then we prove the following.
\begin{theorem}\label{thm:ModuliIsPoint}
		Let $N_0$ be a protocork and $N_1$ be its reflection. Then for $i=0,1$
		there exist  a residual set of perturbations, generic enough to compute \eqref{def:relativeSWInvariantPcork}, such that  the moduli space	$M(N_i^*; [\gota_{\ntop,-1}]) $ is regular and consists of a single point, in particular is not empty.
\end{theorem}
This provides a geometric explanation for the fact that $\Delta\in \HMred(Y)$, indeed,  $\gr^{\Q} (\Delta) = -1$ and  it follows from Theorem~\ref{Intro:thm:Splitting} that $\Hfrom_{-1}(Y)/\HMred_{-1}(Y) \simeq \mathbb Z$ generated by  $x_0$ (see the proof of Theorem~\ref{thm:DeltaInHMred}\ref{thm:DeltaInHMred:item:Delta}). Hence $x_1$ has to be equal to $\pm x_0$ in the quotient, because the component of both $x_0 $ and $\pm x_1$ along the generator of the complex $[\gota_{\ntop,-1}]$ are the same by Theorem~\ref{thm:ModuliIsPoint} and cycles in $\HMred(Y)$ have zero component along it by Lemma~\ref{lemma:HMred2} below. The computations of the degree of the generators given in  Section~\ref{sec:4DModulispaces} can also be used to give an alternative proof of Theorem~\ref{thm:DeltaInHMred}\ref{thm:DeltaInHMred:item:NMS}.

The lemmas and techniques used in Section~\ref{sec:FloerHomologyBoundary} and \ref{sec:4DModulispaces} serve also to 
 lay the basis for future studies of protocorks and their boundaries from the differential geometric perspective.
 
The paper also includes, in the appendix, the proof of two technical results, regarding general manifolds, non necessarily protocorks, which are  probably known to experts but that we could not trace in the literature. The first, Proposition~\ref{prop:MorseLikePerturbationPerp}, deals
with perturbations which are pullback of a Morse function on the torus of reducibles. The second, Lemma~\ref{lemma:FromL2loctoL2},  is a characterization of reducible moduli spaces over manifolds with cylindrical ends.
}

\paragraph{Organization.} The paper is organized as follows:  in Section~\ref{sec:Protocorks} we define protocorks (Definition~\ref{def:protocork}) and the protocork twist operation (Definition~\ref{def:ProtocorkTwist}) and  we obtain some basic properties that will be used in the other sections.  The proof that any exotic pair is related by a protocork twist and
that any cork has a supporting protocork is in Subsection~\ref{subsec:ProtExotic4man}. 

The first part of Section~\ref{sec:FloerHomologyBoundary} provides the framework for the definition of  the chain complex giving the Floer homology of protocork boundaries, this is used to define the  Morgan-Szab\'o number and to study the moduli spaces of  monopoles over a protocork later in Section~\ref{sec:4DModulispaces}. The  second part Section~\ref{sec:FloerHomologyBoundary} contains the proof of Theorem~\ref{Intro:thm:Splitting}. 
In Section~\ref{sec:VariationSW} we prove Theorem~\ref{thm:DeltaInHMred}. 
In Section~\ref{sec:4DModulispaces} we prove Theorem~\ref{thm:ModuliIsPoint}.
In \autoref{appendix:ProofMorseLike} we prove Proposition~\ref{prop:MorseLikePerturbationPerp}, in  \autoref{app:ProofofLemmaL2locL2}
we prove Lemma~\ref{lemma:FromL2loctoL2}.
More information and proof sketches may  be found in the outlines at the beginning of each section.

\begin{acknowledgements} The author wishes to express  his gratitude to his supervisor Steven Sivek  for his support, encouragement and optimism and also to
  Francesco Lin, Dima Panov and Daniele Zuddas for useful discussions and thoughts on the early draft.
  The author also would like to thank Tom Mrowka, Bruno Roso, Charles Stine  and Zolt\'{a}n Szab\'{o} for helpful conversations.
  Furthermore he would like to thank Bernd Ammann, Bob Gompf and  Mike Miller for valuable correspondence.
  The author is grateful to  an anonymous reviewer for suggesting him Lemma~\ref{lem:RepByIrreducibleCycles}, which makes the proof of Theorem~\ref{thm:DeltaInHMred}\ref{thm:DeltaInHMred:item:NMS} independent from  Section~\ref{sec:4DModulispaces}.
 The author was funded by EPSRC.
\end{acknowledgements}

\section{Protocorks}\label{sec:Protocorks}

\newcommand{\valid}{protocork plumbing }
\newcommand{\vA}{\mathbf{v}^A}
\newcommand{\vB}{\mathbf{v}^B}
\newcommand{\Realiz}{\mathcal{R}}

\begin{plan} In Subsection~\ref{subsec:ReviewPlumbings} we review plumbings and their realizations. This is important to us because we will need to define diffeomorphisms of plumbings, thus to our aims is not enough  to work with the diffeomorphism type of a plumbing. In Subsection~\ref{subsec:DefProtocork} we define protocorks.
In Subsection~\ref{subsec:KirbyDiagrProtocorks} we describe Kirby diagrams of protocorks, these  will be used throughout the paper in particular  to construct cobordisms that we will exploit in the proof of our theorems. In Subsection~\ref{subsec:homological properties} we infer some basic algebraic-topological properties of protocorks and their boundary that will be used throughout the paper.
In Subsection~\ref{subsec:involutions} we define some involutions on protocork boundaries and some diffeomorphisms of manifolds associated to protocorks and we prove some properties about them.
In Subsection~\ref{subsec:ProtExotic4man} we define the protocork twist operation, we prove that any simply connected exotic pair can be related by such operation and that any cork admits a supporting protocork.
\end{plan}

\paragraph{Inspiration and motivation.} Protocorks are a natural object to consider when dealing with exotic 4-manifolds and h-cobordisms, indeed they are implicit in the proofs of \cite{CHFS, Matveyev}
and in Morgan-Szab\'{o}'s definition of complexity of an h-cobordism \cite{MorganSzabo99}. 
Protocorks with sphere-number $1$ appear also  in the unpublished thesis of Sunukjian \cite{SunukjianThesis} under the name $D(n,1)$ where the author also computes $\HMbar$ of the boundary using the vanishing of the triple cup product.
However, to the author's best knowledge this is the first paper to attempt a systematic study of  them.

Our intention is to use protocorks to prove general statements regarding corks and exotic pairs. 
This is motivated by two key features:  first of all we can enumerate protocorks, as they are in bijection with certain bipartite graphs. This is in stark contrast with the cork's situation: there is no classification of corks and it is not easy to produce corks that can potentially yield  new exotic pairs. Secondly, 
protocorks and their boundaries possess interesting symmetries (see  Subsection~\ref{subsec:involutions}) and other geometrical features that come in handy when doing gauge theory (see the discussion at the beginning of Section~\ref{sec:4DModulispaces}).

\subsection{Review of plumbings and their realizations.}\label{subsec:ReviewPlumbings}
Plumbings  were firstly introduced in the context of surgery theory by Milnor in \cite{MilnorPlumbings}. 
In this subsection we review them in the specific case of 4-manifolds and establish our notation, for examples and further discussion we refer the reader to \cite[Section 6.1]{GompfStipsicz}.

\paragraph{Plumbing graphs.} A plumbing graph is a finite, undirected multigraph  where each vertex is labelled with a pair $(g,e)\in \N_{\geq 0}\times \Z$  and each edge is decorated with a sign $+$ or $-$.
Such graphs  model an intersection of surfaces in an ambient 4-manifold: each vertex represents a Riemann surface of genus $g$  embedded with  normal bundle of Euler number $e$, and  each $+$ ($-$) edge represents a positive (negative) intersection between the surfaces.
Given a plumbing graph  $\Gamma$ with set of vertices $V$, for $i,j \in V$, we will  denote by $r_\Gamma(i,j)\in \N_{\geq 0}$  the \emph{geometric intersection number} i.e. the  number of edges between $i$ and $j$  and by $a_\Gamma(i,j)\in \Z$  the \emph{algebraic intersection number} i.e. the signed count of the edges between $i$ and $j$. 
The edges constitute a multiset therefore we will adopt the following convention: we will write  $\varepsilon \simeq (v_1,v_2, \pm)$ to denote an edge $\varepsilon$ between the vertices $v_1,v_2$ of sign $\pm$, of course we can have $\varepsilon,\varepsilon'\simeq (v_1,v_2, \pm)$ and $\varepsilon\neq \varepsilon'$.

\paragraph{Realization datum.} We will denote by $\D^n(r)\subset \R^n$, the disk of radius $r>0$ so that  $\D^n = \D^n(1)$.
To a plumbing graph $\Gamma$ there is associated  a \emph{diffeomorphism class} of 4-manifold called plumbing on $\Gamma$. If we want to associate a specific 4-manifold we need to add some data. A \emph{realization datum} for $\Gamma$ consists of the following choices:
\begin{enumerate}
\item for each vertex $v$ of $\Gamma$ with label $(g,e)$,  is given a preferred \emph{oriented} surface of genus $g$, $S_{v}$ with an \emph{oriented} $\D^2$-bundle  $N_{v}\to S_v$ of Euler class $e $ and
\item denoting by $d_v  $ the degree of $v$,  is given a preferred embedding $\varphi_{v}: \bigsqcup_{i=1}^{d_v} \D^2(2)\to S_v$ together with a preferred trivialization of $N_{v}$ restricted to the image of $\varphi_{v}$.	
\item \label{itemEdgesAssRealizationDatumDef}  Let  $v_1$ and $v_2$ be two vertices of $\Gamma$. Then 	
		for each edge of $\Gamma$, $\varepsilon \simeq (v_1,v_2, \pm)$  are chosen two unit disks 	 $D_{\varepsilon, 1}$ and $D_{\varepsilon, 2}$, from  the image of 
		\begin{equation}\label{eq:diskchoice1}
			\bigsqcup_{i=1}^{d_{v_1}} \D^2(1)\subset 	\bigsqcup_{i=1}^{d_{v_1}} \D^2(2)  \overset{\varphi_{v_1}}{\to } S_{v_1}\hookrightarrow S_{v_1}\times \{v_1\}
		\end{equation}
		and
		\begin{equation}\label{eq:diskchoice2}
				\bigsqcup_{i=1}^{d_{v_1}} \D^2(1)\subset	\bigsqcup_{i=1}^{d_{v_2}} \D^2(2)  \overset{\varphi_{v_2}}{\to} S_{v_2}\hookrightarrow S_{v_1}\times \{v_2\}
		\end{equation}	respectively.
		This choice must respect the following  global condition: each disk in the image of \eqref{eq:diskchoice1} and \eqref{eq:diskchoice2} is associated to \emph{exactly one} edge.
\item \label{itemSmoothingRealizationDatumDef}	Denote by $M_{\pm}$ the manifold with corners
		obtained by gluing  $\mathrm{int}(\D^2(2))\times \D^2\times \{0\}$ and $\mathrm{int}(\D^2(2))\times \D^2\times \{1\}$
		via the identification
		\begin{equation}\label{eqdef:Invpm}
				\operatorname{inv}_\pm:\D^2(1)\times \D^2\times \{0\}\to \D^2(1)\times \D^2 \times \{1\} = (b,f,0)\mapsto \pm (f,b,1).
		\end{equation}
		Then  for each edge $\varepsilon \simeq (v_1,v_2, \pm)$ is chosen a smoothing of the corners of $M_\pm$ \cite[Section 2]{WallBook}.
\end{enumerate}

	Calling   $M_{\pm}$a manifold with corners is a slight abuse of language because a neighbourhood of a singularity is diffeomorphic to 
	\begin{equation}
		\R^2\times \{(x_1,x_2 ) \in \R^2\ | x_1 \geq 0 \ \text{ or } x_2\geq 0\ )\},
	\end{equation} i.e. is a \emph{concave} corner,  instead of being diffeomorphic to $\R^2\times [0,+\infty)^2$, i.e. a \emph{convex} corner, as required by the usual definition corner point \cite[pg. 30]{WallBook}. Nevertheless the two are homeomorphic and the corner straightening theory \cite[Section 2]{WallBook} is carried out with little modifications.
	
\paragraph{Realization.} Let $\Gamma$ be a plumbing graph and let $\Realiz$ be a realization datum for $\Gamma$. Then we can construct an oriented  smooth 4-manifold with boundary $P(\Gamma, \Realiz)$,  \emph{the realization of the plumbing on} $(\Gamma, \Realiz)$ constructed in the following way: let
	\begin{equation}\label{defOfEgammaRealiz}
		E(\Gamma, \Realiz) = \bigsqcup_{v} N_{v}\times \{v\}
	\end{equation}
	where the $v$ varies among vertices of $\Gamma$.
	Then $P (\Gamma, \Realiz)$ is obtained from $E(\Gamma, \Realiz)$ by identifying some codimension zero submanifolds:
	for each edge $\varepsilon $ we identify $E(\Gamma, \Realiz) |_{D_{\varepsilon,1}} $ and $E(\Gamma, \Realiz) |_{D_{\varepsilon,2}} $ by making this diagram commutative
	
	\begin{equation}\label{eq:diskIdentification}
			\begin{tikzcd}	    
				&  E(\Gamma, \Realiz) |_{D_{\varepsilon,1}} \arrow[d,"\sim"] \arrow[r] &  E(\Gamma, \Realiz) |_{D_{\varepsilon,2}} \arrow[d,"\sim"]   \\
				& 	\D^2\times \D^2	\arrow[r, "\operatorname{inv}_\pm"]& \D^2\times \D^2   \\
			\end{tikzcd}
	\end{equation}
	where the vertical arrows are given by the local trivializations (where the second factor is the fiber) chosen with $\Realiz$ and $\operatorname{inv}_\pm$ is the involution defined by \eqref{eqdef:Invpm} 
	that exchanges  the base with the fiber.
	$P (\Gamma, \Realiz)$ is naturally a smooth manifold with (concave) corners along the plumbing tori (see below). 
	Exploiting $\Realiz$, we define a smooth structure on $P (\Gamma, \Realiz)$  by straightening the corners using the data of item  \ref{itemSmoothingRealizationDatumDef} in the definition of realization datum.

	\begin{remark} It can be shown that the (oriented) diffeomorphism type of $P (\Gamma, \Realiz)$ is independent of the realization datum $\Realiz$ and depends on $\Gamma$ only up to isomorphism of plumbing graphs (which  is an isomorphism of graphs that preserves the labels of the vertices and the sign of the edges). Thus we write $P (\Gamma)$ to denote the diffeomorphism type of  the plumbing on $\Gamma$.
	\end{remark}

	\paragraph{Plumbing tori.} We recall a definition that we will need in Proposition~\eqref{propProtocork} and Proposition~\eqref{prop:existenceSymmetry}.
	The torus  $\partial \D^2\times \partial \D^2\subset \D^2\times \D^2$ is preserved by $\operatorname{inv}_\pm$. Thus via the diagram \eqref{eq:diskIdentification}, each edge of $\Gamma$ gives rise to a torus lying in 	the boundary $Y:=\partial P(\Gamma, \Realiz)$. Such tori are called \emph{plumbing tori}.
	Notice that the union of plumbing tori separate  $Y$ in a collection of $\SS^1$-bundles over surfaces with boundary thus giving $Y$ the structure of a graph manifold.

	\paragraph{Plumbing structures and associated surfaces.} A plumbing structure on a 4-manifold $X$ is a diffeomorphism $f:X\to P(\Gamma, \Realiz)$ for some pair $(\Gamma, \Realiz)$.
	Given such a structure,  the surface in $ X$ associated to a vertex $v$ of $\Gamma$, is the preimage of the surface associated to $v$ in $P(\Gamma, \Realiz)$ (i.e. the image of the zero section of $N_v\times\{v\}$ in \eqref{defOfEgammaRealiz}).
	Notice that a tubular neighbourhood of an intersection of surfaces intersecting according to a plumbing graph $\Gamma$,  has a an obvious plumbing structure.

\subsection{The definition of protocork.}\label{subsec:DefProtocork}\begin{plan}
The aim of this  subsection is to define protocorks (Definition~\ref{def:protocork}), these can be thought as the incoming end of certain 5-dimensional h-cobordisms consisting merely of 5-dimensional 2-handles and 3-handles.
Such a cobordism is specified by the intersection pattern of the belt spheres of the 2-handles with the attaching spheres of the 3-handles. 
We record this information in the form of a protocork plumbing graph (Definition ~\ref{def:ppgraph}).
In order to define the protocork twist operation later,  it is convenient to keep track also of the outcoming end and of the middle level of the above-mentioned cobordism, we will call the former reflection of the protocork and the latter the plumbing associated to the graph. Hence in  Definition~\ref{def:protocork} we will define three objects: plumbing, protocork and reflection. 
\end{plan}

\begin{definition}[Protocork plumbing graph] \label{def:ppgraph} A protocork plumbing graph is a bipartite plumbing graph $\Gamma$ with $2n\geq 0$ vertices and parts labelled $A$ and $B$ satisfying the following conditions:
\begin{enumerate}
\item $A$ and $B$ have cardinality  $n$ called the \emph{sphere-number} and their elements are denoted as $\{\vA_i\}_{i=1}^n, \{\vB_i\}_{i=1}^n$,
\item the $\mathbb D^2$-bundle associated to each vertex  is the  trivial  bundle  over $\SS^2$,
\item $r_\Gamma(\vA_i, \vA_j) = r_\Gamma(\vB_i, \vB_j) = 0$ for each $i,j=1,\dots, n$,
\item  $a_\Gamma(\vA_i, \vB_j)=\delta_{i,j} $  for each $i,j=1,\dots, n$, $\delta_{i,j}$ being the Kronecker delta.
\end{enumerate}
\end{definition}

Since the only edges occur between $A$-vertices and $B$-vertices, we will write  $\varepsilon \simeq (i,j,\pm)$  to denote a positive/negative  edge between $\vA_i$ and $\vB_j$.
	
A \valid graph $\Gamma$ is called \emph{symmetric} if $r_\Gamma(\vA_i,\vB_j) = r_\Gamma(\vA_j,\vB_i)$ for each  $i,j=1,\dots, n$. In particular  every $\Gamma$ with sphere-number $1$ is symmetric.
	 Given a \valid graph $\Gamma$ we can form another \valid graph called the  \emph{reflection} of $\Gamma$ denoted  as $\overline{\Gamma}$ by swapping the labels  $A$ and $B$.  More precisely,
	 the part $A$ of $\overline\Gamma$  is the part $B$ of ${\Gamma}$ and the part $B$ of $\overline \Gamma$ is the part $A$ of ${\Gamma}$; the multiset of edges of $\overline\Gamma$ is the same multiset of edges of $\Gamma$.

	Two \valid graphs are isomorphic  if there is an isomorphism of unoriented graphs that preserves the labelling of the parts and the sign of the edges. 	
	In particular $\Gamma$ and $\overline \Gamma$ need not be isomorphic but if $\Gamma$ is symmetric they are.  We will adopt the convention of drawing the $A$-vertices on the left and the $B$-vertices on the right.
	\autoref{figure1} shows some examples of \valid graphs.

		\begin{figure}
		\includegraphics[scale=0.55]{./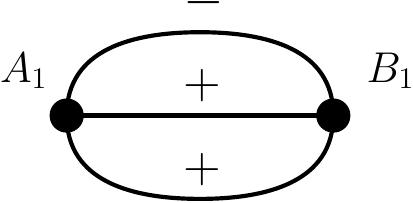}\hspace{1.5cm}
		\includegraphics[scale=0.45]{./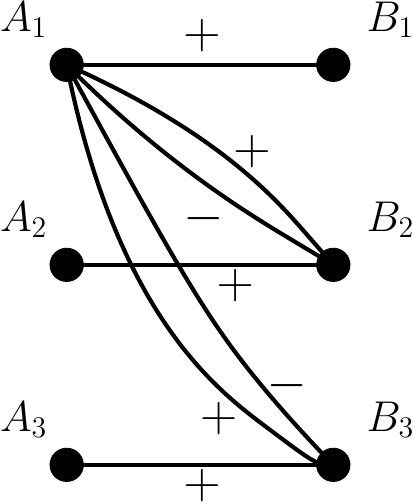}\hspace{1.5cm}
		\includegraphics[scale=0.45]{./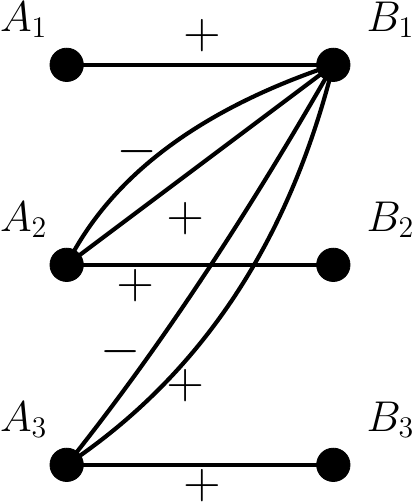}
	\caption{\label{figure1} From left to right: a symmetric \valid graph with sphere-number $1$, an asymmetric \valid graph with sphere-number $3$, the reflection of the previous example.}
		\end{figure}
		
		\begin{definition}\label{def:RealizationDatumForProtocork}A realization datum  $\Realiz$ for a protocork plumbing graph $\Gamma$ is a realization datum for the plumbing graph $\Gamma$ with an extra information: to each vertex $v$ is
		associated a preferred trivialization of the bundle $N_v\to S_v$.
		\end{definition}
		
		\paragraph{Natural framing.}
		Consider a plumbing graph  $\Gamma$ with set of vertices $V$ where each vertex is labelled with the trivial bundle over $\SS^2$. Let $\Realiz$ be a realization datum for $\Gamma$ where the model disk bundles are 
		$ \D^2\times \SS^2$.
		In this case, the plumbing realization $P:= P(\Gamma, \Realiz)$ is a quotient of  $\bigcup_{v\in V} \D^2\times \SS^2\times \{v\}$.  Let $v\in V$, then the quotient map restricts to a smooth  embedding over  $\mathrm{int}(\D^2)\times \SS^2\times \{v\}$. Notice that this extends only to a topological embedding of $\D^2\times \SS^2\times \{v\}$ because the corner straightening procedure introduces some corners in the image of this embedding.
		
		 Define  $S_v\subset P$ to be the image of $\{0\}\times \SS^2\times\{v\}\subset \mathrm{int}(\D^2)\times \SS^2\times\{v\}$  under the quotient map. 
		We call $S_v$ the \emph{sphere induced by $v$}  in $P$.  $S_v$ has a natural framing because is image of the  zero section of $\mathrm{int}(\D^2)\times \SS^2\times\{v\}$. It is thus possible to perform \emph{surgery} on the sphere induced by $v$ using its natural framing (see \cite[Definition 5.2.1]{GompfStipsicz}  pg. 154 for a definition of surgery) by embedding $\SS^2\times \D^2$ as $\D^2(\frac 1 2)\times \SS^2\times \{v\}$ via the  radial contraction $\D^2\to \D^2(\frac 1 2)$.

		In particular, 	by using the trivializations of  Definition~\ref{def:RealizationDatumForProtocork}, it follows that  if $\Realiz$ is a realization datum for a \emph{protocork} plumbing graph  $\Gamma$, there is a well defined natural framing
		for each surface $S_v$.
				
		We remark that in general, given a manifold $M^n$ and a framed sphere $f:\SS^k\times \D^{n-k}\to M$, the surgered manifold $M_f$ 		
		has a well defined smooth structure except that in a neighbourhood
		of $f(\SS^k \times \partial \D^{n-k})$ where it is defined up to the choice of  a collar of $f(\SS^k \times \partial \D^{n-k})$ in $M\setminus f(\SS^k\times \mathrm{int}(\D^{n-k}))$ and a collar of the boundary of $\D^{k+1}\times \SS^{n-k-1}$.
		In our case, we have a preferred choice because  the framed sphere is embedded as 
		
		\begin{equation}
			\D^2(\frac 1 2)\times \SS^2\times \{v\} \subset \mathrm{int}(\D^2)\times \SS^2\times \{v\}
		\end{equation}
		therefore, using polar coordinates to produce the two collars we end up with a well defined smooth structure on the surgered manifold.

	\begin{definition}[Protocork] \label{def:protocork} Let $\Gamma $ be a \valid graph of sphere-number $n$ and $\Realiz$ be  a realization datum for $\Gamma$. We  associate to $(\Gamma, \Realiz)$ the following oriented $4$-manifolds with boundary:
		\begin{itemize}
			\item The manifold $P_{1/2}(\Gamma, \Realiz)$ obtained by plumbing on $(\Gamma,\Realiz)$.  We call the spheres induced by vertices in the $A$-part \emph{$A$-spheres} and those  induced by vertices in the $B$-part \emph{$B$-spheres}.

			\item The manifold $P_0(\Gamma,\Realiz)$ obtained from $P_{1/2}(\Gamma,\Realiz)$ by surgering out the $B$-spheres  with their natural faming.
			$P_0(\Gamma,\Realiz) $ is called the \emph{protocork} associated to $(\Gamma, \Realiz)$.
			\item The manifold $P_1(\Gamma, \Realiz)$ obtained from $P_{1/2}(\Gamma, \Realiz)$ by surgering out the $A$-spheres with their natural framing. $P_1(\Gamma,\Realiz)$ is called the \emph{reflection} of $P_0(\Gamma,\Realiz)$.
		\end{itemize}
		
		\end{definition}	
		We remark that these three $4$-manifolds come with a natural identification of their boundaries. 
		
		We will use $P_{1/2}(\Gamma), P_0(\Gamma), P_1(\Gamma)$ to denote the diffeomorphism type of these manifolds, these depends just on the isomorphism class of $\Gamma$.
		With an abuse of language we will also write $N\simeq P_i(\Gamma)$ instead of $N\in P_i(\Gamma)$.

	$P_0(\Gamma)$ is said to be a \emph{symmetric protocork} if $\Gamma$ is a symmetric 	valid graph. In this case $P_1(\Gamma)$ is diffeomorphic to $P_0(\Gamma)$  as will follow from  
	Proposition~\ref{prop:existenceSymmetry}.
	Notice that $P_1(\Gamma,\Realiz)$ is also diffeomorphic to the protocork associated to $\overline{\Gamma}$ with the obvious reflected realization $\overline{\Realiz}$, this is why it is called reflection of $P_0(\Gamma, \Realiz)$.

	\subsection{Handle decomposition of protocorks.}\label{subsec:KirbyDiagrProtocorks}  
	\begin{plan}We will describe  a handle decomposition of $P_{1/2}(\Gamma), P_{0}(\Gamma)$ and $P_{1}(\Gamma)$ that will be used in the following sections. This decomposition will be given  in terms of  Kirby diagrams (\cite{KirbyCalculus}, also see  \cite[Section 5.4]{GompfStipsicz} for the dotted circle notation originally introduced by Akbulut \cite{Akbulut1977}). 	
	\end{plan}
	
	If $\Gamma$ is not connected, then $P_{t}(\Gamma)$, $t\in \{0,1/2,1\}$ is diffeomorphic to  the  boundary connected sum $  P_{t}(\Gamma_1)\natural \dots \natural  P_t(\Gamma_{|\pi_0(\Gamma)|})$ where $\Gamma_i$, $i=1,\dots, |\pi_0(\Gamma)|$ are the connected components of $\Gamma$. 
	 The Kirby diagram of a boundary connected sum is obtained by drawing the two diagrams next to each other, therefore we will assume $\Gamma$ is connected with sphere-number $n$.

	\paragraph{Kirby diagram for $P_{1/2}(\Gamma)$.} An algorithm to draw a  Kirby diagram for a plumbing is given in   \cite[Section 6.1]{GompfStipsicz}, we will apply it to the specific case of protocork plumbing graphs. 
	 Fix an embedding of $\Gamma$ in $\R^3$ where all the vertices and a neighbourhood of the endpoints of each edge lie in $\{0\}\times \R^2$.
	  Notice that protocork plumbing graphs do not have to be planar in general. 
	  Since $\Gamma$ is connected we can choose a  spanning tree $T\subset \Gamma$.
	  For each vertex $v$, we remove a small ball centered at $v$ from  $\Gamma\subset \R^3$ obtaining $\Gamma'$. 
	  Notice that this will also remove a portion of the edges meeting $v$, 	  we introduce a 0-framed circle $K_v$ in $\{0\}\times \R^2$ meeting $\Gamma'$ along these edges and fix an orientation for $K_v$ see \autoref{figure:diagramAlgo1} for an example.
	  
	  	Now for each edge not in $T$, add a dotted circle, meridional to that edge.
	Next we  replace each positive (negative) edge of the graph  with a positive (negative) clasp between the circles incident to that edge as in  \autoref{figure:clasps}. 
	Notice that  this step depends on the orientation of the knots.
	The clasp follows the path of the edge, thus  in the end, the clasps relative to edges not in $T$ will pass through a dotted circle.
	In this way we obtained a Kirby diagram with $2n$ 0-framed unknots and, denoting by $E$ the multiset of edges,  $k = |E|- |T| = |E|-(2n-1)$ dotted circles.
	See 	\autoref{figure:variousP} for an example, recall that by convention we draw the $A$-spheres on the left and the $B$-spheres on the right.
	
	\begin{figure}
	 \begin{center}
		\includegraphics[scale=0.5]{./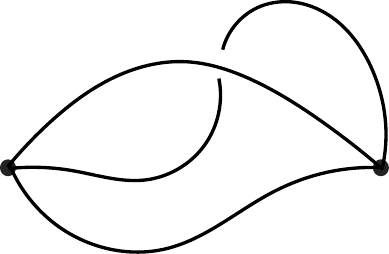}\hspace{1cm}
		\includegraphics[scale=0.5]{./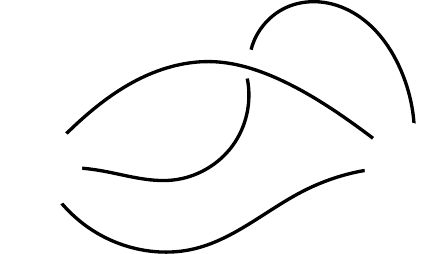}\hspace{1cm}
		\includegraphics[scale=0.5]{./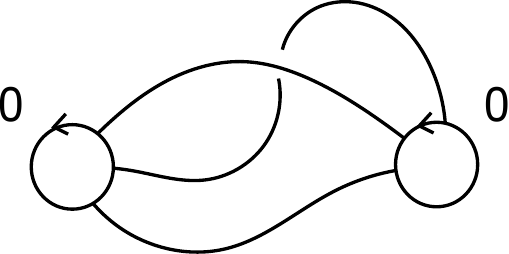}	 
	 \end{center}

	\caption{\label{figure:diagramAlgo1} From the left: embedding of $\Gamma$, the result of removal of small balls centered at the vertices  $\Gamma'$, introduction of 0-framed, oriented circles.}
		\end{figure}

\paragraph{Kirby diagram for $P_{0}(\Gamma)$ and $P_1(\Gamma)$.}	Let $K$ be a 0-framed unknot in a Kirby diagram and let $D$ be its spanning disk.
Assume that $D$ does not meet any dotted circles. Then in the interior of the 4-manifold given by the Kirby diagram, lies a  \emph{framed} sphere $S$  obtained by capping $D$ with the core of the 2-handle associated to $K$.  
In this case it can be shown \cite[Section 5.4]{GompfStipsicz} that a Kirby diagram for the surgery along $S$ is obtained by replacing the 0-framed  knot $K$ with a  dotted circle (isotopic to $K$).
Consequently, to draw a diagram for  $P_0(\Gamma)$,  we  exchange each 0-framed knot relative to $\vB_j$, $j=1,\dots, n$ with a dotted circle in the diagram for $P_{1/2}(\Gamma)$ described above. Similarly to draw the diagram for $P_1(\Gamma)$, we replace the knots relative to $\vA_i$ with dotted circles. 
See \autoref{figure:variousP} for an example.
Even though we will not make use of this fact, we mention that, as it is clear from the Kirby diagram, protocorks are diffeomorphic to the complement of $b_1(P_{0}(\Gamma))$ ribbon disks in   $\mathbb{D}^4$.

			\begin{figure}[h]
\centering
\subfloat[][\label{figure:clasps:a}]
   {\includegraphics[width=.30\columnwidth]{./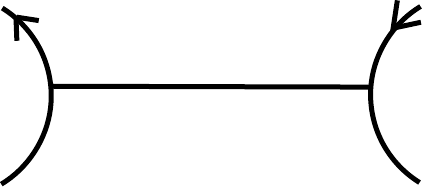}} \quad\quad\quad
\subfloat[][\label{figure:clasps:b}]
   {\includegraphics[width=.30\columnwidth]{./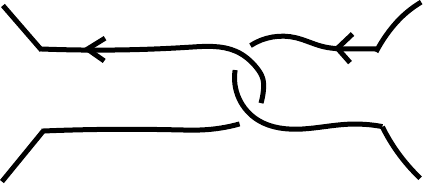}} \\
\subfloat[][\label{figure:clasps:c}]
   {\includegraphics[width=.30\columnwidth]{./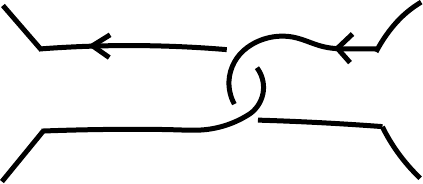}} \quad\quad \quad
\subfloat[][\label{figure:clasps:d}]
   {\includegraphics[width=.30\columnwidth]{./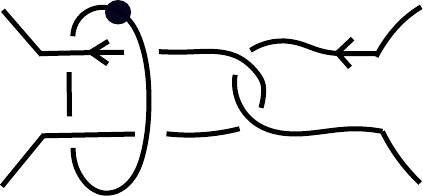}}\\
\caption{\label{figure:clasps} Figure~\ref{figure:clasps:a}) two oriented circles joined  by an edge. Figure~\ref{figure:clasps:b}) replacement of a positive edge in the spanning tree $T$ with a clasp. Figure~\ref{figure:clasps:c}) replacement of a negative edge in $T$ with a clasp. Figure~\ref{figure:clasps:d}) replacement of a positive edge \emph{not} in $T$.}
\end{figure}
%	\clearpage	
\begin{figure}[h]
\centering
\subfloat[][\label{figure:variousP:a}]
   {\includegraphics[width=.15\columnwidth]{./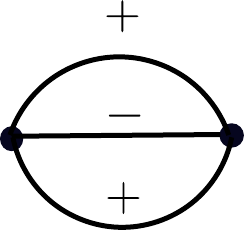}} \quad \quad \quad\quad  \quad\quad  \quad\quad  \quad\quad  \quad\quad  
\subfloat[][\label{figure:variousP:b}]
   {\includegraphics[width=.35\columnwidth]{./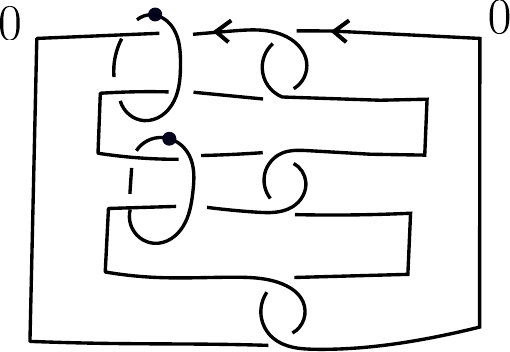}} \\
\subfloat[][\label{figure:variousP:c}]
   {\includegraphics[width=.35\columnwidth]{./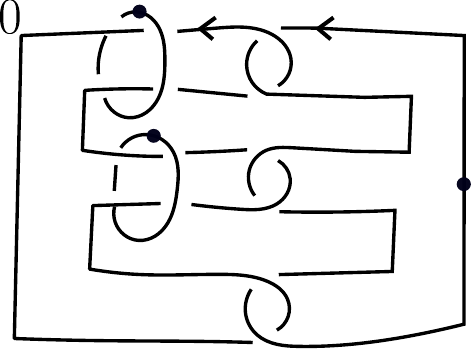}}\quad \quad \quad\quad  
\subfloat[][\label{figure:variousP:d}]
   {\includegraphics[width=.35\columnwidth]{./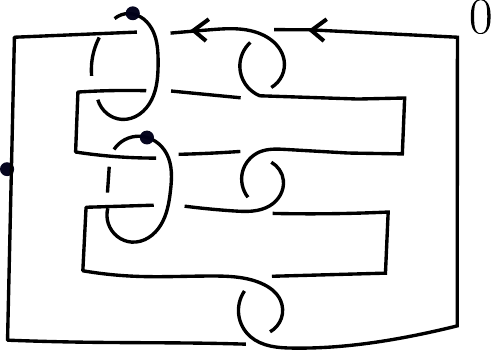}}
\caption{\label{figure:variousP}  A protocork plumbing graph (Figure~\ref{figure:variousP:a}),  its plumbing (Figure~\ref{figure:variousP:b}), its protocork (Figure~\ref{figure:variousP:c}) and the reflection of the protocork (Figure~\ref{figure:variousP:d}).}
\end{figure}
%\clearpage

	%%%%%%%%%%%%%%%%%%%	%%%%%%%%%%%%%%%%%%%	%%%%%%%%%%%%%%%%%%%	%%%%%%%%%%%%%%%%%%%	%%%%%%%%%%%%%%%%%%%	%%%%%%%%%%%%%%%%%%%
	
	%%%%%%%%%%%%%%%%%%%	%%%%%%%%%%%%%%%%%%%	%%%%%%%%%%%%%%%%%%%	%%%%%%%%%%%%%%%%%%%	%%%%%%%%%%%%%%%%%%%	%%%%%%%%%%%%%%%%%%%
	
	%%%%%%%%%%%%%%%%%%%	%%%%%%%%%%%%%%%%%%%	%%%%%%%%%%%%%%%%%%%	%%%%%%%%%%%%%%%%%%%	%%%%%%%%%%%%%%%%%%%	%%%%%%%%%%%%%%%%%%%

	%%%%%%%%%%%%%%%%%%%	%%%%%%%%%%%%%%%%%%%	%%%%%%%%%%%%%%%%%%%	%%%%%%%%%%%%%%%%%%%	%%%%%%%%%%%%%%%%%%%	%%%%%%%%%%%%%%%%%%%
			
\subsection{Homological  properties of protocorks.}		\label{subsec:homological properties}
	\newcommand{\card}{k}
	The next proposition (Proposition~\ref{propProtocork}) summarizes the basic homological properties of protocorks and their boundary that we will need in the second part of the article. Some of these are used also in \cite{MorganSzabo99}, without a proof. 
	All the homologies below use $\Z$-coefficients when not specified otherwise.

	\begin{proposition}\label{propProtocork}Let $\Gamma$ be a connected protocork plumbing graph with sphere-number $n$ and multiset of edges $E$. 
	Let $N $ be a protocork associated to $\Gamma$ and denote by $Y := \partial N$ its oriented boundary. 
	Let  $X $ be a a closed, oriented smooth 4-manifold such that $X = M\bigcup_Y N$  where $M$ is a compact, oriented 4-manifold with $\partial M = -Y$. Then:	
		\begin{enumerate}
		\item  $H_1(N)\simeq \Z^{|E|-2n+1}$, $H_2(N) \simeq H_3(N)\simeq  (0)$, in particular $b^+(N) = 0$.
		\item $H_1(Y)$ is free and generated by cycles in the graph $\Gamma$, thus $b_1(Y) = |E|-2n+1$.
		 \label{propProtocork:isoH1}The inclusion map $i:Y=\partial N \to N$ induces  isomorphisms  $H_1(Y)\to H_1(N)$ and $H^1(N)\to H^1(Y)$.
		 \item  We can find a basis of  $H_2(Y)$ given by plumbing tori and  $H^1(Y)\otimes H^1(Y)\to H^2(Y) = \alpha \otimes \beta \to \alpha \cup \beta$ is the zero map. A fortiori, the triple cup product of $Y$ is trivial.
		\item $b^+(M) = b^+(X)$.

		\end{enumerate}
		
		\paragraph{} If in addition $H_1(X) = 0$, then
		\begin{enumerate}[resume]
		\item $H_1(M) =0$   in particular $H_1(Y)\to H_1(M)$ is trivial.
		\item $H^2(M) \to H^2(Y)$ is  surjective.
		\item $H^2(M) \simeq H^2(X) \oplus H^2(Y)$ not canonically. 
		\item Denote by $Spin^c(W)$ the isomorphism classes of \spinc structures for a manifold $W$. The restriction map 
				\begin{equation}\label{restrictionmap}
						r: Spin^c(X)\to Spin^c(M)\times Spin^c(N) = [\sstruc] \mapsto ([\sstruc|_M], [\sstruc|_N])
				\end{equation}		 
				is an injection. The only \spinc structure on $N$ is the trivial one up to isomorphism, hence $r$ establishes a correspondence between $Spin^c(X)$ and $Spin^c(M)$.
		\end{enumerate}
	\end{proposition}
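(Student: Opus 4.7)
The plan is to build everything from the Kirby diagram of $N = P_0(\Gamma)$ developed in Subsection~\ref{subsec:KirbyDiagrProtocorks}, combined with Mayer-Vietoris, Lefschetz duality, and universal coefficients. The handle decomposition of $N$ reads: one 0-handle, $n$ 1-handles from the surgered $B$-vertices, $k := |E| - 2n + 1$ further 1-handles from non-tree edges (relative to a fixed spanning tree $T \subset \Gamma$), and $n$ 2-handles from the $A$-vertices. Each $A$-vertex 2-handle $\vA_i$ links the $B$-1-handle $\vB_j$ with multiplicity $a_\Gamma(\vA_i,\vB_j)=\delta_{ij}$, so the $B$-block of $\partial_2$ is the identity; hence $\partial_2$ is injective with image a rank-$n$ direct summand of $C_1$, giving $H_2(N)=H_3(N)=0$, $H_1(N)\cong\Z^k$, and $b^+(N)=0$. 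Lefschetz duality yields $H_k(N,Y)\cong H^{4-k}(N)$; the long exact sequence of $(N,Y)$ then gives the isomorphism $H_1(Y)\xrightarrow{\sim}H_1(N)\cong\Z^k$ and $H_2(Y)\cong H_3(N,Y)\cong\Z^k$, and the dual $H^1(N)\to H^1(Y)$ is an isomorphism by universal coefficients.

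For the plumbing-tori statement in item~3, the $k$ non-tree edges of $T$ yield fundamental cycles forming a basis of $H_1(\Gamma)\cong H_1(Y)$. A loop representing the fundamental cycle through $\varepsilon'$ meets $T_{\varepsilon''}$ transversely in $\pm\delta_{\varepsilon',\varepsilon''}$ points (it crosses each plumbing torus precisely when it traverses the corresponding edge), so the intersection pairing between the $k$ non-tree plumbing tori and this basis of $H_1(Y)$ is $\pm I$, proving the tori form a basis of $H_2(Y)$. Cup product vanishing follows from disjointness: $\mathrm{PD}(\alpha\cup\beta)=\mathrm{PD}(\alpha)\cap\mathrm{PD}(\beta)=0$ in $H_1(Y)$ for $\alpha,\beta\in H^1(Y)$, since the PDs are supported on pairwise disjoint plumbing tori. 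For item~4, Mayer-Vietoris combined with $H_2(N)=0$ and the injectivity of $H_1(Y)\to H_1(M)\oplus H_1(N)$ (forced by the iso $H_1(Y)\to H_1(N)$) gives a surjection $H_2(M)\twoheadrightarrow H_2(X)$ whose kernel is the image of $H_2(Y)\to H_2(M)$; pushing such classes into a collar of $Y$ shows this image lies in the radical of the intersection form on $H_2(M;\Q)$, and since $X$'s form is nondegenerate the image must equal the radical, so $b^+(M)=b^+(X)$.

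The items under the hypothesis $H_1(X)=0$ are now quick. Mayer-Vietoris in $H_1$ combined with the isomorphism $H_1(Y)\to H_1(N)$ forces $H_1(M)=0$ (item~5). Item~6 is the long exact sequence of $(M,Y)$ with $H^3(M,Y)\cong H_1(M)=0$. For item~7, cohomological Mayer-Vietoris collapses via $H^2(N)=0$, $H^3(X)\cong H_1(X)=0$, and the isomorphism $H^1(M)\oplus H^1(N)\to H^1(Y)$ (from items~2 and~5) to a short exact sequence $0\to H^2(X)\to H^2(M)\to H^2(Y)\to 0$, which splits since $H^2(Y)\cong H_1(Y)$ is free. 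Item~8 is formal: $Spin^c(N)$ is a nonempty torsor over $H^2(N)=0$ and hence a single class, and $r$ is equivariant with respect to the injection $H^2(X)\hookrightarrow H^2(M)\oplus H^2(N)$ from item~7. I expect the main obstacle to be item~3: Lefschetz duality only pins down the rank of $H_2(Y)$, and singling out the non-tree plumbing tori as an honest basis requires the geometric computation of their intersection pairing with fundamental cycles of $\Gamma$.
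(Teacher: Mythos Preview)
Your proof is correct and follows essentially the same route as the paper. The only noteworthy differences are local: for item~2 you extract $H_1(Y)\xrightarrow{\sim}H_1(N)$ from Lefschetz duality and the long exact sequence of $(N,Y)$, whereas the paper computes $H_1(Y)$ directly from the surgery presentation via linking numbers; and for item~4 you argue directly that the kernel of $H_2(M;\Q)\to H_2(X;\Q)$ equals the radical of the intersection form, whereas the paper invokes the additivity formula $b^+(X)=b^+(M)+b^+(N)+\operatorname{rank}\partial$ from \cite[pg.~76]{KM}. Both of your substitutions are arguably cleaner and more self-contained. Everything else---the handle chain complex computation for item~1, the plumbing-tori duality for item~3, and the Mayer--Vietoris arguments for items~5--8---matches the paper.
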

	\begin{proof}
	\begin{enumerate}
	
	 \item
	 	We consider the Kirby diagram  of  \autoref{subsec:KirbyDiagrProtocorks}. Generators of  $H_1(N)$ are identified with meridians of  the dotted circles. Relations are given by the linking number of the 0-framed knots with the dotted circles. The latter is governed by $a_\Gamma(\vA_i,\vB_j) = \delta_{ij}$ which  forces all the generators that are not induced by the edges to be trivial. 
	This shows that $H_1(N) \simeq \Z^{|E|-2n+1 }$. 
	Inspecting the chain complex induced by the handle decomposition we see that the boundary operator $\partial_2: C_2(N)\to C_1(N)$ is injective, indeed it sends the generator induced by $\vA_i$ to the generator induced by $\vB_i$. Hence $H_2(N) = (0)$. Moreover 
	there are no 3-handles, therefore $H_3(N) = (0)$.

	\item  	 The Kirby diagram of $N$ induces a surgery presentation for $Y$ where all the knots are $0$-framed.
	Denote the meridian of the knots induced by $\vA_i, \vB_j, e$ for $i,j=1,\dots, n, e \in E\setminus T$ respectively by $\mu_{A_i}, \mu_{B_j}, \mu_e$.
	From the diagram we  see that  the linking matrix is given by $\ell k(\mu_{A_i}, \mu_{B_j}) = a_\Gamma(\vA_i,\vB_j) = \delta_{ij}$ and zero otherwise.
	By  \cite[Proposition 5.3.11]{GompfStipsicz}, $H_1(Y)$ is generated by the meridians of the knots 
	quotiented out by  relations of the form $0 = p_i \mu_i + \sum_j  q_j \ell k(\mu_i, \mu_j) \mu_j$ where $p_i/q_i$ is the framing coefficient of the meridian $\mu_i$.
	In our case $p_i/q_i = 0$ and the special form  of the linking matrix implies that $\mu_{A_i} = 0 = \mu_{B_j}$  while the $\mu_e$ are unrelated.
	This with the previous item shows that the inclusion map $H_1(Y)\to H_1(N)$ is an isomorphism from which follows also the assertion in cohomology.

		\item Notice that the  same argument used to compute $H_1(N)$, applies to $P_{1/2}(\Gamma)$ and allows us to identify (not canonically) the generators of $H_1(P_{1/2}(\Gamma))$ (and thus of $H_1(Y)$) with cycles in $\Gamma$ thanks to \ref{propProtocork:isoH1}.
		If we consider a basis for $H_1(Y)$ made up of these loops, then we can use as geometric duals 
				the plumbing tori  \cite{IntersectionRingOfGraphManifolds_Doig_Horn}.
				The plumbing tori do not intersect, hence the map $H_2(Y)\otimes H_2(Y)\to H_1(Y) = [S_1]\otimes [S_2]\mapsto [S_1\cap S_2]$ is trivial. 
This map is Poincar\'e dual to $H^1(Y)\otimes H^1(Y)\to H^2(Y) = \alpha\otimes \beta\mapsto \alpha \cup \beta$.

	\item   $b^+ (X)  = b^+(M) + b^+(N) + \mathrm{rank}\{\partial:H_2(X)\to H_1(Y)\}$  \cite[pg. 76]{KM}  where $\partial$ is the connecting morphism in the 
			Mayer-Vietoris long exact sequence. However $\mathrm{rank}{(\partial)} = \dim \mathrm{Ker}\{i_1+i_2: H_1(Y)\to H_1(M)\oplus H_1(N)\} = 0$ since $i_2:H_1(Y)\to H_1(N)$ is an isomorphism.
			\item Since $H_1(X) = 0$, continuing the long exact sequence we have $0\to H_1(Y)\to H_1(M)\oplus H_1(N)\to 0 $, since $H_1(Y)\simeq H_1(N)$ then $H_1(M) = 0$.
			Notice also that this implies that $H_1(Y)\overset{i_1}{\to} H_1(M)$ is trivial.
			\item $H^2(M)\to H^2(Y)$ is surjective  indeed  the triviality of $H_1(Y)\to H_1(M)$ implies by Poincar\'e duality that  $H^2(Y)\to H^3(M, \partial M) $ is trivial and that map fits into the long exact sequence of the pair $(M,\partial M)$: 
			$\dots \to H^2(M)\to H^2(Y)\to H^3(M, \partial M)\to \dots $. 
			\item $H^2(M) \simeq H^2(X) \oplus H^2(Y)$ not canonically.  Mayer-Vietoris implies that $0\to H^2(X) \to H^2(M)\oplus 0 \to H^2(Y)\to 0$ (the surjectivity comes from the above item while the injectivity from the surjectivity of $H^1(N) \overset{\sim}{\to} H^1(Y)$.
			\item Since $H^1(X) = 0$, \spinc structures over $X$ are classified by the first Chern class of the determinant bundle. The claim follows from the injectivity of the restriction map $H^2(X)\to H^2(M)\oplus H^2(N)$
			which can be seen  from the exact sequence $H^1(M)\oplus H^1(N)\overset{\sim}{\to} H^1(Y) \to H^2(X)\to H^2(M)\oplus H^2(N)$ since $H^1(N)\overset{\sim}{\to} H^1(Y)$ is an isomorphism.
			
			\end{enumerate}	
	\end{proof}

\subsection{The involutions $\tau$ and $\rho$.}\label{subsec:involutions}
	\begin{plan}We will show that if $(\Gamma, \Realiz)$ satisfies some assumptions we can construct orientation reversing involutions $\hat \rho_B,\hat \rho_A: P_{1/2}(\Gamma, \Realiz)\to P_{1/2}(\Gamma,\Realiz)$ and
	$\rho_B = \hat\rho_B|_{\partial P_{1/2}(\Gamma, \Realiz)}$, $\rho_A = \hat\rho_A|_{\partial P_{1/2}(\Gamma, \Realiz)}$.
	Informally, $\hat \rho_B$ ($\hat \rho_A$)  reflects  the $B$-spheres ($A$-spheres) while acting trivially on their normal disk and fixes the $A$-spheres ($B$-spheres) while reflecting their normal disks.
	Moreover if $\Gamma$ and $ \Realiz$ are  symmetric (Definition~\eqref{def:SymData}) we will construct an orientation preserving involution 
	$\hat\tau: P_{1/2}(\Gamma, \Realiz)\to P_{1/2}(\Gamma, \Realiz)$ restricting to some involution $\tau$ on the boundary. $\hat \tau$ also induces a  diffeomorphism $\hat \tau:P_{0}(\Gamma, \Realiz)\to P_1(\Gamma,\Realiz)$
	between the protocork and its reflection restricting to $\tau$ on the boundary.
	Informally $\hat \tau$ exchanges the $A$-spheres with the $B$-spheres.
	We remark that these diffeomorphisms  \emph{depend on the specific realization}, even though it is not reflected in the notation.
	We prove some properties about these involutions in Proposition~\ref{prop:existenceSymmetry}. The fact that $\rho_A,\rho_B$ act trivially on the first cohomology group will allow us to compute the index
	of the relevant operators later (see Proposition~\ref{prop:DimOfModuli}).
	The involution $\tau$ is relevant to our theory because plays a role analogous to the twisting map of corks in a sense that will be made precise in  in Subsection~\ref{subsec:ProtExotic4man}. For the sake of this subsection we limit ourself to 
	 Proposition~\ref{prop:tauNotExtends} which says that in analogy to what happens to corks' involutions, $\tau$ does not extend to the protocork (unless the protocork is trivial). 
	\end{plan}
	
\newcommand{\rD}{r_{\D^2}}
	\paragraph{Assumptions on $(\Gamma,\Realiz)$.}
	Denote by $S_0$  the one-point compactification of $\R^2$ equipped with the smooth structure  coming from the homeomorphism with $\SS^2$  induced by the stereographic projection $ \SS^2\setminus \{(0,0,1)\}\to \R^2$.
	
	Let $\Gamma$ be a protocork plumbing graph. We consider realization data $\Realiz$ where the model surfaces $S_v\simeq \SS^2$  are given by  $S_0$, the bundles $N_v $ are the trivial $\D^2$ bundles over $S_0$ and the embedding
	of the disks $\varphi_{v}$ sends $\bigsqcup_{i=1}^{d_v} \D^2(2)$ to the disks  of  radius $2$ centered in $(5k,0)\in \R^2$, $k=0,\dots, d_v-1$ in the obvious way using translations.
	Let $r:S_0\to S_0$ be the self-diffeomorphism induced by the reflection with respect to $\R\times \{0\}$ and denote by $\rD:\D^2\to \D^2$ the restriction of $r$ to $\D^2 = \overline B(0,1)\subset \R^2$.

	We also make an assumption about the smoothing datum, item \ref{itemSmoothingRealizationDatumDef}  in the definition of realization datum.
	For each vertex  $v$, and $i=1, \dots, d_v$, choose
	\begin{equation}
		c_{v,i}: \torus^2\times (-1,1)\times [0,1)\to S_v\times \D^2,
	\end{equation}
		  a collar of the torus $\varphi_v(\partial \D(1) \times \{i\})\times \partial \D^2$ obtained using the natural polar coordinates
	so that the $(-1,1)$ factor comes from the radial coordinate of $\varphi_v(\partial \D(1)\times\{i\})\subset \varphi_v(\D(2)\times \{i\})$ and the $[0,1)$ factor comes from the radial coordinate of $\partial \D^2\subset \D^2$. These collars induce a smooth structure on $P_{1/2}(\Gamma, \Realiz)$ as explained in the paragraph below about the smoothness of $\hat\rho_B$, indeed 
	given the collars we can introduce corners along the tori and perform gluing in a unique way \cite[Section 2.6, 2.7]{WallBook}.
	We will assume that the smoothings of the manifolds $M_\pm$ prescribed by the realization datum $\Realiz$ arise from a choice of the collars $c_{v,i}$s as above.

	\paragraph{Definition of $\hat\rho_B$ and $\rho_B$.}
	We define an  auxiliary involution $\psi$ on the collection of bundles $E(\Gamma,\Realiz)$,
	\begin{equation}\label{eq:defOfPsi}
		\psi: E(\Gamma,\Realiz) \to E(\Gamma, \Realiz)
	\end{equation}
	by defining
	\begin{equation}\label{eq:psi1action}
		\psi(b,f,\vA_i) = (b, \rD (f), \vA_i) \quad \quad \text{ for } (b,f,\vA_i)\in S_0\times \D^2\times\{\vA_i\}
	\end{equation}
	and 
	\begin{equation}\label{eq:psi2action}
		\psi(b,f,\vB_j) = (r(b),  f, \vB_j)  \quad \quad \text{ for } (b,f,\vB_j)\in S_0\times \D^2\times\{\vB_j\}.
	\end{equation} So in particular $\psi$ fixes the $A$-spheres.
	It is not difficult to check  that $\psi$ passes to the quotient $P_{1/2}(\Gamma, \Realiz) $  defining an involution $\hat \rho_B:P_{1/2}(\Gamma, \Realiz) \to P_{1/2}(\Gamma, \Realiz) $. We denote the induced involution of the boundary as $\rho_B$.
	 			
	 	\paragraph{Smoothness of $\hat{\rho}_B$.} 	 	
	 	The map $\hat{\rho}_B$ is clearly smooth outside of the plumbing tori, proving that is smooth over the tori is not immediate because of the smoothing of the corners 
	 	used to define the plumbing.  	To prove it we argue as follows.

	 	The smooth manifold $ P_{1/2}(\Gamma, \Realiz) $ can be described as the manifold obtained by the following procedure. Firstly, for every vertex $\vB_i$  in the $B$-part
		set $\tilde N_{\vB_i}:=\tilde S_0 \times \D^2$ where
		\begin{equation}
			\tilde S_0 = S_0 \setminus \varphi_{\vB_i}(\bigsqcup_{i=1}^{d_v} \mathrm{int}(\D^2(1))).
		\end{equation}
		
		Secondly,  for every vertex $\vA_i$  in the $A$-part, let $\hat N_{\vA_i}$ be the manifold obtained from $N_{\vA_i}$ by introducing a corner \cite[pg. 61]{WallBook}
		along the tori $ \varphi_{\vA_i}(\bigsqcup_{j=1}^{d_{\vA_i}}\partial \D^2(1))\times \partial \D^2$. In general this operation yields a structure of manifold with corners only up to diffeomorphism
		but since we have chosen collars $c_{\vA_i,j}$ of the tori, we obtain a precise structure.
		
		Thirdly, glue $E_B:= \bigcup_{i=1}^n \tilde N_{\vB_i}$ to $E_A:=\bigcup_{i=1}^n \hat N_{\vA_i}$ as prescribed by the edges  using again the collars  $c_{v,j}$ to obtain a  smooth structure over the glued region \cite[Section 2.7]{WallBook}. The result is a smooth manifold with a natural diffeomorphism to
		$ P_{1/2}(\Gamma, \Realiz) $.
			
		We can think of the map   $\psi$ as a map $E_B\bigsqcup E_A\to E_B\bigsqcup E_A$ preserving  $E_B$ and $E_A$.	A priori, the smoothness of the action $\psi$ over a neighbourhood of a plumbing torus in $E_A$ is not clear because we introduced corners in passing from $N_{\vA_i}$ to  $\hat N_{\vA_i}$.
		However,  using our choice of  collars $c_{v,j}$, we can see that the action of $\psi$ over such neighbourhood
		is conjugated to 
		\begin{spliteq}
			{\psi'\times id}:\torus^2\times [0,1)^2 \longrightarrow  \torus^2\times [0,1)^2
		\end{spliteq}
		where $\psi':\torus^2\to \torus^2$, $\psi(e^{i\theta_1}, e^{i\theta_2}) = (e^{i\theta_1}, e^{-i\theta_2})$; in particular is smooth and restricts to the identity on the normal cones.
		Similarly the action of $\psi$ on a neighbourhood of a plumbing torus in $E_B$  is conjugated to
		\begin{spliteq}
			{\psi''\times id}:\torus^2\times [0,1)^2 \longrightarrow  \torus^2\times [0,1)^2
		\end{spliteq}
		where $\psi(e^{i\theta_1}, e^{i\theta_2}) = (e^{-i\theta_1}, e^{\theta_2})$.
		These two maps glue  proving smoothness over a neighbourhood of the plumbing torus $\torus^2 \times (-1,1)\times [0,1)\hookrightarrow P_{1/2}(\Gamma, \Realiz)$.

		\paragraph{Definition of $\hat\rho_A$ and $\rho_A$.} To define $\hat \rho_A$ and $\rho_A$ we follow the same recipe, but exchanging the role of $A$ and $B$,  so that $\hat \rho_A$ fixes the $B$ spheres and $\hat \rho_B $ fixes the $A$ spheres.
	
	\begin{definition}[Symmetric realization datum.] \label{def:SymData}A realization  datum $\Realiz$ for a protocork plumbing graph $\Gamma$ is said to be \emph{symmetric} if  satisfies the hypothesis stated for $\hat \rho_B$ and in addition satisfies the following. For any vertex $\mathbf{v}$ and $n\in \{1,\dots, d_{\mathbf v}\}$,   let   $D^{\mathbf{v}}_n\subset S_{0}\times\{\mathbf{v}\} $ be the image of the $n$-th disk $\D^2\times\{n\}\subset \bigsqcup_{i=1}^n \D^2$ under  $\varphi_{\mathbf v}$,  then  $\Realiz$ satisfies
	\begin{enumerate}
		\item if an edge of the form $\varepsilon \simeq (i,i,\pm)$ identifies $D_{\varepsilon,1}$ with $D^{\vA_i}_n$ and $D_{\varepsilon,2}$ with $D^{\vB_i}_k$ then $k=n$ and
		\item \label{defSymDataItem2} if an edge of the form $\varepsilon \simeq (i,j,\pm)$ with $i\neq j$ identifies $D_{\varepsilon,1}$ with $D^{\vA_i}_n$ and $D_{\varepsilon,2} $ with $D^{\vB_j}_k$  then exists an edge $\varepsilon'= (j,i, \pm)$ identifying $D^{\vA_j}_k $ with $D^{\vB_i}_n$.
		
		\item For  any pair of edges $\varepsilon, \varepsilon'$ as in item \ref{defSymDataItem2}, the smoothings of $M_\pm$ (see item \ref{itemSmoothingRealizationDatumDef} in the definition of realization datum) are the same.
	\end{enumerate}	
	\end{definition}
	Notice that a protocork plumbing graph  admits this kind of realization data if and only if is symmetric.
	
	\paragraph{Definition of $\hat \tau$ and $\tau$.} Now suppose that $\Gamma$ is symmetric and let $\Realiz $ be a symmetric realization datum for $\Gamma$.
		We define an auxiliary involution on the collection of bundles $E(\Gamma,\Realiz)$,
	\begin{equation}\label{eq:defOfPsi2}
		\psi: E(\Gamma,\Realiz) \to E(\Gamma, \Realiz)
	\end{equation}
	by setting
	
	\begin{equation}
	\psi(b,f,\vA_i) = (b, f, \vB_i)   \quad \quad \text{ for } (b,f,\vA_i)\in S_0\times \D^2\times\{\vA_i\}
	\end{equation}
	and 
	\begin{equation}
		\psi(b,f,\vB_j) = (b,  f, \vA_j)  \quad \quad \text{ for } (b,f,\vB_j)\in S_0\times \D^2\times\{\vB_j\}.
	\end{equation}
	
	 So in particular $\psi$ exchanges the
	$A$-spheres with the $B$-spheres.
	It is not difficult to check, using our assumptions on $\Realiz$, that 	 $\psi$ passes to the quotient $P_{1/2}(\Gamma, \Realiz) $  defining a smooth involution $\hat \tau$. We will denote the induced involution on the boundary as $\tau$.

	\paragraph{}
	Since  $\hat \tau$  swaps the tubular neighbourhoods of the  $A$-spheres and $B$-spheres respecting their framings and the sugery operation modifies the manifold only in that neighbourhood, the involution $\hat 
\tau$  gives also a diffeomorphism $\hat \tau: P_{0}(\Gamma,\Realiz)\to P_{1}(\Gamma,\Realiz) $ that restricts to $\tau$ on the boundary.

	\begin{proposition}\label{prop:existenceSymmetry}
	Let $\Gamma$ be a protocork plumbing graph of sphere-number $n$ and $\Realiz$ a realization datum for $\Gamma$ satisfying the hypothesis stated at the beginning of this subsection. 
	Set  $Y := \partial P_{1/2}(\Gamma, \Realiz)$. Then 
	\begin{enumerate}
	\item $\hat{\rho}_B, \hat \rho_A:P_{1/2}(\Gamma, \Realiz)\to P_{1/2}(\Gamma, \Realiz) $  and $\rho_B, \rho_A:Y\to Y $ are orientation \emph{reversing},
	\item $\rho_B$ and $ \rho_A $ fix $H^1(Y; \Z)$ and commute.
	\end{enumerate}
	
	If in addition $\Gamma$ and $\Realiz$ are symmetric, then 
	\begin{enumerate}[resume]
	\item $\hat \tau:P_{1/2}(\Gamma, \Realiz)\to P_{1/2}(\Gamma, \Realiz)$, $\hat \tau: P_{0}(\Gamma, \Realiz)\to P_{1}(\Gamma, \Realiz)$ and $\tau: Y\to Y$ are orientation preserving,
	\item \label{prop:tauNotActTriviallyH1} If $\Gamma$ is symmetric and non-trivial, i.e.  exists $i\in \{1,\dots,n\}$ such that  $a_\Gamma(\vA_i,\vB_i) \not = r_\Gamma(\vA_i,\vB_i)$, then  $\tau$  acts non-trivially on $H^1(Y;\Z)$.  
	\item $\rho_B, \rho_A$ and $\tau$ generate an action of $D_8$, the dihedral group of 8-elements.
	\end{enumerate}
	\end{proposition}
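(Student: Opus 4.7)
For items 1 and 3 (orientation), the plan is a local analysis. At each $A$-vertex the involution $\hat\rho_B$ acts on the trivial model bundle $S_0\times\D^2$ by the fiber reflection $(b,f)\mapsto(b,\rD(f))$, and at each $B$-vertex by the base reflection $(b,f)\mapsto(r(b),f)$; both formulas are orientation-reversing on the $4$-dimensional total space, so globally $\hat\rho_B$ and $\rho_B$ reverse orientation. The analogous analysis handles $\hat\rho_A,\rho_A$. In contrast, $\hat\tau$ is locally the identification of trivial bundles $(b,f,\vA_i)\mapsto(b,f,\vB_i)$, which is orientation-preserving, and this carries over to the induced diffeomorphism $P_{0}(\Gamma,\Realiz)\to P_{1}(\Gamma,\Realiz)$ since the surgery procedure is local.

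For item 2, the plan is to leverage Proposition~\ref{propProtocork}, which identifies $H_1(Y;\Z)\simeq\Z^{|E|-2n+1}$ as freely generated by cycles in $\Gamma$. Since neither $\hat\rho_A$ nor $\hat\rho_B$ permutes vertices or edges of $\Gamma$, each such cycle is sent to itself as a chain, and the induced action on $H_1(Y;\Z)$, and dually on $H^1(Y;\Z)$, is the identity. The commutativity of $\rho_A$ and $\rho_B$ reduces to a vertex-by-vertex check: on any $N_v$, one involution acts on the base factor and the other on the fiber factor, and these commute.

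Item 4 is the main obstacle. Under the hypothesis, some diagonal vertex pair has both a positive edge $\varepsilon_+\simeq(i,i,+)$ and a negative edge $\varepsilon_-\simeq(i,i,-)$ (since $a_\Gamma(\vA_i,\vB_i)=1$ while $r_\Gamma(\vA_i,\vB_i)>1$). The plan is to consider the $1$-chain $\varepsilon_+-\varepsilon_-$, with both edges oriented from $\vA_i$ to $\vB_i$; this is a nonzero cycle in $\Gamma$ and represents a nonzero class $[\gamma]\in H_1(Y;\Z)$ via the identification above. Because $\tau$ swaps $\vA_i$ with $\vB_i$, it reverses the chosen orientation of each diagonal edge, whence $\tau_*[\gamma]=-[\gamma]\ne[\gamma]$ (using that $H_1(Y;\Z)$ is torsion-free). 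The delicate point, and the reason this item is the hardest, is making the naturality of the chain map under $\tau$ rigorous, which requires tracking how the base-fiber swap defining $\hat\tau$ interacts with the symmetric gluing data of Definition~\ref{def:SymData}.

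For item 5, the plan is to verify the defining relations of $D_8$. The involutions $\rho_A,\rho_B,\tau$ are already shown to square to the identity, and $\rho_A,\rho_B$ commute; what remains is the conjugation relation $\tau\rho_A\tau=\rho_B$, which is a routine local check (composing the three local models on $N_{\vA_i}$ returns exactly the formula for $\rho_B$ there, and likewise on $N_{\vB_i}$). These relations present $D_8$ with $\tau\rho_A$ as the order-$4$ rotation. To confirm that the group does not collapse to a proper quotient, one distinguishes elements by their action: $\tau$ preserves orientation while $\rho_A$ reverses it, so $\tau\ne\rho_A$, and $\rho_A\rho_B$ is non-trivial since it acts at each $A$-vertex by $(b,f)\mapsto(r(b),\rD(f))$.
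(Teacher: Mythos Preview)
Your argument is correct, and items 1, 3, and 5 follow essentially the same lines as the paper (your item 5 is in fact slightly more careful: you verify that the surjection $D_8\to\langle\rho_A,\rho_B,\tau\rangle$ does not collapse by checking $\rho_A\rho_B\neq 1$, which under the identification $\rho_A\rho_B\mapsto r^2$ rules out every nontrivial normal subgroup of $D_8$; the paper leaves this implicit).

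For items 2 and 4, however, you take a genuinely different route. You work directly in $H_1$: using the identification $H_1(Y)\simeq H_1(P_{1/2})\simeq H_1(\Gamma)$, you argue that $\rho_A,\rho_B$ fix every graph cycle (since they permute no vertices or edges) and that $\tau$ reverses the orientation of each diagonal edge, hence negates the cycle $\varepsilon_+-\varepsilon_-$. The paper instead works in $H_2$ via the plumbing tori: it shows $\rho_*[\torus]=-[\torus]$ and $\tau_*[\torus]=-[\torus]$ by an explicit local computation on $\partial\D^2\times\partial\D^2$ (where $\tau$ acts as the factor swap $(b,f)\mapsto(f,b)$), and then invokes Poincar\'e duality together with the orientation behaviour from item 1 to translate to $H^1$. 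Your approach is more elementary in that it avoids Poincar\'e duality, but it puts the burden on the equivariance of the identification $H_1(Y)\simeq H_1(\Gamma)$, which you correctly flag as delicate; the paper's computation on the plumbing torus is more concrete and sidesteps that naturality issue entirely. Both are valid; the paper's is shorter once item 1 is in hand, while yours is more self-contained.
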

	
	\begin{proof}
			\begin{enumerate}
				\item The auxiliary involution $\psi$ defined in \eqref{eq:defOfPsi} is orientation reversing, indeed on a tubular neighbourhood of the  $A$-spheres acts as a product of the identity (on the base) and a reflection (on the normal fibers). Consequently $\hat \rho_B$ is orientation reversing too. The proof for $\hat \rho_A$ and the induced map on the boundary is analogous.
				\item By Proposition~\eqref{propProtocork} it is sufficient to understand the behaviour   $\rho\in  \{\rho_A,\rho_B\}$ on the plumbing tori. 
				 $\rho$ preserve each  plumbing torus $\torus$    but changes its orientation, indeed we can see from  \eqref{eq:psi1action} and \eqref{eq:psi2action} that 
				 $(e^{i\theta_b},e^{i\theta_f})\in \torus\overset{\rho}{\mapsto} (e^{i\theta_b},e^{-i\theta_f})\in \torus$, therefore $\rho_*([\torus]) = -[\torus]\in H_2(Y)$.
			Since $\rho$ is orientation reversing, we obtain by Poincar\'e duality that 			 $\rho^*: H^1(Y)\to H^1(Y)$ is the identity.
			Moreover since $H_*(Y)$ has no torsion,   $\rho^*:H^2(Y)\to H^2(Y)$ is minus the identity and $\rho_*: H_1(Y)\to H_1(Y)$ is the identity.
			\item Follows from the fact that the auxiliary involution in \eqref{eq:defOfPsi2} is orientation preserving.
			\item Since $\Gamma$ is not trivial, there is an edge $\varepsilon\simeq (i,i,+)$ that does not belong to a spanning tree of $\Gamma$.
			Hence by Proposition~\ref{propProtocork}, the plumbing torus  $\torus$ associated to $\varepsilon$ is non-trivial in $H^2(Y)$.
			On the other hand, we see from the definition of $\tau$ that  $\tau(b,f, \vA_i) = (b,f,\vB_i) \sim_\varepsilon (f,b,\vA_i) $, hence $ \tau_*[\torus] = -[\torus]$. Since $\tau$ preserves the orientation
			the thesis follows.
 
			\item An explicit computation shows that the following relations are in place: 
				\begin{spliteq}
					& \rho_A ^2 = \rho_B^2 = \tau^2 = 1\\
					& \tau^{-1} \rho_B \tau = \rho_A \\
					& \rho_A \rho_B = \rho_B \rho_A.
				\end{spliteq}
			We can therefore define a group isomorphism from $G := \langle \rho_A, \rho_B, \tau \rangle = \langle \tau \rho_A, \tau\rangle $ to 
			\begin{equation}
			 D_8  = \langle r, s \ | \ r^4 = 1, \ s^2 = 1, \ srs = r^{-1}\rangle
			\end{equation}
			by sending $\tau \rho_A \mapsto r \in D_8$ and $\tau\mapsto s\in D_8$.
			\end{enumerate}					
	\end{proof}
	Regarding item \ref{prop:tauNotActTriviallyH1} in the above proposition, we also point out that  it is not difficult to construct examples with sphere-number larger than one  where $\tau$ 
	does not even preserve the plumbing tori. 
	
	We conclude this subsection by showing that, similarly to what happens to the cork's involutions,  $\tau$ does not extend to a diffeomorphism of $P_0(\Gamma,\Realiz)$.
	\begin{lemma}\label{lemma:gammanotslice}
		Let $\Gamma$ be a protocork plumbing graph with sphere-number $n$. 
		Let $i\in \{1,\dots, n\}$ be such that $r_\Gamma(\vA_i, \vB_i)>1$
		and let $\gamma^B_i \subset \partial P_0(\Gamma)$ be a circle meridional to the sphere $B_i$ in $P_{1/2}(\Gamma)$.
		Then $\gamma^B_i$ is not slice in  $P_0(\Gamma)$, i.e.   does not bound a disk in the protocork $P_0(\Gamma)$.
	\end{lemma}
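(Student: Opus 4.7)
The plan is to show $[\gamma^B_i]\neq 1$ in $\pi_1(P_0(\Gamma))$, which forces $\gamma^B_i$ not to bound a disk. First I would read off a presentation of $\pi_1(P_0(\Gamma))$ from the Kirby diagram of Subsection~\ref{subsec:KirbyDiagrProtocorks}: the generators are the meridians $x_j$ of the dotted circles $\vB_j$ together with the meridians $y_e$ of the dotted circles $\mu_e$, one for each edge $e\in E\setminus T$ (where $T$ is a fixed spanning tree of $\Gamma$), and there is one relator $R_i$ per $A$-vertex, obtained by tracing the $0$-framed attaching circle $K_{\vA_i}$. A clasp coming from an edge $\varepsilon\simeq(i,l,\pm)$ in $T$ contributes a factor $x_l^{\pm 1}$, while a clasp coming from $\varepsilon\notin T$ contributes $y_\varepsilon x_l^{\pm 1} y_\varepsilon^{-1}$, since $K_{\vA_i}$ enters and exits the disk of $\mu_\varepsilon$ exactly once each around the clasp. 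Under this identification $\gamma^B_i$ is the generator $x_i$.

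Next I would collapse $\pi_1(P_0(\Gamma))$ to a one-relator group. Let $E_{ii}\subset E$ be the submultiset of edges between $\vA_i$ and $\vB_i$; by hypothesis $|E_{ii}|=r_\Gamma(\vA_i,\vB_i)>1$. Define a homomorphism $\phi:\pi_1(P_0(\Gamma))\to G$ where
\[
G=\langle\, x_i,\ y_\varepsilon\ (\varepsilon\in E_{ii}\setminus T)\ \mid\ R_i'\,\rangle,
\]
sending $x_l\mapsto 1$ for $l\neq i$ and $y_e\mapsto 1$ for $e\notin E_{ii}$, so that $R_i':=\phi(R_i)=\prod_{\varepsilon\in E_{ii}}F_\varepsilon$ with $F_\varepsilon=x_i^{\sigma_\varepsilon}$ for $\varepsilon\in T$ and $F_\varepsilon=y_\varepsilon x_i^{\sigma_\varepsilon} y_\varepsilon^{-1}$ for $\varepsilon\notin T$. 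The key verification is that for every $j\neq i$ the image $\phi(R_j)$ is a word in $x_i$ alone whose total exponent equals $a_\Gamma(\vA_j,\vB_i)=\delta_{ji}=0$, so $\phi(R_j)=1$ already in the free group; in particular $\phi$ is well defined.

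Finally I would apply Magnus' Freiheitssatz to the one-relator group $G$. Because a spanning tree contains at most one edge of $E_{ii}$ and $|E_{ii}|>1$, the set $E_{ii}\setminus T$ is non-empty, so at least one $y_\varepsilon$ genuinely occurs in $R_i'$. The word $R_i'$ is cyclically reduced: the $y_\varepsilon$'s appearing have pairwise distinct subscripts (each edge produces at most one such factor), and $x_i$ is a different generator from each $y_\varepsilon$, so neither adjacent letters nor the first and last letters of the cyclic word cancel. Magnus' theorem then asserts that any proper subset of the generating set, in particular $\{x_i\}$, freely generates a free subgroup of $G$; thus $x_i$ has infinite order in $G$, and hence $x_i\neq 1$ in $\pi_1(P_0(\Gamma))$, proving that $\gamma^B_i$ is not slice.

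The most delicate step I expect is rigorously deriving the shape of the relator $R_i$ from the Kirby diagram, in particular that each clasp through $\mu_\varepsilon$ contributes the conjugation pattern $y_\varepsilon(\cdots)y_\varepsilon^{-1}$ rather than a more complicated word, and that between consecutive clasps $K_{\vA_i}$ does not meet any other dotted disk. This is forced by the clasp convention of Figure~\ref{figure:clasps}, but it should be written out carefully by tracking the construction of the diagram.
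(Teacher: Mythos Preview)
Your approach is correct and takes a genuinely different route from the paper. The paper's proof is geometric: it adds $0$-framed $2$-handles to $P_0(\Gamma)$, first cancelling all dotted circles except two coming from a $\pm$ pair of edges between $\vA_i$ and $\vB_i$, and then (following \cite[Exercise~9.3.5]{GompfStipsicz}) adds two more $2$-handles to identify the resulting pair with the Akbulut cork and the meridian of its dotted circle. It then invokes Akbulut's theorem \cite{Akbulut91} that this meridian is not slice --- a fact whose known proofs use gauge theory. Your argument, by contrast, is purely combinatorial: you show the stronger statement that $[\gamma^B_i]\neq 1$ in $\pi_1(P_0(\Gamma))$ via Magnus' Freiheitssatz. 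Note that in the Akbulut cork the meridian \emph{is} null-homotopic (the cork is contractible), so your method detects an obstruction that is destroyed by the paper's $2$-handle additions; conversely the paper's method works even for curves that are null-homotopic but not slice.

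Your collapsing map $\phi$ and the Freiheitssatz step are clean. Two points deserve the care you already flag. First, for $\phi(R_j)$ with $j\neq i$ to be a word in $x_i$ alone, you need that $K_{\vA_j}$ never crosses the disk of any $\mu_\varepsilon$ with $\varepsilon\in E_{ii}$; this holds because in the embedding of Subsection~\ref{subsec:KirbyDiagrProtocorks} the edges incident to $\vA_j$ are disjoint from the small meridional circles placed on edges between $\vA_i$ and $\vB_i$, but say so explicitly. Second, and more substantially, the assertion that each out-of-tree clasp contributes $y_\varepsilon x_i^{\pm 1} y_\varepsilon^{-1}$ with the $x_i^{\pm 1}$ genuinely sandwiched is exactly what prevents $R_i'$ from collapsing to $x_i$; this comes down to the convention of Figure~\ref{figure:clasps}(d), but you should also argue that the dotted circles $\beta_i$ and $\mu_\varepsilon$ bound disjoint disks in $D^4$ (they form an unlink since $\beta_i$ traverses $\mu_\varepsilon$'s obvious disk algebraically zero times and one can push $\mu_\varepsilon$ off along $\beta_i$'s arm) and that with respect to the induced identification of $\pi_1$ with a free group, the crossing of $K_{\vA_i}$ with $\beta_i$'s disk at the hook lies between its two crossings with $\mu_\varepsilon$'s disk.
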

	\begin{proof} We will add some 2-handles to $P_0(\Gamma)$ and show that the image of $\gamma_i^B$ in this new manifold is not slice. 
		Since adding 2-handles can only improve sliceness this will imply the thesis.
		With reference to the Kirby diagram constructed in Subsection~\ref{subsec:KirbyDiagrProtocorks}, 	let $C_1,\dots, C_N$, $N\in \N$ be the collection of dotted circles. 
		Since $r_\Gamma(\vA_i, \vB_i)>1$, we can  suppose without loss of generality that
		$C_1, C_2$ are dotted circles introduced by a pair of $+,-$ edges between $\vA_i$ and $\vB_i$. 
		We proceed by cancelling $C_3,\dots, C_N$ by adding  0-framed 2-handles meridional to each of them.
		Now we have reduced to the case where $\Gamma$ is the graph showed in \autoref{figure:variousP}  $i=1$, $d_i = 3$.
		In this case we can add a pair of 2-handles to $P_0(\Gamma)$ so that $(P_0(\Gamma), \gamma^B_1)$ is diffeomorphic to the pair $(W, \gamma)$ shown in Figure~\ref{fig:PA:e} where $\gamma$ is the black dashed curve.
		This is  \cite[Exercise 9.3.5]{GompfStipsicz}, since it is particularly relevant to us, we give  a proof in \autoref{fig:PA} for completeness.
		Now $W$ is the Akbulut cork  \cite{Akbulut91} and $\gamma$ is a meridional circle to the its dotted circle. $\gamma$ is not slice in $W$ (\cite{Akbulut91}, also  \cite[Theorem 9.3]{AkbulutBook}).
	\end{proof}
	\begin{proposition}\label{prop:tauNotExtends}  Let $\Gamma$ be a symmetric protocork plumbing graph of sphere-number $n$ and $\Realiz$ be a symmetric realization datum for $\Gamma$. Suppose that for some $i\in\{1,\dots, n\}$ $r_\Gamma(\vA_i, \vB_i)>1$ and let $Y:= \partial P_0(\Gamma, \Realiz)$. Then the involution $\tau: Y\to Y$ does not extend to a diffeomorphism of $P_0(\Gamma, \Realiz)$.
	\end{proposition}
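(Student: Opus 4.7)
The plan is to derive a contradiction with Lemma~\ref{lemma:gammanotslice} by showing that an extension of $\tau$ would convert a trivially slice meridian into the non-slice meridian $\gamma_i^B$. Suppose for contradiction that $\tau$ extends to a diffeomorphism $\hat{\tau}'\colon P_0(\Gamma,\Realiz)\to P_0(\Gamma,\Realiz)$ with $\hat{\tau}'|_Y = \tau$. Pick $i\in\{1,\dots,n\}$ with $r_\Gamma(\vA_i,\vB_i)>1$ (this is where the hypothesis is used) and let $\gamma_i^A\subset Y$ be a meridian to the sphere $A_i\subset P_{1/2}(\Gamma,\Realiz)$, taken at a point of $A_i$ outside the plumbing regions.

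First I would observe that $\gamma_i^A$ is slice in $P_0(\Gamma,\Realiz)$. Indeed, the surgery producing $P_0$ from $P_{1/2}$ only removes neighbourhoods of the $B$-spheres, so $A_i$ (which has trivial normal bundle by Definition~\ref{def:ppgraph}) still sits in the interior of $P_0(\Gamma,\Realiz)$ with its normal $\D^2$-bundle intact; a normal fiber disk to $A_i$ realizes a slicing of $\gamma_i^A$.

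Next I would track what $\tau$ does to $\gamma_i^A$. From the explicit formula $\hat{\tau}(b,f,\vA_i)=(b,f,\vB_i)$ in Subsection~\ref{subsec:involutions}, a meridian $\{p\}\times\partial\D^2\times\{\vA_i\}$ of $A_i$ is sent to $\{p\}\times\partial\D^2\times\{\vB_i\}$, which in the $\vB_i$-chart is a meridian to $B_i$. Since the $B$-sphere minus its plumbing disks is connected, all such meridians lying in $Y$ are isotopic in $Y$, hence $\tau(\gamma_i^A)$ is isotopic in $Y$ to the meridian $\gamma_i^B$ of Lemma~\ref{lemma:gammanotslice}.

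Putting this together, the extension $\hat{\tau}'$ carries the slice circle $\gamma_i^A$ to a curve in $Y$ isotopic to $\gamma_i^B$, which must therefore also be slice in $P_0(\Gamma,\Realiz)$ (push the slicing disk via $\hat{\tau}'$, then apply the isotopy in $Y$ to a collar). This contradicts Lemma~\ref{lemma:gammanotslice}, completing the proof. The argument is essentially immediate given the two preceding pieces, so the only potential subtlety — and the one I would write most carefully — is verifying that $\tau(\gamma_i^A)$ is genuinely isotopic in $Y$ (not merely in $P_{1/2}$) to $\gamma_i^B$, which is handled by confining the isotopy to the boundary of a tubular neighbourhood of $B_i$ minus its plumbing caps.
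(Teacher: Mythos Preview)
Your proof is correct and follows essentially the same approach as the paper: exhibit the meridian $\gamma_i^A$ as slice in $P_0(\Gamma,\Realiz)$, observe that $\tau$ sends it to $\gamma_i^B$, and invoke Lemma~\ref{lemma:gammanotslice}. You supply more detail than the paper does (in particular the verification that $\tau(\gamma_i^A)$ is isotopic in $Y$ to $\gamma_i^B$), though your sentence ``$A_i$ \ldots still sits in the interior of $P_0(\Gamma,\Realiz)$ with its normal $\D^2$-bundle intact'' is slightly imprecise---the sphere $A_i$ meets the surgered $B$-spheres---but since you already specified the meridian is taken outside the plumbing regions, the normal fiber disk you need does survive and the argument is sound.
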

	\begin{proof}
		Let $\gamma^A_i \subset Y$ and $\gamma^B_i\subset Y$	be circles meridional to the spheres $A_i$ and $B_i$ in $P_{1/2}(\Gamma, \Realiz)$ respectively.
		Then $\tau$ exchanges $\gamma^A_i$ and $\gamma^B_i$. $\gamma^A_i$ is slice in $P_0(\Gamma, \Realiz)$ while $\gamma^B_i$ is not by Lemma~\ref{lemma:gammanotslice}.
		Therefore $\tau$ cannot extend to a diffeomorphism of the protocork.
	\end{proof}

	\begin{figure}[p]
\centering
\subfloat[][\emph{Protocork} $P_0(\Gamma)$.\label{fig:PA:a}]
   {\includegraphics[width=.45\columnwidth]{./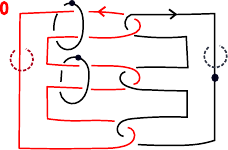}} \quad
\subfloat[][\emph{Adding 2-handles (blue)}.\label{fig:PA:b}]
   {\includegraphics[width=.45\columnwidth]{./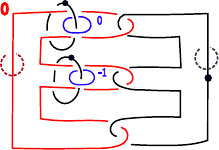}} \\
\subfloat[][\label{fig:PA:c}]
   {\includegraphics[width=.45\columnwidth]{./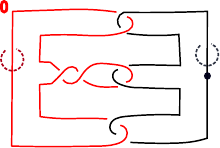}} \quad
\subfloat[][\label{fig:PA:d}]
   {\includegraphics[width=.45\columnwidth]{./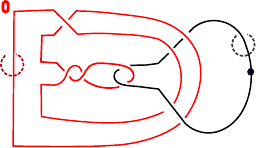}}\\
   \subfloat[][\emph{Akbulut cork}.\label{fig:PA:e}]
   {\includegraphics[width=.45\columnwidth]{./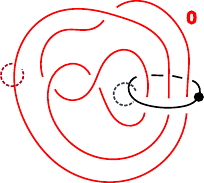}} \quad
\caption{The picture shows how we obtain the Akbulut cork \eqref{fig:PA:e} by adding two 2-handles to the protocork \eqref{fig:PA:a}. The dashed circles represent the meridian to the $A$-sphere (red) and to the $B$-sphere (black). In \eqref{fig:PA:b} we add the 2-handles (blue). To obtain  \eqref{fig:PA:c} we cancel the $0$-framed blue with the dotted circle, and similarly after sliding the red to the blue we cancel the $-1$-framed blue with the remaining dotted circle.
To obtain \eqref{fig:PA:d} we apply the Lemma shown in \autoref{figure:Lemma}. \eqref{fig:PA:d} is clearly isotopic to \eqref{fig:PA:e}.
\label{fig:PA}}
\end{figure}

		\begin{figure}
		\begin{center}
		\includegraphics[scale=1.2]{./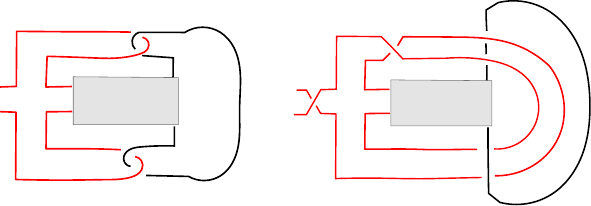}
		\end{center}
	\caption{\label{figure:Lemma} Lemma used to simplify the diagram in \autoref{fig:PA}. The diagram on the right is obtained by isotopying the red (performing an half rotation of the exterior fork).}
		\end{figure}
		
	%%%%%%%%%%%%%%%%%%%%%%%	%%%%%%%%%%%%%%%%%%%%%%%	%%%%%%%%%%%%%%%%%%%%%%%	%%%%%%%%%%%%%%%%%%%%%%%	%%%%%%%%%%%%%%%%%%%%%%%	%%%%%%%%%%%%%%%%%%%%%%%
	
\subsection{Protocorks and exotic 4-manifolds.}\label{subsec:ProtExotic4man}
\begin{plan}
In this subsection we define, in analogy to the case of corks, the operation of \emph{protocork twist}. Firstly we define it for general protocorks, in this case the operation consists in cutting out the protocork and gluing  back its reflection. In case the protocork is symmetric we obtain the same result (up to diffeomorphism) gluing back the protocork via the involution of the boundary $\tau$.
Then  we show that any exotic pair is related by a protocork twist (Proposition~\eqref{prop:ExoticCompleteness}), this follows directly from the definition of protocork without much surprise as
the definition of protocork is forged exactly to this end.
Next we show that corks, and many other simply-connected twisting triples,   admit supporting protocorks (Proposition~\eqref{prop:CorkObstruction}).
\end{plan}

\newcommand{\emb}{\mathbf{e}}
\paragraph{Corks and exotic pairs.} A pair $(X_0, X_1)$ of oriented, closed, smooth, 4-manifolds is said to be an exotic pair if $X_0$ is (orientation preservingly) homeomorphic  but not (orientation preservingly) diffeomorphic to $X_1$.
It is a result of  of Curtis, Freedman, Hsiang and Stong \cite{CHFS} and Matveyev \cite{Matveyev} that if the pair is simply connected then $X_1$ can be obtained from $X_0$ by an operation called \emph{cork twist}  which we now review.
\begin{definition}\label{def:cork} An (abstract) \emph{ cork} is a pair $(W,\tau)$ where $W$ is a compact, \emph{contractible}, oriented, smooth 4-manifold  with $\partial W\neq \emptyset$ and $\tau:\partial W \to \partial W$ 
is an orientation preserving smooth involution such that  $\tau$  does \emph{not} extend to an orientation preserving diffeomorphism of $W$.
\end{definition}
We remark that by \cite{Freedman} the involution of a cork always extend to an homeomorphism of the full manifold thanks to the contractibility assumption.

The study of corks was pioneered  by Akbulut who found the first cork in \cite{Akbulut91}. This cork, which we call \emph{Akbulut cork}, is shown in Figure~\ref{fig:PA:e} where the dashed curves are exchanged by the involution $\tau$.  
The action of the involution $\tau$ is best seen by isotopying the diagram so to obtain the symmetric link \cite[Figure 10.4]{AkbulutBook}, then $\tau$ is induced  by a rotation which preserves the link. 
 We point out that this is not the only nomenclature in the literature as several authors call Akbulut corks what we call corks and Akbulut-Mazur cork  the Akbulut cork.

Now let $X$ be a closed oriented smooth  4-manifold $X$,  $(W,\tau)$ a  cork and let   $\emb: W\to X $ be  an orientation preserving embedding. 
							Then we can form another 4-manifold 
							\begin{equation}\label{eqDef:corktwist}
								X(W,\tau, \emb) := (X\setminus \mathrm{int} (\emb(W)))\bigcup_{\emb \circ \tau} W
							\end{equation}
							by cutting out $\emb(W)$ and gluing it back  via the map $\emb\circ \tau:\partial W\to \partial (X\setminus \emb	(W))$.
							We say that $X(W,\tau, \emb) $ is obtained from $X$ by a \emph{cork twist}.
							If $X$ and $X(W,\tau, \emb) $ are \emph{not} diffeomorphic we say that $(W,\tau,\emb)$ is \emph{effective} \cite{AKMREquivariantCorks}.

\newcommand{\Diffeop}{\mathrm{Diffeo}_+}
   \paragraph{Protocork twist.}  Given $M_0, M_1$ oriented manifolds we will denote by $\Diffeop(M_0,M_1)$ the set of orientation preserving diffeomorphism $M_0\to M_1$.
   \begin{definition}
   We call  \emph{twisting triple}  a triple $(M_0, M_1, \alpha)$ where $M_0, M_1$ are oriented,  compact 4-manifolds with boundary and $\alpha \in \Diffeop(\partial M_0, \partial M_1)$.
   Two twisting triples $(M_0, M_1, \alpha)$, $(M_0', M_1', \alpha')$ are said to be  \emph{isomorphic} if exist $\phi_i \in \Diffeop(M_i, M_i')$ for $i=0,1$, such that $\alpha =  \phi_1^{-1}|_{\partial M_1'}\circ\alpha'\circ\phi_0|_{\partial M_0}$.
   \end{definition}
   
   \begin{definition}[Abstract protocork]\label{def:AbstractProtocork} An \emph{abstract protocork} is a twisting triple $(P_0, P_1, \alpha)$ isomorphic to $(P_0(\Gamma, \Realiz), P_1(\Gamma, \Realiz), id)$ for some $\Gamma, \Realiz$ protocork plumbing graph and realization.
	\end{definition}

	Given an abstract protocork $(P_0,P_1, \alpha)$ and an embedding of $P_0$ in a closed $4$-manifold, the protocork twist operation consists in  cutting out $P_0$ and gluing in $P_1$ using the identification $\alpha$. This is made precise by the following definition.
	
	\begin{definition}[Protocork twist] \label{def:ProtocorkTwist}   Let $(P_0, P_1, \alpha)$ be an abstract protocork, let 
	 $X$ be a closed, oriented 4-manifold and let $\emb:~P_0\to X$ be a smooth, orientation preserving embedding. 
	The manifold
		\begin{equation}
				X(P_0, P_1, \alpha,\emb) := \left ( X\setminus \mathrm{int}(\emb(P_0)\right) \bigcup_{\emb\circ\alpha} P_1
		\end{equation}
		is said to be obtained from $X$ by twisting  $(P_0, P_1, \alpha)$ via $\emb$.
	\end{definition}

	In the symmetric case we can give more economical definitions resembling those of cork and cork twist. 
	We define a \emph{twisting pair} to be a pair $(M_0, \iota)$ where $M_0$ is a compact,  oriented 4-manifold with boundary and $\iota\in \Diffeop(\partial M_0)$.
   Two twisting pairs $(M_0, \iota)$ and $(M_0', \iota')$ are \emph{isomorphic} if exists $\phi_0 \in \Diffeop(M_0, M_0')$ such that $\iota = \phi_0^{-1}|_{\partial M_0'}\circ \iota'\circ\phi_0|_{\partial M_0}$.
	
	\begin{definition}[Abstract symmetric protocork]\label{def:AbstractSymProt}
	An \emph{abstract symmetric protocork} is a twisting pair $(P_0, \iota)$ isomorphic to $(P_0(\Gamma, \Realiz), \tau)$ for some
	 symmetric  protocork plumbing graph $\Gamma$ with symmetric realization datum $\Realiz$ and involution $\tau$ induced by $(\Gamma, \Realiz)$ as in Subsection~\ref{subsec:involutions}.
	\end{definition}

	Given an abstract symmetric protocork $(P_0, \iota)$ and an orientation preserving embedding 	$\emb: P_0\to  X$ in a smooth, oriented 4-manifold $X$, we define the manifold obtained  from $X$ by  twisting 
$(P_0, \iota)$ via $\emb$ to be 
	\begin{equation}\label{eqDef:protocorkTwist}
			X(P_0,\iota, \emb)  := X(P_0, P_0, \iota, \emb) = \left ( X\setminus \mathrm{int}(\emb(P_0)\right) \bigcup_{\emb\circ\iota} P_0.
	\end{equation}
	The analogy between \eqref{eqDef:protocorkTwist} and  \eqref{eqDef:corktwist} motivates the name protocork twist.
	
	\begin{remark} Since protocork twists are defined by gluing two manifolds along their boundaries, the smooth structure on the resulting manifold 
	is well defined only up to the choice of the collars of the boundaries and  different choices lead to diffeomorphic manifolds. 
	Therefore, if this choice is not made explicit we have to interpret the result just as a diffeomorphism type.
	This is the same level of ambiguity present in the definition of cork twist.
	\end{remark}

	\begin{remark}\label{rem:EffectOfRealization} The manifolds (up to diffeomorphism)  that can be produced from $X$ by twisting $(P_0(\Gamma, \Realiz), P_1(\Gamma, \Realiz), id)$ do not depend on the specific realization chosen, what matters is that we use the same realization for the protocork and its reflection.
	Indeed if $\Realiz'$ is a different realization,
	there exist diffeomorphisms $\varphi_i: P_i(\Gamma, \Realiz)\to P_i(\Gamma, \Realiz')$, $i=0,1$  that coincide over the boundary, i.e. $\varphi_0|_{\partial P_0(\Gamma, \Realiz)} = \varphi_1|_{\partial P_1(\Gamma, \Realiz)}$; these can be constructed by isotopying the surgery data. 
	
	Similarly,	 the manifolds that can be produced from $X$ by twisting an abstract symmetric protocork $(P_0, \iota)$, where $P_0\simeq P_0(\Gamma)$ depend only on the isomorphism class of $\Gamma$. 
	This was not immediate from the definition because the involution $\tau$ depends on the realization datum.
	\end{remark}

	\begin{proposition}\label{prop:ExoticCompleteness} For any exotic pair $(X_0, X_1)$ of simply connected, oriented, closed 4-manifolds  there exists an
	abstract symmetric protocork $(P_0, \iota)$ with connected protocork plumbing graph and an embedding $\emb: P_0\to X_0$ such that $X_1$ is diffeomorphic to $X_0(P_0, \iota, \emb)$.
	\end{proposition}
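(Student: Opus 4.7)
My plan is to realize the desired protocork as a regular neighborhood of certain middle-level spheres of a carefully chosen $5$-dimensional h-cobordism from $X_0$ to $X_1$, along the lines of Curtis--Freedman--Hsiang--Stong \cite{CHFS} and Matveyev \cite{Matveyev}, and then promote the resulting plumbing graph to a connected symmetric one by adding cancelling handle pairs.

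First, by the classical combination of Wall's and Freedman's theorems, any two homeomorphic simply connected closed smooth 4-manifolds are smoothly h-cobordant, so I fix an h-cobordism $W$ from $X_0$ to $X_1$. Using the simple connectivity of $X_0, X_1, W$ together with standard 5-dimensional handle cancellation, I would replace $W$ with one built from $X_0\times[0,1]$ by attaching only 2-handles and 3-handles, say $n$ of each. Writing $X_{1/2}$ for the middle level, let $A_1,\dots,A_n\subset X_{1/2}$ be the belt spheres of the 2-handles and $B_1,\dots,B_n\subset X_{1/2}$ the attaching spheres of the 3-handles; both families consist of embedded 2-spheres with trivial normal bundle. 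The boundary map in the cellular chain complex of $(W,X_0)$ is $(A_i\cdot B_j)_{i,j}$, which is invertible over $\Z$ by the h-cobordism condition; handle slides let me reduce to $A_i\cdot B_j=\delta_{i,j}$, and transversality plus small isotopy puts the geometric intersections in a standard plumbing pattern.

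Second, a regular neighborhood $U\subset X_{1/2}$ of $\bigcup_i A_i\cup\bigcup_j B_j$ is then diffeomorphic to $P_{1/2}(\Gamma)$ for a protocork plumbing graph $\Gamma$ of sphere-number $n$ (the bipartiteness and the condition $a_\Gamma(\vA_i,\vB_j)=\delta_{i,j}$ exactly encode $A_i\cdot B_j=\delta_{i,j}$). Reversing the 2-handles surgers the $A$-spheres, which by Definition~\ref{def:protocork} turns $U$ into $P_1(\Gamma)$ and $X_{1/2}$ into $X_0$, while attaching the 3-handles surgers the $B$-spheres and turns $U$ into $P_0(\Gamma)\subset X_1$. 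Setting $\Gamma_0:=\overline{\Gamma}$, so that $P_0(\Gamma_0)\simeq P_1(\Gamma)$ and $P_1(\Gamma_0)\simeq P_0(\Gamma)$, this produces an embedding $\emb\colon P_0(\Gamma_0)\hookrightarrow X_0$ whose asymmetric protocork twist (gluing in $P_1(\Gamma_0)$ along the identity) recovers $X_1$.

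Third, I would upgrade $\Gamma_0$ to a connected and symmetric protocork plumbing graph. Adding a cancelling 2-/3-handle pair to $W$ does not change its diffeomorphism type but introduces a new $(\vA,\vB)$ vertex pair joined by a single edge, and finger moves pushing the new $A$-sphere (respectively $B$-sphere) through an existing $B$-sphere (respectively $A$-sphere) create pairs of geometric intersection points of opposite signs; these appear as pairs of $+$ and $-$ edges and preserve $a_\Gamma(\vA_i,\vB_j)=\delta_{i,j}$ since $\delta_{i,j}=0$ for $i\neq j$. By choosing such moves carefully I can first add enough $\pm$ pairs between each off-diagonal pair $(\vA_i,\vB_j)$ to equalise $r_\Gamma(\vA_i,\vB_j)$ and $r_\Gamma(\vA_j,\vB_i)$ (which are both automatically even when $i\neq j$, since the algebraic intersection vanishes), and then add further $\pm$ pairs joining distinct components to make the graph connected. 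This produces a connected symmetric protocork plumbing graph $\tilde\Gamma$ with an embedding $P_0(\tilde\Gamma)\hookrightarrow X_0$ whose asymmetric twist gives $X_1$; since $\tilde\Gamma$ is symmetric this twist agrees up to diffeomorphism with the symmetric twist $X_0(P_0(\tilde\Gamma),\tau,\emb)$ via the involution $\tau$ of Subsection~\ref{subsec:involutions}. The main technical obstacle is precisely this last step: orchestrating the finger moves and handle slides in the 5-dimensional cobordism so that the new intersections have exactly the signs and endpoints needed without perturbing the invertibility of the intersection matrix. The first two steps are essentially a direct translation of Matveyev's argument, simplified by the fact that protocorks (unlike corks) need not be contractible.
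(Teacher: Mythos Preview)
Your argument is correct and follows essentially the same approach as the paper's proof: both extract the protocork as a regular neighbourhood of the middle-level spheres of an h-cobordism built from only 2- and 3-handles, and then symmetrise and connect the plumbing graph via finger moves before invoking $\hat\tau$. Your closing worry is unfounded---finger moves introduce intersection points in cancelling $\pm$ pairs and therefore never alter the algebraic intersection matrix; the only cosmetic difference from the paper is your swapped $A$/$B$ labelling (belt spheres versus attaching spheres), which you correctly absorb by passing to $\overline{\Gamma}$.
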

	The proof is along the lines of the  first part of the  argument of \cite{CHFS, Matveyev}, the difference is that we do not need to complete the subcobordism to obtain contractible ends.

	\begin{proof}
	
	The first part of the proof of the h-cobordism  theorem (\cite{SmaleHCob}, see also \cite{MilnorLecturesHCob}) ensures the existence of an h-cobordism $W^5:X_0\to X_1$ with the following properties: 
			\begin{itemize}
			\item $W^5$ is obtained by gluing simultaneously  $n\geq 0$   2-handles $\{h^2_j\}_{j=1}^n$ to $X_0\times \{1\}\subset X_0\times [0,1]$ obtaining  $W'$
			and then by gluing simultaneously $n$   3-handles $\{h^3_i\}_{j=1}^n$ to $\partial_+ W'$. In particular there are no handles of index $\lambda \neq 2,3$.
			\item Let the 4-manifold $W_{1/2}\subset W$ be  the middle 
			level of the cobordism obtained after attaching all the $2$-handles. 
			 Let $B_j\subset W_{1/2}$ be the belt sphere of $h^2_j$ and let $A_i\subset W_{1/2}$ be the attaching sphere of $h^3_i$.  Then  their algebraic intersection number in $W_{1/2}$
			 is equal to the Kronecker delta:	$B_j\cdot A_i = \delta_{i j}$ and the normal bundle of $B_j$, $A_i$ is trivial for any $i,j$.
			\end{itemize}

			It follows that the intersection graph of the spheres $\{B_j, A_i\}_{i,j}$ induces a protocork plumbing graph $\Gamma$.
			 If $\Gamma$ is not connected, then we can add  extra pairs $+,-$ of geometrically cancelling intersections by pushing one of the spheres into another.
			 In a similar manner if $\Gamma$ is not symmetric we can symmetrize it by adding some \emph{geometrically} cancelling intersections between the spheres $\{B_j, A_i\}_{i,j}$.
			 
			 Now let $P_{1/2}\subset W_{1/2}$ be a tubular neighbourhood of the intersection of the spheres. $P_{1/2}$ 
			 is clearly diffeomorphic to $P_{1/2}(\Gamma, \Realiz)$ for some realization datum $\Realiz$ by a diffeomorphism that sends $A_i\subset P_{1/2}$ to the sphere associated to the vertex $\vA_i$ in $P_{1/2}(\Gamma, \Realiz)$ and $B_j\subset P_{1/2}$ to  the sphere associated to the vertex $\vB_j$ in $P_{1/2}(\Gamma, \Realiz)$.

			Notice that $P_{1/2}$ is the middle level of a subcobordism $A^5\subset W^5$ obtained by flowing $P_{1/2}$ with the gradient of the Morse function of $W$ forward and backward in time (and adding the limiting critical points). Since the belt spheres of the $h^2_j$s and the attaching spheres of the $h^3_i$s lie in $P_{1/2}$, $A^5$ will contain all critical points of the Morse function and therefore $W\setminus \mathrm{int}(A)$ will be a trivial cobordism , i.e. diffeomorphic to $M\times [0,1]$ where $M$ is a compact $4$-manifold with boundary.
			
			It follows from Morse theory that :
			\begin{itemize}
			\item the incoming boundary of  $A$, $A_0\subset X_0$, is diffeomorphic to the manifold $P_0$
			 obtained from $P_{1/2}$ by surgering  the belt spheres $B_j$ of the 2-handles.
			\item Similarly, the outcoming boundary of $A$, $A_1\subset X_1$, is diffeomorphic to $P_1$, the manifold obtained from $P_{1/2}$ by surgerying the attaching spheres $A_i$ of the 3-handles.	
			\item  $X_0 \simeq M \bigcup P_0$ and $X_1 \simeq M\bigcup P_1$, where we can assume that $P_0$ and $P_1$ have the same boundary and the gluing maps are the same because
			surgery interests only the interior of $P_{1/2}$.
			\end{itemize}
			
			Clearly there are  diffeomorphisms $f_t: P_t \to P_t(\Gamma, \Realiz)$ for $t\in \{0,1/2, 1\}$ such that $f_t|_{\partial P_t}^{\partial P_t(\Gamma,\Realiz)}$ is independent of $t$, these can be constructed with an appropriate choice of $\Realiz$ or by isotopying the surgery data.
			Therefore we obtain    diffeomorphisms   $ X_0 \simeq M \bigcup P_0(\Gamma, \Realiz)$ and $ X_1 \simeq M\bigcup P_1(\Gamma, \Realiz)$.
			Since $\Gamma$ is symmetric,   we obtain the conclusion using the diffeomorphism $\hat \tau$ of Subsection~\ref{subsec:involutions}.
	\end{proof}	
	
	In Figure~\ref{fig:PA} we have seen how the Akbulut cork has a protocork embedded in its interior which determines
	the effect of the Akbulut cork twists. This suggests the following definition.
	\begin{definition}[Supporting protocork]
		A twisting triple $(C_0, C_1, f)$ is supported by an abstract protocork  $(P_0, P_1,\alpha)$
		if there exists embedding 
		$\emb: P_0\hookrightarrow \mathrm{int}(C_0)$ and diffeomorphism 
		\begin{equation}\label{eq:FInDefOfSupportingProtocork}
					F: (C_0\setminus \mathrm{int}(\emb(P_0))\bigcup_{\emb \circ \alpha} P_1\to  C_1
		\end{equation}
		 restricting to $f$ on the boundary.
	\end{definition}
	In particular, a cork $(C,f)$ is supported by a symmetric protocork $(P_0, \tau)$ if there exist an embedding $\emb: P_0\hookrightarrow \mathrm{int}(C)$ and $	F: C_1\to (C\setminus \mathrm{int}(\emb(P_0))\bigcup_{\emb \circ \tau } P_0$ diffeomorphism restricting to $f$ on the boundary.
	Notice that in this case, if  $C$ embeds in a 4-manifold  $X_0$, then the result of the cork twist, $X_1$ 	is obtained by twisting the supporting protocork $P_0$.
	The example of Figure~\ref{fig:PA} shows that the protocork of Figure~\ref{fig:PA:a} supports the Akbulut cork, indeed in this 
	case the involution of the protocork extends over the complement restricting to the involution of the cork on the boundary. 	

	It turns out that  any cork has a supporting symmetric protocork as corollary of the following.

	\begin{proposition}\label{prop:CorkObstruction}		Let $(C_0, C_1, f)$ be a twisting triple such that $\pi_1 (C_0) = 1$, $\partial C_0$ is connected and $f$ extends to an orientation preserving \emph{homeo}morphism $\tilde f: C_0\to C_1$. Then there exists a  symmetric protocork $(P_0, \tau)$ supporting $(C_0, C_1, f)$. Moreover we can assume that the map $F$ in \eqref{eq:FInDefOfSupportingProtocork} satisfies
	$(F^{-1})^*(x) = ({\tilde f}^{-1})^*(x)$ for every $x \in Im\left(H^2(C_0) \to H^2(C_0\setminus \mathrm{int}(\emb(P_0))\right)$.
	In particular, any cork $(C,f)$ is supported by a symmetric protocork.	
														 
	\end{proposition}

\begin{proof}
Since $f$ extends to an homeomorphism and  $C_0$ is simply-connected with connected boundary,  \cite{boyer_1986} implies that  there exists an $h$-cobordism $W:C_0\to C_1$ relative to $f$, 
in particular there is an identification: 
					\begin{equation}
						 \partial W\simeq  \bar C_0 \bigcup_{id} (\partial \bar C_0\times I) \bigcup_{f} C_1.
					\end{equation}

				We consider a Morse function $\cF:W\to \R$ such that $\cF(\partial_- W) = 0$, $\cF(\partial_+W) = 1$ and $\cF|_{\partial\bar C_0\times I} $ is the projection onto the second factor.
			Now we can repeat the  same argument of  the proof of Proposition~\eqref{prop:ExoticCompleteness}    obtaining  that there is a subcobordism $A\subset W$, $A:A_0\to A_1$, such that
			$W\setminus A$ is  a trivial cobordism and   $A_0 \simeq P_0(\Gamma, \Realiz)$, $A_1 \simeq P_1(\Gamma, \Realiz)$ with $\Gamma$ connected and symmetric.
			Moreover, setting  $M_0:= C_0 \setminus \mathrm{int}(A_0)$ and $M_1 := C_1 \setminus \mathrm{int}(A_1) $, we have that 
			\begin{align*}
				\overline{\partial_- W} = M_0\bigcup A_0 \simeq M_0\bigcup P_0(\Gamma, \Realiz)  
			\end{align*}
			and
			\begin{equation*}
							   \partial_+W = M_1\bigcup A_1 \simeq M_0\bigcup P_1(\Gamma,\Realiz)			
			\end{equation*}
			where the diffeomorphism $ M_0\bigcup P_1(\Gamma,\Realiz)  \overset{ \simeq}{\to}\partial_+W$ restricts to $f$ on $\partial C_0$ because of our choice of  $\cF|_{\partial \bar C_0\times I}$.
			Now, since $\Gamma$ is symmetric  we can construct the  diffeomorphism $M_0\bigcup_\tau P_0(\Gamma,\Realiz)\to C_1 $ of the thesis using the diffeomorphism $\hat \tau$ of Subsection~\ref{subsec:involutions}.
	To justify the last claim, we use that the $h$-cobordism $W$ constructed in  \cite[Prop. 4.2]{boyer_1986}  induces  $f_*:H_2(C_0)\to H_2(C_1)$ in homology and hence $f^*:H^2(C_1)\to H^2(C_0)$ in cohomology since $\pi_1(C_i) = 1$ for $i=0,1$.
\end{proof}

\begin{remark} In the specific case of a cork or more generally a twisting pair $(C,f)$ with  $C$ contractible,  we do not need to invoke \cite{boyer_1986}, as we can explicitely construct an $h$-cobordism relative to $f$. It is sufficient to find a \emph{contractible} smooth 5-manifold  $W^5$, with boundary $\partial W \simeq \bar C\bigcup_f  C$. Since $\bar C\bigcup_{f^{-1}} C \simeq_{\mathrm{TOP}} \Sphere^4$ there is, by Wall's theorem \cite{Wall}	 an  h-cobordism $\tilde W :\bar C\bigcup_f C  \to \Sphere^4 $, this is constructed just using handles of index $2$ and $3$ so it is  also simply connected.  We construct $W$  by capping $\Sphere^4\simeq  \partial_+ \tilde W$ with 	$\D^5$. 
\end{remark}

 \section{On the monopole Floer homology of protocork boundaries.}\label{sec:FloerHomologyBoundary}
\begin{plan}
The section is divided in four parts. The first subsection, Subsection~\ref{subsec:backgroundFloer} recaps the relevant background and notation from Kronheimer and Mrowka's book \cite{KM}. The reader accustomed with \cite{KM} may well skip it.

The second subsection, Subsection~\ref{subsec:MorseLikePerturbation}, describes in a general setting what we call Morselike perturbations, 
these are are perturbations of the Chern-Simons-Dirac functional  obtained by pulling back a Morse-Smale function defined on the torus of flat connections. 
We will use them to gain control over the reducible critical points. This idea is not new, in fact ad hoc applications of Morselike perturbations appear in several places in the literature as a device to pass from a Morse-Bott critical submanifold to a Morse one, e.g. \cite{BraamDonaldson}.  In particular, we borrowed the idea from Section 35.1 and Chapter 36  of \cite{KM}.  We need however some stronger results about them with respect to those used in the book; we develop them in full generality in Proposition~\ref{prop:MorseLikePerturbationPerp} and report the technical proof in \autoref{appendix:ProofMorseLike}.

The third and fourth subsection are specific to the case of $Y^3$ being a protocork boundary, and constitute the core of the section.
In Subsection~\ref{subsec:CriticalPoints}  we describe the relevant geometric setting over a protocork boundary $Y$ useful to define its Floer homology, we point out some key properties of the chain complex, we  define the Morgan-Szab\'o number and show  how it relates to the original definition.

In the last subsection Subsection~\ref{subsec:FloerHomologyCobordismSplitting},  we will prove Theorem~\ref{Intro:thm:Splitting} which provides a splitting of the Floer homology of $Y$. The proof will rely only on the functorial properties of Floer homology so it will be independent of the previous subsections. 
\end{plan}

\subsection{Background on generators of the chain complex.}\label{subsec:backgroundFloer}
\begin{plan}
In this section we will use the monopole Floer machinery developed by Kronheimer and Mrowka in their book \cite{KM}.
The aim of this subsection is to recap definitions and notation from \cite{KM} that we will need in the rest of the section.  We make no claim of originality.
\end{plan}

We recall that the abstract recipe for Floer homology is to construct a chain complex where the generators are the restpoints of a vector field (usually gradient of a functional)  and the
differential counts flowlines between them.
In the case of monopole Floer homology, the vector field we are interested in is the formal gradient of the Chern-Simons-Dirac  functional $\calL$ defined over the quotient configuration space ($\B(Y,\sstruc)$ in the following).
The quotient configuration space however is not a manifold in general due to the presence of reducible configurations. 
This issue is solved by Kronheimer and Mrowka by working on the blown-up quotient configuration space $\B_k^\sigma(Y,\sstruc)$ which is an Hilbert manifold with boundary. The gradient of $\calL$  admits a lift 
to the blown-up, $(\mathrm{grad}(\calL))^\sigma$,  which is used to define  the chain complex.
We briefly review how these objects are defined.

\begin{recap}[Classical 3D configuration spaces]
Let $Y$ be a closed, oriented Riemannian 3-manifold, and let $\sstruc$ be a \spinc structure on $Y$, with associated  spinor bundle $S\to Y$ and Clifford multiplication $\rho: TY\to \End(S)$.
We suppose that $\rho$ is compatible with the orientation of $Y$, i.e. if $e_1, e_2, e_3 $ is an oriented orthonormal frame then $\rho(e_1)\rho(e_2)\rho(e_3) = 1$. 
Given  a  \spinc connection $B$, we denote by $B^t$ the induced connection on the determinant bundle  $\Lambda^2S$ and by    $F_{B^t}\in \Lambda^2(Y, i\R)$ its curvature.   $D_B: C^\infty(Y;S)\to C^\infty(Y; S)$ will denote the \spinc \  Dirac operator induced by $B$.
Let $k>2$ be a natural number, $k$  will be the \emph{regularity parameter} of our Sobolev spaces of sections. In particular  $k>2$ will guarantee the embedding into the space of continuous functions.
By $\A_k(Y, \sstruc)$ we denote  the space of $L^2_k$ Sobolev \spinc connections on $S$. The \emph{configuration space} is defined as
\begin{equation}
	\Conf_k(Y, \sstruc) = \A_k(Y,\sstruc) \times L^2_k(Y; S)
\end{equation}
where $L^2_k(Y; S)$ denotes the Sobolev space of $L^2_k$-sections of the spinor bundle $S\to Y$. 
As a general rule, ommission of the subscript $k$ means that \emph{smooth} sections (or connections) are considered.
We  will denote by $\mathcal{T}_j = L^2_j(Y; iT^*Y\oplus S)$ the tangent space of $\Conf_k(Y, \sstruc) = \A_k(Y,\sstruc) \times L^2_k(Y; S)$ consisting of $L^2_j$-sections $j\leq k$ (see  \cite[Section 9]{KM}).
The \emph{gauge group} is defined as
\begin{equation}
	\G_{k+1}(Y)= \{ u \in L^2_{k+1}(Y;\C) \ | \ \ \norm {u(y)} = 1\  \forall y \in Y\}
\end{equation} with the subspace topology. Notice that the evaluation at a point makes sense because $k>2$ guarantees that $u\in L^2_{k+1}(Y)\subset C^0(Y)$ by Sobolev embedding theorem.
 $\G_{k+1}(Y)$ acts on $\Conf_k(Y,\sstruc)$  by
 \begin{equation}\label{eq:actionOfG}
 	u\cdot(B, \psi) = (B-\frac {du}{u}\otimes 1_S, u\ \psi)
 \end{equation}
 for $u\in \G_{k+1}(Y)$ and $(B,\psi) \in \Conf_k(Y,\sstruc)$, here $1_S\in \End(S)$ is the identity map, so that $\frac {du}{u}\otimes 1_S\in L^2_{k+1}(Y; \Lambda^1 Y\otimes \End(S))$.
Configurations that have trivial (non-trivial) stabilizer under the gauge group action are said to be irreducible (reducible). The reducible configurations are precisely those with identically zero spinor component.
The \emph{quotient configuration space} $\B_k(Y,\sstruc) := \Conf_k(Y,\sstruc) /\G_{k+1}(Y,\sstruc)$ is the quotient of the configuration space by the action of the gauge group. 
We will denote by $\torus\subset \B_k(Y,\sstruc)$ the torus of flat connections, i.e. 
\begin{equation}
	\torus := \{[(B,0)] \in \B_k(Y,\sstruc) \ | \ F_{B} = 0\},
\end{equation}
clearly $\torus\neq \emptyset$ only when $\sstruc$ is torsion and in this case  $\torus\simeq H^1(Y; \R)/ H^1(Y; \Z)$ is a torus of dimension $b_1(Y)$.
\end{recap}
\begin{recap}[Blow-up of configuration spaces] The configuration spaces $\Conf_k(Y,\sstruc)$  have the inconvenience that the action of the gauge group is not free and therefore $\B_k(Y,\sstruc)$ is not a manifold in general.  This issue is solved by Kronheimer and Mrowka by introducing blown-up configurations spaces (see Section 6 and 9 of \cite{KM}) defined as follows.
\begin{equation}
	\Conf_k^\sigma(Y,\sstruc) := \A_k(Y,\sstruc) \times \R_{\geq 0} \times \SS(L^2_k(Y;S)),
\end{equation}
where $\SS(L^2_k(Y;S))$ is the unit sphere with respect to the $L^2$-norm (notice it is not the $L^2_k$-norm).  We call elements with zero $\R_{\geq 0}$-component \emph{reducibles} and their complement \emph{irreducibles}.
The tangent bundle of  $\Conf_k^\sigma(Y,\sstruc) $ is denoted by $\cT^\sigma_k$, see \cite[Section 9]{KM} for an explicit definition.
$\G_{k+1}(Y)$ acts \emph{freely} on $\Conf_k^\sigma(Y,\sstruc)$ by 
\begin{equation}\label{eq:actionOfGsigma}
	u\cdot(B,r, \psi)  = (B-\frac {du}{u}\otimes 1_S, r, u\psi)
\end{equation}
for $u\in \G_{k+1}(Y), (B,r,\psi)\in \Conf_k^\sigma(Y,\sstruc)$.
The quotient space is denoted by $\B_k^\sigma(Y, \sstruc)$ and  is an Hilbert manifold with boundary \cite[Corollary 9.3.8]{KM}. The boundary consists of equivalence classes of reducible configurations. 
There is a projection map $\pi: \Conf_k^\sigma(Y,\sstruc)\to \Conf_k(Y,\sstruc)$ given by $(B,r, \psi)\mapsto (B,r\psi)$. $\pi$ is $\G_{k+1}(Y)$-equivariant, hence defines a map  between the quotient configuration spaces $\pi: \B_k^\sigma(Y,\sstruc)\to \B_k(Y,\sstruc)$ which is a diffeomorphism over the irreducible locus.
\end{recap}
\newcommand{\gradLift}{(\operatorname{grad}\calL)^\sigma}
\newcommand{\gradL}{\operatorname{grad}\calL}
\begin{recap}[Chern-Simons-Dirac functional and the lifted gradient]
Fix a reference connection $B_0\in \A_k(Y, \sstruc)$ then the  Chern-Simons-Dirac functional  $\calL: \Conf_k(Y,\sstruc)\to \R$ is defined as
\begin{equation}
	\calL (B,\psi) = -\frac 1 8 \int_Y (B^t - B_0^t)\wedge (F_{B^t}-F_{B_0^t})+ \frac 1 2 \int_Y \langle D_B \Psi, \Psi\rangle d\operatorname{vol}.
\end{equation}
It can be checked that $\calL$ does not depend on the choice of the connection $B_0$.
$\calL$ is not $\G_{k+1}(Y)$-invariant in general, however we will be concerned only with torsion \spinc structures and in this case $\calL$ will be  gauge invariant and thus will descend to a functional
$\calL: \B_k(Y,\sstruc)\to \R$. The formal gradient of $\calL$, is a map $\gradL: \Conf_k(Y,\sstruc)\to \mathcal{T}_{k-1}$ explicitely defined in  \cite[Eq. (4.3)]{KM}.
In principle, this is the vector field that we would like to use to define monopole Floer homology, however as said above, due to technical issues, the actual definition uses another vector field, $\gradLift: \Conf_k^\sigma(Y,\sstruc)\to \mathcal{T}_{k-1}^\sigma$, which is a lift of $\gradL$ to the blow-up.
$\gradLift$ is defined as follows  \cite[pg. 117]{KM}:
\begin{spliteq}
	\gradLift(B,r,\psi) = \begin{pmatrix}
								& \frac 1 2 *F_{B^t} + r^2\rho^{-1}(\psi\psi^*)_0\\
								& \Lambda(B,r,\psi) r\\
								& D_B\psi - \Lambda(B,r,\psi)\psi\\
								\end{pmatrix}\in \cT_{k-1}^\sigma
\end{spliteq}
where $\Lambda(B,r,\psi) = \langle \psi, D_B\psi\rangle_{L^2(Y)}$ and  $(\psi\psi^*)_0 $ denotes the traceless part of the hermitian endomorphism $\psi\psi^*$.
Contrarily to $\gradL$, the lifted gradient is not, in general,  the gradient of a functional defined on the blow-up.
\end{recap}
\begin{recap}[Taxonomy of critical points]
Although $\gradLift$ is not the formal gradient of a functional over the blow-up,
its restpoints are  called critical points since they generate the chain complexes, in analogy with Morse homology. According to \cite[Proposition 6.2.3]{KM}, if $(B,r,\psi)\in \Conf^\sigma_k(Y,\sstruc)$ is a critical point of $\gradLift$ then either
\begin{itemize}
\item $(B,r,\psi)$ is irreducible (i.e. $r\neq 0$), and $(B,r\psi) = \pi(B,r,\psi)$ is a critical point of $\gradL$ or
\item $(B,r,\psi)$ is reducible (i.e. $r= 0$) and $(B,0)  = \pi(B,r,\psi)$ is a critical point of $\gradL$ and $\psi$ is an eigenvector of $D_B$.
\end{itemize}
Thus over the irreducibles the critical points of $\gradLift$ and $\gradL$ coincide under $\pi$ while each reducible critical point of $\gradL$ can introduce infinitely many (one for each normalized eigenvector) critical points in the blowup.
Reducible critical points are those whose equivalence class lie in the boundary of $\B_k^\sigma(Y,\sstruc)$, we can thus divide them in two classes:
\begin{enumerate}
\item critical points relative to a \emph{positive} eigenvalue, called  \emph{boundary-stable} and
\item critical points relative to a \emph{negative} eigenvalue, called  \emph{boundary-unstable}.
\end{enumerate}
The name comes from the fact that   flowlines of $-\gradLift$ which are  not contained entirely in $\partial \B_k^\sigma(Y,\sstruc)$  either start from boundary-unstable  to arrive at an irreducible critical point or start from an irreducible to arrive at  a boundary-stable critical point.
\end{recap}

\newcommand{\gradLiftp}{(\operatorname{grad}\Lpert)^\sigma}
\newcommand{\gradLp}{\operatorname{grad}\Lpert}

\begin{recap}[Perturbations]
The Chern-Simons-Dirac functional may have degenerate critical points    \cite[Definition 12.1.1]{KM}) and furthermore the trajectories of $-\gradLift$ may  be singular moduli spaces.
This problem is soved in \cite{KM} by perturbing $\gradL$ with a generic  \emph{tame perturbation}. 
A tame perturbation   \cite[Definition 10.5.1]{KM}) is a continuous map 
\begin{equation}
	\pert:\Conf(Y,\sstruc)\to \mathcal{T}_0
\end{equation}  satisfying some conditions ensuring nice properties of the perturbed moduli spaces (e.g. compactness), we will denote by $\pert^0$ and $\pert^1$  the connection and spinor component of $\pert$ respectively.
 We can use $\pert$	 to perturb $\gradLift$ obtaining $\gradLiftp(B,r,\psi)$ as follows:
\begin{equation}\label{eq:perturbedLiftedGradient}
\gradLiftp(B,r,\psi) := \begin{pmatrix}
									& \frac 1 2 * F_{B^t} + r^2\rho^{-1}(\psi\psi^*)_0 + \pert^0(B,r\psi)\\
									&  \Lambda_\pert(B,r,\psi)r\\
									& D_B \psi + \tilde\pert^1(B,r,\psi) - \Lambda_\pert(B,r,\psi) \psi\\
								\end{pmatrix}	\in \mathcal{T}^\sigma_{k-1}
\end{equation}
where denoting by $\Differential_x$ the  Fr\'echet derivative at $x$,
\begin{equation}
	\tilde \pert^1(B,r,\psi)= 	\begin{cases}  \frac 1 r \pert^1(B,r\psi) & \text{ if } r\neq 0 \\
																\Differential_{(B,0)}\pert^1(0,\psi) &  \text{ if } r = 0, \\
										\end{cases}
\end{equation}
and $ \Lambda_\pert(B,r,\psi) = Re\left\langle \psi, D_B \psi + \tilde \pert^1(B, r,\psi) \right \rangle_{L^2}$. The classification of critical points explained in the previous paragraph makes sense even in the perturbed setting. In fact, $\gradLiftp$ is the lift of $\gradLp = \gradL +\pert$ and, similarly to $\gradLift$, the critical points of $\gradLiftp$ are either the lift of an irreducible critical point of $\gradLp$ or 
 triples $(B,0,\psi)$ with $\psi$ an eigenvector of 
 	\begin{equation}\label{def:perturbedDirac}
 		D_{B,\pert} :=  D_B + \Differential_{(B,0)}\pert^1(0,\cdot).
 	\end{equation} Thus it still makes sense to speak of boundary-stable and unstable critical points.
 The authors of \cite{KM} construct a \emph{large Banach space of perturbations} $\Pert$ in  \cite[ Section 11.6]{KM},  consisting of tame perturbations, which is used in several constructions as an input for the Sard-Smale theorem to obtain a non-degenerate functional and regular moduli spaces (Theorem~12.1.2 and  Theorem~5.1.1 in \cite{KM}).
 Despite the notation, a tame perturbation does not have to be the formal gradient of a function, however this is true for $\pert \in\Pert$, thus we have also a perturbed functional $\Lpert = \cL + f_\pert$ where $f_\pert: \Conf(Y,\sstruc)\to \R$ is a primitive of $\pert$.
 \end{recap}

\begin{recap}[Chain complexes]\label{recap:ChainComplexes} Section 22 of \cite{KM}  defines the chain complexes  $\check{C}, \hat{C}, \bar{C}$ giving rise to Floer homology groups. We briefly review how these are generated by the critical points of $\gradLiftp$. First of all, we choose an \emph{admissible} perturbation $\pert \in \Pert$    \cite[Definition 22.1.1]{KM}), in particular the critical points of $\gradLiftp$ are non-degenerate and the moduli spaces of trajectories are regular. For any $\Lambda =\{x,y\}$ 2-element set, $\Z\Lambda$ will denote the coefficient group 
\begin{equation}
	\Z\Lambda = \langle x,y \ | \ x= -y\rangle.
\end{equation}
Notice that a choice of a generator $x$ or $y$ establishes an isomorphism $\Z\Lambda \simeq \Z$, indeed we could have worked with $\Z$-coefficients  instead of using $\Z\Lambda$ but then we would have to choose an orientation for each generator of the complex.
Now let $\crit^o, \crit^s, \crit^u$ be the set of \emph{gauge-equivalence classes} of irreducible, boundary-stable and boundary unstable critical points of $\gradLiftp$ respectively and set
\begin{align}
	& C^o = \bigoplus_{[\gota]\in \crit^o} \Z\Lambda ([\gota]) 
	&  C^s = \bigoplus_{[\gota]\in \crit^s} \Z\Lambda ([\gota]) & 
	& C^u = \bigoplus_{[\gota]\in \crit^u} \Z\Lambda ([\gota]). 
\end{align}
where $\Lambda ([\gota])$ is a 2-element set of orientations of a moduli space associated to $[\gota]$  \cite[Section 20.3]{KM}.
In Section 22 of \cite{KM} are defined homomorphisms  $\partial^x_y:C^x\to C^y$ and $\bar\partial^x_y:C^x\to C^y$ for $x,y\in \{s,u,o\}$ between these complexes obtained  by counting $1$-dimensional moduli spaces  of trajectories (only reducible ones in the latter case) of $\gradLiftp$.
Now define
\begin{align}
	& \check C = C^o\oplus C^s
	&  \hat C = C^o\oplus C^u & 
	& \bar C = C^s\oplus C^u. 
\end{align}
The Floer homology groups $\HMto(Y,\sstruc), \HMfrom(Y,\sstruc), \HMbar(Y,\sstruc)$ are the homologies of the chain complexes $(\check C, \check\partial), (\hat C, \hat\partial), (\bar C, \bar\partial)$, where $\check \partial, \hat \partial, \bar \partial $ are constructed from   the abovementioned homomorphisms $\partial^x_y$, and $\bar \partial^x_y$. 
For a precise definition  we refer the reader to \cite[Section 22]{KM}. 
 Although as defined the Floer homology groups depend  on the perturbation $\pert$ and on the metric  there is a canonical isomorphism between the Floer homologies arising from a different choice of data (Riemannian metric and perturbation)  \cite[Corollary 23.1.6]{KM}. Thus the Floer homology groups are topological invariants of $Y$.
\end{recap}

\subsection{Morselike  perturbations.}\label{subsec:MorseLikePerturbation}
\newcommand{\Lf}{\cL + f}
\begin{plan}A \emph{Morselike perturbation} is a perturbation of $\cL$ that is  the pullback of a Morse function defined on the torus of flat connections $\torus$;  we give a formal definition below.
This  has the advantage of giving us a complete understanding of the reducible critical points and of the flow between them \emph{in the blow-down}.
In general such a perturbation is not admissible: critical points may be degenerate (even the reducible ones  in the blow-down if the Dirac operator is not invertible) and the moduli spaces of trajectories may be singular. We can use the theorems of Chapter 12 and Theorem 15.1.1 in \cite{KM} to obtain an admissible perturbation $\Lpert = \cL + f + f'$ where $\operatorname{grad} f' \in \Pert$, however, this would 
defy the purpose of the Morselike perturbation $f$ because the perturbation $f'$ may alter reducible critical points and the flow between them.
This issue is only apparent, indeed a slight modification  of the proofs of \cite{KM} shows that in the case of a Morselike perturbation,  we can sharpen the result of \cite{KM} and assume that the connection component of $\operatorname{grad} f' $   vanishes on the reducible locus at the cost of possibly perturbing slightly the Morse function. We state this as Proposition~\ref{prop:MorseLikePerturbationPerp} here and give a proof in  Appendix~\ref{appendix:ProofMorseLike}.
\end{plan}

\paragraph{Definition of Morselike perturbations.}
We continue with the notation of Subsection~\ref{subsec:backgroundFloer}, thus $(Y,g)$ will denote  a closed oriented Riemannian 3-manifold with torsion \spinc structure $\sstruc$.
Fix a reference  \emph{flat} \spinc connection $B_0\in \A_k(Y,\sstruc)$ so that $\A_k(Y,\sstruc) = B_0 + L^2_k(iT^*Y) $.
Define $P: \Conf_k(Y,\sstruc)\to \Conf_k(Y,\sstruc)$ by
 \begin{equation}\label{def:projP}
													P(B_0 + b\otimes 1_S, \psi) = (B_0+ (\PP_{\ker \Delta}b)\otimes 1_S, 0),
\end{equation}
where  $\PP_{\ker \Delta}:L_k^2(Y, i\R)\to L^2_k(Y,i\R)$  is  the $L^2$-projector on the space of harmonic 1-forms i.e. 
 the kernel of the Hodge Laplacian $\Delta:L^2_k(iT^*Y)\to L^2_{k-2}(iT^*Y)$. The existence of $\PP_{\ker \Delta}$ is ensured by Hodge theory, notice that 
the image of $P$ is the set of flat connections.  The map $P$ passes to the quotient defining a  retraction
\begin{equation} \label{def:RetractionToTorus}
 p_\torus: \B_k(Y,\sstruc) \to \torus. 
\end{equation}

 \begin{definition} A functional $f:\Conf(Y,\sstruc)\to \R$ of the form 
 		\begin{equation}
				f (x) :=  f_\torus([P(x)])\quad \text{ for all } 	x\in\Conf(Y,\sstruc)
		\end{equation}
		 where $ f_\torus:\torus\to \R$ is a Morse-Smale function, is called \emph{Morselike perturbations}.	
 
 \end{definition}

Notice that the \emph{reducible} critical points of $\cL + f$ are the critical points of $f$ and  $\torus$ 	is invariant under the gradient flow of $\cL +f $, the reducible trajectories (in the blow-down) are precisely the gradient trajectories of the Morse function $f$.  The gradient of $f$ is a tame perturbation.

\newcommand{\PertZero}{\Pert^{\perp}}
\begin{definition}[$\PertZero$] \label{def:PertZero} We denote by $\PertZero < \Pert$ the Banach subspace of perturbations that vanish on the reducible locus. In formulae, $\pert\in \PertZero$ if $\pert(B,0) = 0$ for all $B\in \A(Y,\sstruc)$.
\end{definition}

\begin{remark} We could have defined $\PertZero$ by requiring that only the connection component vanishes on the reducible locus, i.e. 
$\pert^0(B,0) = 0$ for all $B\in \A(Y,\sstruc)$.
Indeed equivariancy of tame perturbations implies  that $\pert^1(u\cdot(B,\psi)) = u\pert^1(B,\psi)$ for any $u\in\G(Y), (B,\psi)\in \Conf(Y)$,
thus the spinor component always vanishes over the reducible locus; its derivative instead may be non-zero and perturb the equations in the blown-up model.
In light of this, a perturbation $\pert \in \Pert$ with primitive $f_\pert:\Conf(Y,\sstruc)\to \R$ belongs to $\PertZero$ if and only if $f_\pert$ is constant on the reducible locus.
\end{remark}
The  subspace $\Pert^\perp$ of perturbations vanishing on the reducible locus is clearly a proper closed subspace of $\PertZero$. 
These perturbations are important to us, because they do not alter the reducible trajectories in the (classical) configuration space.

\begin{proposition} \label{prop:MorseLikePerturbationPerp} Let $f$ be a Morselike perturbation induced by $ f_\torus:\torus \to \R$.  Then there is a residual subset $U_0\subset \PertZero$ such that for any $\mathfrak{u}_0 \in U_0$,  after possibly enlarging $\Pert$, 
there are a closed subspace  $Z<  \Pert$ depending on $\mathfrak{u}_0$ and 
 a neighbourhood of zero  $ U \subset Z$   such that  $\mathfrak{u}\in  U$ implies that 
 \begin{enumerate}
 \item  the critical points of $\left(\grad(\cL + f)+\mathfrak{u}_0\right)^\sigma $ are non-degenerate,
 \item  $\mathfrak{u}$ vanishes in a neighbourhood of the critical points of $\grad(\cL + f)+\mathfrak{u}_0 $,
 \item $\left(\grad(\cL+f) +\mathfrak{ u}_0+ \mathfrak{ u}\right)^\sigma$ has the same critical points of $\left(\grad(\cL + f)+\mathfrak{u}_0\right)^\sigma$,
 \item   $\mathfrak u = \grad( h_\torus([P(\cdot)]) + \mathfrak{q}^\perp $ where  $\mathfrak{q}^\perp \in \PertZero$  and  $ f_\torus+ h_\torus$ is  Morse-Smale.
 \item  In addition,   the set of  $\mathfrak{u} \in U  $ for which  $\grad f +\mathfrak{ u}_0+ \mathfrak{u}$ is admissible is residual in $U$.
 \end{enumerate}
\end{proposition}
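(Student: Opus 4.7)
My plan is to adapt the two-stage perturbation scheme from Chapters~12 and 15 of \cite{KM} to the constrained Banach space $\PertZero$: first choose $\mathfrak{u}_0 \in \PertZero$ to make every critical point non-degenerate, and then construct $Z$ together with a small neighbourhood $U \subset Z$ inside a possibly enlarged $\Pert$ that achieves admissibility without disturbing the reducible critical set.

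For item~(1): since $\mathfrak{u}_0 \in \PertZero$ vanishes on the reducible locus, the blow-down reducible critical points of $\operatorname{grad}(\cL+f) + \mathfrak{u}_0$ coincide with the critical points of $f_\torus$ and are automatically non-degenerate in the blow-down by the Morse hypothesis. Non-degeneracy at each lifted reducible critical point additionally requires that the perturbed Dirac operator $D_{B,\mathfrak{u}_0}$ of \eqref{def:perturbedDirac} has simple spectrum with no zero eigenvalue. The key observation is that $\Differential_{(B,0)}\mathfrak{u}_0^1(0,\cdot)$ is not constrained by the condition $\mathfrak{u}_0(B,0)=0$, so $\PertZero$ still contains perturbations that move the Dirac spectrum freely. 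A Sard--Smale argument applied simultaneously to the universal Dirac spectrum over the finitely many reducible critical points and to the irreducible critical locus (a finite union of Fredholm zero-sets parametrised by $\PertZero$) then yields a residual $U_0 \subset \PertZero$ realising item~(1).

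For items~(2)--(4), I fix $\mathfrak{u}_0 \in U_0$; its critical set in $\B_k^\sigma(Y,\sstruc)$ is a finite collection of gauge orbits. I enlarge $\Pert$, if necessary, to contain all gradients of pullbacks $h_\torus \circ p_\torus$ with $h_\torus \in C^\infty(\torus)$, and take $Z$ to be the closed subspace of the enlarged $\Pert$ whose elements vanish on a gauge-invariant neighbourhood of this critical set. Any $\mathfrak{u} \in Z$ then decomposes as $\operatorname{grad}(h_\torus \circ p_\torus) + \mathfrak{q}^\perp$ with $\mathfrak{q}^\perp \in \PertZero$, by extracting from the primitive of $\mathfrak{u}$ its restriction to the torus of flat connections. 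Choosing $U \subset Z$ a small neighbourhood of zero, the support condition prevents the emergence of new critical points in the blow-up, while the openness of the Morse--Smale property together with the vanishing of $h_\torus$ near the critical points of $f_\torus$ gives item~(4).

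Finally, for item~(5), I would transplant the transversality argument of Theorem~15.1.1 in \cite{KM} to the universal parametrised moduli space of trajectories over $U$. The main obstacle is verifying that $Z$ contains enough transversality directions, since by construction its elements vanish near the critical set. I expect to resolve this via a unique-continuation argument: every non-constant trajectory has an interior point lying in the irreducible locus and outside the support-excluding neighbourhood of the critical set, so bump-function perturbations in $Z$ concentrated near such a point surject onto the cokernel of the linearised flow operator. Sard--Smale then delivers the residual subset of admissible $\mathfrak{u} \in U$.
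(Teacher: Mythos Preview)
Your approach to item~(1) matches the paper's, and the general two-stage strategy is correct. There are, however, genuine gaps in items~(2)--(5).

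\textbf{The decomposition in item~(4) fails for your $Z$.} You define $Z$ as all perturbations in the enlarged $\Pert$ vanishing near the critical set, and then claim every $\mathfrak{u}\in Z$ splits as a torus-pullback plus an element of $\PertZero$. But extracting $h_\torus := (\text{primitive of }\mathfrak{u})|_\torus$ only guarantees that the remainder vanishes on the torus of \emph{flat} connections, not on the full reducible locus $\{(B,0)\}$; so the remainder need not lie in $\PertZero$. The paper avoids this by \emph{defining} $Z$ as a direct sum $\Pert_f \oplus \PertZero_\cO$, where $\Pert_f$ is generated by explicit cylinder functions of the form $\tilde h(p_\torus(\cdot))\sum_{[\alpha]}\beta_{[\alpha]}$: a torus-pullback times bump functions cutting it off near each irreducible critical point. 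The decomposition is then built into the construction rather than deduced.

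\textbf{Your transversality argument in item~(5) misses the key case.} You argue that a non-constant trajectory has an interior irreducible point outside $\cO$, and that bump functions concentrated there surject onto the cokernel. Two problems: first, reducible trajectories between distinct reducibles lie entirely in the reducible locus, so no such point exists; the paper handles these separately, using that the Morse--Smale property of $(f+h)_\torus$ already makes $d^+ + \Differential_\gamma(\grad(f+h)^0)$ surjective. Second, and more seriously, for an \emph{irreducible} trajectory between two reducibles, the cokernel element $\overline V$ may be tangent to $\underline\gamma$ at every $t$ with $[\underline\gamma(t)]\notin\cO$. In that case any bump function $g$ satisfying $\int\langle \grad g,\overline V\rangle\,dt>0$ must be monotone along $\underline\gamma$, hence take different values at the two reducible endpoints --- impossible for $g$ with gradient in $\PertZero$, since such $g$ are constant on reducibles. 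This is precisely why the paper introduces $\Pert_f$: a torus-pullback function \emph{can} distinguish the two reducible endpoints. Your $Z$ happens to contain such functions, but your argument never invokes them, so the proof as written does not close.
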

The proof is given in \autoref{appendix:ProofMorseLike}. Notice that since $u_0 \in U_0\subset \PertZero$, the \emph{reducible} critical points of $\grad(\cL+f) +\mathfrak{ u}_0+ \mathfrak{u}$ are the critical points of $\grad(\cL+f)$ in the blow-down.

\subsection{Critical points for $Y$ a protocork boundary.}\label{subsec:CriticalPoints}
\begin{plan}In this subsection we will describe our setting, in particular the data (metric and perturbation) that we will use and the critical points. We will finally draw some conclusions on the boundary maps and relative degree (Proposition~\ref{prop:bargr}) and define an homomorphism that will come in handy 
in the proof of Theorem~\ref{thm:DeltaInHMred}.
The main ingredients will be a metric constructed by  Morgan and Szab\'{o} and Morselike perturbations as described in Subsection~\ref{subsec:MorseLikePerturbation}.
The last part of the section deals with Morgan-Szab\'{o}'s number, introduced in Theorem~\ref{thm:DeltaInHMred} item \ref{thm:DeltaInHMred:item:NMS}, we show that the original definition of \cite{MorganSzabo99} coincides
with the one that we use here.
\end{plan}

\paragraph{Setting for Floer homology.}
The following result due to Morgan and Szab\'{o}  will be fundamental.
		\begin{lemma}[Lemma 2.1 in \cite{MorganSzabo99}]  \label{lemma:DiracInjective} Let $Y$ be the boundary of a protocork and let $\sstruc$ be the trivial \spinc structure. 
				Then there  exists a Riemannian metric on $Y$ such that for \emph{any} flat  \spinc connection $B$  the kernel of the Dirac operator
					$D_B: \Gamma(S)\to \Gamma(S)$ is trivial.
		\end{lemma}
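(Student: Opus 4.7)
The approach reduces the statement to the existence of a Riemannian metric on $Y$ with non-negative scalar curvature $s$ that is strictly positive on some non-empty open set, and then constructs such a metric from the plumbing description of $Y$.

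The reduction is the standard Bochner--Lichnerowicz--Weitzenböck identity: for any $\spinc$ connection $B$ on $Y$,
\[ D_B^2 \;=\; \nabla_B^* \nabla_B + \tfrac{s}{4} + \tfrac{1}{2}\rho(F_{B^t}). \]
If $B$ is flat then $F_{B^t}=0$, and integrating by parts against $\psi \in \ker D_B$ gives
\[ 0 \;=\; \int_Y \Bigl(|\nabla_B \psi|^2 + \tfrac{s}{4}|\psi|^2\Bigr)\,dv. \]
When $s \geq 0$ everywhere and $s > 0$ on some non-empty open set $U$, both terms must vanish: the first makes $\psi$ covariantly parallel, while the second forces $\psi \equiv 0$ on $U$; parallelism and connectedness of $Y$ then imply $\psi \equiv 0$ globally. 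Thus it suffices to construct such a metric.

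For the construction I would exploit the graph-manifold decomposition of $Y$ induced by the plumbing. Since every vertex of $\Gamma$ is labelled with the trivial $\D^2$-bundle over $\SS^2$, each vertex $v$ contributes to $Y$ a region diffeomorphic to $\Sigma_v \times \SS^1$, where $\Sigma_v$ is a 2-sphere with $d_v$ open disks removed, and adjacent pieces are glued along plumbing tori by a diffeomorphism that swaps the $\SS^1$-fibre of one piece with the boundary circle of a removed disk on the other. On each such piece I would place a product metric: on the $\Sigma_v$ factor, the round metric on $\SS^2$ of radius $R$ with $d_v$ geodesic disks of common boundary circumference $\epsilon$ removed; on the $\SS^1$ factor, the round circle of length $\epsilon$. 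Such a product has constant scalar curvature $2/R^2 > 0$.

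The main technical obstacle is to smoothly match these product metrics across the plumbing tori. Thanks to the common choice of $\epsilon$, the metric induced on each plumbing torus from either side is the same flat $\SS^1(\epsilon)\times\SS^1(\epsilon)$, so the metrics agree on the gluing loci themselves. Near each torus the round metric on $\Sigma_v$ is only approximately a product, so a small collar interpolation to an exact product neighbourhood $\SS^1(\epsilon)\times\SS^1(\epsilon)\times I$ is required; this can be arranged so as to preserve non-negativity of $s$ throughout, while strict positivity on the interior of each $\Sigma_v\times\SS^1$ is automatic from the round-sphere model. Applying the Bochner argument then yields $\ker D_B = 0$ for every flat $\spinc$ connection $B$, as required.
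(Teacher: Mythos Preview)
Your approach cannot work: the paper notes immediately after the lemma that a protocork boundary does \emph{not} admit a metric of positive scalar curvature unless the protocork is trivial, and by the Kazdan--Warner trichotomy this also rules out any metric with $s\geq 0$ and $s\not\equiv 0$. The failure of your construction is already visible locally at the interpolation step you flagged as the ``main technical obstacle''. On each piece $\Sigma_v\times S^1$ you need a metric on $\Sigma_v$ (a sphere minus $d_v$ disks) with $K\geq 0$ that is a flat product near each boundary circle; then $\kappa_g=0$ on $\partial\Sigma_v$ and Gauss--Bonnet gives $\int_{\Sigma_v}K\,dA=2\pi(2-d_v)$, which is negative as soon as $d_v\geq 3$. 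Any non-trivial protocork graph has such a vertex (the condition $a_\Gamma(\vA_i,\vB_i)=1\neq r_\Gamma(\vA_i,\vB_i)$ forces $r_\Gamma\geq 3$, hence degree $\geq 3$), so negative scalar curvature is unavoidable in your product metric.

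The Morgan--Szab\'o proof, as the paper summarizes, does not try to control the sign of $s$. It puts product metrics on the pieces $\Sigma_A\times S^1$ and $\Sigma_B\times S^1$ (each of which individually has $s>0$) and joins them across the plumbing tori by flat necks $[-T,T]\times\torus^2$. The argument is a neck-stretching one: a harmonic spinor on $Y$ with long necks would, in the limit $T\to\infty$, give rise to harmonic spinors on the completed pieces, and those are excluded by the Lichnerowicz formula there. Hence for $T$ sufficiently large $\ker D_B=0$ for every flat $B$, even though the assembled metric on $Y$ necessarily has negative scalar curvature somewhere.
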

		Notice that the the conclusion of the above lemma is true in general for manifolds admitting a metric with positive scalar curvature. The proof  of \autoref{lemma:DiracInjective} does not show  that $Y$
		has a metric with positive scalar curvature (which is false unless the protocork is trivial) but is  based instead on a neck stretching argument.
		
		The metric of  Lemma~\ref{lemma:DiracInjective} can be described as follows. Recall that $Y$ is obtained by gluing two trivial $\SS^1$-bundles 	over  $\Sigma_A, \Sigma_B$,  surfaces diffeomorphic to $\SS^2$ minus some open disks. 
		The bundles $\Sigma_A\times \SS^1, \Sigma_B\times \SS^1$ are  endowed with the product metric where
		the metric over $\Sigma_A$ and $\Sigma_B$ is such that the boundary circles have unit length and restricts to a product metric in a collar 	of the boundary. In this way,  there are necks isometric to $[-T,T]\times \torus^2$ embedded in $Y$ for some $T>0$.
		The authors of \cite{MorganSzabo99} show that  if $T$ is large enough then the thesis of Lemma~\ref{lemma:DiracInjective} follows.
		
		The form of the metric allows us to assume that  the involutions $\rho_A,\rho_B$ and $\tau$ (when defined) are  isometries, indeed it is not difficult, using the realization datum described in Subsection~\ref{subsec:involutions}, to construct
		metrics that satisfy the above hypothesis and for which the involutions are isometries.

\newcommand{\nirr}{N_{\text{irr}}}
\newcommand{\nred}{N_{\text{red}}}
\newcommand{\Mred}{M^{\text{red}}}
\newcommand{\Red}[1]{\mathcal{R}_{#1}}

For the rest of this section,  $(Y,g)$ will be the  boundary of a  protocork  endowed with a metric $g$ given by \ref{lemma:DiracInjective}.  Let $\sstruc$ be a trivial \spinc \ structure on $Y$, this is unique up to isomorphism since $H^*(Y;\Z)$ has no torsion. 
We will use a perturbation $\pert \in \Pert$ of the form
\begin{equation}\label{defAdmissiblePerturbationqOverY}
		\pert  := \grad f + \pert'
\end{equation}
where $f$ is Morselike perturbation $f$ and $\pert'\in \PertZero$. We will also assume without loss of generality that 
\begin{assumption}\label{assumptionPerturbationEnum}

\begin{enumerate}
	\item $\pert$ is admissible,
	\item $f_{\torus}$ has a unique critical point of Morse index $b_1(Y) = \dim \torus$ on the torus of flat connections $\torus$, and
	\item \label{assumption2OnPert} $\pert'$ is so small in $\Pert$-norm that for any  $[(B,0)]\in \torus$, the perturbed Dirac operator $D_{B,\pert}$ (c.f. \eqref{def:perturbedDirac}) is invertible. 
	\end{enumerate} 
\end{assumption}

The existence of such $\pert$ is ensured by Proposition~\ref{prop:MorseLikePerturbationPerp}  observing that, thanks to  Lemma~\ref{lemma:DiracInjective}, the unperturbed Dirac operator $D_B$ is invertible for all $[(B,0)]\in \torus$ and invertibility is an open condition. We denote the primitive of $\pert'$ by $f'$ and set 
\begin{equation}
	\Lpert = \cL + f + f'.
\end{equation}

				\paragraph{Critical points.}
 				The critical point set  of $\Lpert $ in $\B(Y,\sstruc)$ is  finite and consisting of  $\nirr$  irreducibles
 				and  $\nred$ reducibles  $[\gota_1], \dots, [\gota_{\nred}]$, where  $\gota_i \in \torus$ is a flat connection and a critical point of $f_\torus$.  We will assume   $[\gota_{\nred}]$ to be  the unique critical point of $f_\torus$ of  maximal Morse index.
 				
 				On the blow-up  $\B^\sigma(Y,\sstruc)$, the zero locus  of $(\mathrm{grad}\Lpert)^\sigma$ consists of 
 				\begin{itemize}
 				\item 	$[\gotb_1],\dots, [\gotb_{\nirr}] \in \B^\sigma_k(Y,\sstruc)$, the lift of the irreducibles 
 				\item 	and a tower of reducibles for each $[\gota_i]$, $i=1,\dots \nred$.
			\end{itemize} 				  				
 				
 				Recall that the elements of the tower correspond to eigenvalues of the Dirac operator $D_{B_i,\pert}$ (which has simple spectrum)  where $B_i$ is the  connection associated to $\gota_i$. 
 				These eigenvalues are real and if we order them in increasing order: 

 				\begin{equation}\label{eq:ordering_eigenvalues}
 					\dots < \lambda_{-2}< \lambda_{-1} < 0 < \lambda_{0} < \lambda_1 < \dots
 				\end{equation}

 				We can denote  the  elements of the tower over $[\gota_i]$ by $[\gota_{i,j}]$
 				where $j \in \Z$ indicates that  					$[\gota_{i,j}]$ corresponds to the eigenvalue $\lambda_j$.
 				So in particular $[\gota_{i,0}]$ corresponds to the \emph{smallest positive} eigenvalue, $[\gota_{i,-1}]$  to the larger negative one etc. Therefore $[\gota_{n,i}]$ will be boundary-\emph{unstable}
 				 for $i<0$ and boundary-\emph{stable} for $i\geq 0$.
 				 
 				 \begin{remark}\label{remark:EigenvaluesSpectralFlow} Thanks to \ref{assumption2OnPert} of Assumption~\ref{assumptionPerturbationEnum}, the spectral flow of any path of perturbed Dirac operators $D_{B_t,\pert}$, $[(B_t,0)]\in \torus$ is zero. Therefore we will  enumerate the eigenvalues so that the numbering is consistent with the spectral flow, i.e. the $0$-th eigenvalue  of $D_{B_0,\pert}$ and the $0$-th eigenvalue of $D_{B_1,	\pert}$
				are the endpoints of a continuous curve $\{(\lambda(t),D_{B_t,	\pert} )\ |  \ \lambda(t) \in \sigma (D_{B_t,	\pert})\}$ where $B_t$ is flat for each $t$.
 				 \end{remark}				
								
			\newcommand{\pointa}{[\gota_{n,i}]} 				
			\newcommand{\pointb}{[\gota_{m,j}]} 				
			\newcommand{\indf}{\mathrm{index}_f }
		
		\paragraph{On the moduli spaces between reducible critical points.}
		Consider two reducible critical points in the blow-up $[\gota_{n,i}], [\gota_{m,j}]$, then we have
		$\Mred_z(\pointa, \pointb)$, the moduli space of \emph{reducible} trajectories on $\R\times Y$ limiting to
		$\pointa$ for $t\to -\infty$ and to $\pointb$ for $t\to +\infty$ of homotopy class $z$  \cite[Definition 13.1.1]{KM}.
		Adopting the convention of  \cite[Chapter 16.6]{KM}, we denote by 
		\begin{equation}
			\bar\gr (\pointa, \pointb) = \dim \Mred_z(\pointa, \pointb)
		\end{equation}
		the virtual dimension of the moduli space. Notice that since the \spinc structure is torsion $\bar\gr (\pointa, \pointb) $ does not depend from the  path $z$, so we have omitted it from the notation. 
		It can be shown that $\bar \gr$ is additive, meaning that $\bar\gr (\pointa, \pointb) + \bar\gr (\pointb, [\gota_{r,k}]) = \bar\gr (\pointa, [\gota_{r,k}])$.
		
		\paragraph{} For a reducible critical point $\pointa$, denote by 
		\begin{equation}\label{defIndfMorse}
			\indf \pointa \in \N
		\end{equation} the index of the Morse function $f_\torus$ at $[\gota_i] \in \torus$.
		\autoref{remark:EigenvaluesSpectralFlow}, and the Morselike perturbation $f$ imply the following properties analogous to the case of  $\SS^1\times \SS^2$  (cf. \cite[Section 36.1]{KM} and  the related  \cite[Lemma 33.3.3]{KM}). 
		\begin{proposition}\label{prop:bargr}
			\begin{enumerate}
				\item $\bar \gr(\pointa, \pointb) = \indf \pointa - \indf \pointb + 2(i-j)$.
				\item $\bar \partial^s_u = 0$, 
			\end{enumerate}
		\end{proposition}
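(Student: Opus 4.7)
My plan is to derive item (1) via a spectral flow calculation and then use the resulting dimension formula to prove item (2) by a short case check.

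For item (1), I would identify $\bar\gr(\pointa,\pointb)$ with the spectral flow of the Hessian of the lifted perturbed functional along any path of reducible configurations joining $\pointa$ to $\pointb$, as in \cite[Chapter 14]{KM}. Along such a path the Hessian splits into a connection block and a spinor block. Using the projector $P$ of \eqref{def:projP}, the connection block further decomposes into a harmonic part, whose spectral flow equals the Morse index difference $\indf\pointa-\indf\pointb$ of $f_\torus$, and an $L^2$-orthogonal part, whose spectral flow vanishes thanks to Hodge theory together with the choice of perturbation from Assumption~\ref{assumptionPerturbationEnum}. The spinor block is the spectral flow of the family of perturbed Dirac operators $D_{B_t,\pert}$ with asymptotic eigenvectors labelled $i$ and $j$; by Assumption~\ref{assumptionPerturbationEnum}~\ref{assumption2OnPert} and Remark~\ref{remark:EigenvaluesSpectralFlow} the eigenvalues can be enumerated consistently across the path, so this contribution is $2(i-j)$, the factor two arising from the complex structure on the spinor bundle. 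Summing the two blocks yields item (1).

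For item (2), I would feed the formula from item (1) into a dimension-parity argument to rule out the existence of the relevant moduli spaces. A boundary-stable critical point has $i\geq 0$ and a boundary-unstable one has $j<0$, so $2(i-j)\geq 2$. If $n=m$ then the base contribution is zero and $\bar\gr=2(i-j)$, an even integer at least $2$; in particular $\bar\gr\neq 1$. If $n\neq m$ then Morse-Smale transversality for $f_\torus$ on $\torus$ requires $\indf[\gota_n]\geq \indf[\gota_m]+1$ for the base moduli space to be non-empty, giving $\bar\gr\geq 3$. In either case $\bar\gr\geq 2$, so no one-dimensional moduli space of reducible trajectories connects a boundary-stable point to a boundary-unstable one, hence $\bar\partial^s_u=0$.

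The main obstacle will be bookkeeping in the spectral flow decomposition, in particular justifying the factor of $2$ in the spinor block. The cleanest approach is to model the computation on \cite[Lemma 33.3.3]{KM}, where the analogous calculation is carried out in the $\SS^1\times \SS^2$ case and only the spinor contribution appears; the Morselike form of our perturbation then adds the transparent base contribution $\indf\pointa-\indf\pointb$ that is specific to our setting.
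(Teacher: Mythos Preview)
Your proposal is correct and follows the same approach the paper alludes to: the paper does not spell out a proof but simply cites the analogous computation for $\SS^1\times\SS^2$ in \cite[Section~36.1 and Lemma~33.3.3]{KM} together with Remark~\ref{remark:EigenvaluesSpectralFlow}, and your spectral-flow decomposition into the Morse-theoretic base contribution plus the Dirac contribution, followed by the dimension/Morse--Smale argument for item~(2), is precisely how one unpacks that reference. The one cosmetic point is that the vanishing of the spectral flow on the non-harmonic connection block comes from the fact that $*d$ is independent of the base point (pure Hodge theory) rather than from Assumption~\ref{assumptionPerturbationEnum} per se, but this does not affect the argument.
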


		Notice also that despite this,  the proof given for $\SS^1\times \SS^2$ that  $\bar \partial^u_s = 0$ does not go through because the metric of  $Y$ does not have non-negative scalar curvature.

\newcommand{\proj}{\mathrm{proj}}
{ \newcommand{\ntop}{N^{\mathrm{red}}}
\paragraph{The homomorphism $\proj$.}Recall from \autoref{recap:ChainComplexes} that the group of chains defining  $\HMfrom(Y)$ and $\HMbar(Y)$ are respectively $\hat C = C^o\oplus C^u$  and $\bar C = C^s\oplus C^u$.  
Suppose for simplicity  to have chosen  an orientation for all the generators of all the chain complexes involved, so that the group of chains will have coefficients in $\Z$ instead of $\Z\Lambda([\mathfrak{c}])$.
Then we define  $\Z$-homomorphisms
\begin{align}
	\mathrm{proj}: \HMfrom(Y) \to \Z &
		& \mathrm{proj}: \HMbar(Y)\to \Z
\end{align}
by projecting along the component $[\gota_{\ntop, -1}]\in C^u$. The next lemma guarantees that $\proj$ is well defined in homology.
\begin{lemma}\label{lemma:HMred1}
Consider a generator $[c]\in \hat C = C^o\oplus C^u$, then $\langle \hat \partial [c], [\gota_{\ntop,-1}]\rangle = 0$ i.e. $\hat \partial [c] $ has no component along $[\gota_{\ntop,-1}]$.
The same holds in $\bar C$, i.e. if $[c]\in \bar C$ is a generator then $\langle \bar \partial [c], [\gota_{\ntop,-1}]\rangle = 0$.
Consequently reading  the component along  $[\gota_{\ntop,-1}]$ gives  well defined $\Z$-homomorphisms $\proj: \HMfrom(Y)\to \Z, \proj:\HMbar(Y)\to \Z$ 
\end{lemma}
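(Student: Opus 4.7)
The plan is to show separately that $\langle\hat\partial[c],[\gota_{\ntop,-1}]\rangle=0$ and $\langle\bar\partial[c],[\gota_{\ntop,-1}]\rangle=0$ for every generator $[c]$ of the respective complexes. Two structural inputs from Proposition~\ref{prop:bargr} will do almost all the work: the vanishing $\bar\partial^s_u=0$, and the specialisation of the dimension formula
\[
\bar\gr([\gota_{m,j}],[\gota_{\ntop,-1}])=\indf[\gota_m]-b_1(Y)+2(j+1),
\]
combined with our standing assumption that $\indf[\gota_\ntop]=b_1(Y)=\dim\torus$ is the unique maximal Morse index of $f_\torus$ on $\torus$.

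First I would treat $\bar\partial$ on $\bar C=C^s\oplus C^u$. The only matrix entries of $\bar\partial$ that can land in $C^u$, and hence on $[\gota_{\ntop,-1}]$, come from $\bar\partial^s_u$ and $\bar\partial^u_u$. The former vanishes identically by Proposition~\ref{prop:bargr}. For $\bar\partial^u_u$, a $1$-dimensional moduli of reducible trajectories from a boundary-unstable $[\gota_{m,j}]$ (so $j\leq -1$) limiting at $[\gota_{\ntop,-1}]$ would require $\bar\gr([\gota_{m,j}],[\gota_{\ntop,-1}])=1$, which by the displayed formula forces $\indf[\gota_m]=b_1(Y)-2j-1\geq b_1(Y)+1$. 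Since no critical point of $f_\torus$ has Morse index exceeding $b_1(Y)$, the moduli space is empty and the corresponding matrix element vanishes.

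For $\hat\partial$ on $\hat C=C^o\oplus C^u$, I would unpack the explicit formula of \cite[Section~22]{KM}. Every composition appearing there whose image lies in $C^u$ is one of three types: it reduces to $\bar\partial^u_u$ (already handled), it contains $\bar\partial^s_u$ as a factor and therefore vanishes under our hypothesis, or it is the raw map $\partial^o_u\colon C^o\to C^u$. This last map is identically zero because, as recalled just after \eqref{eq:perturbedLiftedGradient}, every flowline of $-\gradLift$ that is not contained entirely in $\partial\B_k^\sigma(Y,\sstruc)$ must either start from a boundary-unstable critical point and arrive at an irreducible, or start from an irreducible and arrive at a boundary-stable critical point: in particular no non-reducible trajectory limits to a boundary-unstable critical point in forward time, so the moduli spaces counted by $\partial^o_u$ are empty.

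The main obstacle I anticipate is the first step of the previous paragraph: carefully auditing the formula in \cite[Section~22]{KM} for $\hat\partial$ to confirm that every correction term landing in $C^u$ really is of one of the three types above. This is a bookkeeping exercise rather than a genuine difficulty, but needs care because several components $\partial^x_y$ and $\bar\partial^x_y$ intertwine in the definition of $\hat\partial$. Once it is done, the three vanishing statements assemble to show that neither $\hat\partial[c]$ nor $\bar\partial[c]$ has any component along $[\gota_{\ntop,-1}]$, so projection onto this generator descends to well-defined $\Z$-homomorphisms $\proj\colon\HMfrom(Y)\to\Z$ and $\proj\colon\HMbar(Y)\to\Z$.
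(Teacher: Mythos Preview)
Your proposal is correct and follows essentially the same route as the paper: reduce to $\bar\partial^s_u=0$ and then rule out $\bar\partial^u_u$ contributions by the index formula of Proposition~\ref{prop:bargr}. Two minor remarks. First, once you actually write out the formula for $\hat\partial$ (the paper displays it in the proof), the only entries landing in $C^u$ are $-\bar\partial^s_u\partial^o_s$ and $-\bar\partial^u_u-\bar\partial^s_u\partial^u_s$; there is no raw $\partial^o_u$ term, so your third case is vacuous and the bookkeeping you flag as an obstacle disappears. Second, your elimination of $\bar\partial^u_u$ via the inequality $\indf[\gota_m]\geq b_1+1$ is a slight variant of the paper's argument, which instead observes that any reducible trajectory into $[\gota_{\ntop}]$ must have constant connection component (since $[\gota_{\ntop}]$ is the Morse maximum), forcing $m=\ntop$ and hence $\bar\gr=2(1-i)$ even; both arguments exploit the same maximality of $\indf[\gota_{\ntop}]$.
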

\begin{proof}[Proof of Lemma~\ref{lemma:HMred1}]Recall that  the boundary map of $\hat C$ is given by
	\begin{equation*}\hat \partial = 
		\begin{bmatrix}	&	\partial^o_o & \partial^o_u\\
							& -\bar \partial^s_u \partial^o_s & - \bar\partial^u_u - \bar \partial^s_u\partial^u_s\\
		\end{bmatrix}: C^o\oplus C^u\to C^o\oplus C^u.
	\end{equation*}
	By Propostion~\ref{prop:bargr}, $\bar\partial^s_u= 0$, therefore  the only trajectories to $[\gota_{\ntop,-1}]$ come from elements of $C^u$.
	On the other hand if $[c] = [\gota_{k,-i}]\in C^u$ then an application of Proposition~\ref{prop:bargr} shows that 
	$\bar \gr([\gota_{k,-i}], [\gota_{\ntop,-1}]) =\mathrm{ index}_f([\gota_k]) -b_1 + 2(1-i)$. On the other hand, $\mathrm{ index}_f([\gota_k]) -b_1  = 0$ because otherwise we would not have a  flow of the connection component. Consequently 
	$\bar \gr([\gota_{k,-i}], [\gota_{\ntop,-1}]) = 2(1-i)\neq 1$ for $i\neq 1$. Therefore there are no $1$-dimensional moduli spaces of trajectories from $[\gota_{k,-i}]$ to $ [\gota_{\ntop,-1}]$.
\end{proof}
Another  useful corollary of the above lemma is the following sufficient condition for belonging to the reduced homology.
\begin{lemma}\label{lemma:HMred2}
	If $\sigma = (\sigma^o, k[\gota_{\ntop, -1}])  \in  \hat C = C^o\oplus C^u$, $k\in \Z$ is a cycle and $[\sigma]  \in HM^{red}(Y)$, then $k = 0$, therefore $\proj(HM^{red}(Y)) = 0$.
\end{lemma}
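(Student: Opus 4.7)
The plan is to follow the same strategy as the proof of Lemma~\ref{lemma:HMred1}, but applied to $\bar\partial^u_u$ in place of $\bar\partial^s_u$.  I would use the identification $\HMred(Y) = \mathrm{image}\,(j_*:\HMto(Y)\to\HMfrom(Y))$ coming from the long exact sequence, so that any cycle representative admits a decomposition
\begin{equation*}
\sigma = j(\tau) + \hat\partial\eta
\end{equation*}
for some cycle $\tau\in\check C = C^o\oplus C^s$ and some chain $\eta=(\eta^o,\eta^u)\in\hat C$.  The block of the chain map $j:\check C\to\hat C$ landing in $C^u$ is (up to sign) $\bar\partial^s_u$, and by Proposition~\ref{prop:bargr} this vanishes; hence $j(\tau)$ has no $C^u$-component.

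Reading off the $C^u$-component of the equation $\sigma=j(\tau)+\hat\partial\eta$ via the explicit matrix for $\hat\partial$ recalled in the proof of Lemma~\ref{lemma:HMred1}, and using $\bar\partial^s_u=0$ once more, one obtains
\begin{equation*}
k[\gota_{\ntop,-1}] = -\bar\partial^u_u\,\eta^u.
\end{equation*}
Thus the task reduces to showing that $[\gota_{\ntop,-1}]$ is never in the image of $\bar\partial^u_u$.  A contribution from a generator $[\gota_{n,i}]\in C^u$ (necessarily with $i\le -1$) would require a one-dimensional reducible moduli space, i.e.\ by Proposition~\ref{prop:bargr},
\begin{equation*}
1 = \bar\gr([\gota_{n,i}],[\gota_{\ntop,-1}]) = \indf[\gota_n] - b_1 + 2(i+1).
\end{equation*}
Since $\indf[\gota_n]\le b_1$ (with equality only at the unique top-index critical point $[\gota_\ntop]$) and $i+1\le 0$, the right-hand side is non-positive, a contradiction.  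Therefore $\bar\partial^u_u\eta^u$ carries no $[\gota_{\ntop,-1}]$ term and $k=0$.

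The closing assertion $\proj(\HMred(Y)) = 0$ comes for free: the argument above never used that $\sigma^u$ is concentrated on $[\gota_{\ntop,-1}]$, only that its $[\gota_{\ntop,-1}]$-coefficient equals $k$.  Applied to an arbitrary cycle representing a class in $\HMred(Y)$, the same reasoning forces its $[\gota_{\ntop,-1}]$-coefficient to vanish.  The one delicate point is keeping Kronheimer-Mrowka's chain-level conventions for $j$ and $\hat\partial$ straight; once that is done, the proof is essentially a rerun of Lemma~\ref{lemma:HMred1} with $\bar\partial^u_u$ in place of $\bar\partial^s_u$, combined with the observation $\indf[\gota_n]\le b_1$.
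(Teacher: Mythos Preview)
Your argument is correct but takes the dual route to the paper's. The paper uses the characterization $\HMred(Y)=\ker p_*$ rather than $\mathrm{image}\,j_*$: since the $C^u\to C^u$ block of the anti-chain map $p$ is the identity, one reads off immediately that the $C^u$-component of $p(\sigma)$ is $k[\gota_{\ntop,-1}]$; then $p(\sigma)=\bar\partial\eta$ for some $\eta\in\bar C$, and the $\bar C$-version of Lemma~\ref{lemma:HMred1} (already proved) says $\bar\partial\eta$ has no $[\gota_{\ntop,-1}]$-component, so $k=0$. This avoids having to recall the precise chain-level form of $j$---the ``delicate point'' you flag---and does not require redoing the grading computation, since that computation is exactly the content of Lemma~\ref{lemma:HMred1}.

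Your approach works just as well once the form of $j$ is verified, and your version of the grading estimate (using only $\indf[\gota_n]\le b_1$ and $i+1\le 0$ to get $\bar\gr\le 0$) is in fact a little cleaner than the paper's phrasing in the proof of Lemma~\ref{lemma:HMred1}. But note that you could have shortened your proof by simply citing Lemma~\ref{lemma:HMred1} for the vanishing of the $[\gota_{\ntop,-1}]$-component of $\hat\partial\eta$, rather than rederiving it.
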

	\begin{proof}[Proof of Lemma~\ref{lemma:HMred2}] By definition, 
		$HM^{red}(Y) = \ker p_*:\HMfrom(Y)\to \HMbar(Y)$,  where $p$ is the  anti-chain map
		\begin{equation*} p = 
		\begin{bmatrix}	&	\partial^o_s &  \partial^u_s\\
							& 0  & 1\\
		\end{bmatrix}: C^o\oplus C^u\to C^s\oplus C^u.
	\end{equation*}
	Therefore $p(\sigma) = (\partial^o_s \sigma^o +\partial^u_s[\gota_{\ntop, -1}],  k[\gota_{\ntop, -1}])$ is equal to $\bar \partial \eta $ for some $\bar \eta \in \bar C$.
	Now \autoref{lemma:HMred1} implies that $k= 0$.
	\end{proof}
}

\paragraph{The Morgan-Szab\'{o} number.} In view of the notation just introduced, the Morgan-Szab\'{o} number  defined in 
 Theorem~\ref{thm:DeltaInHMred} \ref{thm:DeltaInHMred:item:NMS} is equal to 
	\begin{equation}\label{eqDef:NMS_num}
		n_{MS} = 2 + \max_{i,j\in \{1,\dots, \nirr\}} \gr([\gotb_i], [\gotb_j]).
	\end{equation}
	Notice that this number is dependent on the metric $g$ and the perturbation $\pert$.

	Morgan and  Szab\'{o}'s paper \cite{MorganSzabo99} came out before that monopole Floer homology was developed by Kronheimer and Mrowka \cite{KM}, 
	therefore the original definition of $n_{MS}$, which is given by the definition of $\mathcal k$ in \cite[Proposition 2.3]{MorganSzabo99}, is slightly different.
	Now we want to explain why the two definitions are equivalent.
	
	In \cite{MorganSzabo99}, the Chern-Simons-Dirac functional is perturbed using an exact $2$-form $\mu = d\eta \in \Omega^2(Y, i\R)$, i.e.
	\begin{equation}
			\Lpert_{MS}(B, \psi) := \cL(B, \psi) +\frac 1 4 \int_Y (B-B_0)^t\wedge d\eta
	\end{equation} 
	 The resulting 
	 3-monopole equations are:
	\begin{spliteq}\label{MSnumber_aux_1}
		& \frac 1 2 *(F_{B^t} - d\eta) +\rho^{-1}(\psi \psi^*)_0 = 0\\
		&  D_{B}\psi = 0 \\
	\end{spliteq}
	for $(B,\psi) \in \Conf(Y,\sstruc)$. 
	The $1$-form $\eta$ is chosen in such a way that the irreducible solutions are non-degenerate and the moduli spaces of trajectories are regular.
	We will also suppose without any loss of generality that $\eta$ has  $L^2_k$-norm so small that  the perturbed Dirac operator $\psi \mapsto D_{B}\psi + \frac 1 2\rho(\eta)\psi$ is still injective
	for any flat $B$.
	Then the authors of \cite{MorganSzabo99} define $\mathcal{k}$  as $2$ plus the maximum relative degree (i.e. the dimension of the space of trajectories of $-\grad \Lpert_{MS}$) between two irreducible solutions. 
	Notice that $\Lpert_{MS} - \cL$ is not a perturbation of the type considered in the beginning of Subsection~\ref{subsec:CriticalPoints}, as can be easily seen  from the fact that the reducible solutions
	are precisely a translation of the torus of flat connections.

	\begin{proposition}
	There exists a perturbation $\pert$ of the form \eqref{defAdmissiblePerturbationqOverY} satisfying Assumption~\ref{assumptionPerturbationEnum} 	 such that the number $n_{MS}$ as defined in \eqref{eqDef:NMS_num} is equal to $\mathcal k$.
	\end{proposition}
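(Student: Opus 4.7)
The strategy is to construct a perturbation $\pert = \grad f + \pert'$ of our form whose irreducible critical points, together with their pairwise relative gradings, match those of MS's setup. Since both $\mathcal{k}$ and $n_{MS}(\pert)$ are defined as $2$ plus the maximum relative grading between irreducibles, the equality will then follow.

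Fix $\eta$ realizing MS's $\mathcal{k}$, let $\mathfrak{p}_\eta$ be the associated tame perturbation (connection component $\frac{1}{2}*d\eta$, spinor component $0$, so $\Differential_{(B,0)}\mathfrak{p}_\eta^1 = 0$ on all reducibles), and let $\{[\gotb_i^{MS}]\}_{i=1}^N$ be its irreducibles. The first step is to produce $\pert' \in \Pert^{\perp} \subset \PertZero$ that agrees with $\mathfrak{p}_\eta$ on a $\G(Y)$-invariant neighborhood of each $[\gotb_i^{MS}]$ in $\Conf(Y,\sstruc)$ and vanishes identically in a $\G(Y)$-invariant neighborhood of the reducible locus. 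In spirit one would multiply $\mathfrak{p}_\eta$ by a gauge-invariant cutoff; to stay within the Banach space $\Pert$ of \cite[Section 11.6]{KM}, one approximates this cutoff perturbation using the density of cylinder functions, exactly as in the construction leading to Proposition~\ref{prop:MorseLikePerturbationPerp}. Once $\pert'$ is in place, choose a Morselike $f$ with $f_\torus$ having a unique top-index critical point; then the reducible critical points of $\grad f + \pert'$ are exactly the critical points of $f_\torus$ on $\torus$, since $\pert'$ vanishes on reducibles. By taking $\pert'$ (and $\eta$) sufficiently small in $\Pert$-norm, the perturbed Dirac operators $D_{B,\pert}$ over $\torus$ remain invertible, so Assumption~\ref{assumptionPerturbationEnum}.\ref{assumption2OnPert} holds.

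Next, since $\pert'$ coincides with $\mathfrak{p}_\eta$ in a neighborhood of each MS irreducible, the implicit function theorem (applied to the smooth map cutting out critical points on the irreducible locus) identifies the irreducibles of $\grad f + \pert'$ with $\{[\gotb_i^{MS}]\}$. These are already non-degenerate by MS's hypothesis on $\eta$. To achieve full admissibility, invoke Proposition~\ref{prop:MorseLikePerturbationPerp}: there is a residual subset of $\PertZero$ of additional perturbations which (i) vanish in a neighborhood of all critical points of the current perturbation (preserving both the reducible and irreducible critical points and their non-degeneracy), (ii) yield a Morse-Smale perturbation of $f_\torus$, and (iii) make all trajectory moduli spaces regular. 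After this final adjustment, Assumption~\ref{assumptionPerturbationEnum} is satisfied in full, and the irreducibles and their local linearizations are unchanged from MS's.

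The relative grading $\gr([\gotb_i^{MS}],[\gotb_j^{MS}])$ is computed as a spectral flow of the Hessian of $\Lpert$ along a path of configurations, plus a correction coming from the blow-up/Dirac spectral flow at reducible endpoints (absent here, since both endpoints are irreducible). Because our $\pert$ agrees with MS's in open neighborhoods of the two irreducibles and the linearization depends only on first-order data of the perturbation at the critical points, the Fredholm index of the linearized operator on the cylinder is the same in both setups (deforming the perturbation away from the two endpoints changes the operator by a compact perturbation, which preserves the index). Hence the relative gradings match term by term, and taking maxima gives $n_{MS}(\pert) = \mathcal{k}$.

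The main technical hurdle is the first step: producing $\pert' \in \PertZero$ that faithfully reproduces $\mathfrak{p}_\eta$ near the finitely many MS irreducibles while staying inside KM's admissible Banach space $\Pert$. This requires combining a gauge-equivariant cutoff with the density arguments underlying the construction of $\Pert$ in \cite[Section 11.6]{KM}; the rest of the proof is then a bookkeeping of Fredholm indices and standard Sard--Smale genericity within $\PertZero$.
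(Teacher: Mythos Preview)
Your plan has a genuine gap in the passage from Morgan--Szab\'o's perturbation $\mathfrak{p}_\eta$ to your cutoff perturbation $\pert'$. The implicit function theorem is a local statement: it tells you that each non-degenerate MS irreducible $[\gotb_i^{MS}]$ persists under a \emph{small} change of perturbation, but the passage from $\mathfrak{p}_\eta$ to $\pert'$ is not small. Indeed $\mathfrak{p}_\eta$ has connection component $\tfrac{1}{2}*d\eta$, which is a nonzero constant section independent of $(B,\psi)$, while $\pert'$ is forced to vanish on the entire reducible locus; the difference is of size $\|\eta\|$ on a set of infinite measure in $\Conf(Y,\sstruc)$. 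Nothing in your argument prevents new irreducible critical points of $\grad\cL+\pert'$ (or of $\grad\cL+\grad f+\pert'$) from appearing in the region where $\pert'$ differs from $\mathfrak{p}_\eta$. If such extra irreducibles exist, they enter the maximum defining $n_{MS}$ and the identification $n_{MS}=\mathcal{k}$ breaks down. A secondary issue is that $\grad f$ is generically nonzero at the MS irreducibles, so these points are not critical for $\grad\cL+\grad f+\pert'$; you would need $f$ small (e.g.\ replace $f$ by $\epsilon f$) and then argue the perturbed critical points stay inside the region where $\pert'=\mathfrak{p}_\eta$, which you do not do.

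The paper sidesteps both problems with a change of variables rather than a cutoff. The translation $P_\eta(B,\psi)=(B-\tfrac{1}{2}\eta\otimes 1_S,\psi)$ carries the MS critical set diffeomorphically onto the critical set of $\cL_1(B,\psi)=\cL(B,\psi)+\tfrac{1}{4}\mathrm{Re}\langle\rho(\eta)\psi,\psi\rangle_{L^2(Y)}$, and the perturbation $\cL_1-\cL$ already lies in $\PertZero$ because it is quadratic in $\psi$. Since $P_\eta$ is an affine diffeomorphism of $\Conf(Y,\sstruc)$, it manifestly preserves relative degrees, so the irreducibles of $\cL_1$ have exactly the same pairwise gradings as those of MS. Only then does the paper add $\epsilon f\sum_{[\alpha]}\beta_{[\alpha]}$ (with bump functions killing $f$ near the irreducibles) and use a properness/compactness argument to show that for $\epsilon$ small no new irreducibles are created; the injectivity of the perturbed Dirac on $\torus$ is what rules out irreducibles accumulating on the reducible locus. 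This is precisely the step your IFT invocation cannot replace.
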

	\begin{proof}
	Define the diffeomorphism $P_\eta : \Conf_k(Y,\sstruc)\to \Conf_k(Y,\sstruc)$ as 
	\begin{equation}
		 P_\eta(B, \psi) = ({B-\frac 1 2\eta\otimes 1_S}, \psi).
	\end{equation}
	Then $P_\eta$ establishes  a diffeomorphism from  the moduli space of solutions of \eqref{MSnumber_aux_1} to the moduli space of solutions of 
	\begin{spliteq}\label{MSnumber_aux_2}
		& \frac 1 2 *F_{B^t}  +\rho^{-1}(\psi \psi^*)_0 = 0\\
		&  D_{B}\psi + \frac 1 2\rho(\eta)\psi = 0, \\
	\end{spliteq}
 $(B,\psi) \in \Conf(Y,\sstruc)$, i.e. the critical points of 
 \begin{equation}
 	\Lpert_1(B,\psi) := \cL(B, \psi)  +\frac 1 4 Re\langle \rho(\eta) \psi, \psi\rangle_{L^2(Y)}.
 \end{equation}	
 	In addition $P_\eta$ preserves the relative degree between two solutions.	
 	Indeed the relevant Fredholm operators differ by post-composition with a translation and hence have the same index.

	Now we consider a cylinder function of the form $f \sum_{[\alpha]} \beta_{[\alpha]}$ where $f$ is a chosen Morselike perturbation, $[\alpha]$ ranges over \emph{irreducible} critical points  and 
	$\beta_{[\alpha]}$ is a cylinder function that vanishes in a neighbourhood of $[\alpha]$ and is equal to $1$ outside of a slightly larger neighbourhood disjoint from the reducible locus. 
	These functions are constructed as in the paragraph preceding the definition of $\Pert_f$ in the proof of Proposition~\ref{prop:MorseLikePerturbationPerp}.
	
	We claim that exists $\epsilon_0>0$ such that  $\forall \epsilon \in (0, \epsilon_0)$ the perturbed functional  
	\begin{equation}
		\Lpert_{2, \epsilon} =  \Lpert_1 + \epsilon f \sum_{[\alpha]} \beta_{[\alpha]}
	\end{equation}	
	 has as reducible critical points the critical points those $f$	and 	as irreducible critical points those of $\Lpert_1$. By contradiction  suppose $\epsilon_n\to 0$ and let $(B_n, \psi_n) \in \Conf(Y)$
	 be \emph{irreducible } critical points of $\Lpert_{2,\epsilon_n}$ that are not critical points of  $\Lpert_1$. Then, by the properness property of tame perturbations  \cite[Proposition 11.6.4]{KM}, 
	up to applying a gauge transformation or passing to a subsequence we can assume that $(B_n , \psi_n)\to (B_\infty, \psi_\infty)$ critical point of  $\Lpert_1$.
	The configuration $(B_\infty, \psi_\infty)$ cannot be irreducible because, thanks to the bump function $\beta_{[\alpha]}$, each irreducible critical point of $\Lpert_1$ has a neighbourhood 
	where it  is the only critical point of $\Lpert_{2,\epsilon_n}$. Thus $(B_\infty, \psi_\infty)$ is reducible,  $\psi_\infty= 0$ and $B_\infty$ is flat.
	Consider the normalized sequence $\Phi_n = \psi_n/\norm{\psi_n}_{L^2(Y)}$, these unit spinors satisfy the elliptic equation $ D_{B_n}\Phi_n + \frac 1 2\rho(\eta)\Phi_n = 0 $.
	Thanks to elliptic regularity we can extract a converging subsequence $\Phi_n \to \Phi_\infty$ to a unit spinor in the kernel of  $ D_{B_\infty}+ \frac 1 2\rho(\eta)$, but the latter operator
	is injective by assumption. This concludes the proof of the claim.

	We choose  $ \epsilon \in (0, \epsilon_0)$ and put $ \Lpert_{2} := \Lpert_{2, \epsilon}$. 
	The relative degree between the irreducible critical points of $\Lpert_{2}$  is the same of  $\Lpert_1 $ because  the relative Fredholm operator varies by a compact perturbation. 

	It is possible that the reducible critical points of $\Lpert_2$ are degenerate  \emph{in the blow-up} . We can remedy this by adding a small perturbation 
 	$\pert_3\in \PertZero$ vanishing in a neighbourhood $\cO$ of the \emph{irreducibles}  critical points (cf. the definition of $\PertZero_{\cO}$ in \autoref{appendix:ProofMorseLike} with the caveat that therein $\cO$ is also a neighbourhood of reducible critical points).
 	If the perturbation is small enough  no irreducible critical points are introduced in the blow-down (the reducible critical points are not affected by a perturbation in $\PertZero$). 
 	This follows from  \cite[Proposition 11.6.4]{KM} and an application of the implicit function theorem exploting   that if ${\grad \Lpert_2(B,0) = 0}$, $\pert_3 \in \PertZero$ then $\grad \Lpert_2(B,0) + \pert_3 (B,0) =0$ and that 
 	reducible critical points are non-degenerate  in the blow-down, i.e. the relevant operator is an isomorphism when acting over the Coulomb slice at the critical point.
 	
 	 We denote the perturbed functional as $\Lpert_3 = \Lpert_2 + f_{\pert_3}$, where $f_{\pert_3}$ is a primitive of $\pert_3$.
	
	After possibly enlarging the large Banach space of perturbations $\Pert$, we see that the tame perturbation associated to $\Lpert_3$ takes the form: 
	$\grad (\epsilon f) + \mathfrak{u}_0$, where $\mathfrak{u}_0\in \PertZero$  and $\epsilon f$ is Morselike. 
	This is almost of the kind of perturbations required, except for the fact that the moduli spaces of trajectories might not be regular. 
	This issue can be solved by adding another small perturbation  $\mathfrak{u} =\pert_{\epsilon f}+\pert''$  as in Lemma~\ref{app:Lemma2_0MorseLike} in the proof of Proposition~\ref{prop:MorseLikePerturbationPerp}.	

	This shows that $\mathcal k$ as defined  \cite[Proposition 2.3]{MorganSzabo99} is equal to $n_{MS}$ as defined in \eqref{eqDef:NMS_num} using the perturbation $\pert := \grad (\epsilon f) + \mathfrak{u}_0 + \mathfrak{u}$.
 	\end{proof} 

\subsection{A splitting theorem for  Floer homology.}\label{subsec:FloerHomologyCobordismSplitting}			
\begin{plan}The aim of this subsection is to prove Theorem~\ref{Intro:thm:Splitting}. We will construct two cobordism $W$ and $Q$
such that $Q\circ W$ is trivial, this will provide us with a first  splitting, then the vanishing of the triple cap product (Proposition~\ref{propProtocork}) will allow us to characterize the two factors. The proof will not rely on the other parts of this section.
\end{plan}

	\newcommand{\sumb}{b_1}
	\paragraph{Notation.}
	Throughout this subsection, $\Gamma$ is a given protocork plumbing graph with sphere-number $n$, $\Realiz$ a given realization datum for $\Gamma$ and we denote by $N_0, N_1 $ and $Y$  respectively the realization $P_0(\Gamma,\Realiz)$, its reflection $P_1(\Gamma, \Realiz)$ and their common boundary $\partial P_0(\Gamma,\Realiz)$.
	We will also adopt the shorthand notation
		 \begin{equation}
		 	b_1 := b_1(Y),
		 \end{equation} and $n M := M\#\dots\#M$ to denote the $n$-fold connected sum of a $3$-manifold $M$ with itself, thus $\sumb \SS^1\times \SS^2 = \#_{i=1}^{b_1(Y)}(\SS^1\times \SS^2)$.
	Also for a cobordism $W:M_0\to M_1$ we will denote by $\partial_- W = M_0$ the incoming boundary (with opposite orientation) and by $\partial_+ W = M_1$ the outcoming boundary.
	\paragraph{The cobordism $W$.} We construct a cobordism $W: \sumb \SS^1\times \SS^2\to Y$. Start with a Kirby diagram $\mathcal D$ for $N_0$ of the type described in Subsection~\ref{subsec:KirbyDiagrProtocorks}.
	The diagram $\mathcal D$ will consists of a link with components of three types:
	\begin{itemize}
	\item  the $0$-framed knots $\alpha_1,\dots, \alpha_n$, relative to the vertices $\vA_i$, $i=1,\dots, n$, 
	\item the dotted circles $\beta_1,\dots, \beta_n$  relative to the vertices $\vB_i$, $i=1,\dots, n$, and 
	\item the dotted cicles $\mu_1,\dots, \mu_{b_1}$ associated to edges in the complement of a spanning tree of $\Gamma$.
	\end{itemize}
	 Considering the collection of the $\mu_i$s alone induces a surgery presentation for $\sumb \SS^1\times \SS^2$, indeed when looking at the boundary of a handlebody induced by a Kirby-diagram the dotted circles contribute as $0$-framed knots.
	 Then we construct $W$ in two steps: firstly we  add $n$ 1-handles to $(\sumb\SS^1\times \SS^2)\times \{1\}\subset (\sumb\SS^1\times \SS^2)\times [0,1]$ obtaining $W'$. 
	 The outcoming boundary $\partial_+ W'$ has a surgery presentation given by considering only the $\mu_i$s and the $\beta_i$s in $\mathcal{D}$ and regarding them as $0$-framed.
	 The second step  consist in gluing $n$  2-handles to $\partial_+ W'$, with reference to the surgery presentation just described, the attaching curves of the $2$-handles will be given 
	 by the $\alpha_i$s and will be $0$-framed. 
	 We notice also that  $W$ is an homology cobordism because the $1$-handles are algebraically cancelled by the $2$-handles.

	 \paragraph{The cobordism $Q$.} We will construct a cobordism $Q:Y\to \sumb\SS^1\times \SS^2$ such that the composition $Q\circ W: \sumb\SS^1\times \SS^2\to \sumb\SS^1\times \SS^2$ is a trivial cobordism, i.e. $Q\circ W \simeq (\sumb\SS^1\times \SS^2)\times I$. Consider the surgery presentation for $Y$ induced by the diagram $\mathcal{D}$.
	 We construct $Q$ in two steps. Firstly for each $i=1,\dots, n$,  we add a $0$-framed $2$-handle meridional to $\beta_i$ to $(\sumb\SS^1\times \SS^2)\times \{1\}\subset (\sumb\SS^1\times \SS^2)\times [0,1]$ obtaining a cobordism $Q'$. 
	 A surgery presentation for $\partial_+ Q'$ is given by the surgery presentation for $Y$ induced by the diagram $\mathcal{D}$, together with the attaching circles (meridional to the $\beta_i$s) of the $0$-framed 2-handles we just add.
	 Denote by $\gamma_i$, $i=1,\dots, n$ a loop in $\partial_+ Q'$ meridional to $\alpha_i$.
	 As a second step, for each $i=1,\dots, n$ we glue  a 3-handle to $\partial_+ Q'$ in such a way that,  its  attaching sphere meets $\gamma_i$ exactly in one point.
	Such a sphere exists in $\partial_+ Q'$, because one can slide $\alpha_i$ over the newly added $0$-framed $2$-handles meridional to the $\beta_j$s	  so that $\alpha_i$ becomes a $0$-framed unknot unlinked from the rest of the diagram having $\gamma_i$ as one of its meridians.

	\begin{lemma}\label{lemma:Qleftinverse} $Q$ is a left inverse of $W$, i.e. $Q\circ W: \sumb\SS^1\times \SS^2\to \sumb\SS^1\times \SS^2$ is a trivial cobordism.
	\end{lemma}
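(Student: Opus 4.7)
The plan is to produce a handle decomposition of $Q\circ W$ on $(\sumb\SS^1\times\SS^2)\times I$ in which every handle cancels in pairs, so that by Morse theory $Q\circ W$ is the trivial product cobordism. Relative to the incoming boundary the handles attached are, in order of increasing index: $n$ 1-handles $\beta_1,\dots,\beta_n$ from $W$; $n$ 2-handles along $\alpha_1,\dots,\alpha_n$ from $W$; $n$ 2-handles along meridians of the $\beta_i$ from $Q$; and $n$ 3-handles from $Q$.

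First I would perform $n$ 1/2-handle cancellations. Any two 2-handles commute in the handle order, since a generic small isotopy of their attaching circles in the three-dimensional intermediate level makes them disjoint. After reordering, each meridional 2-handle from $Q$ sits directly after the 1-handle $\beta_i$ it dualizes, and its attaching circle meets the belt 2-sphere of $\beta_i$ transversely in a single point by construction. Smale's cancellation theorem eliminates the pair, and after carrying this out for every $i=1,\dots,n$ what remains is $(\sumb\SS^1\times\SS^2)\times I$ with the $n$ 2-handles along $\alpha_i$ and the $n$ 3-handles from $Q$ attached.

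Next I would perform $n$ 2/3-handle cancellations. By construction of $Q$, the attaching 2-sphere of each 3-handle meets the meridian $\gamma_i$ of $\alpha_i$ in $\partial_+ Q'$ transversely in a single point, and once the 1/2-handle cancellations have been executed this meridian is isotopic in the relevant boundary to the belt circle of the $\alpha_i$-handle. The Smale cancellation hypothesis is therefore again met and each pair cancels. No handles survive, and $Q\circ W\simeq (\sumb\SS^1\times\SS^2)\times I$.

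The main subtle point I expect is ensuring that the transverse single-point intersection between a 3-handle attaching sphere and the belt circle of its corresponding $\alpha_i$-handle persists through the reordering of 2-handles and the first wave of cancellations. This is however guaranteed by the order chosen in the construction of $Q$: the 3-handles are introduced only after the meridional 2-handles of $Q$ have already unknotted $\alpha_i$ in $\partial_+Q'$, so the geometric configuration required for the 2/3 cancellation is established in $\partial_+Q'$ before any 1/2 pair is removed. Since the 1/2 cancellations are localized near the $\beta_i$ and can be carried out disjointly from the neighborhoods containing the 3-handle attaching spheres and the belt circles $\gamma_i$, they do not disturb this configuration, and all cancellations can be performed sequentially.
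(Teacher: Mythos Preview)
Your proof is correct and follows essentially the same approach as the paper: both arguments cancel the 1-handles of $W$ against the meridional 2-handles of $Q$, and the 2-handles of $W$ against the 3-handles of $Q$. You have supplied considerably more detail than the paper's terse proof, in particular the explicit commutation of the 2-handles and the verification that the 2/3 cancellation condition survives the first wave of cancellations, but the underlying argument is the same.
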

	\begin{proof} The cobordism $Q\circ W$ is constructed from $\sumb\SS^1\times \SS^2\times [0,1]$  by attaching handles to $\sumb\SS^1\times \SS^2\times \{1\}$ in four steps by repeating the process
	described to construct $W$ and then $Q$.
	Now it is sufficient to note that by construction the addition of the $2$-handles of $Q$ will cancel the $1$-handles of $W$ and the $3$-handles will cancel the $2$-handles of $W$.
	We are thus left with a cobordism with no handles hence trivial. 
	\end{proof}
	
	Recall that, as stated in \cite[Theorem 3.4.3]{KM}, Floer homology defines a covariant functor from the cobordism category to the category of $\ZU$-modules. In particular, we have $\ZU$-homomorphisms
	\begin{spliteq}
	& \HMfrom(W) : \HMfrom(\sumb\SS^1\times \SS^2) \to \HMfrom(Y)\\
	& \HMfrom(Q) : \HMfrom(Y) \to \HMfrom(\sumb\SS^1\times \SS^2)
	\end{spliteq}
	such that, thanks to Lemma~\ref{lemma:Qleftinverse},
		\begin{equation}\label{eq:CompositionIsId}
				\HMfrom(Q)\circ\HMfrom(W) = \HMfrom(Q\circ W) = \HMfrom((\sumb\SS^1\times \SS^2)\times I) = \hat{\mathrm{id}},
		\end{equation}
	the identity $\HMfrom(\sumb\SS^1\times \SS^2)\to \HMfrom(\sumb\SS^1\times \SS^2)$.
	With analogous maps $\HMto(W)$, $ \HMto(Q)$  and $\HMbar(W)$, $ \HMbar(Q)$ composing to $\check {\mathrm{id}}$ and $\bar{\mathrm{id}}$ respectively.

	\newcommand{\Sone}{\sumb\SS^1\times \SS^2}
	Since $\Sone$ has a metric with positive scalar curvature by  \cite[Proposition 36.1.3]{KM} the Floer homology  groups  of $\Sone$ are isomorphic as $\ZU$-modules to 
	\begin{spliteq}	\label{eq:FloerHomologyofConnSumofS1S2}
	& \HMfrom(\Sone) \simeq \Lambda^*(\Z^{b_1})\otimes_{\Z} \ZU\\
	& \HMto(\Sone) \simeq \Lambda^*(\Z^{b_1})\otimes_{\Z} \Z[U^{-1}, U]/\Z[U]\\
	& \HMbar(\Sone) \simeq \Lambda^*(\Z^{b_1})\otimes_{\Z} \Z[U^{-1}, U]].
	\end{spliteq}
	We will denote  by 
	\begin{align}
	& \hat{G} < \HMfrom(Y)	& \check{G} < \HMto(Y)  & &  \bar{G} < \HMbar(Y) &
	\end{align}
		The image of $\HMfrom(W), \HMto(W)$ and $\HMbar(W)$ respectively.	
		
		Before proceding to the next proposition, we recall that for any $3$-manifold $M$,  there is a long exact sequence \cite[pg. 52]{KM}
		\begin{equation}\label{ExactSequenceinFloerHomology}
		\cdots \to  \HMto(M) \overset{j_*}{\to} \HMfrom(M) \overset{p_*}{\to}\HMbar(M) \to \dots,
		\end{equation}
		and the reduced homology  $\HMred(M) < \HMfrom(M) $ is  defined as the kernel of the map $p_*: \HMfrom(M)\to \HMbar(M)$   \cite[Definition~3.6.3 ]{KM}. For any $3$-manifold, the reduced homology has finite rank.
	
		The next proposition  clearly implies Theorem~\ref{Intro:thm:Splitting}.
	\begin{proposition}\label{prop:SplittingOfHomology}
	Let $\sstruc$ be the the unique (up to isomorphism) torsion   \spinc structure of $Y$.
	The  cobordism map $\HMfrom(W)$ has image contained in $\HMfrom(Y,\sstruc)$, preserves the absolute $\Q$-grading and has 
	 a left inverse. The same applies to $\HMto(W)$ and $\HMbar(W)$, 
	 hence $\hat{G}, \check G, \bar G$ are isomorphic to the $\ZU$-modules on the right of \eqref{eq:FloerHomologyofConnSumofS1S2}.
	In addition,  we have that 
	\begin{spliteq}
	& \HMfrom(Y) \simeq \hat G \oplus \HMred(Y)\\
	& \HMto(Y) \simeq \check G \oplus \HMred(Y)\\
	& \HMbar(Y) \simeq \bar G
	\end{spliteq}
	
	With respect to this splitting, the map $j_*:\check G \oplus \HMred(Y)\to  \hat G \oplus \HMred(Y)$ is given by $j_*(g,q) = (0,q)$ and the map $p_*:\hat G \oplus \HMred(Y)\to \bar G$ 
	is given by $p_*(g,q) = p^{\Sone}_*(g)$ where $p_*^{\Sone}$ is   conjugated to $p_*: \HMfrom(\Sone)\to \HMbar(\Sone)$ via $\HMfrom(W)$ and $\HMbar(W)$.
	\end{proposition}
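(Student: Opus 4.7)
The plan is to exploit Lemma~\ref{lemma:Qleftinverse} together with the functoriality of monopole Floer homology to obtain split injections in all three flavors, upgrade the one on $\HMbar$ to an isomorphism using the vanishing of the triple cup product from Proposition~\ref{propProtocork}, and finally identify the complementary summands with $\HMred(Y)$ via a diagram chase in the exact triangle.

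First I would settle the $\spinc$ and grading bookkeeping. Since $W$ is built from $\Sone\times I$ by attaching $n$ $1$-handles and $n$ algebraically cancelling $2$-handles it is a homology cobordism, so $H^2(W;\Z)\simeq H^2(\Sone;\Z)\simeq H^2(Y;\Z)$ is torsion-free; consequently the unique torsion $\spinc$ structure on $\Sone$ extends uniquely to a $\spinc$ structure $\sstruc_W$ on $W$ with $c_1(\sstruc_W)=0$, which restricts to the unique torsion $\spinc$ structure on $Y$. Moreover $\chi(W)=0=\sigma(W)$, so the degree-shift formula forces the induced cobordism maps to preserve the absolute $\Q$-grading (and similarly for $Q$). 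Then by \cite[Theorem 3.4.3]{KM} and Lemma~\ref{lemma:Qleftinverse}, $\HMfrom(Q)\circ\HMfrom(W)=\hat{\mathrm{id}}$, with analogous identities for $\HMto$ and $\HMbar$; hence $\HMfrom(W),\HMto(W),\HMbar(W)$ are split injective $\ZU$-module maps and $\hat G,\check G,\bar G$ are identified via these maps with the Floer homologies of $\Sone$ from \eqref{eq:FloerHomologyofConnSumofS1S2}.

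The key topological step is to prove that $\HMbar(W)$ is an isomorphism. The module $\HMbar(Y,\sstruc)$ for a torsion $\spinc$ structure is determined as a $\Q$-graded $\ZU$-module by $H^1(Y;\Z)$ together with the triple cup product structure (compare the computations in Section~35 of \cite{KM}). By Proposition~\ref{propProtocork} the triple cup product on $Y$ vanishes, so $\HMbar(Y,\sstruc)\simeq\HMbar(\Sone)$ as $\Q$-graded $\ZU$-modules. Since $\HMbar(W)$ is a grading-preserving split injection between two modules whose $\Z$-ranks agree in each $\Q$-degree, it must be an isomorphism, so $\bar G=\HMbar(Y)$.

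The remaining identifications are a diagram chase. If $x\in\ker\HMfrom(Q)$, naturality gives $\HMbar(Q)p_*(x)=p_*\HMfrom(Q)(x)=0$, and since $\HMbar(Q)$ is an isomorphism, $p_*(x)=0$, so $x\in\HMred(Y)$. Conversely, for $y\in\HMred(Y)$, $p_*^{\Sone}\HMfrom(Q)(y)=\HMbar(Q)p_*(y)=0$ forces $\HMfrom(Q)(y)\in\HMred(\Sone)$, which vanishes by \cite[Proposition 36.1.3]{KM}. The analogous argument using $j_*$, together with $j_*^{\Sone}=0$ (since $\HMred(\Sone)=0$), identifies $\ker\HMto(Q)$ with $\HMred(Y)$, and the formulas for $j_*$ and $p_*$ in the splitting then follow by naturality. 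The main obstacle is the third step: split injectivity does not automatically yield surjectivity, and one genuinely needs the topological computation of $\HMbar$ in terms of the triple cup product (combined with Proposition~\ref{propProtocork}) to conclude; once $\HMbar(W)$ is an isomorphism, the rest is essentially formal.
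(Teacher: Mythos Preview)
Your proposal is correct and follows essentially the same route as the paper: both arguments use $Q\circ W$ trivial to get split injections, invoke the vanishing of the triple cup product (Proposition~\ref{propProtocork}) together with the computation of $\HMbar$ from \cite{KM} to force $\HMbar(W)$ to be an isomorphism, and then run a diagram chase with the naturality of $j_*,p_*$ to identify the complementary summands with $\HMred(Y)$. The only cosmetic difference is in the rank step: the paper tensors with $\R$ to work over the PID $\R[U^{-1},U]]$ and then appeals to torsion-freeness of $\HMbar(Y)$, whereas you argue degree-wise over $\Z$; your version implicitly uses that $\HMbar(Y)$ is torsion-free (otherwise a split injection of equal rank need not be surjective), which is true here and is exactly what the paper invokes as well.
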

	\begin{proof} The Floer homology of $\Sone$ is supported in its torsion \spinc structure, $\sstruc_{\Sone}$ \cite[Proposition 36.1.3]{KM} and since $W$ is an homology cobordism there is a unique  \spinc structure over $W$ extending $\sstruc_{\Sone}$ (up to isomorphism).  By considering a trivial \spinc structure we see that  it must   restrict to $\sstruc_0$ over $Y$. To see that $\gr^{\Q}$ is preserved it is enough to compute the degree of the map  \cite[Equation (28.3)]{KM}
	\begin{equation}
	\frac 1 4 (c_1^2-\sigma (W)) - \frac 1 2 (\chi(W) + \sigma(W) + b_1(\Sone)- b_1(Y)) = 0.
	\end{equation}
	We proceed to prove the splitting part of the proposition.
	$Q$ provides the left inverse for the cobordism maps induced by $W$ (see Lemma~\ref{lemma:Qleftinverse} and \eqref{eq:CompositionIsId}).
	Consequently there are submodules $\hat Q, \check Q, \bar Q$ given by the kernel of $\HMfrom(Q), \HMto(Q), \HMbar(Q)$ such that
	\begin{align}
	&  \HMfrom(Y)=\hat{G}  \oplus \hat Q	& \HMto(Y) = \check{G} \oplus \check Q & &   \HMbar(Y) = \bar{G}\oplus \bar Q.&
	\end{align}
	Firstly we will show that  $\bar{Q}= \{0\}$.
	The diagram  
	\begin{equation}
		\HMbar(\Sone)\overset{\HMbar(W)}{\longrightarrow} \HMbar(Y)  \overset{\HMbar(Q)}{\longrightarrow} \HMbar(\Sone)
	\end{equation}	 
	is, up to isomorphism, 
	\begin{equation}\label{GGQG}
		\bar G\hookrightarrow \bar G \oplus \bar Q \to \bar G
	\end{equation}	 
	where the first map is the inclusion into the first summand and the second map the projection to the first summand.
	Now we use that, as showed in Proposition~\ref{propProtocork}, the triple cup product of $Y$ vanishes, therefore   \cite[pg. 687-688]{KM} implies that
	$\HMbar(Y)\simeq \Lambda^*(\Z^{b_1})\otimes_{\Z} \Z[U^{-1}, U]]$.
	
	The maps of  \eqref{GGQG} can be promoted to $\Z[U^{-1}, U]]$-module homomorphisms. 
	Tensoring by $\R$ so that $\R[U^{-1}, U]]$ is a PID, and $\HMbar(Y)\otimes \R$ is  a free $\R[U^{-1},U]]$-module of rank $2^{b_1}$, we see that $\bar Q \otimes \R = 0$, because  the maps of \eqref{GGQG} tensored with $\R$ have to be isomorphisms for dimensional reasons.  
		This also forces the maps of \eqref{GGQG} to be isomorphisms over $\Z[[U]]$ because if $\bar Q\neq 0$ also $\bar Q \otimes \R\neq 0$ as $\HMbar(Y)$ is torsion free. 
		
	This shows that $\bar{Q} = \{0\}$, now we will show that  $\check Q \simeq \hat Q = \HMred(Y) $.
	The commutative diagram 
	\begin{equation}
	\begin{tikzcd}
		&\HMto(\Sone)\arrow[r, "\HMto(W)"]\arrow[d, "j_*"]& \HMto(Y)\arrow[r, "\HMto(Q)"]\arrow[d, "j_*"]& \HMto(\Sone)	\arrow[d, "j_*"]\\ 
		&\HMfrom(\Sone)\arrow[r, "\HMfrom(W)"]\arrow[d,"p_*"]& \HMfrom(Y)\arrow[r, "\HMfrom(Q)"]\arrow[d, "p_*"]& \HMfrom(\Sone)\arrow[d, "p_*"]\\ 
		&\HMbar(\Sone)\arrow[r, "\HMbar(W)"]& \HMbar(Y)\arrow[r, "\HMbar(Q)"]& \HMbar(\Sone)\\ 
	\end{tikzcd}
	\end{equation}
	is isomorphic to 
	\begin{equation}
	\begin{tikzcd}
		&\check G\arrow[d, "0"]\arrow[r, hookrightarrow]& \check G \oplus\check Q\arrow[r, "pr_1"] \arrow[d]& \check G	\arrow[d,"0"]\\ 
		&\hat G \arrow[d,hookrightarrow]\arrow[r,hookrightarrow] & \hat G \oplus \hat Q \arrow[r, "pr_1"]\arrow[d]& \hat G\arrow[d, hookrightarrow]\\
			&\bar G \arrow[r,hookrightarrow,"\simeq"] & \bar G \arrow[r, "\simeq"]& \bar G\\
	\end{tikzcd}	
	\end{equation}
	where the first and last column are the exact sequence  \eqref{ExactSequenceinFloerHomology} for $\Sone$ ($j_*$ is zero by  \cite[Proposition 36.1.3]{KM}).   The middle column is the exact sequence \eqref{ExactSequenceinFloerHomology} for $Y$, 
	and since the diagram is commutative and horizontally the first maps are inclusions, we see that when restricted to the $G$-factors
	the middle column coincides with  exact sequence for $\Sone$.
	 The claim $\hat Q = \HMred(Y)$ is equivalent to $\hat Q = \ker (\hat G \oplus \hat Q\to \bar G)$ which is now clear from the last two rows of the diagram. To see that $j_*:\check{Q}\overset{\simeq}{\to} \hat{Q} $ is an isomorphism we consider again the diagram above for surjectivity and for injectivity the exact sequence $\bar{G}\to \check{G}\oplus \check{Q}\to \hat G\oplus \hat Q$ restricting over the $G$s to the exact sequence of $\Sone$. 
	\end{proof}

\section{On the variation of the Seiberg-Witten invariants.}\label{sec:VariationSW}
\begin{plan}		
The aim of this section is to give a proof of the main theorem, Theorem~\ref{thm:DeltaInHMred}. The proof of items \ref{thm:DeltaInHMred:item:Delta} and \ref{thm:DeltaInHMred:item:HomOrientation}
rely on the study of some cobordisms maps and, in the case of \ref{thm:DeltaInHMred:item:Delta}, 
on Theorem~\ref{Intro:thm:Splitting}, used  in the form of Proposition~\ref{prop:SplittingOfHomology}. 
We point out that \ref{thm:DeltaInHMred:item:HomOrientation} is a non-trivial statement due to the fact that even though we have a well defined notion of composition of homology orientations, in practice,
is hard  to compute the composite orientation from the definition. The idea of our proof is to reduce to the case of an isomorphism between cobordisms, where the variation of the homology orientation 
is easily computed from the action on homology.
\end{plan}

\paragraph{Setting.}
In this section we adopt the same notation of Subsection~\ref{subsec:FloerHomologyCobordismSplitting}, in particular $N_0 = P_0(\Gamma,\Realiz)$  is a protocork, $N_1 = P_1(\Gamma, \Realiz)$ its reflection, $Y=\partial N_0 = \partial N_1$ is their common boundary and $b_1 = b_1(Y)$.
For each $i= 0,1$, we write  $N_i\setminus \ball$ to denote the manifold $N_i$ minus a ball removed from its interior, and we regard it as a cobordism $N_i\setminus \ball: \SS^3\to Y$.
We recall that an homology orientation \cite[Definition 3.4.1]{KM} for a cobordism $C:\partial_- C \to \partial_+ C$ is an orientation of 
\begin{equation}\label{def:homologyOrientationLine}
	\Lambda^{\max} H^1(C; \R) \otimes \Lambda^{\max} I^+(C)\otimes \Lambda^{\max} H^1(\partial_+ C; \R)
\end{equation}
where $I^+(C)$ is a chosen maximal non-negative subspace for the intersection form on $I^2(C) :=  \ker\left\{H^2(C;\R) \to H^2(\partial C; \R)\right\}$.
We recall that the homology orientation is part of the data necessary to define the cobordism maps induced by a cobordism. 
Choose   $\mu_0$,  an homology orientation for $N_0\setminus \ball$.

	\newcommand{\crita}{[\gota_{\ntop,-1}]}
	\begin{proof}[Proof of Theorem~\ref{thm:DeltaInHMred} \ref{thm:DeltaInHMred:item:Delta}] 
	 By Proposition~\ref{prop:SplittingOfHomology}, we have a splitting $\HMfrom(Y) \simeq \hat{G}\oplus HM^{red}(Y)$ 
	  where $\hat{G}= Im(\HMfrom(W))$, and $W: \Sone \to Y$  has been defined in Subsection~\ref{subsec:FloerHomologyCobordismSplitting}.
	  From the construction of $W$, it is clear that $N_0\setminus \ball = W\circ F$ where $F:\SS^3\to \Sone$ is the cobordism obtained by removing a ball from $\natural_{i=1}^{b_1}\SS^1\times \D^3$
	  Choosing  homology orientations on $W$ and $F$ so that their composition agrees with $\mu_0$, we see that
	  \begin{equation}
	   x_0 = 	\HMfrom(N_0\setminus\ball) (\hat 1) = \HMfrom(W)\circ \HMfrom(F)(\hat 1),
	  \end{equation}
	  therefore  $x_0\in \hat{G}$.

	     We recall that since we are dealing with torsion \spinc structures, the Floer homologies of $\SS^3, \Sone$ and $Y$ are endowed with an absolute $\Q$-grading \cite[Section 28.3]{KM} such that multiplication by $U$ is an endomorphism of degree $-2$. 
		The generator     $\hat 1 \in \HMfrom(\SS^3)\simeq \ZU$  has degree $\gr^{\Q} (\hat 1)  = -1$.
		And, as $\Q$-graded $\Z[[U]]$-modules,  
		\begin{spliteq}
		& {\HMfrom(\Sone) \simeq \Lambda^* (\Z^{b_1})\otimes_{\Z} \Z[[U]]},\\
		&  \gr^{\Q}\left((e_{i_1}\wedge \cdots \wedge e_{i_k})\otimes 1\right) = (k-1) - b_1,\\
		\end{spliteq}
		 where $\{e_i\}_{i=1}^{b_1}$ denotes the standard basis of $\Z^{b_1}$. This  follows from the discussion of \cite[Chapter 36]{KM}, alternatively we can use computations analogous to those of  Proposition~\ref{prop:DimOfModuli}.
		
	Now, $\HMfrom(F)$ is a homomorphism of degree \cite[Eq. (28.3)]{KM}
		\begin{equation}
		\frac 1 4 \big(\underbrace{c_1^2}_{0}-2\underbrace{\chi(F)}_{-b_1} -3\underbrace{\sigma(F)}_{0}\big) -\frac 1 2 \big(\underbrace{b_1(\Sone)}_{b_1} - b_1(\SS^3)\big) = 0.
		\end{equation}
		Thus,  $\HMfrom(F)(\hat 1)$ belongs to the $\Z$-submodule generated by $(e_1\wedge \cdots \wedge e_{b_1})\otimes 1$.
		Moreover, since also $\HMfrom(W)$ has degree zero (see Subsection~\ref{subsec:FloerHomologyCobordismSplitting}),  the elements of $\hat G$ of degree $\gr^{\Q}(x_0)=-1$ constitute a free $\Z$-module of rank $1$.
	
		We claim that this $\Z$-module is generated by $x_0$. To see this, we consider the cobordism $C:Y\to \SS^3$ that, with respect to the surgery presentation  of $Y$ described in Subsection~\ref{subsec:KirbyDiagrProtocorks}, is obtained by adding $b_1(Y)$ 0-framed  2-handles to $Y\times\{1\}\subset Y\times [0,1]$ meridional to the dotted circles generating $H_1(Y)$. Then, from the Kirby diagram described in Subsection~\ref{subsec:KirbyDiagrProtocorks}
		it is clear that in the composition $C\circ (N_0\setminus \ball)$ is a trivial cobordism because  the  $2$-handles coming from $C$ will cancel $b_1(Y)$ $1$-handles of $N_0\setminus \ball$ and, at this point, the remaining  $1$-handles can be cancelled		
		in pairs with the  $2$-handles. 
		Consequently, for some homology orientation of $C$, $\mu_C$,
		\begin{equation}
		\HMfrom(C, \mu_C)(x_0) =  \HMfrom(C\circ (N_0\setminus \ball))(\hat 1) = \hat 1 \in \HMfrom(\SS^3)
		\end{equation}
		 which proves the claim because $\hat 1$ is a generator of the $\Z$-submodule of elements of its degree.
		 
	By looking at the Kirby diagram for $N_1$ given in Subsection~\ref{subsec:KirbyDiagrProtocorks}, we see that also the composition $C\circ (N_1\setminus \ball)$ is a trivial cobordism hence 
	  $\HMfrom(C,\mu_C)(x_1) = \pm\hat 1$. Let $\mu_1$ be the homology orientation for $N_1\setminus \ball$ such that $\HMfrom(C,\mu_C)(x_1) = \hat 1$.
	Now, with respect to the splitting $\HMfrom(Y) \simeq \hat{G}\oplus HM^{red}(Y)$, $x_1$ splits as $x_1 =: (g, q) \in \hat{G}\oplus HM^{red}(Y)$. 
	 Since $\HMfrom(C,\mu_0)$ maps $x_1$ and $ x_0$ to the same element,   $\gr^{\Q}(x_1) = \gr^{\Q}(x_0)$; consequently $g = k x_0$ for some $k\in \Z$.
	  Moreover $\HMred(\SS^3) = (0)$ implies  that $\HMfrom(C,\mu_C)(q) = 0$, hence  we obtain that, with the homology orientation $\mu_C$,
	  \begin{equation*}
	  	\HMfrom(C)(x_1) = \HMfrom(C)(g,q) =  \HMfrom(C)(g,0) =  \HMfrom(C)(k x_0) =  k \HMfrom(x_0) = k \hat 1,
	  \end{equation*}
	  and consequently $k = 1$. Hence $\Delta = x_0-x_1 = q \in \HMred(Y)$ and item \ref{thm:DeltaInHMred:item:Delta} of Theorem~\ref{thm:DeltaInHMred} is proved.
	 \end{proof}
	 
	 \begin{proof}[Proof of Theorem~\ref{thm:DeltaInHMred} \ref{thm:DeltaInHMred:item:NMS}] In this proof  the geometric setting over $Y$, and the  $N_i$s is that used in Subsection~\ref{subsec:PointlikeModuli} and Subsection~\ref{subsec:FormalDimension}. 	 
	 In particular the generators of the chain complex $\hat C$ (see \autoref{recap:ChainComplexes}) are those described in Subsection~\ref{subsec:CriticalPoints} and we can express the Morgan-Szab\'o  number
	 as 
	 \begin{equation}
	 	n_{MS} = 2+ \max_{[\gotb_i], [\gotb_j]}\gr([\gotb_i], [\gotb_j])
	 \end{equation}
	 where $ [\gotb_i], [\gotb_j]\in \B^\sigma_k(Y,\sstruc)$ vary on irreducible critical points of the perturbed Chern-Simons-Dirac functional.
	 Beware that the parenthesis of $[\gotb_i] $ are due to gauge equivalence class and not to the Floer homology class (which may be even undefined since we do not know whether  $[\gotb_i]$ is a cycle).
	 We will use the following lemma. 
	\begin{lemma} \label{lem:RepByIrreducibleCycles} If $\overline{\partial}^s_u = 0$ then any element of $\HMred(Y)$ can be represented by an irreducible cycle.
	\end{lemma}	 
	\begin{proof}[Proof of Lemma~\ref{lem:RepByIrreducibleCycles}] Let $(x,y)\in C^o\oplus C^s$ be a cycle such that $[(x,y)] \in \HMred(Y) =\ker p_*$. Then there exists $(z,t) \in C^s\oplus C^u$ such that 
	\begin{equation}
		p(x,y) = (\partial^o_s x + \partial ^u_s y, y)	 =\overline \partial (z,t) = (\overline \partial^s_s z + \overline\partial ^u_s t,  \underbrace{\overline\partial^u_u z}_{ = 0}+ \overline\partial ^u_u t).
	\end{equation}	 
	Thus $y =\overline\partial ^u_u t$, hence  $[(x,y)] = [(x,y) + \hat \partial (0,t)] = [(x,y) + (\partial ^u_o t, - \overline \partial^u_u t) ] =[(x + \partial^u_o t, 0)]$,
	where we used again that $\overline{\partial}^s_u = 0$  in the expression for the differential $\hat \partial$.
	\end{proof}
	 \newcommand{\ntop}{N^{\mathrm{red}}}
	 We want to show that $U^{n_{MS}/2}\Delta = 0$.  By Theorem~\ref{thm:DeltaInHMred}, $\Delta \in \HMred(Y)$ and by  Proposition~\ref{prop:bargr} $\overline{\partial}^s_u = 0$, thus Lemma~\ref{lem:RepByIrreducibleCycles} tells us that	 \begin{equation}
	 	\Delta = \left[\sum_{i=1}^{N_{\mathrm{irr}}}c_i[\gotb_i] \right] \in \HMfrom(Y)
	 \end{equation}
	 is represented by an  \emph{irreducible} cycle for some coefficients   $c_i\in \Z$, $i=1,\dots,N_{\mathrm{irr}} $. Most importantly, we can assume that \emph{not all of the $c_i$s are zero}
	 for otherwise $\Delta =0$ and the statement would be trivial.  
	 
	 Now,  the definition of $n_{MS}$ implies that 
	 \begin{equation}
	 	U^{n_{MS}/2}\left[\sum_{i=1}^{N_{\mathrm{irr}}}c_i[\gotb_i] \right] = [r]\in \HMfrom(Y)
 	\end{equation} for some  $r\in C^u$  cycle of \emph{reducibles}.
	 Indeed, let $i_0$ be an index such that $c_{i_0}\neq 0$, and suppose that $U^{n_{MS}/2}\left[\sum_{i=1}^{N_{\mathrm{irr}}}c_i[\gotb_i] \right]$ has
	 a representative with a non zero component along an  irreducibles $[b_j]$ , then $\gr([\gotb_{i_0}], [\gotb_j]) = n_{MS}$ which is impossible. 
	 
	 Since $\Delta\in \HMred(Y)$, also $U^{n_{MS}/2}\Delta \in \HMred(Y)$, therefore $p_*[r] = 0 \in \HMbar(Y)$.
	 This tells us that there exists $(\eta^s, \eta^u) \in \bar{C}$ such that $\bar \partial (\eta^s, \eta^u) = p(r) = (\partial^u_s r, r)$. 
	 From which we obtain, thanks to  $\overline\partial^s_u = 0$ (see Proposition~\ref{prop:bargr}), that $\overline \partial^u_u\eta^u = r$.
	 This implies that $[r] = 0$ in $\HMfrom(Y)$ because
	 \begin{equation}
	 \hat \partial \eta^u = \partial^u_o \eta^u -\overline\partial^u_u \eta^u = -r,
	 \end{equation}
	 where $ \partial^u_o \eta^u = 0$ for the definition of $n_{MS}$ as above.
	 Consequently $[r]= 0$.
	\end{proof}
	
	\begin{proof}[Proof of Theorem~\ref{thm:DeltaInHMred} \ref{thm:DeltaInHMred:item:HomOrientation}] For the rest of this section we suppose that $N_0$ is symmetric. The map $\tau$ has been defined in Subsection~\ref{subsec:involutions}.
	The proof of the statement \ref{thm:DeltaInHMred:item:HomOrientation}  is based on the following lemma:
		\begin{lemma} \label{lemma:constructionofCobT}There exists a cobordism $T: Y\to \SS^3$, such that 
		\begin{enumerate}
						\item $\tau:Y\to Y$ extends to an orientation preserving self-diffeomorphism of $T$, $f:T\to T$ which restricts to the identity on $\SS^3$,
						\item  $T\circ (N_0\setminus \ball) :\SS^3\to \SS^3$ is the trivial cobordism,
						\item $H^1(T) =0$, $\ker(H^2(T,\R)\to H^2(\partial T,\R)) = 0$.
		\end{enumerate}
		\end{lemma}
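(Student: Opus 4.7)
The plan is to adapt the construction of the cobordism $C$ from the proof of item~\ref{thm:DeltaInHMred:item:Delta} (recall that $C:Y\to \SS^3$ attaches $b_1$ 0-framed 2-handles to $Y\times\{1\}$ along meridians of the extra dotted circles $\mu_1,\ldots,\mu_{b_1}$ in the Kirby diagram of $N_0\setminus\ball$) into a $\tau$-equivariant cobordism. The involution $\tau$ extends as $\tau\times\mathrm{id}$ on the cylindrical part $Y\times I$; handles attached along a $\tau$-invariant framed link then extend the action, with $\tau$ either preserving a handle attached along a $\tau$-fixed attaching sphere or swapping a pair of handles attached along a $\tau$-orbit. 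On the resulting $\SS^3$ cap, $\tau$ is forced to act by the identity since the construction exhausts all handles.

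To achieve $\tau$-symmetry, I would enlarge the attaching link of $C$ to its full $\tau$-orbit; since $\tau$ is an isometry of $Y$ by our choice of metric (Subsection~\ref{subsec:CriticalPoints}), the $0$-framings are preserved. Because $\tau$ may identify a meridian of $\mu_i$ with the meridian of a different $\mu_j$ whose class in $H_1(Y)$ already lies in the $\Z$-span of the original basis, the enlarged attachment may introduce redundant classes in $H_2$; I would kill these by attaching $\tau$-invariant 3-handles along 2-spheres formed by pairs of handle cocores, chosen equivariantly. After these cancellations, the outgoing boundary is $\SS^3$.

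Property~(1) is built into the construction. Property~(2) follows by running the cancellations of item~\ref{thm:DeltaInHMred:item:Delta} equivariantly: the 2-handles of $T$ kill the $\mu_i$-type 1-handles of $N_0\setminus\ball$, the pairs $(\alpha_i,\beta_i)$ from the protocork structure cancel, and the auxiliary $2/3$-handle pairs added for $\tau$-symmetry cancel against each other, yielding $\SS^3\times I$. Property~(3) follows from $H_1(T)=0$ (since the 2-handles span $H_1(Y)\simeq \Z^{b_1}$) combined with the long exact sequence of the pair $(T,\partial T)$ and Poincar\'e--Lefschetz duality: $\ker(H^2(T;\R)\to H^2(\partial T;\R))$ is the image of $H^2(T,\partial T;\R)\simeq H_2(T;\R)$, which is computed from the handle decomposition and vanishes once the 3-handles have killed the redundant $H_2$ generators introduced by the $\tau$-symmetrization.

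The main obstacle is the $\tau$-equivariant bookkeeping in property~(2): the cancellations from item~\ref{thm:DeltaInHMred:item:Delta} and the additional $2/3$-handle cancellations must be performed simultaneously in a way that respects the symmetry, which in particular requires that the 3-handles introduced to kill the redundant $H_2$-classes can be chosen so that the extra $2$-handles they pair with cancel correctly during the composition with $N_0\setminus\ball$. This demands a careful analysis of how $\tau$-orbits of edges in $\Gamma$ interact with the choice of spanning tree, possibly via handle slides compatible with $\tau$.
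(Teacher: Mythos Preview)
Your approach---symmetrize the attaching link of $C$ under $\tau$, then kill the resulting redundancy with $3$-handles---is plausible in outline but leaves a genuine gap, and it diverges from the paper's route.

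The gap is exactly where you flag it. You assert that the extra $2$-handles (the $\tau$-images of the $\mu_i$-meridians that fall outside the original basis) can be cancelled by $\tau$-equivariant $3$-handles, but you do not construct those $3$-handles. This is not just bookkeeping: you need embedded $2$-spheres in $\partial_+ Q'$ meeting the belt spheres of the redundant $2$-handles once, and you need the collection of such spheres to be $\tau$-invariant. Whether this can be arranged depends on how $\tau$ acts on the middle level, and you have not analyzed this. A second, smaller error: your claim that $f|_{\SS^3}=\mathrm{id}$ is ``forced'' because ``the construction exhausts all handles'' is not right---the extension to the top boundary is whatever diffeomorphism of $\SS^3$ the handle-swapping induces. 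One fixes this separately by invoking Cerf's theorem $\pi_0(\mathrm{Diff}^+(\SS^3))=1$ and isotoping in a collar.

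The paper sidesteps the $3$-handle issue entirely. Rather than symmetrizing an asymmetric basis, it builds a $\tau$-invariant set of exactly $b_1$ curves from the start. The idea is to embed $\Gamma$ in $\R^3$ so that the symmetry $e\leftrightarrow e'$ of the graph is realized geometrically (edges with $i<j$ go above the vertex plane, edges with $i>j$ below), choose a specific spanning tree, and take for each excess edge a zig-zag or figure-eight loop $\gamma_e$ on the graph itself. By construction $\tau$ permutes the $\gamma_e$'s, so $T$ is built from $b_1$ $2$-handles only, and properties (2) and (3) follow by the same cancellation argument you cite for $C$. This buys you a cobordism with no $3$-handles, which makes both the triviality of $T\circ(N_0\setminus\ball)$ and the vanishing of $\ker(H^2(T;\R)\to H^2(\partial T;\R))$ immediate from the handle count.
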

	We postpone the proof of this lemma to the next paragraph and we continue the proof. 
	Since $x_0  = \HMfrom(N_0\setminus \ball,\mu_0) (\hat 1) $, the second item of Lemma~\ref{lemma:constructionofCobT} implies that $\HMfrom(T,\mu_T)(x_0) = \hat 1$ for some homology orientation $\mu_T$.
	Denote by $(N_0\setminus \ball)_\tau: \SS^3\to Y$ the cobordism  obtained from $N_0\setminus \ball$ by replacing the identification map of the outcoming boundary with $\tau$.
	Clearly, from the definition of $\tau$ and $\hat\tau$ in Subsection~\ref{subsec:involutions} it follows that
	\begin{equation}
		x_1 = \HMfrom(N_1, \hat \tau_*\mu_0)(\hat 1)  = \HMfrom ((N_0\setminus \ball)_\tau,\mu_0)(\hat 1).
	\end{equation}
	On the other hand, the first item of Lemma~\ref{lemma:constructionofCobT} tells us that 
	\begin{equation}
	(T, \mu_T)\circ ((N_0\setminus \ball)_\tau, \mu_0) \simeq  (T, f_*\mu_T) \circ (N_0 \setminus \ball,\mu_0),
	\end{equation}
	which implies that $\HMfrom(T, \mu_T) (x_1) = \pm \hat 1 $; to conclude the proof we have to show that this is equal to $+\hat 1$.
	
	The third item of Lemma~\ref{lemma:constructionofCobT} implies that for $T$ the vector space \eqref{def:homologyOrientationLine} 
	 is zero dimensional hence $f$  preserves the homology orientation, i.e. $f_*(\mu_T) = \mu_T$.
	Consequently 
	 	\begin{equation}
	 		\hat 1  = \HMfrom(T, \mu_T) (x_i) 
	 	\end{equation} for any $i\in \{0,1\}$ which concludes the proof.
	\end{proof}

We are left to prove Lemma~\ref{lemma:constructionofCobT}.
\begin{proof}[Proof of Lemma~\ref{lemma:constructionofCobT}]	
Let $\Gamma$ be a symmetric protocork plumbing graph.
We will construct  a Kirby diagram for $P_0(\Gamma)$ and  closed curves  $\gamma_1,\dots, \gamma_{b_1(Y)}$ in its boundary such that $\gamma_i$ intersects the belt sphere of the $i$-th 1-handle of $P_0(\Gamma)$ exactly once and $\tau$ (for some realization $\Realiz$) preserves the support of $\{\gamma_i\}_i$.
At this point $T$ will be constructed by gluing  $2$-handles  to $Y\times\{1\}\subset Y\times I$ along the curves $\{\gamma_i\}_{i=1}^{b_1}$,  this will allow us to extend $\tau$ to $T$ and the other property will cause the cancellation of the 1-handles of $N_0\setminus \ball$ when composed with $T$.

\paragraph{Embedding $\Gamma$ and the curves $\gamma_i$.}
Let $n$ be the sphere number of  $\Gamma$, since $\Gamma$ is symmetric, it is obtained by adding some edges  to the graph $\Gamma'$ embedded in $\R^3$ showed in Figure~\autoref{fig:Cob:1} for the case $n=3$.
We choose as spanning tree for $\Gamma$, the gray edges  in  Figure~\autoref{fig:Cob:1}, i.e. all the horizontal edges and the lower diagonal edges.
All the other edges  $e_1,\dots, e_k$ are in excess and will contribute to generate $b_1(Y)$.  To each of them we associate an 8-shaped  loop $\gamma_i$ on the embedding of $\Gamma$ as showed in  Figure~\autoref{fig:Cob:2}, if the edge is $e_k$, the path $\gamma_k$ will be given by $e_k$, the two horizontal edges meeting $e_k$ and the diagonal edge below $e_k$.

Now  pick  an edge of  $\Gamma$ not in $\Gamma'$, and call it $e$. Suppose $e$ joins $\vA_i$ to $\vB_j$ and $i< j$. Then we embed it as in \autoref{fig:Cob:3}, in such a way that is \emph{above} all the other edges.
If instead $i>j$ then we embed it as in  Figure~\autoref{fig:Cob:4}, so that it is \emph{below} all the other edges. If instead $i=j$ then we embed it in the plane of the vertices as in  Figure~\autoref{fig:Cob:5}.
Then we associate to $e$  loop $\gamma_e$, given by a zig-zag  (diagonal edge, horizontal edge) as showed in  Figure~\autoref{fig:Cob:3},  Figure~\autoref{fig:Cob:4}, Figure~\autoref{fig:Cob:5} depending on $i,j$.
Since $\Gamma$ is symmetric, there is an edge $e'$ associated to $e$, such with $e' \simeq (\vA_j, \vB_i, \pm)$. We apply the same procedure to embed $e'$ obtaining a new loop $\gamma_{e'}$.
Iterating the previous steps  we embed all of $\Gamma$.

We use this data to construct a Kirby diagram for $P_0(\Gamma)$ as in Subsection~\ref{subsec:KirbyDiagrProtocorks}.  The loops $\{\gamma_i\}_i$ induce up to isotopy some loops in the diagram. 
Notice that we will have also an associated symmetric realization datum given by the pairs $e, e'$ and thus a diffeomorphism $\tau$
that exchanges $\gamma_{e}$ and $\gamma_{e'}$ (we parametrize the curves so that $\tau$ preserves the orientation of $\gamma_e$ and $\gamma_{e'}$).

\begin{figure}[p]
	\centering
		\subfloat[][\label{fig:Cob:1}]
		   {\includegraphics[width=.20\columnwidth]{./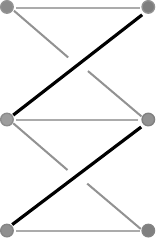}} \quad
	\subfloat[][\label{fig:Cob:2}]
		   {\includegraphics[width=.20\columnwidth]{./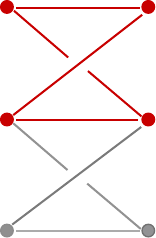}} \\
	\subfloat[][\label{fig:Cob:3}]
		   {\includegraphics[width=.25\columnwidth]{./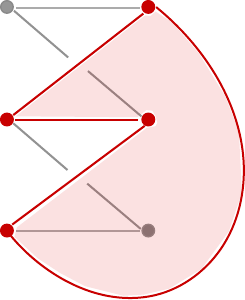}} \quad
	\subfloat[][\label{fig:Cob:4}]
		   {\includegraphics[width=.25\columnwidth]{./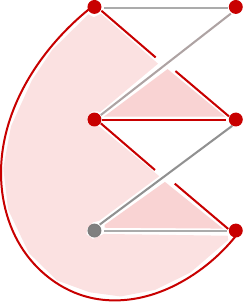}}\\
	   \subfloat[][\label{fig:Cob:5}]
	   {\includegraphics[width=.20\columnwidth]{./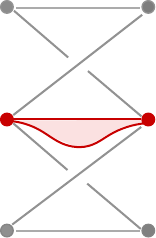}} \quad
	\caption{\eqref{fig:Cob:1} The graph $\Gamma'$ for $n=3$. The spanning tree is in gray while the black edges corresponds to the excess arcs $e_1,e_2$. \eqref{fig:Cob:2} In red, the $8$-shaped loop $\gamma_1$ corresponding to $e_1$. \eqref{fig:Cob:3} Case $i<j$. Notice the arc is above the rest of the graph. In red, the loop $\gamma_e$ with the disk it bounds in light red. \eqref{fig:Cob:4} Case $i>j$. Notice the arc is below the rest of the graph. In red, the loop $\gamma_e$ with the disk it bounds in light red. \eqref{fig:Cob:3} Case $i=j$.  In red, the loop $\gamma_e$ with the disk it bounds in light red.\label{fig:Cob}}
\end{figure}

\paragraph{Properties of $\gamma_i$}
The set of curves $\gamma_i$ has some properties.  First of all, with reference to the Kirby diagram,  each curve will pass exactly once trough the dotted circle of  its associated edge and  through of a certain number of dotted-circles associated
to the diagonal edges of $\Gamma'$ in the case that  $\gamma$ passes through the upper   diagonal edges, i.e. if $e$ joins $\vA_i$ to $\vB_j$ and $i< j$.

Secondly, if we add to $P_0(\Gamma)$ $b_1(Y)$ 0-framed 2-handles along the curves $\gamma_i$ then these will cancel out the 1-handles of $P_0(\Gamma)$.
Indeed, the $\gamma_i$ relative to the 8-figure shaped paths will appear as loops meridional to the dotted circles of the clasps associated to the diagonal edges.
Hence we can cancel these dotted circles. 
After this cancellation, the remaining loops $\gamma_i$ can be isotoped in the diagram to curves that are meridional to the remaining dotted circles.
This is because they do not interact between each other, as can be seen from the fact that the $\gamma_i$s can be pushed to the boundary of a tubular neighbourhood $\nu \Gamma\subset \R^3$ of $\Gamma$ in in such a way that their image bound disks in the complement of $\nu\Gamma$.  Figure~\autoref{fig:Cob:3},  Figure~\autoref{fig:Cob:4}, Figure~\autoref{fig:Cob:5} provide some intuition for these disks.
As a result, we can cancel also the remaining dotted circles.
Notice that the result of the 2-handle addition is  $\mathbb{B}^4$ since we will have cancelled all the clasps from the diagram except one for each pair  $\vA_i, \vB_i$.

 Let $T$ be the cobordism from $Y$ to $\SS^3$ obtained by gluing the 2-handles as above. 
Since the set of attaching curves $\gamma_i$ is preserved by $\tau$, we can extend $\tau$ to a diffeomorphism  $f: T\to T$, which exchanges the 2-handles relative to each pair of $\gamma_{e}$ and $\gamma_{e'}$ exchanged by $\tau$ and if $\tau$ sends  $\gamma_i\to \pm \gamma_i$ then we extend it to a diffeomorphism of the handle $\D^2\times \D^2$ accordingly.
The requirement that $f$ restricts to the identity on $\SS^3$ is simple to fulfull because any orientation preserving diffeomorphism of $\SS^3$ is isotopic to the identity, therefore we can just modify $f$ in a collar neighbourhood of the boundary to satisfy this requirement.

\paragraph{Homological properties.} To prove the third item of Lemma~\ref{lemma:constructionofCobT}, we notice that $H^1(T) = (0)$ because $T$ is obtained by gluing $2$-handles along the curves generating  $H^1(Y)$ hence these generators are killed. Moreover $H^2(T,Y) \simeq \Z^{b_1}$ and $H^3(T,Y) = (0)$, because $H^*(T,Y)$ is generated by the handles used in the construction and these account only to  $b_1$ 2-handles. 
Now the long exact sequence of $(T,Y)$ gives 
\begin{equation}
	0 \to \underset{\simeq \Z^{b_1}}{H^1(Y)} \to   \underset{\simeq \Z^{b_1}}{H^2(T,Y)} \to H^2(T) \to  \underset{\simeq \Z^{b_1}}{H^2(Y)} \to 0
\end{equation}
from which  is clear that 
\begin{equation}
	\ker \left(H^2(T,\R)\to H^2(\partial T,\R)\simeq H^2(Y,\R)\oplus H^2(\SS^3, \R)\right)= (0).
\end{equation}
This concludes the proof  of Lemma~\ref{lemma:constructionofCobT}.
\end{proof}

\section{Moduli spaces of reducible monopoles over a protocork.}\label{sec:4DModulispaces}		
\begin{plan}	
In this section we study the  moduli spaces of perturbed reducible monopoles over a protocork $P_0(\Gamma)$. 
In Subsection~\ref{subsec:background4D} we recap the relevant background and notation from the book \cite{KM} by  Kronheimer and Mrowka that we will need.
In Subsection~\ref{subsec:CriticalPoints} we compute the formal dimension of the  reducible moduli spaces of monopoles asymptotic to 
the critical points described in Subsection~\ref{subsec:CriticalPoints}. .
In the last subsection we show that the moduli space of reducible monopoles limiting to $[\gota_{N^{\mathrm{red}},-1}]$ consist of a single point when appropriate perturbations are considered proving Theorem~\ref{thm:ModuliIsPoint}. This is a fact of its own interest, because  this moduli space constributes to the Seiberg-Witten invariants. 
Our propositions will descend from some nice properties of protocorks, in particular: 
\begin{enumerate}
	\item $H^1(P_0(\Gamma) \to H^1(\partial P_0(\Gamma))$ is surjective,
	\item the diffeomorphism $\rho_A$ (see Subsection~\ref{subsec:involutions}) is orientation reversing and preserve $H^1(\partial P_0(\Gamma))$,
	\item in  the \emph{classical}  moduli space of \emph{unperturbed} 3-monopoles over $\partial P_0(\Gamma)$, the torus of reducibles is a Morse-Bott singularity and 
	\item the Dirac operator on $\partial P_0(\Gamma)$ associated to flat connections  is invertible.
\end{enumerate}
The main difficulty is to deal with the and  perturbations and the  blown-up moduli spaces where all the spectrum of the Dirac operator matters.
\end{plan}

\subsection{Background on 4D moduli spaces.}\label{subsec:background4D}
\begin{plan}
In this subsection we review the relevant background   from  \cite{KM} and establish our notation. The definition of
the perturbed moduli space asymptotic to a $3$-monopole, together with the functional spaces  used  therein will be particularly important to us. We make no claim of originality.
\end{plan}

\begin{recap}[Classical and blown-up configuration spaces on a compact 4-manifold with boundary]
	\renewcommand{\sstruc}{\mathfrak{s}_X}
	Let $X$ be a \emph{compact}, smooth, oriented 4-manifold with boundary $\partial X = Y$ endowed with a Riemannian metric and \spinc structure $\sstruc$ consisting of spinor bundle $S_X\to X$ and Clifford 			multiplication $\rho_X: TX\to \Hom(S_X, S_X)$ extended as usual to $\Lambda^\bullet X$. The bundle $S_X $ splits as $S^+\oplus S^-$, which are respectively the   $-1$ and $+1$ eigenbundles of $		\rho_X(\operatorname{vol}_X)$, where $\operatorname{vol}_X$ 
	is the Riemannian volume form.
	 We will denote the Dirac operator induced by $A\in \cA(X,\sstruc)$ acting on positive spinors as $D^+_A:\Gamma(S^+)\to \Gamma(S^-)$. As in  the 3D case, given a \spinc connection $A$, we will denote by $A^t$ the induced connection on the determinant bundle. 
		Let $k\geq 0$, then similarly to the 3-dimensional case, we can define 
	\begin{spliteq}\label{defConf4D}
				& \Conf_k(X, \sstruc) = \cA_k(X,\sstruc)\times L^2_k(X; S^+)\\
				& \G_{k+1}(X) = \{u\in L^2_{k+1}(X;\C) \ | \  |u(x)|  = 1 \}\\
				& \B_{k}(X,\sstruc) = \Conf_{k}(X,\sstruc)/\G_{k+1}(X),\\
	\end{spliteq}
	which are respectively the configuration space, gauge group and quotient configuration space. The action of $u\in \G_{k+1}(X)$ is the same as in \eqref{eq:actionOfG}.   In the following we will always assume that $k	> 2$ and if $k$ is omitted, smooth objects are considered.
	We also have the blown-up spaces: 
	\begin{spliteq}
				& \Conf_k^\sigma(X, \sstruc) = \cA_k(X,\sstruc)\times \R_{\geq 0} \times\{ \varphi \in L^2_k(X; S^+) \ | \ \norm{\varphi}_{L^2(X)} = 1\}\\
				& \B^\sigma_{k}(X,\sstruc) = \Conf^\sigma_{k}(X,\sstruc)/\G_{k+1}(X),\\
	\end{spliteq}
	where $u\in \G_{k+1}(X)$ acts as in \eqref{eq:actionOfGsigma}.
	The completion of the tangent spaces of $\Conf_k(X, \sstruc)$ and $\Conf_k^\sigma(X, \sstruc)$ in the $L^2_j$-norm, $j\leq k$ are denoted by $\cT_{j}$ and $\cT_j^\sigma$ respectively.
	As in the 3-dimensional case there is a blow-down map $\pi: \Conf^\sigma_k(X,\sstruc)\to \Conf_k(X,\sstruc)$ given by $(A,s, \varphi) \mapsto (A,s\varphi)$  that is a diffeomorphism over the irreducibles. 
	We also introduce the vector bundle $\cV_j\to \Conf_k(X,\sstruc)$, $j\leq k$ defined by 
	\begin{equation}\label{defVbundle}
		\cV_j = L^2_j(X; i\su(S^+)\oplus S^-)
	\end{equation}
	where $\su(S^+)$ is the bundle of traceless skew-hermitian endomorphisms of $S^+$.
	The blow-up  $\cV_j^\sigma \to \Conf^\sigma_k(X,\sstruc)$  is defined as the pull-back along $\pi$ of $\cV_k$.
	This will be the codomain for the Seiberg-Witten map. 
\end{recap}

\begin{recap}[Special notation in the case of the cylinder] \label{recap:SpincStructuresOnCylinders} Let $Z = I\times Y$, where $I\subset \R$ is an interval,  and let $\sstruc$ be a \spinc structure on $Y$ with spinor bundle $S$ and Clifford multiplication $\rho$ as in Subsection~\ref{subsec:backgroundFloer}.
We can endow $Z$ with a \spinc structure $\sstruc_Z$   where, denoting by $\pi:Z\to Y$ the projection onto the second factor, the spinor bundle is $S_Z := \pi^*S\oplus \pi^*S $,
	and Clifford multiplication $\rho_Z:TZ \to \End(S_Z)$ is given by 
	\begin{equation*}
		\rho_Z(\partial/\partial t) := \begin{bmatrix}
														0 & -1\\
														1 & 0 \\
													\end{bmatrix} 										
										\quad \text{ and } \quad  														
													 \rho_Z(v) := 	\begin{bmatrix}
														0 &  - \rho(v)^*\\
														\rho(v) & 0 \\
													\end{bmatrix}
	\end{equation*}
	for $v \in TY\hookrightarrow TZ.$ 	In particular the bundles of positive and negative spinors  are $S^+ = S^- = \pi^*S$.
	 A   configuration $(A,\phi) \in \cA_k(Z, \sstruc_Z)\times L^2_k(S^+)$  induces a time-dependent configuration 
		$(\check{ A}(t), \check \phi(t)) \in \Conf_k(Y)$ defined implicitly by putting 
		\begin{equation}
			(A, \phi)|_{\{t\}\times Y} =  (\check{A}(t)  + c(t)dt \otimes 1_S, \check{ \phi}(t))
		\end{equation}
		for some $c:Z\to\R$, so that $\check{A}(t)$ is in temporal-gauge.
	Moreover,  we have an isomorphism of vector bundles
	\begin{spliteq}
			& \pi^*\Lambda^1(Y) \overset{\sim}{\longrightarrow}  i\su(S^+)\\
			& \pi^*\alpha\mapsto \frac{1}{2} \rho_Z\left( \pi^*(*\alpha) +  \pi^*(\alpha)\wedge dt) \right).\\
	\end{spliteq}
	In particular  $\mathcal{V}_0(Z)\simeq L^2(Z; \pi^*\Lambda^1(Y)\oplus \pi^*S)$.
	Now, consider a perturbation $\pert: \Conf(Y,\sstruc) \to \cT_0(Y)$. 
	This determines a map 
	\begin{equation}\label{defOfhatpert}
		\hat{\pert}: \Conf(Z,\sstruc_Z) \to \mathcal{V}_0(Z)\simeq L^2(Z; \pi^*\Lambda^1(Y)\oplus \pi^*S),
	\end{equation}
	defined by 
	\begin{equation}\
		\hat \pert (A,\varphi)|_{\{t\}\times Y}  = \pert(\check A(t), \check{\varphi}(t)) \in \cT_0(Y)= L^2_0(iT^*Y\oplus S).
	\end{equation}
	More information may be found in \cite[Chapter 4]{KM}.
	\end{recap}
	{	\newcommand{\pertp}{{\mathfrak{p}}}
		\newcommand{\linearAction}{\mathbf{d}^\sigma}
		\newcommand{\dsdag}{\mathbf{d}^{\sigma,\dagger}}
		\renewcommand{\Re}{\mathrm{Re}}
	\begin{recap}[Perturbations] \label{recap:4DPerturbations}We will briefly describe the structure of the perturbations used over 4-manifolds, more details may be found in  \cite[Section 24]{KM}. Suppose that $(X,\sstruc_X)$ and  $(Y,\sstruc)$, $Y=\partial X$  are as above.
	In particular, $X$ is compact, and we will suppose that there is a collar of $\partial X$  isometric to  $(-C, 0]\times Y$ for some $C>0$,
	where $\sstruc_X$ is constructed with the \spinc structure of $Y$, $\sstruc$ as in the previous paragraph. 
	 The perturbation will be supported in this collar. 
	Fix $\pert $,  an admissible perturbation for $(Y, \sstruc)$, and let $\beta\in C^\infty([-C,+\infty))$ be a cut-off function, such that $\beta(t) = 1$ for $t\geq 0$, $\beta(t) = 0$ for $t\leq -C/2$.
	Let $\beta_0\in C^\infty((-C,+\infty))$ be a bump-function with $\supp (\beta_0) \subset (-C,-C/2)$.
	For any perturbation  $\pert_0 \in \Pert(Y,\sstruc)$, we define a perturbation over $(-C,0]\times Y$ by setting
	\begin{equation}\label{defPert4D}
			\hat {\mathfrak{p}} =  \beta_0\hat\pert_0 +  \beta \hat\pert : \Conf((-C,0]\times Y, \sstruc_X|_{(-C,0]\times Y}) \to \cV_0 ((-C,0]\times Y)
	\end{equation}
	where we are using \eqref{defOfhatpert}.	
	This induces a perturbation 
	\begin{equation}
		\hat{\mathfrak{p}}: \Conf_k(X,\sstruc_X)\to \cV_k(X)
	\end{equation}
	that depends only on the behaviour of the configuration on the collar. 
	Notice that  $\hat {\mathfrak{p}}$ will be slice-wise equal to  $\pert$ in a neighbourhood of $\{0\}\times Y$.
	$\hat{\pertp}$ induces a perturbation on the blow-up as follows.
		Writing $\mathcal{V}_k = L^2_k(X, i\mathfrak{su}(S^+)\oplus S^-)$, we have the decomposition $\hat \pertp = (\hat \pertp^0, \hat \pertp^1)$.
	We define $\hat\pertp^\sigma$ on the blow-up to be the section 
	\begin{spliteq}	
		\hat{\pertp}^\sigma & : \Conf^\sigma_k(X, \sstruc_X)\to \mathcal{V}_k^\sigma\\
		& \hat{\pertp}^{0,\sigma}(A,s,\phi) = \pertp^0(A,s\phi)\\
		& \hat{\pertp}^{1,\sigma}(A,s,\phi) =\begin{cases} \frac 1 s \hat\pertp^1(A,s\phi) \quad \text { if } \ s\neq 0\\
				 \Differential_{(A,0)} \hat\pertp^1 (\phi)\quad \text { if } \ s = 0\\ 
				 \end{cases}
	\end{spliteq} 
	
	\end{recap}
	
	\begin{recap}[Blown-up moduli spaces over compact $X$]  The  Seiberg-Witten map (in the blown-up setting) is defined as
	\begin{spliteq}
		 \cF^\sigma   & : \Conf^\sigma_k(X, \sstruc_X)\to \mathcal{V}_k^\sigma\\
		&(A,s,\varphi) \mapsto (\frac{1 }{2}\rho_X(F^+_{A^t} - s^2(\varphi \varphi^*)_0, D^+_A \varphi)\\
	\end{spliteq} which is a lift of the classical Seiberg-Witten map under the blow-down map $\pi:\Conf^\sigma(X,\sstruc_X)\to \Conf(X,\sstruc_X)$.
	Given a perturbation $\hat \pertp^\sigma$ as in the above paragraph, we can define the perturbed Seiberg-Witten map 
	\begin{spliteq}\label{defFpertsigma}
		 \cF_\pertp^\sigma  = \cF^\sigma + \hat{\pertp}^\sigma : \Conf^\sigma_k(X, \sstruc_X)\to \mathcal{V}_k^\sigma.\\
	\end{spliteq}
	The zero locus of $\cF^\sigma_\pertp$ is invariant under $\G_{k+1}(X)$ and  we have the moduli spaces of solutions
	\begin{equation}
		M(X,\sstruc_X) =\{x \in \Conf_k(X,\sstruc_X) \ | \  \cF^\sigma_\pertp(x) = 0\} / \G_{k+1}(X) \subset \B_k(X,\sstruc_X)
	\end{equation}
	 which we will refer to as the moduli space of perturbed monopoles. 
	\end{recap}

	\newcommand{\Lloc}{L^2_{k,loc}}	
	\begin{recap}[Moduli spaces for manifolds with cylindrical ends]
	We will denote by  $X^*$, the manifold $X$ with cylindrical ends attached to the boundary using the diffeomorphism $\partial X\simeq Y$,
	\begin{equation}
			X^* = X \bigcup ([0,+\infty) \times Y).
	\end{equation}
	On $X^*$ we extend the Riemannian metric of $X$ so that  the infinite cylinder  is isometric to  $(-C,+\infty)\times Y$.
	The relevant spaces in this context are the  $\Lloc$-configuration spaces:
	\begin{equation}
	 \mathcal{C}_{k,loc}(X^*,\sstruc_X)  = \cA_{k,loc}(X^*,	\sstruc_X)\times \Lloc(X^*; S^+).
	\end{equation}
	Note however that $\Lloc(X^*)$ is not a Banach space as it is not normable. Also notice that the $L^2_k$-norm of an element of $\Lloc(X^*)$ can be infinite in general.
	The $\Lloc$-blown-up configuration spaces are defined as follows. Let 	$\Sphere := (\Lloc(X^*,S^+)	 \ \setminus\ \{ 0\})	\big / \R_>$, then put 
	\begin{spliteq*}
		&\Conf^\sigma_{k,loc}(X^*, \sstruc):= \big\{(A, \phi,\R_>\phi )| \ (A, \phi)\in \Conf_{k,loc} (X^*, \sstruc), \phi \neq 0 \big\} \subset \Conf_{k,loc} (X^*, \sstruc)\times \Sphere.\\
	\end{spliteq*}
	The relevant group of tranformations is the gauge group $\G_{k+1,loc}(X^*)$ defined as in \eqref{defConf4D} but using $\Lloc$ Sobolev spaces.
	We can define as usual $\B_{k,loc}(X^*,\sstruc_X)$ and $\B^\sigma_{k,loc}(X^*, \sstruc_X)$ by quotienting out the above defined configuration spaces by the action of $\G_{k,loc}(X^*)$.
	Over $\Conf_{k,loc}(X^*,\sstruc_X)$ we have a bundle $\mathcal{V}_{j,loc}(X^*)$ defined as in \eqref{defVbundle} but using $L^2_{k,loc}$-coefficients.
	In the blow-up setting, the bundle $\mathcal{V}^\sigma_{j,loc}(X^*)$,  is defined as follows. 
	First of all, we define the tautological line bundle
	\begin{equation}
		\mathcal{O}(-1) := \big \{ (A,\phi, \R_>\phi, z\phi) \ | \  (A,\phi, \R_>\phi) \in \Conf_{k,loc}(X^*), \ z \in \C \big\} \to \Conf_{k,loc}(X^*) \\
	\end{equation}
	then we put
	\begin{spliteq*}
	 & \mathcal{V}_{j,loc}^\sigma(X^*) := \mathcal{O}(-1)^*\otimes \pi^* \mathcal{V}_{j,loc}(X^*)\to \Conf^\sigma_{k,loc}(X^*).\\
	\end{spliteq*}
	This definition generalizes  the one given for $\mathcal{V}^\sigma_j(X)$, indeed in the $L_j^2$-case, the norm allows us to trivialize the bundle $\mathcal{O}(-1)$, hence  $\mathcal{V}^\sigma_j(X)$ is just the pullback via the blow-down map $\pi$. 
	The operator $\cF_\pertp^\sigma$ defined in \eqref{defFpertsigma} induces an a continuous section of $\cV_{j,loc}^\sigma$ invariant under the action of $\G_{k+1,loc}(X^*)$ that we will denoted again by
	$\cF_\pertp^\sigma$. We can define then $M(X^*,\sstruc_X)$ as the zero locus of $\cF_\pertp^\sigma$ quotiented out by $\G_{k,loc}(X^*)$.
	\end{recap}
	
	\newcommand{\criticalb}{{[\mathfrak{b}]}}
	\newcommand{\quotY}{\B^\sigma_{k}(Y,\sstruc_Y)}
	\newcommand{\quotW}{\B^\sigma_{k,loc}(X^*,\sstruc_X)}
	\newcommand{\limdx}{\underset{\to}{\lim}}
	\begin{recap}[$\tau$-model]
	In the case of a cylinder $ Z = I\times Y$ ($I\subset \R$ possibly unbounded), there is another model, called $\tau$-model  \cite[Section 13.1]{KM}. The $L^2_{k,loc}$-version  of it
		 is the set 
	\begin{equation}
		\Conf^\tau_{k,loc}(Z,\sstruc_Z) \subset \cA_{k,loc}(Z)\times L^2_{k,loc}(I)\times L^2_{k,loc}(Z,S^+)
		\end{equation}
		consisting of triples $(A,s, \varphi)$		 such that  $s(t)\geq 0 $ and $\norm{\varphi(t)}_{L^2(Y)} = 1$ for all $t \in I$. The \spinc structure over $Z$ is always the one induced by $Y$ and defined in \autoref{recap:SpincStructuresOnCylinders}.
	The gauge group	$\G_{k+1,loc}(Z)$ acts on $\Conf^\tau_{k,loc}(Z,\sstruc_Z)$ with quotient $\B^\tau_{k,loc}(Z,\sstruc_Z)$.
	Elements of $\Conf^\tau_{k,loc}(Z,\sstruc_Z)$ with  connection component in temporal gauge naturally represent a path of configurations  in  $\Conf^\sigma_k(Y,\sstruc)$.	
 	Similarly to the case of the blown-up model, there is a bundle $\cV_{k,loc}^\tau\to \Conf^\tau_{k,loc}(Z,\sstruc_Z)$ and 
 	a perturbed Seiberg-Witten map $\cF^\tau_{\pertp}$ which is a section of it.
 	Suppose that $I = \R_\geq $  and we are  given an element $[\gamma] \in  \B^\tau_{k,loc}(Z, \sstruc_Z)$ and a critical point $\criticalb \in \quotY$. Then we write that 
 	\begin{equation}
	 	\limdx [\gamma] = \criticalb
 	\end{equation} if $[ \gamma(t+\cdot, \cdot)]  $ tends in $\Lloc(Z)$ to the translation invariant solution defined by $\criticalb$ as $t\to +\infty$.
 	In an analogous way one can define ${\underset{\leftarrow}{\lim}}[\gamma] $, see  \cite[Section 13.1]{KM}.
 	\end{recap}

     \begin{recap}[Moduli spaces of monopoles asymptotic to a 3-monopole] Now we want to define the moduli space of solutions limiting to a critical point $\criticalb\in \quotY$. First of all, notice that there is a restriction map defined slicewise by renormalizing the spinor component
     \begin{equation}\label{__aux1}
     R_Z: \big	\{  [\gamma] \in \quotW \ \big |\ \cF^\sigma_\pertp (\gamma) = 0\big\} \to \B^\tau_{k,loc}(Z, \sstruc_Z),
     \end{equation}
     where $Z = [0,+\infty)\times Y$, and we are implicitly using the diffeomorphism $\partial X\simeq Y$. Notice that \eqref{__aux1} can be defined only on the solution space because we need
     to appeal to the unique continuation property for the solutions of the equations. 
	We define 
	\begin{equation}
		M(X^*, \sstruc_X, \criticalb) \subset \quotW
	\end{equation}
	as the set of $[\gamma]$ such that $\cF_\pert^\sigma(\gamma) = 0$ and the restriction $\limdx \ R_Z[\gamma] = \criticalb$.
	In \cite{KM}, the notation $	M_z(X^*, \sstruc_X, \criticalb) $ is used to denote the component of the moduli space 
	consisting of monopoles of homotopy class $z$  \cite[pg. 474]{KM} we will not need it because in our specific case there will be only one class $z$.
	\end{recap}
	
	\begin{recap}[Index function] 	Suppose now that our  4-manifold is a cobordism $W^4:Y_0\to Y_1$. We will assume to have fixed  \spinc structure $\sstruc_W$ over $W$  and that  $[\gota]$ and $ [\gotb]$ are non-degenerate critical points respectively over $Y_0$ and $Y_1$ relative
	to the \spinc structure obtained restricting $\sstruc_W$.
	
	To this data, Kronheimer and Mrowka associate an integer  
	$\gr([\gota],W^*,[\gotb]) $ \cite[pg. 475]{KM}, the index of a Fredholm operator.
	In general $\gr$ depends also on the choice of  a $W$-path $z$ from $[\gota]$ to $[\gotb]$,  \cite[Def. 23.3.2]{KM}  modulo homotopy fixing the endpoints, however in all our applications the restrictions of $\sstruc_W$ will be torsion and in this case $\gr$ becomes independent of $z$, thus we drop it from the notation.
	
 The function $\gr$ relates to the  formal dimension of the moduli spaces in the following way:
	\begin{equation}\label{eq:dimAndGr}
	 \dim  M([\gota], W^*, [\gotb]) = \gr ([\gota], W^*, [\gotb]) + \varepsilon,
	\end{equation}
where $\varepsilon$ is computed in the following way. Let $n_+$ be the number of \emph{outcoming} connected components of $\partial W$ with a boundary-\emph{unstable} critical point  associated  
 	 similarly let $n_-$ be the number of \emph{incoming} components of $\partial W$ with a boundary-\emph{stable} critical point associated.
		Now set $c := n_+ + n_- -1$, if $c>0$, we say that the moduli space is boundary-obstructed with corank $c$ and $\varepsilon := c$, otherwise $\varepsilon := 0$.

		The  function $\gr$ enjoys the following \emph{additivity property}:
		\begin{equation}
			\gr([A],W_1^*,[B]) + \gr([B], W^*_2, [C]) = \gr([A], (W_1\circ W_2)^*, [C]),
		\end{equation}
		which instead does not hold for $\dim$.

	\end{recap}
}	

\subsection{Formal dimension of moduli spaces asymptotic to a reducible critical point. } \label{subsec:FormalDimension}
\begin{plan}The aim of this subsection is to prove Proposition~\ref{prop:DimOfModuli} below which computes the formal dimension of  reducible moduli spaces $M^{\mathrm{red}}(N^*,[\gota_{k,j}])$ where $N$ is a protocork. 
The idea of the proof is to reduce to the case of the  classical (unperturbed) moduli space limiting to the torus of reducibles, which is a Morse-Bott singularity of $\cL$, and apply the Atiyah-Patodi-Singer (APS) index theorem. This is done in a several steps. We use the characterization given by Lemma~\ref{lemma:FromL2loctoL2} below to pass from $L^2_{k,loc}$-monopoles to monopoles in a weighted Sobolev space, this is
probably known to experts, but we decided to give an explicit proof in \autoref{app:ProofofLemmaL2locL2} for future reference because, to the authors' knowledge, it  is not present in the literature. 
Then we reduce to the $k=N^{\mathrm{red}}, j=0$ case, i.e.  $M^{\mathrm{red}}(N^*,[\gota_{N^{\mathrm{red}},0}])$,   for two reasons: $j=0$ allows us to pass from weighted Sobolev spaces to  $L^2$-spaces (Lemma~\ref{Lemma:OperatorQDimension}) and $k=N^{\mathrm{red}}$ allows us to kill the perturbative term (Lemma~\ref{Lemma:IndexQIsIndASDDirac}). 
At this point  the computation of the APS index is carried out with the help of the nice properties of $Y$ showed  in Subsection~\ref{subsec:CriticalPoints}.
\end{plan}

\paragraph{Setting.}
For the rest of this section $N$ will be   an oriented, compact 4-manifold diffeomorphic to a protocork $P_0(\Gamma)$ for some protocork plumbing graph $\Gamma$. The boundary of $N$ will be denoted by $Y$.
We suppose also that $N$ is endowed with a Riemannian metric so that on a collar neighbourhood of the boundary  is isometric to $(-C,0]\times Y$, for some $C>0$ where the Riemannian metric on $Y$ satisfies the hypothesis of Subsection~\ref{subsec:CriticalPoints}.
We denote by $\sstruc$ a \emph{trivial} \spinc structure over $Y$ and we consider a \emph{trivial} \spinc structure $\sstruc_N$ over $N$ extending $\sstruc$, such that on the collar $(-C,0]\times Y$ is of the form described in  \autoref{recap:SpincStructuresOnCylinders}.

Since these are the only \spinc structures we are going to deal with, we will omit them in our notation. 
The setting over $Y$  is that described in Subsection~\ref{subsec:CriticalPoints}, in particular the perturbation $\pert$ is defined in \eqref{defAdmissiblePerturbationqOverY} 
and in  $\B^\sigma_k(Y)$, we have irreducible critical points $[\gotb_i]$,  ${i=1, \dots N^{\mathrm{irr}}}$ and reducible critical points $[\gota_{k,i}]$,  ${k=1, \dots, N^{\mathrm{red}}},  i\in \Z$ where $i\geq 0$ ($i<0$) corresponds to boundary-stable (unstable) critical points. 

\newcommand{\gotc}{\mathfrak{c}}
	\paragraph{Premise on moduli spaces of reducible monopoles.}  For any critical point $[\mathfrak{c}]\in \B^\sigma_k(Y)$, we denote by $M^{\mathrm{red}}(N^*; [\mathfrak{c}])\subset M(N^*; [\mathfrak{c}])$ the moduli space of (perturbed) \emph{reducible} monopoles. 
	If $[\mathfrak{c}]$ is irreducible, this space is empty, if $[\mathfrak{c}]$ is reducible and boundary-unstable then  it is the whole of $M(N^*; [\mathfrak{c}])$ while if 
	$[\mathfrak{c}]$ is reducible and boundary-stable it consists of the boundary of $M(N^*; [\mathfrak{c}])$ (assuming regularity of the moduli spaces). 
 
	We remark that in general the formal dimension depends on the homotopy class of the monopoles  \cite[pg. 474]{KM}. However for any $[\mathfrak{c}]$ critical points, $\pi_0(\B^\sigma_k(N;[\mathfrak{c}])) =\{*\}$ consists of a point; indeed it is a principal homogeneous space over $H^1(Y; \Z)/ H^1(N;\Z)  $
		which is trivial by Proposition~\ref{propProtocork}. Consequently, we can omit the subscript $z\in \pi_0(\B^\sigma_k(N;[\mathfrak{c}]))$  in   $M_z(N^*; [\gotc]) $.

In the next lemma  we will denote by $L^2_{k,\delta}(N^*) $, $\delta \in \R$, the weighted Sobolev space defined using as weight function $t\mapsto e^{2\delta t \beta(t) }$, in particular, $L^2_k(N^*)\to L^2_{k,\delta}(N^*) = f\mapsto e^{-\delta t \beta(t)} f$ is an isometry. Here $\beta\in C^\infty(N^*, [0,1])$ is the  cut-off function defined  in \autoref{recap:4DPerturbations} extended to zero over the complement of the tube.

It is  convenient to define the following Hilbert manifolds, associated to a spinor $\psi_0 \in L^2_{k}(Y)$ and $\lambda_0 \in \R, \delta >0$, 
\begin{equation*}	
	X^\pm_{k,\lambda_0,\delta}(\psi_0) := \left\{ \Phi \in  L^2_{k}(N^*; S^\pm) \ | \ 
	\Phi_{|_{\R_+\times Y}} (t,y) = e^{-\lambda_0 t } \psi_0(y) + \varphi(t,y), \ \ \varphi \in L^2_{k, \lambda_0+\delta}(N^*; S^\pm) \right\}
\end{equation*}
endowed with the smooth structure defined by requiring that
\begin{equation}\label{def:diffeoL2kAndX}
	L^2_{k, \lambda_0+\delta}(N^*; S^\pm) \to X^\pm_{k,\lambda_0, \delta}(\psi_0) = \varphi \mapsto e^{-\lambda_0 (\cdot) \beta(\cdot)} \beta(\cdot)\psi_0 + \varphi
\end{equation}
is a diffeomorphism.
\begin{remark} \label{rem:ConjugationOfDiracOp} Let $A\in \cA_{k,loc}(N^*)$ be  translation invariant and equal to $B$ over the tube. Then the  Dirac operator defines a  smooth map 
  \begin{equation}
		D_A^+:X^+_{k,\lambda_0,\delta} (\psi_0)\to X^-_{k-1,\lambda_0,\delta} (-\lambda_0 \psi_0 + D_{B}\psi_0),
	\end{equation}	
	which is conjugated via the diffeomorphism \eqref{def:diffeoL2kAndX}, to 
		\begin{spliteq}
			D_A^+ + K: L^2_{k, \lambda_0+\delta}(N^*, S^+)\to L^2_{k-1, \lambda_0+\delta}(N^*, S^-)
		\end{spliteq}
		where $K:L^2_{k, \lambda_0+\delta}(N^*, S^+)\to L^2_{k-1, \lambda_0+\delta}(N^*, S^-)$ is a compact operator.
\end{remark}

\begin{lemma}[Characterization of reducible moduli spaces with cylindrical ends.]   \label{lemma:FromL2loctoL2}  Suppose that $[\gotc] = [(B_\gotc, 0,\psi_\gotc)]\in \B_k^\sigma(Y)$ is a non-degenerate, reducible critical point with associated eigenvalue $\lambda_\gotc$.  
Then there exists $\overline\delta>0$, such that for any $\delta \in (0,\overline \delta]$,  $M^{\mathrm{red}}(N^*; [\gotc])$ is naturally identified with
		the set of pairs $(A,\Phi) \in A_\gotc +L^2_{k,\delta}(N^*,i\Lambda^1N^*)\times X^+_{k,\lambda_\gotc, \delta}(\psi_\gotc)$ such that 
		\begin{equation}
		 \label{eq:monopoleCyl} \begin{matrix} F_{A^t}^+  + \hat{\mathfrak{p}}^0(A^t,0) &= 0\\
								  D^+_A \Phi  +\Differential_{(A,0)}\hat {\mathfrak{p}}^1(\Phi) &= 0 \end{matrix}\\
		\end{equation}
		quotiented out by the action of the gauge group
		\begin{equation}
					\G_{k+1,\delta}(N^*; 1) := \{u\in L^2_{k+1,loc}(N^*) \ \ | \ 1-u \in L^2_{k+1, \delta}(N^*), \ |u|=1\}		
		\end{equation}
 Here $A_\gotc$ denotes a connection that over the cylindrical end is translation invariant and equal to $B_\gotc$ and the Sobolev norm on $L_k^2(N^*, S^+)$ is computed using the connection $A_\gotc$.
		\end{lemma}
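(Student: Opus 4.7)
\textbf{Proof sketch for Lemma~\ref{lemma:FromL2loctoL2}.} The plan is to follow the standard strategy of exponential decay for Seiberg--Witten monopoles asymptotic to a non-degenerate critical point (cf.\ \cite[Chapter 13]{KM}), adapted to the blown-up perturbed setting and the reducible locus. There are three main steps: gauge fixing on the end, exponential decay estimates, and verification that the identification descends to the quotients.

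First, given a representative $\gamma = (A, 0, \Phi) \in \Conf^\sigma_{k,loc}(N^*)$ of a class in $M^{\mathrm{red}}(N^*; [\gotc])$, I would use an $L^2_{k+1,loc}$ gauge transformation to put $A$ in temporal gauge on the cylindrical end $[0,+\infty) \times Y$, so that $A|_{\{t\}\times Y} = \check A(t)$ represents a path in $\Conf_k(Y)$. The asymptotic condition $\limdx R_Z[\gamma] = [\gotc]$ says that this path, after renormalizing the spinor slice by slice, converges in $L^2_{k,loc}$ to the translation-invariant configuration defined by $\gotc = (B_\gotc, 0, \psi_\gotc)$; this fixes the gauge up to a constant phase and a $\G_{k+1,loc}$ transformation that becomes trivial at infinity.

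Second, the heart of the proof is exponential decay. On the cylindrical end the perturbed Seiberg--Witten equations for a reducible monopole in temporal gauge decouple:  the connection component satisfies
\begin{equation}
\tfrac{d}{dt}\check A(t) = -\tfrac{1}{2}\!\ast\! F_{\check A(t)^t} - \pert^{0}(\check A(t),0),
\end{equation}
which is the gradient flow of $\Lpert$ on the reducible locus, while the spinor satisfies $(\partial_t + D_{\check A(t),\pert})\Phi(t,\cdot) = 0$. Because $[\gotc]$ is a \emph{non-degenerate} critical point of $(\operatorname{grad}\Lpert)^\sigma$ (Assumption~\ref{assumptionPerturbationEnum}), the Hessian at $\gotc$ is an invertible, symmetric elliptic operator on the slice, and its spectrum has a smallest positive absolute value $\overline\delta > 0$. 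A standard Kondrachov-style argument (linearizing the flow, expanding in the spectral decomposition of the Hessian, and absorbing the nonlinear tail) shows that $\check A(t) - B_\gotc$ and all its derivatives decay like $e^{-\overline\delta t}$. For the spinor, since $\psi_\gotc$ is a $\lambda_\gotc$-eigenvector of $D_{B_\gotc,\pert}$ and all other eigenvalues are separated by at least $\overline\delta$ (after possibly shrinking it), the solution splits as $\Phi(t,y) = e^{-\lambda_\gotc t}\psi_\gotc(y) + \varphi(t,y)$ with $\varphi \in L^2_{k,\lambda_\gotc + \delta}$ for any $\delta \in (0,\overline\delta]$; I would invoke Remark~\ref{rem:ConjugationOfDiracOp} to transfer this from the weighted norm to the space $X^+_{k,\lambda_\gotc,\delta}(\psi_\gotc)$.

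Third, I would run the identification in the other direction: any $(A,\Phi)$ satisfying the stated decay conditions and the equations~\eqref{eq:monopoleCyl} gives a well-defined $L^2_{k,loc}$ reducible monopole; the decay forces the slicewise restriction, after normalization, to converge in $L^2_{k,loc}$ to $(B_\gotc,0,\psi_\gotc)$, hence represents a class in $M^{\mathrm{red}}(N^*;[\gotc])$. Finally, I would check that two solutions of the stated form are $\G_{k+1,loc}(N^*)$-equivalent if and only if they are $\G_{k+1,\delta}(N^*;1)$-equivalent: given $u \in \G_{k+1,loc}$ relating the two, the equation $\check A_2(t) = u \cdot \check A_1(t)$ on the end forces $u-1$ to decay at the rate of the difference of the connections, i.e.\ at rate $e^{-\overline\delta t}$, so $u \in \G_{k+1,\delta}(N^*;1)$ up to the residual constant phase (absorbed into the choice of representative).

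The main obstacle is the second step: establishing exponential decay \emph{in the blow-up} when the asymptotic spinor component is nontrivial and the perturbation $\pert$ is supported in the collar. The subtlety is that the natural functional setting is the weighted space $L^2_{k,\lambda_\gotc + \delta}$, shifted by the exponential ansatz $e^{-\lambda_\gotc t}\psi_\gotc$, and one must check that the perturbative term $\Differential_{(A,0)}\hat\pertp^{1}$ is compatible with this weighted setup and does not interfere with the spectral-gap argument. This is why the proof is relegated to Appendix~\ref{app:ProofofLemmaL2locL2}, where the conjugation trick of Remark~\ref{rem:ConjugationOfDiracOp} plays the decisive role.
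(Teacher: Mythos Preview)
Your outline follows the same three-step architecture as the paper's proof (gauge fix on the end, exponential decay, reverse direction and gauge equivalence), and the connection-component story is essentially right: the paper cites \cite[Proposition 13.6.1]{KM} to obtain a gauge transformation $u$ such that $(u\cdot A,0,u\psi)-\gamma_\gotc$ decays like $e^{-\delta t}$, and then uses the isomorphism $H^1(N)\to H^1(Y)$ (which you do not mention) to extend $u$ from the cylinder to all of $N^*$.

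Where your sketch diverges from the paper is in the treatment of the \emph{unnormalized} spinor. You write that ``the solution splits as $\Phi(t,y)=e^{-\lambda_\gotc t}\psi_\gotc(y)+\varphi(t,y)$'' by a spectral-gap argument, and you attribute the resolution to the conjugation trick of Remark~\ref{rem:ConjugationOfDiracOp}. That is not how the appendix proceeds. The asymptotic condition is formulated in the $\tau$-model, so what decays exponentially is the \emph{normalized} spinor $\psi(t)=\Phi(t)/\|\Phi(t)\|_{L^2(Y)}$, not $\Phi$ itself. The paper recovers the behaviour of $\Phi$ by writing $\Phi(t)=s(t)\psi(t)$ with $s(t)=\|\Phi(t)\|_{L^2(Y)}$, observing that $s$ satisfies the scalar ODE $s'(t)=-\Lambda(t)s(t)$ with $\Lambda(t)=\operatorname{Re}\langle D_{\check A(t),\pert}\psi(t),\psi(t)\rangle$, and then proving two technical lemmas: first that $\Lambda(\cdot)-\lambda_\gotc\in L^2_{k-1,\delta}(\R_+)$ (this is where the work is, using the exponential decay of $\check A-B_\gotc$ and $\psi-\psi_\gotc$), and second that this forces $s(t)=ce^{-\lambda_\gotc t}+g(t)$ with $g\in L^2_{k,\lambda_\gotc+\delta}$. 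Combining $s(t)\psi(t)$ then gives $u\Phi/c\in X^+_{k,\lambda_\gotc,\delta}(\psi_\gotc)$. Remark~\ref{rem:ConjugationOfDiracOp} plays no role here; it is used only later in the index computations. Your direct spectral-decomposition argument for $\Phi$ is problematic as stated because the operator $D_{\check A(t),\pert}$ is genuinely time-dependent on the end, so separation of variables is not available until after one has already established the exponential decay of the connection.
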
	
				The proof of this Lemma is in \autoref{app:ProofofLemmaL2locL2}. 
				This characterization relies on the fact that $\pi_0(\B^\sigma_k(N;[\mathfrak{c}]))=\{*\}$, but can be easily modified to cover the
				general case by defining spaces $X^+_{z, k,\lambda_\gotc, \delta}(\psi_\gotc)$ dependent on the path $z$.

\newcommand{\ntop}{N^\mathrm{red}}
		\begin{proposition}\label{prop:DimOfModuli}
			 The formal dimension of $M(N^*; [\gota_{k,j}])$ is equal to 
					\begin{equation*}
						d(M(N^*; [\gota_{k,i}])) =\begin{cases}	  b_1(Y)  - \indf [\gota_{k}] -2i -1&\  \text{for $i\geq 0$}\\
																  b_1(Y) - \indf [\gota_{k}] -2i -2 & \ \text{for $i< 0$}\\
						\end{cases}
					\end{equation*}
		\end{proposition}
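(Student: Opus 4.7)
The strategy is to realise the formal dimension of $M(N^*,[\gota_{k,i}])$ as the Fredholm index of an elliptic operator, reduce step by step to a concrete model, and finally evaluate that index using the topology of protocorks (Proposition~\ref{propProtocork}) together with the symmetry $\rho_A$ of Proposition~\ref{prop:existenceSymmetry}. By Lemma~\ref{lemma:FromL2loctoL2} the moduli space is cut out by the elliptic system \eqref{eq:monopoleCyl} in weighted Sobolev spaces on $N^*$. Linearising at a reducible configuration and imposing Coulomb gauge via $-d^*$, one obtains an elliptic operator $Q$ which, because the asymptotic configuration is reducible, decouples as $Q = Q_f \oplus Q_D$. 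Here $Q_f$ acts on $L^2_{k,\delta}(N^*; i\Lambda^1)$ with values in $L^2_{k-1,\delta}(N^*; i\Lambda^{+} \oplus i\Lambda^{0})$, and $Q_D$ is the perturbed Dirac operator between the affine spaces $X^{+}_{k,\lambda_i,\delta}(\psi_i)$ and its negative analogue. The formal dimension equals $\operatorname{ind}(Q_f)+\operatorname{ind}(Q_D)$.

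For the spinor piece I would use Remark~\ref{rem:ConjugationOfDiracOp} to trivialise the affine structure, converting $Q_D$ into $D^+_A + K$ on the standard weighted space $L^2_{k,\lambda_i+\delta}(N^*;S^+)$ with $K$ compact and therefore index-preserving. Conjugating by the weight $e^{(\lambda_i+\delta)t\beta(t)}$ moves the problem to unweighted $L^2$, which I expect to be the content of Lemma~\ref{Lemma:OperatorQDimension}. The resulting operator is asymptotically translation-invariant with APS boundary operator $D_{B_{\gota_k},\pert}$, invertible by Assumption~\ref{assumptionPerturbationEnum}.3. As $i$ varies, $\operatorname{ind}(Q_D)$ changes in a controlled way each time the weight crosses an eigenvalue of $D_{B_{\gota_k},\pert}$; the asymmetry between $i \geq 0$ and $i < 0$ reflects the APS convention on eigenvalues sitting on either side of the zero threshold secured by Assumption~\ref{assumptionPerturbationEnum}.3. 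Lemma~\ref{Lemma:IndexQIsIndASDDirac} should permit switching off the perturbation in the base case $k = N^{\mathrm{red}}$, $i = 0$, at which point $c_1(\sstruc_N) = 0$ together with Lemma~\ref{lemma:DiracInjective} give a clean evaluation of the base APS index.

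For the form piece, additivity of the grading (Proposition~\ref{prop:bargr} transported to the 4D side via a reducible cylindrical trajectory) reduces the computation to $k = N^{\mathrm{red}}$, at the cost of a shift by $\indf[\gota_{N^{\mathrm{red}}}] - \indf[\gota_k] = b_1(Y) - \indf[\gota_k]$. At $k = N^{\mathrm{red}}$ the asymptotic Hessian of $\calL + f$ restricted to the torus of flat connections is negative-definite and $Q_f$ becomes the APS-extended operator $d^{+} \oplus (-d^{*})$ on $N^{*}$. Its index is then read off from the topology of $N$: the identities $b_2^+(N) = 0$, $b_1(N) = b_1(Y)$, $\sigma(N) = 0$ and the isomorphism $H^1(N;\Z) \xrightarrow{\sim} H^1(Y;\Z)$ from Proposition~\ref{propProtocork}, together with the orientation-reversing isometry $\rho_A$ of Proposition~\ref{prop:existenceSymmetry} which acts trivially on $H^1(Y;\Z)$, annihilate the boundary $\eta$-invariant that would otherwise appear. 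Summing $\operatorname{ind}(Q_f)$ and $\operatorname{ind}(Q_D)$ yields the two cases of the proposition.

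The main obstacle is the careful bookkeeping of the $-1$ versus $-2$ asymmetry at the transition $i = -1 \to i = 0$: this is precisely where the APS convention on eigenvalues near zero enters, and it must be tracked through the weight conjugation, the spectral-flow reductions in $i$, and the reduction to $k = N^{\mathrm{red}}$, so that no unit of spectral flow is lost along the way.
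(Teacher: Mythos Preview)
Your proposal is correct and follows essentially the same route as the paper: reduce to the base case $(k,i)=(N^{\mathrm{red}},0)$ via additivity of $\gr$ and $\bar\gr$, invoke Lemma~\ref{lemma:FromL2loctoL2} to pass to weighted Sobolev spaces, apply Lemmas~\ref{Lemma:OperatorQDimension} and~\ref{Lemma:IndexQIsIndASDDirac} to strip the weights and the perturbation, and then evaluate the APS index of $ASD\oplus D^+_{B_0}$ using the topology of $N$ together with an orientation-reversing isometry of $Y$ to kill the $\eta$-terms. The only cosmetic difference is that the paper carries out the reduction in $k$ and $i$ simultaneously via a single additivity step (rather than treating form and spinor pieces separately) and cites $\rho_B$ rather than $\rho_A$; by Proposition~\ref{prop:existenceSymmetry} either isometry works, since both are orientation-reversing and act trivially on $H^1(Y;\Z)$.
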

where $\indf [\gota_{k}] $ is defined in \eqref{defIndfMorse}.

		\begin{proof}[Proof of Proposition~\ref{prop:DimOfModuli}]
				Consider $[\gota_{\ntop}]\in \B_k(Y)$,  the only critical point  such that $\indf [\gota_{\ntop}] = b_1(Y)$  (see Subsection~\ref{subsec:CriticalPoints}).
				  We will show that $d(M(N^*; [\gota_{\ntop,0}]) = -1$. The general case will then  follow from the additivity  of the index:
						\begin{spliteq}
					d(M(N^*; [\gota_{k,j}])) &= d(M(N^*; [\gota_{\ntop,0}]) + \gr([\gota_{\ntop,0}], [\gota_{k,j}]) \\
													 	& = d(M(N^*; [\gota_{\ntop,0}]) +\bar \gr([\gota_{\ntop,0}], [\gota_{k,j}]) - \epsilon(j),\\
						\end{spliteq}					 
					 where $\epsilon = 1$ if  $j<0$ and $0$ otherwise and $\bar \gr$ has been defined in Proposition~\ref{prop:bargr}.
					Since reducible solutions constitute the boundary of the moduli space: $M^{\mathrm{red}}(N^*; [\gota_{\ntop,0}]) = \partial M(N^*; [\gota_{\ntop,0}])$, it is enough to show that the virtual dimension of $M^{\mathrm{red}}(N^*; [\gota_{\ntop,0}])$ is equal to $-2$.
							
						\newcommand{\SWmap}{\cF^\sigma}
						\newcommand{\pertp}{\mathfrak{p}}

			 Denote the components of  $\gota_{\ntop, 0}$ as  $\gota_{\ntop, 0} = (B_{\infty}, 0, \psi_{\infty})\in \Conf^\sigma_k(Y)$.
		 	Since $\gota_{\ntop, 0}$ is  non-degenerate we can invoke Lemma~\ref{lemma:FromL2loctoL2}, so that $M^{\mathrm{red}}(N^*; [\gota_{\ntop,0}])$ consists of 
		 	equivalence classes of  perturbed monopoles $(B_0 + a\otimes 1_{S^+}, \R_+ \Phi)$  where $B_0 $ is a smooth connection 
			equal to $B_{\infty}$   over the end and  $(a, \Phi) \in L^2_{k,\delta}(N^*; i\Lambda^1(N^*))\times X^+_{k,\lambda_0,\delta}(\psi_\infty)$. Here  $\lambda_0 >0$ is the eigenvalue associated to $\psi_\infty$ and $\delta>0$ is arbitrarily small. 
			Lemma~\ref{lemma:FromL2loctoL2}, will allow us to pass from $L^2_{k,loc}$ to $L^2_{k}$ sections, 	this is necessary step since we want  to invoke the  Atiyah-Patodi-Singer's index theorem \cite{APS1} later.

			\newcommand{\linearActadj}{\mathbf{d}^{\sigma,\dagger}}
			\begin{lemma}\label{Lemma:OperatorQDimension} The formal dimension of  $M^{\mathrm{red}}(N^*, [\gota_{\ntop,0}])$ is equal to $\ind_{L^2} Q-1$ where $Q$ is  the operator 
			\begin{spliteq}
				  Q &:   L^2_k(N^*; i\Lambda^1(N^*)\oplus S^+)\to L^2_{k-1}(N^*; i \Lambda^2_+\oplus  S^-\oplus i\R) \\
				 	& (a, \Phi) \mapsto ( d^+ a  + \Differential_\gamma \hat \pertp^0 (a), D^+_{B_0} \Phi + \Differential_{\gamma} \hat \pertp^1( \Phi), -d^*a),
			\end{spliteq}
			 and $\gamma=(B_0, 0)\in \Conf_{k,loc}(N^*)$.
			\end{lemma}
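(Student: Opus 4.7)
The plan is to use the characterization provided by Lemma~\ref{lemma:FromL2loctoL2} to realize $M^{\mathrm{red}}(N^*; [\gota_{\ntop,0}])$ as the zero set of \eqref{eq:monopoleCyl} in weighted Sobolev spaces modulo the gauge group $\G_{k+1,\delta}(N^*;1)$, linearize at a solution, and compare the resulting Fredholm index with $\ind_{L^2} Q$.

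First I would parametrize a neighbourhood of each class $[(A_0,\Phi_0)]\in M^{\mathrm{red}}(N^*;[\gota_{\ntop,0}])$ by pairs $(a,\varphi)\in L^2_{k,\delta}(N^*;i\Lambda^1)\times L^2_{k,\lambda_0+\delta}(N^*;S^+)$, after identifying $X^+_{k,\lambda_0,\delta}(\psi_\infty)$ with $L^2_{k,\lambda_0+\delta}(N^*;S^+)$ via the diffeomorphism \eqref{def:diffeoL2kAndX}. Because the equations \eqref{eq:monopoleCyl} decouple on reducibles (the first equation depends only on $A$, and the second is $\mathbb{C}$-linear in $\Phi$), the linearization adjoined with the Coulomb slice condition $-d^*a=0$ is a block-diagonal Fredholm operator $\tilde Q=\tilde Q_{\mathrm{conn}}\oplus\tilde Q_{\mathrm{spin}}$ on these weighted spaces. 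Since the gauge group $\G_{k+1,\delta}(N^*;1)$ consists of $u$ with $1-u\in L^2_{k+1,\delta}$ and $|u|=1$, it contains no non-trivial constants for $\delta>0$; hence its action at any reducible is infinitesimally free, and the formal dimension of the moduli space equals $\ind\tilde Q_{\mathrm{conn}}+\ind\tilde Q_{\mathrm{spin}}$.

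The heart of the proof is comparing $\ind\tilde Q$ with $\ind_{L^2}Q$. Conjugation by the weights $e^{\delta t\beta(t)}$ and $e^{(\lambda_0+\delta)t\beta(t)}$ (as in Remark~\ref{rem:ConjugationOfDiracOp}) realizes $\tilde Q_{\mathrm{conn}}$ and $\tilde Q_{\mathrm{spin}}$ as operators on $L^2_k$ whose tangential models on the tube are, respectively, the blow-down Hessian of $\Lpert$ at $(B_\infty,0)$ in the connection direction shifted by $-\delta$, and the perturbed Dirac operator $D_{B_\infty,\pert}$ shifted by $-(\lambda_0+\delta)$. For the connection block, the tangential model of $Q_{\mathrm{conn}}$ is this Hessian (unshifted) which, by our Morse--Smale choice of $f_\torus$ together with Proposition~\ref{propProtocork} and Assumption~\ref{assumptionPerturbationEnum}, is invertible; for $\delta>0$ sufficiently small no eigenvalue is crossed, giving $\ind\tilde Q_{\mathrm{conn}}=\ind Q_{\mathrm{conn}}$. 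For the spinor block, the spectrum of $D_{B_\infty,\pert}$ is $\cdots<\lambda_{-1}<0<\lambda_0<\lambda_1<\cdots$, and shifting by $-(\lambda_0+\delta)$ with $\delta<\lambda_1-\lambda_0$ crosses exactly the simple eigenvalue $\lambda_0$. By the standard change-of-Fredholm-index formula for translation-invariant operators on cylindrical ends under weight change (compare \cite[Chapter~14]{KM} and \cite{APS1}), crossing a simple eigenvalue of the tangential operator changes the $L^2$-index by $-1$, hence $\ind\tilde Q_{\mathrm{spin}}=\ind Q_{\mathrm{spin}}-1$. Adding the two blocks yields $\ind\tilde Q=\ind_{L^2}Q-1$, and combined with the identification of the formal dimension with $\ind\tilde Q$ this proves the lemma.

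The main obstacle is the last step: one must pick $\delta$ small enough that the connection tangential operator avoids its (finitely many, bounded away from zero) non-zero spectral values while the spinor tangential operator picks up only the crossing at $\lambda_0$. This requires the hypotheses of Subsection~\ref{subsec:CriticalPoints}, notably that $f_\torus$ is Morse--Smale and that $\pert'$ is small enough to preserve invertibility of $D_{B,\pert}$ on flat $B$ (Lemma~\ref{lemma:DiracInjective}), in order to guarantee non-degeneracy of the blow-down Hessian at $[(B_\infty,0)]$ and to ensure discreteness and correct ordering of the spectrum.
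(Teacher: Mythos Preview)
Your overall strategy coincides with the paper's: use Lemma~\ref{lemma:FromL2loctoL2} to work in weighted spaces, linearize \eqref{eq:monopoleCyl} at a reducible, add the Coulomb condition, and compare the resulting weighted index with $\ind_{L^2}Q$. The final number $-1$ is correct, but you reach it via two errors that happen to cancel.

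For the spinor block, you assert that crossing the simple eigenvalue $\lambda_0$ of $D_{B_\infty,\pert}$ changes the index by $-1$. The perturbed Dirac operator is $\C$-linear and its spectrum is simple over $\C$, so each eigenvalue has real multiplicity $2$; crossing $\lambda_0$ therefore changes the \emph{real} index by $-2$, not $-1$. This is exactly what the paper records: $\ind_{L^2_{\lambda_0+\delta}}(D^+_{B_0}+\Differential_\gamma\hat\pertp^1)=\ind_{L^2}(D^+_{B_0}+\Differential_\gamma\hat\pertp^1)-2$.

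For the connection block, you identify the tangential operator with the blow-down Hessian of $\Lpert$ on the Coulomb slice and declare it invertible. That Hessian is invertible, but it is not the tangential operator of $Q_{\mathrm{conn}}=(d^++\Differential_\gamma\hat\pertp^0,\,-d^*)$. On the cylinder the domain $\Lambda^1(N^*)$ restricts to $\Lambda^1(Y)\oplus\Lambda^0(Y)$ (the second summand from $c(t)\,dt$), and the tangential operator is $SIGN+\Hess_{B_\infty}^f\circ\PP_{\ker\Delta}$ acting on $\Lambda^1(Y)\oplus\Lambda^0(Y)$. The Hessian kills the $\mathcal{H}^1$-kernel of $SIGN$, but the $\mathcal{H}^0\simeq\R$ summand coming from constant $0$-forms survives. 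Thus the tangential operator is \emph{not} invertible, and passing from $L^2$ to $L^2_\delta$ does cross an eigenvalue at $0$. The paper phrases this as ``the exponential decay cuts off an $\R$-summand in the cokernel due to constant functions'', giving $\ind_{L^2_\delta}Q_{\mathrm{conn}}=\ind_{L^2}Q_{\mathrm{conn}}+1$ rather than your claimed equality.

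So the correct bookkeeping is $(-2)+(+1)=-1$, not $(-1)+0=-1$. You should either fix both steps or, as the paper does, simply observe the two individual jumps directly: the real $2$-dimensional $\lambda_0$-eigenspace for the Dirac part and the one-dimensional contribution of constant functions for the $(d^+,-d^*)$ part.
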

			\begin{proof} The formal dimension is given by the index of the Fredholm operator obtained by linearizing the equations \eqref{eq:monopoleCyl}  and adding a gauge fixing condition.

			Pick $ \Phi_0 \in X^+_{k,\lambda_0, \delta}(\psi_\infty) $,
			 then the linearization of
			\eqref{eq:monopoleCyl} at $(B_0, \Phi_0)$ is conjugated  via   \eqref{def:diffeoL2kAndX} to 
			\begin{spliteq}
				  \tilde Q :  \ \ & L^2_{k,\delta}(N^*; i\Lambda^1N^*)\oplus L^2_{k, \lambda_0+\delta}(N^*; S^+) 
				  \to L^2_{k-1,\delta}(N^*; i\Lambda^1N^*)\oplus L^2_{k-1, \lambda_0+\delta}(N^*; S^-) \\
				 		&  (a, \Phi) \mapsto (d^+ a  + \Differential_\gamma 	\hat\pertp^0 (a), D^+_{B_0} \Phi + \Differential_{\gamma}\hat \pertp^1(\Phi)) + K(a, \Phi)
			\end{spliteq}
			where  $K$ is  a compact operator (see also \autoref{rem:ConjugationOfDiracOp}).
			Choosing a different $(B_0,\Phi_0)$ will perturb $\tilde Q$ by a compact operator (hence irrelevant for the  computation of  the index)
		 due to compact embedding theorems for weighted Sobolev spaces.

			To take care of the action of the gauge group, 	we add a gauge-fixing condition, i.e. we consider the operator $\tilde Q \oplus \linearActadj_\gamma$ where 	 $\linearActadj_\gamma:L^2_{k,\delta}( i\Lambda^1(N^*)\oplus S^+)\to L^2_{k-1,\delta}(i\R) $ 
			is the operator defined at  \cite[pg. 143]{KM}; the $L^2$-kernel of $\linearActadj_\gamma$ is the tangent space to the Coulomb slice at $\gamma$. 
			$\tilde Q \oplus \linearActadj_\gamma$ is equal, modulo compact operators,  to the operator 
			\begin{spliteq*}\label{eq:QExpDecay}
				  Q &:  L^2_{k,\delta}(N^*; i\Lambda^1N^*)\oplus L^2_{k, \lambda_0+\delta}(N^*; S^+) \to L^2_{k-1,\delta}(N^*; i\Lambda^1N^*)\oplus L^2_{k-1, \lambda_0+\delta}(N^*; S^-) \oplus L^2_{k-1,\delta}(N^*; i\R) \\
				 	& (a, \Phi) \mapsto ( d^+ a  + \Differential_\gamma \pertp^0 (a), D^+_{B_0} \Phi + \Differential_{\gamma} \pertp^1( \Phi), -d^*a) .
			\end{spliteq*}
$Q$  is not quite the operator appearing in the thesis because of the \emph{weighted} Sobolev spaces.
			Since $\lambda_0$ is the first positive eigenvalue of the limiting perturbed Dirac operator and the latter has simple spectrum, by choosing $\delta>0$ small, 	 we have that $\ind_{L_{\lambda_0+\delta}^2} (D^+_{B_0} + \Differential_{\gamma} \pertp^1) = \ind_{L^2}(D^+_{B_0}+ \Differential_{\gamma} \pertp^1) -2$ (see also the proof of Lemma~\ref{lemma:SpinorSolutionIndex}).			
			Furthermore, $\ind_{L_{\delta}^2}  (d^+  + \Differential_\gamma \pertp^0, -d^*)= \ind_{L^2}  (d^+ 	  + \Differential_\gamma \pertp^0, -d^*) +1$ because the exponential decay cuts off an $\R$-summand in the cokernel due to constant functions. 
			In conclusion the difference between the above operator $Q$ and the operator $Q$ in the thesis is $-1$.
			\end{proof}
			
			In view of  Lemma~\ref{Lemma:OperatorQDimension},  in order to conclude the proof we will show that the $L^2$-index of $Q$ is equal to $-1$. 
			
			\begin{lemma} \label{Lemma:IndexQIsIndASDDirac}
				\begin{equation}
			 	\ind\ Q = \ind\  ASD   \oplus D^+_{B_0}
		 	\end{equation}
		 	where $ASD =  d^+ - d^*$ is the anti-self duality operator.
			\end{lemma}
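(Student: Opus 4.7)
The approach is to view $Q$ as a perturbation of $ASD \oplus D^+_{B_0}$ by an operator $K$ that does not change the Fredholm index. Write $Q = (ASD \oplus D^+_{B_0}) + K$ with $K = K^0 \oplus K^1$ encoding the linearizations $\Differential_\gamma \hat{\mathfrak{p}}^0$ and $\Differential_\gamma \hat{\mathfrak{p}}^1$ at the reducible $\gamma=(B_0,0)$. I would then analyze $K$ using the decomposition $\pert = \operatorname{grad} f + \pert'$ from \eqref{defAdmissiblePerturbationqOverY} into Morselike and $\PertZero$ pieces.

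First, for the spinor component $K^1$: since $f$ factors through the retraction $p_\torus$ to the space of flat connections, its spinor part vanishes, $(\operatorname{grad} f)^1 = 0$, and so $K^1 = \Differential_\gamma \hat{\pert'}^1$. The tame-perturbation structure of $\Pert$ from \cite[Section 11.6]{KM}, combined with the cut-off localization via $\beta_0,\beta$ in \eqref{defPert4D}, should make $K^1$ a compact operator $L^2_k(N^*; S^+) \to L^2_{k-1}(N^*; S^-)$: the derivative of a tame perturbation in the spinor direction at a reducible is built to be smoothing, and the cut-offs combined with Rellich--Kondrachov on the compactly-supported region produce the required compactness.

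Second, for the connection component $K^0$: since $\pert' \in \PertZero$ vanishes on the reducible locus, its derivative in the connection direction at $(B_0,0)$ vanishes, and its derivative in the spinor direction at a reducible vanishes by $U(1)$-equivariance of tame perturbations. So only the Morselike contribution survives, and $K^0$ on the cylindrical end reads
\begin{equation*}
a \mapsto \beta(t)\, H\bigl(\PP_{\ker\Delta}\check a(t)\bigr),
\end{equation*}
where $H$ is the Hessian of $f_\torus$ at the unique critical point of maximal Morse index; the image lies in the $b_1(Y)$-dimensional space of harmonic 1-forms on $Y$.

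The main obstacle is that $K^0$ is \emph{not} compact on unweighted $L^2$; it realizes precisely the spectral shift of the limiting boundary operator that was needed to make $Q$ Fredholm in the first place. To get around this I would work in weighted Sobolev spaces $L^2_\delta$ for $\delta>0$ small, where both $Q$ and $ASD \oplus D^+_{B_0}$ are Fredholm and where the finite-rank slicewise structure of $K^0$ becomes genuinely compact (the weight separates the non-decaying harmonic modes from the spectrum of interest). Fredholm stability in this weighted setting then gives $\ind_{L^2_\delta}Q = \ind_{L^2_\delta}(ASD \oplus D^+_{B_0})$, and converting back to unweighted $L^2$-indices produces matching correction terms on both sides coming from the spectral elements straddling the weight, exactly analogous to the $L^2_\delta$-to-$L^2$ comparison carried out in the proof of Lemma~\ref{Lemma:OperatorQDimension}. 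This last step is where care is needed, as one must check that the Morselike perturbation does not create any spectral flow through zero during the interpolation, so that the corrections cancel and the identity $\ind Q = \ind(ASD \oplus D^+_{B_0})$ is obtained.
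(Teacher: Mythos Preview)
Your decomposition into Morselike and $\PertZero$ pieces is correct, as is the identification of $K^0$ with the Hessian term, but both of your index arguments have gaps.

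For the spinor part, the operator $K^1$ is \emph{not} compact on $L^2_k(N^*)$. The term coming from $\beta_0\pert_0$ is fine (compactly supported, derivative of a tame perturbation), but the term $\beta\,\Differential_{(B_\infty,0)}\pert'^1$ is supported on the entire infinite cylinder: $\beta(t)=1$ for $t\ge 0$, so Rellich--Kondrachov does not apply. Slicewise compactness (which tameness gives) does not imply compactness on $\R_+\times Y$. The paper instead uses Assumption~\ref{assumptionPerturbationEnum}\ref{assumption2OnPert}: since $\pert$ is chosen so small that $D_{B_\infty,\pert}$ stays invertible, the interpolation $s\mapsto D^+_{B_0}+s\,\Differential_\gamma\hat\pertp^1$ has invertible boundary operator throughout, hence is a continuous family of Fredholm operators and the index is unchanged.

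For the connection part, the operator $K^0$ is \emph{not} compact in weighted spaces either: it is a slicewise operator, so conjugation by the weight $e^{\delta t}$ leaves it unchanged, and a bounded multiplication operator on $L^2(\R_+,\mathcal{H}^1)$ with $\dim\mathcal{H}^1=b_1$ is never compact. Moreover, your hoped-for cancellation of weight-correction terms fails: the boundary operator for $ASD$ is $SIGN$ with $\ker SIGN\simeq\mathcal{H}^0\oplus\mathcal{H}^1$ of dimension $1+b_1$, while for $ASD+K^0$ it is $SIGN+\Hess_{B_\infty}^f\circ\PP_{\ker\Delta}$ with kernel only $\mathcal{H}^0$ (the Hessian is negative definite since $[\gota_{\ntop}]$ is a maximum). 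So the number of eigenvalues crossing the weight differs by $b_1$, and the corrections do not match. The paper handles this by an explicit APS gluing computation: it writes $\ind(ASD+K^0)=\ind(ASD)+\dim\ker(SIGN)+\ind_{\R\times Y}(\partial_t+SIGN+\beta\Hess^f\PP)$, splits the cylinder operator over $\mathcal{H}^0\oplus\mathcal{H}^1\oplus(\ker SIGN)^\perp$, and computes the cylinder index to be $-1-b_1$, exactly cancelling $\dim\ker(SIGN)=1+b_1$. This cancellation is the actual content of the lemma and requires the computation, not a perturbation argument.
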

			\begin{proof}
			 Recall that $D_{B_\infty}:L^2_k(Y;S)\to L^2_{k-1}(Y;S)$ is invertible by our choice of  metric and  we choose  our perturbation $\pert$ so small in the Banach space of perturbations 
			 that  $D_{B_\infty,\pert}$ remains injective, consequently, the spectral flow will be zero and the index of $D^+_{B_0} + \Differential_{\gamma} \pertp^1$
			 will be equal to the index of $D^+_{B_0}$ alone. 			
			 Therefore 
			 \begin{equation}
			 	\ind\ Q = \ind\  \big (ASD  + \Differential_\gamma \pertp^0  \big) \oplus D^+_{B_0}
		 	\end{equation} where $ASD =  d^+ - d^*$ is the anti-self duality operator. 
		 	
		 	The rest of the proof deals with $ASD  + \Differential_\gamma \pertp^0 $.
		 	\newcommand{\Hessf}{\Hess_{B_\infty}^f}
		 	Let  $\mathcal{H}^p$ denote the immaginary valued harmonic $p$-forms over $Y$ and let  $\Hessf: \cH^1\to \cH^1$  be the Hessian at zero of the Morse function 
		 	\begin{equation}
		 	 \alpha \in \cH^1\mapsto f (B_\infty + \alpha)\in \R.
		 	\end{equation}
		 	
			 The operator $\Differential_\gamma \pertp^0:L^2_k(N^*;i\Lambda^1)\to L^2_{k-1}(N^*; i\Lambda^2_+) $  is supported over the cylindrical end and acts on 1-forms as
			 \begin{equation}\label{eq:Diffpertp}
					 \Differential_\gamma \pertp^0 (a) (t) =dt\wedge \beta(t) (\Hessf)(\check a_{harm}(t)) + dt\wedge \beta_0(t)\Differential_{\check \gamma(t)}\pert_0^0(\check a(t))
			\end{equation}			  
			 (see the discussion of perturbations in \autoref{recap:4DPerturbations}). Since $\beta_0$ is compactly supported  and $\Differential_{\check \gamma(t)}\pert_0^0$ is compact ($\pert_0$ is a tame perturbation), $\Differential_{\gamma}(\beta_0\hat \pert_0^0)$ is a compact operator over the cylinder hence  is not relevant for index computions. We will thus assume from now on that $\pert_0^0= 0$.

			Equation \eqref{eq:Diffpertp} tells us that the operator $ ASD  + \Differential_\gamma \pertp^0$ is  an Atiyah-Patodi-Singer operator \cite{APS1, NicolaescuBook}, and after the usual identifications, the operator over the slice $\{t\}\times Y$
			takes the form (see for example \cite[pg. 312]{NicolaescuBook})
			\begin{equation}
				\partial_t + SIGN			+ \beta(t) \Hess f \circ \mathbb{P}_{\ker \Delta}
			\end{equation}
				where 
				\begin{equation}
				SIGN  =  \begin{bmatrix}
									& *d & -d\\
									& -d^* & 0 \\
								\end{bmatrix} : L^2_k(Y; \Lambda^1\oplus \Lambda^0)\to  L^2_{k-1}(Y; \Lambda^1\oplus \Lambda^0) 
				\end{equation} is the odd signature operator. 
			The gluing formulas for the index \cite{APS1} give: 
			\begin{spliteq}\label{eq:monopoleCyl4}
			\ind \ (ASD  + \Differential_\gamma \pertp^0  \big) = &
			 \ind \ (ASD) + \dim \ker (SIGN) \\
			 & + \ind \left(\partial_t + SIGN	+ \beta(t) \Hessf \circ \mathbb{P}_{\ker \Delta}\right)\\
			\end{spliteq}
			where the last term is the index over the infinite cylinder $\R\times Y$.
			
			 Clearly $\ker (SIGN)  \simeq H^0(Y)+H^1(Y)$, we claim that the last summand in \eqref{eq:monopoleCyl4} is equal to  $-1 - b_1(Y)$.
			This will imply that  $\ind \ (ASD  + \Differential_\gamma \pertp^0  \big) = \ind \ (ASD)$ and conclude the proof of the lemma.
			The operator  $SIGN + \beta(t)\Hessf \circ \mathbb{P}_{\ker \Delta}$ splits as 
			\begin{equation*} 
						\begin{bmatrix}
							& 0 & 0 & 0\\
							& 0 & \beta(t) \Hessf   & 0\\
							& 0& 0 & SIGN'\\
						\end{bmatrix}:\begin{matrix}
												&\mathcal{H}^0 \\	
												& \oplus \\
												&   \mathcal{H}^1\\
												& \oplus \\
												& (\ker SIGN)^\perp \\
\end{matrix}						   \to \begin{matrix}
												&\mathcal{H}^0 \\	
												& \oplus \\
												&   \mathcal{H}^1\\
												& \oplus \\
												& (\ker SIGN)^\perp, \\
\end{matrix}	
			\end{equation*}
			with $SIGN'$ being the restriction of $SIGN$.
			It follows that
			\begin{equation*}
				\ind \left(\partial_t + SIGN + \beta(t) \Hessf	 \circ \mathbb{P}_{\ker \Delta}\right) = \ind\  \left(\partial_t +\begin{bmatrix}
							& 0 & 0 \\
							& 0 & SIGN'\\
						\end{bmatrix}\right) + \ind \ \left(\partial_t + \beta(t)\Hessf\right).
			\end{equation*}
			where all the operators are over the infinite cylinder $\R\times Y$.
			 We recall the the Atiyah-Patodi-Singer index is equal to the dimension  the $L^2$-kernel minus the dimension of the $L^2_{ex}$ kernel of the adjoint, where $L^2_{ex}$ denotes the extended $L^2$ sections
			\cite[Corollary 3.14]{APS1}.
			  We can easily show by separation of variables that $\ker_{L^2} (\partial_t +\begin{bmatrix}
							& 0 & 0 \\
							& 0 & SIGN'\\
						\end{bmatrix})= \{0\}$  whilst $\ker_{L^2_{ex}} (\partial_t -\begin{bmatrix}
							& 0 & 0 \\
							& 0 & SIGN'\\
						\end{bmatrix})$ has dimension one, given by the constant solutions in $\mathcal{H}^0$.
			Similarly 			$\ker_{L^2} \left(\partial_t + \beta(t) \Hessf \right) = \{0\}$ because $\beta(t) = 0$ for $t<0$ forces a solution to be constant, hence it cannot be in $L^2$ unless it is trivial. 
			On the other hand an element in $\ker_{L^2_{ex}}\left(\partial_t - \beta(t) \Hessf \right)$ will be constant as $t\to -\infty$ and $O(e^{\lambda t})$ for $t\to +\infty$ where $\lambda $ is an eigenvalue of $\Hessf$ therefore the $L^2_{ex}$-kernel has dimension equal to the number of negative eigenvalues of the Hessian, which is $b_1(Y)$ by construction. 
			This concludes the proof of the claim and hence the proof of Lemma~\ref{Lemma:IndexQIsIndASDDirac}.
			\end{proof}			
						
				We have shown that $\ind \ Q = \ind (APS \oplus D^+_{B_0})$.
				The APS index of such operator is (see for example \cite[pg. 312]{NicolaescuBook}) equal to 
				\begin{equation*}
				 \frac {1} {4} (c_1^2({\sstruc}) - 2 \chi(N_0) - 3 \sigma(N_0)) - \frac{b_1(Y) +1}{2}    - \dim (\ker_{\C} D_{B_\infty})   - \eta (D_{B_\infty}) - \frac{1}{4} \eta (SIGN)
				\end{equation*}
				The $\eta$-invariant of the Dirac operator $D_{B_\infty}$ and that of the signature operator vanish because we have an orientation reversing isometry of $Y$ that fixes
				$B_\infty$ given by $\rho_B$ defined in Subsection~\ref{subsec:involutions}.
				In addition, $\ker_{\C} D_{B_\infty} =0 $ because we used a metric satisfying \autoref{lemma:DiracInjective} and   $\chi(N_0) = 1- b_1(Y)$, $c_1^2({\sstruc}) = \sigma(N_0)=0$.
				Hence $\ind \ Q  = -1$.
				This concludes the proof of Proposition~\ref{prop:DimOfModuli}.
		\end{proof}		
		
\subsection{Pointlike moduli spaces.}\label{subsec:PointlikeModuli} 
	\newcommand{\pertp}{{\mathfrak{p}}}
	  \begin{plan} We recall that we defined $ [\gota_{\ntop}]\in \torus $ to be a critical  point of $f_\torus$ of maximal index on the torus, i.e.   	 $\indf [\gota_{\ntop}] = \dim \torus = b_1(Y)$.  The aim of this subsection is to prove
	   Theorem~\ref{thm:ModuliIsPoint}.
	   
	   We know that the moduli space, if not-empty,  has dimension zero from Proposition~\ref{prop:DimOfModuli}. Lemma~\ref{lemma:RegularPerturbationInPertZero} 
	 will provide us with perturbations making the moduli spaces regular without perturbing the connection equation. 
	 Then we will solve the connection equation (Lemma~\ref{lemma:MorsePertCanBeOmmitted}, Lemma~\ref{lemma:uniqueSolutionForPert0Zero}) showing that it  has
	 a unique solution modulo gauge equivalence. In the second part of the proof we will deal with the spinor equation. The main difficulty here is to show that
	 the solution space is not empty. In fact by Lemma~\ref{lemma:SpinorSolutionIndex} it is easy to show that the equation will have a solution but this is not enough as we need solutions with the correct asymptotics.
	 Here the perturbation chosen in the previous step will play a crucial role, since will allow us to pass from a general asymptotically cylindrical operator to a cylindrical one.
	This will improve greatly our understanding of the behaviour of the solutions at infinity. 
	   	\end{plan}

	   \paragraph{} We recall that the subspace of perturbations $\PertZero$ has been defined in Definition~\ref{def:PertZero} and the structure of perturbations of 4D moduli spaces has been reviewed in \autoref{recap:4DPerturbations}.
	We restate Theorem~\ref{thm:ModuliIsPoint} from the introduction more precisely in view of the notation introduced previously.
\begingroup
\def\thetheorem{\ref{thm:ModuliIsPoint}}
\begin{theorem}
There is a subset of perturbations $S \subset \PertZero$, residual in $\PertZero$, such that for any $\pert_0\in S$, the perturbation $\pertp = \beta_0(t)\pert_0+\beta(t) \pert$ is such that  the moduli space $M(N^*; [\gota_{\ntop,-1}]) $ is  regular and consists of a single point, in particular is not empty.
				Moreover, if $\Phi: N\to P_0(\Gamma, \mathcal{R})$ is a diffeomorphism of $N$ with a realization of a protocork, then the same statement above holds for the moduli space over the reflection of the protocork
				$M(P_1(\Gamma,\mathcal{R})^*; [\gota_{\ntop,-1}]) $ where the identification $Y \simeq \partial P_1(\Gamma, \mathcal{R})$ is given by $\Phi|_Y$.

\end{theorem}
\addtocounter{theorem}{-1}
\endgroup

		\begin{proof} Let $(B_{\infty}, 0, \psi_{-1})\in \Conf^\sigma_k(Y)$ be a smooth representative of  $[\gota_{\ntop,-1}]$. 
		Since by Proposition~\ref{propProtocork} $H^1(N)\to H^1(Y)$ is an isomorphism we can also assume without loss of generality that exists a \emph{flat} connection $A_0 \in \cA(N^*)$ 
		that restricts over the end to a translation invariant connection equal to $B_\infty$ slicewise. $A_0$ is unique up to gauge-transformation. 
		Since $[\gota_{\ntop,-1}]$ is a boundary \emph{unstable} critical point	$M(N_0^*; [\gota_{\ntop,-1}]) $ consists \emph{entirely of reducible} monopoles. 
		We can therefore apply Lemma~\ref{lemma:FromL2loctoL2}, which tells us that the elements of the moduli space can be represented by pairs 
		$(A,\Phi)\in \left(A_0 + L^2_{k,\delta}(N^*; i\Lambda^1N^*)\right)\times X_{k, \lambda_{-1}, \delta}^+(\psi_{-1})$	for some $\delta>0$.
		Firstly we focus on the first equation of 	\eqref{eq:monopoleCyl}. 
		Making apparent the perturbative terms this equation is :		\begin{equation}\label{eq:monopoleCyl2_1}
				 (F_{A^t_0}^+  + d^+a) +  \beta_0 \ \hat\pert_0^0(A_0+a,0) + \beta \ \hat\pert^0(A_0+a, 0)  = 0
		\end{equation}		 
		where $a := A-A_0 \in L^2_{k,\delta}(N^*, i\Lambda^1N^*)$.
		
		Now we invoke Lemma~\ref{lemma:MorsePertCanBeOmmitted} below, that  tells us that  we can ignore the perturbation $\beta \pert^0$  so that the equation becomes
		\begin{spliteq}\label{eq:monopoleCyl2_22}
				&  (F_{A^t_0}^+  + d^+a) +  \beta_0 \ \hat \pert_0^0(A_0 +a) =  0.\\
		\end{spliteq}	
		 This is false in general and relies on our assumption that $ [\gota_{\ntop}]\in \torus $ is a critical  point of $f_\torus$ of maximal index.

		\begin{lemma}[The Morselike perturbation can be omitted]\label{lemma:MorsePertCanBeOmmitted} Regardless of $\pert_0$, the moduli space of solutions to \eqref{eq:monopoleCyl2_1} is equal to the moduli space of $L^2_{k,\delta}$-solutions to 
		\begin{spliteq}\label{eq:monopoleCyl2_2}
				&  (F_{A^t_0}^+  + d^+a) +  \beta_0 \ \hat \pert_0^0(A_0 +a) =  0\\
		\end{spliteq}	
		\end{lemma}		
		\begin{proof} \newcommand{\gradf}{V}
		Recall that  $\pert^0$ is the formal gradient of the Morselike perturbation $f: \Conf(Y)\to \R$.  Denote  by $\cH^1\subset L^2(Y;i\Lambda^1Y)$ the space of immaginary valued 1-forms.
		Define $\gradf: \cH^1\to  \cH^1$ to be the $L^2$-gradient of the function  
		\begin{equation}
			b\mapsto f(B_\infty + b,0):\cH^1\to \R.
		\end{equation}	
		Then since Morselike perturbations are pulled back from the torus of flat connections we have that 
		\begin{equation}
				\pert^0(B_\infty + b) = \gradf(b_{harm})\in \cH^1
		\end{equation}
		for any $b\in L^2(Y; i\Lambda^1Y)$, here $b_{harm}= \PP_{\ker \Delta}(b)$ is the $L^2$-projection to $\cH^1$.
	         Then over the end $Z := [-C/2, +\infty)\times Y$ with the usual identifications in place \cite[Section 4.3]{KM},  \eqref{eq:monopoleCyl2_1} takes the form 
	      \begin{equation}
	      \partial_t \check a(t)  = - *d (\check a(t)) + dc(t) - \beta(t)\gradf(\check a_{harm}(t)).
	      \end{equation}
	      where $a(t)  = \check{a}(t) + c(t) dt$, $\check a(t) \in L^2_{k,\delta}(Y; i \Lambda Y)$; here we exploited that  $A_0$ is flat and   $\beta_0([-C/2,+\infty)) = \{0\}$.
	      With respect to the  Hodge decomposition $L^2(Y; i\R)\simeq Im \ d \oplus Im \ d^*  \oplus \mathcal{H}^1$,   the equation splits as :
			 \begin{spliteq}\label{eq:monopoleConn}
			  \partial_t \check a_{harm}(t)  &=  - \beta(t)\gradf(\check a_{harm}(t))\\
			  \partial_t \check a^\perp(t)  &= - *d (\check a^\perp(t)) + dc(t)\\
			 \end{spliteq}
		where $a^\perp(t)$ is the  component of $\check a(t)$ in  $Im \ d \oplus Im \ d^*$.
		This implies that $\check a_{harm}(t)\equiv 0$ for any $t\geq -C/2$, because otherwise $t\mapsto [B_\infty+\check a_{harm}(t)]\in \torus$ would be a non trivial trajectory of $-\grad f_\torus$ converging to $[B_\infty]$ which cannot be because $[B_\infty]$ is a	 maximum of the Morse function $f_\torus$ by assumption.
		It follows that solutions to \eqref{eq:monopoleCyl2_1} satisfy \eqref{eq:monopoleCyl2_2}.
		Vice versa, the harmonic part $\check a_{harm}(t)$ of a solution to \eqref{eq:monopoleCyl2_2} must be constant since it satisfies \eqref{eq:monopoleConn} with $\beta(t) = 0$ for any $t$, however
		such a connection cannot be in $L^2_k$  unless $ \check a_{harm}(t) = 0$ for all $t\geq -C/2$.
		The proof that the Morselike perturbation can be omitted is concluded.
		\end{proof}
		 The next lemma, Lemma~\ref{lemma:uniqueSolutionForPert0Zero} shows that if $\pert^0_0 = 0$ then there is a unique solution to  \eqref{eq:monopoleCyl2_22}  modulo gauge, given by $a= 0$.	
		 This is enough, in fact Lemma~\ref{lemma:RegularPerturbationInPertZero} below ensures the existence of a residual set of perturbations  $\pert_0\in \PertZero$ (so that $\pert_0^0  =0$) 
		 that makes the moduli spaces regular. 
		\begin{lemma}[Unique solution for connection eq.]\label{lemma:uniqueSolutionForPert0Zero} If  $\pert_0^0 = 0$ there is a unique solution to \eqref{eq:monopoleCyl2_22}  modulo gauge.
		\end{lemma}
		\begin{proof}
		When $\pert_0^0 = 0$, then \eqref{eq:monopoleCyl2_2} is the ASD equation over $N^*$. 
		First of all we show that  $ A^t= A^t_0+a$ must be flat.  Indeed $A^t \in L^2_{k}$ limits exponentially to flat connections over the end
			therefore its curvature $F_{A^t} \in L^2_{k-1}$.
			Since $F_{A^t}$ is anti-self-dual and closed it  is also harmonic;
			the space of harmonic self dual forms has dimension $b^-(N) = 0$  \cite{APS1}, hence  $F_{A^t} = 0$ as we wanted. Since $H^1(N) \to H^1(Y)$ is an isomorphism,  we have a \emph{unique} gauge-equivalence class of flat connections extending $B_\infty$ (given by $A_0$).
		\end{proof}
		
		\begin{lemma}[Perturbations]\label{lemma:RegularPerturbationInPertZero} The set of perturbations $\pert_0\in \PertZero$ such that the moduli spaces 
			 $M^{\mathrm{red}}(N^*; [\gota_{\ntop,j}]) $ are  regular $\forall j\in \Z$  is residual in $\PertZero$.
		\end{lemma}
		Essentially this lemma tells us that in our specific case, in particular the moduli space in the blow-down is regular,  we can improve the transversality result  \cite[Section 24.4.7]{KM} from $\pert_0 \in \Pert$ to $\pert_0 \in \PertZero$. Notice that the lemma is only about \emph{reducible} monopoles. The proof will be a modification of \cite[Proposition 24.4.7]{KM}, notice however that our hypothesis are not strong enough to prove a generalization 
		of the density result \cite[Lemma 24.4.8]{KM}, we can though use some ideas used to prove \cite[Lemma 24.4.8]{KM} to show a partial result on the spinor component and then use our specific hypothesis to 
		deal with the connection component.
		\begin{proof}
		\newcommand{\paramModuli}{\mathfrak{M}^{red}(N)}
		\newcommand{\ppert}{{\mathfrak{q}_0}}
		\newcommand{\mapM}{\mathfrak{M}^\partial}
		\newcommand{\cc}{\mathfrak{c}}
		\renewcommand{\Q}{\mathfrak{Q}}
 
		\newcommand{\Tangent}{\cT^\sigma}
		\newcommand{\SWmap}{\cF}
		Let $\paramModuli\subset \partial\B_k^\sigma(N) \times \PertZero$ be the parametrized moduli space, i.e. the quotient of the zero locus of 
		$\mapM =  (\gamma, \ppert)\mapsto \SWmap_\ppert(\gamma)$ by the action of the gauge group  where with an abuse of language we wrote $\SWmap_\ppert$ to denote the Seiberg-Witten map
		with perturbation $\beta_0\pert_0 + \beta \pert$.
		Notice that this is a moduli space over the compact $N$ not $N^*$.
		Put $Z = [0,+\infty)\times Y$, and let $R_+: \paramModuli \to \partial \B_{k-\frac 1 2}^\sigma(Y) $  and $R_- : M^{red}(Z; [\gota_{\ntop, j}])\to \partial \B_{k-\frac 1 2}^\sigma(Y)$ denote the
		restriction to the boundary $\{0\}\times Y$.
		We have to show that the fiber product is regular, this is equivalent to showing that  for each element $(([\alpha],\ppert),[\beta])$ in the fiber product, the map 
		\begin{equation}\label{eq:fiberProdReg}
			\Differential_{([\alpha],\ppert)} R_+ - \Differential_{[\beta]} R_- : T_{([\alpha],\ppert)}\paramModuli \oplus T_{[\beta]} M^{red}(Z; [\gota_{\ntop, j}]) \to T_{[\mathfrak{c}]}   \partial \B_{k-\frac 1 2}^\sigma(Y)
		\end{equation}
		is surjective. 
		
		Note that we already know that this map is Fredholm by  \cite[Lemma 24.4.1 ]{KM}, moreover using the regularity result   \cite[Lemma 17.2.9]{KM} we reduce to showing
		surjectivity in the case when $k=1$. Notice also that we can assume that, possibly after  gauge transformation, $\alpha|_{\partial N} = \beta|_{\partial Z}$ so that they glue to a configuration over $N^*$, this follows from  \cite[Lemma 24.2.2]{KM}.
		
		Let $\Q = \Differential_{(\alpha,\ppert)}\mapM\oplus \mathbf{d}_\alpha^{\sigma,\dagger}: (\Tangent_{1,\alpha}(N))^\partial \oplus T_\ppert \PertZero\to \V_{0}^\partial $ be the linearization of the Seiberg-Witten map with an added gauge-fixing condition, and similarly let  $\Q^\tau = \Differential_\beta\mathfrak{F}_\pert ^\tau \oplus \mathbf{d}_\beta^{\tau,\dagger} : (\mathcal{T}^\tau_{1,\beta}(Z))^\partial \to \V_{0}^{\tau,\partial} $.
		The kernel of these maps is isomorphic to $T_{([\alpha],\ppert)}\paramModuli $ and $ T_{[\beta]} M^{red}(Z; [\gota_{\ntop, j}])$ respectively, moreover both are surjective operators because $N$ and $Z$ have non-empty boundary and their formal adjoint enjoys the unique continuation property   \cite[Proposition 14.1.5]{KM}.
		Under the usual identifications, we see that  the surjectivity of \eqref{eq:fiberProdReg} is equivalent to showing that 
		the range of 

		\begin{equation}
		(\Q, r_+)- (\Q^\tau, r_-) : \begin{matrix}
										L^2_1(N; iT^*N \oplus S^+)\\ \oplus \\ \PertZero \\  \oplus\\  L^2_1(Z; iT^*Z\oplus S) 
											\end{matrix}\longrightarrow
											\begin{matrix}
											L^2(N; iT^*Y\oplus S^+ \oplus i\R) \\ \oplus\\  L^2(Z; iT^*Y\oplus S\oplus i\R)\\ \oplus \\ L^2_{1/2}(Y; iT^*Y \oplus S \oplus i\R).
											\end{matrix}
		\end{equation}
		contains all elements of the form  
			\begin{equation}
					W = \underbrace{(\omega_N, \psi_N,0)}_{W_N}\oplus\underbrace{(\omega_Z,\psi_Z,0)}_{W_Z}\oplus \underbrace{(v,\psi,0)}_{w}.
			\end{equation}
		In the above definition $r_+, r_-$ are the differentials of the restriction map.

		Suppose by contradiction that such a   $W$ is $L^2$-orthogonal to the range of $(\Q, r_+)- (\Q^\tau, r_-)$.
		By considering variations supported away from the boundary that leave the perturbation component untouched it can be shown, as in the proof of \cite[ Lemma 24.4.8 ]{KM}, 
		that $W_N|_{\partial N} = - w = W_Z|_{\partial Z}$.
		 Since $\Q^*W_N = 0$, $(\Q^\tau)^* W_Z = 0$, we have that   $W_N, W_Z \in L^2_1$. Hence $W_N$ and $W_Z$ can be glued to obtain an $L^2_1$ configuration $W_N\#W_Q$ over $N^*$  because their boundary values coincide.

		By considering gauge-orbit directions, we see that  $w$ is orthogonal to the gauge orbits and, proceeding as in the proof of  \cite[Lemma 15.1.4]{KM}, we obtain that   $W_N|_{\partial N} = - w$  implies that  also $\check W_N(t)$ is  orthogonal 
		 to the orbit for each $t$ in the collar and  never vanishing unless $W = 0$.
		 
		We are now in the position to run the same argument  in \cite[Section 15.1.4]{KM} and showing  that, if $\psi_N\neq 0$ then we can  find an element  $\dot\ppert \in \PertZero$ (the dot here is just to remind us that is a variation)  such that 
		\begin{equation}\label{eq:PertSpinor}
			\langle \beta_0\dot\ppert(\alpha), \psi_N \rangle_{L^2(N)}> 0.
		\end{equation}
		Indeed, the cylinder function constructed at \cite[pg. 271]{KM} is constant on the reducibles hence its gradient is in $\PertZero$.
		This however contradicts the  assumption that $W$ is orthogonal to the range of $(\Q, r_+)$ as can be seen by using directions tangent to $\PertZero$. Therefore $\psi_N = 0$.
		And consequently, by the unique continuation property, $\psi_N\#\psi_Z = 0$ hence $\psi_Z = 0$.

		 Notice that up to this point we did not use our specific hypothesis. We will use it to show that $\omega_N\#\omega_Z = 0$.
		Consider   variations    $(\dot a_N, \dot a_Z)\in L^2_1(N; iT^*N)\oplus L^2_1(Z; iT^*Z)$ such that  $\dot a_N|_{\partial N } = \dot a_Z |_{\partial Z}$ (the dot here is just our notation to keep track of the  variation).  		In this case   we can glue the forms to get
		$\dot a_N	\# \dot  a_Z \in L^2_1(N^*; T^* N^*)$ and   $r_+\dot a_N = r_- \dot a_Z$.
		Now  the orthogonality relation  can be rewritten as
		\begin{equation}\label{eq:orthogonalityJoint}
			0 = \langle  \Q (\dot a_N), \omega_N  \rangle_{L^2(N)} + \langle  \Q^\tau (\dot a_Z), \omega_Z\rangle_{L^2(Z)} = \langle  (d^+  + \beta \Differential_{A_0} \hat\pert^0) (\dot a_N\# \dot a_Z), \omega_N\#\omega_Z  \rangle_{L^2(N^*)}, 
		\end{equation}
		in fact  $\pertp^0= \beta_0 \ppert^0 + \beta \pert^0 = \beta \pert^0 $ over the reducibles because $\ppert\in \PertZero	$.
		
		We claim that \eqref{eq:orthogonalityJoint} cannot hold for all $\dot a_N \#\dot a_Z$ unless $\omega_N\#\omega_Z$ is zero.
		First of all, notice that the \emph{blown-down} moduli space  of solutions to 
		\begin{equation}
			F_{A_0+a}^+ + \beta \hat\pert^0(a) = 0
		\end{equation} limiting to $[\gota_{\ntop}] = \pi [\gota_{\ntop, j}]$ modulo gauge equivalence 
		is regular. Indeed  the deformation complex 
		\begin{equation}
		L^2_{2,ex}(N^*, i\R)\overset{-d}{\to} L^2_1(N^*; i\Lambda^1) \overset{d^++ \beta \Differential_{A_0}\hat\pert^0}{\to } L^2(N^*; i\Lambda^+)
		\end{equation}
		has index equal to $-\ind(ASD\oplus \beta\Differential_{A_0}\pert^0) = -\ind (ASD) $ by the same argument used in the proof of Lemma~\ref{Lemma:IndexQIsIndASDDirac} 
		and the latter is equal to 
		\begin{equation}
			-(\underbrace {b^3(N_0)}_{= 0} + \underbrace{b^+(N_0)}_{=0 } + \dim (Im(\underbrace{H^2(N_0)}_{ = (0)}\to H^2(Y)) +1 = 1.
		\end{equation}		
		On the other hand, denoting by $H^0_{ex}, H^1, H^2$ the cohomology groups of the  deformation complex, we have that
		$H_{ex}^0 \simeq \R $ due to constant functions and  $H^1 \simeq (0)$ because by Lemma~\ref{lemma:uniqueSolutionForPert0Zero} the moduli space is a point when $\pert_0^0 = 0$.
		Consequently  $ H^2 = (0)$ as well.  
		This shows that   the  second map in the deformation complex is surjective,
		but this is precisely the map in the second term  in \eqref{eq:orthogonalityJoint}, thus  $\omega_N\#\omega_Z = 0$ and the 
		proof of  Lemma~\ref{lemma:RegularPerturbationInPertZero} is concluded.
		\end{proof}
		
		Thanks to Lemma~\ref{lemma:RegularPerturbationInPertZero}, we can assume for the rest of the proof that  we are working with a perturbation of the form $\pertp = \beta_0 \pert_0 + \beta \pert$ where $\pert_0^0 = 0$ such that the moduli spaces		 $M(N^*; [\gota_{\ntop,j}])$ are regular $\forall j\in \Z$. Let $A_0$ denote the unique  solution (modulo gauge equivalence) to \eqref{eq:monopoleCyl2_1}, 
		we can assume that $A_0 $ is constant over the end, and equal to $B_\infty$
		In order to conclude the proof of \autoref{thm:ModuliIsPoint}, we need to show that the 
			the second equation  of \eqref{eq:monopoleCyl}, i.e. the one involving the spinor,  has a unique  solution.
			This is not obvious mainly  because the moduli space can be both regular and empty.

			The second equation of \eqref{eq:monopoleCyl}  reads
			
			\begin{equation}\label{eq:monopoleSpinorCyl1}
			D^+_{A_0} \Phi  + \beta_0 \Differential_{(A_0 , 0)} \hat\pert_0^1(\Phi)	 +\beta \ \Differential_{(A_0, 0)} \hat\pert^1(\Phi) = 0\\
			\end{equation}
			where $\Phi \in X^+_{k,\lambda_{-1}, \delta}(\psi_{-1})$.
			Set $\mu  := \lambda_{-1}-\epsilon$, where $\epsilon>0$ is so small that $\lambda_{-1}-\epsilon>\lambda_{-2}$.
			The space of $L^2_{k,\mu}$-solutions  to \eqref{eq:monopoleSpinorCyl1}  contains the $X^+_{k,\lambda_{-1},\delta}(\psi_{-1})$-solutions
			because $\psi_{-1}$ is smooth, hence $X^+_{k,\lambda_{-1}, \delta}(\psi_{-1})\subset e^{-\lambda_{-1}(\cdot)\beta(\cdot)}(L^\infty_{k} + L^2_{k, \delta}) \subset L^2_{k,\mu}$.
			 As a first step, we will show that the space of $L^2_{k,\mu}$-solutions to \eqref{eq:monopoleSpinorCyl1}  is not empty. 
			This follows from the fact that the $L^2_{k,\mu}$-index of the operator in \eqref{eq:monopoleSpinorCyl1} is positive, hence the kernel must be nontrivial.	
			\begin{lemma}\label{lemma:SpinorSolutionIndex} The (real) index of the operator in \eqref{eq:monopoleSpinorCyl1}   as an operator $L^2_{k,\mu}(N^*, S^+) \to L^2_{k-1,\mu}(N^*, S^-)$
			is equal to $2$. 
			\end{lemma}
			\begin{proof} Let $W_\mu:N^*\to \R $  be  a smooth weight  function 
			equal to $1$ on $N$ and equal to $ e^{-2\mu(\cdot)}$ on the infinite cylinder.
			First of all, notice that modulo compact terms and continuous perturbations in the space of Fredholm operators $L^2_{k,\delta}\to L^2_{k-1,\delta}$, 
			the operator of \eqref{eq:monopoleSpinorCyl1}	becomes 
			$D^+_{A_0}   + \beta \ \Differential_{(B_\infty , 0)}\pert^1 $.
			Multiplication by $W_\mu^{-1}$ 	 gives an isometry  $L^2_k\to L^2_{k, \mu}$ under which the operator $D^+_{A_0}   + \beta \ \Differential_{(B_\infty , 0)} \pert^1:L^2_{k,\mu}\to L^2_{k-1,\mu} $ 
			is conjugated to $D^+_{A_0}   + \beta \ \Differential_{(B_\infty , 0)} \pert^1 + \beta(t-2) (-\lambda_{-1}+\epsilon): L^2_k\to L^2_{k-1}$ modulo a compact operator 
			due to the behaviour of $W_\mu^{-1}$ on the complement of the cylinder. 
			It follows that 
			\begin{spliteq*}
				\ind_{L^2_{k,\mu}}(D^+_{A_0}   + \beta \ \Differential_{(B_\infty , 0)}) &= \ind_{L^2}\left(D^+_{A_0}   + \beta \ \Differential_{(B_\infty , 0)} \pert^1+ \beta(t-2) (-\lambda_{-1}+\epsilon)\right)\\
								& = \ind_{L^2}(D^+_{A_0}) + sf\left \{D_{B_\infty}+ \beta(t) D_{(B_\infty,0)}\pert^1\right\}_t \\
								& \phantom{= }+ sf\left\{D_{B_\infty}+  D_{(B_\infty,0)}\pert^1 + \beta(t)(-\lambda_{-1}+\epsilon)\right\}_t\\
			\end{spliteq*}
			Where we have used the usual gluing formulas for the index \cite{APS1}, and $sf\{A_t\}_t$ denotes the  spectral flow of the family of operators $\{A_t\}_t$ (for a definition see \cite[pg. 244]{KM}  or \cite[Section 4.1.3]{NicolaescuBook}).
			Now, $\ind_{L^2}(D^+_{A_0}) = 0$ because $D_{B_\infty}$ is injective and $\eta(D_{B_\infty}) =0$ due to the orientation reversing isometry $\rho_B$ of $Y$ (see Subsection~\ref{subsec:involutions}).
			$sf\{D_{B_\infty}+ \beta(t) D_{(B_\infty,0)}\pert^1\}_t = 0$ because we can choose a priori a  perturbation so small that the operator is injective for any $t$, therefore we have no crossings.
			We claim that $sf\left\{D_{B_\infty}+  D_{(B_\infty,0)}\pert^1 + \beta(t)(-\lambda_{-1}+\epsilon)\right\}_t = 2$.
			Indeed we only have one crossing due to the flow of the eigenvalue $\lambda_{-1}$, which goes from negative ($\lambda_{-1}<0$) to positive ($\epsilon>0$), thus the crossing has positive sign and 
			the spectral flow equals the real dimension of the $\lambda_{-1}$-eigenspace which is $2$ since the spectrum is simple over $\C$.
			\end{proof}

			This shows that  the space of  $L^2_{k\,\mu}$-solutions to \eqref{prop:DimOfModuli} is not empty.
			The second step consists in showing that up to multiplication by an element in $\C^\times$, these solutions lie in $X_{k, \lambda_{-1}, \delta}^+(\psi_{-1})$.
			Observe that  over $[0,+\infty)\times Y$,  \eqref{eq:monopoleSpinorCyl1} takes the form
			\begin{equation}\label{eq:monOverEndPerturbed}
			\partial_t \Phi(t)  = - D_{B_\infty,\pert} \Phi(t) 
			\end{equation}
			 where the right hand side is defined by  \eqref{def:perturbedDirac}. 
			Consequently, by separation of variables, a general {$L^2_{k,loc}$-solution} of \eqref{eq:monOverEndPerturbed} takes the form  of a linear combination
			\begin{equation}\label{eq:expansion}
			\Phi(t) =   \sum_{i\in \Z} c_i e^{-\lambda_i t} \psi_{i}
			\end{equation}
			where $\psi_i$ is a $\lambda_i$-eigenvector of $D_{B_\infty,\pert} $, $c_i\in \C$. Notice that here we used that, thanks to our earlier discussion, the connection is translation invariant over the tube.
			Let $\Phi\neq 0$ be a solution given by step one above. Since $\Phi \in L^2_{k,\mu}$  
			only those $i$ with $\lambda_i>\mu$ can appear in the expansion \eqref{eq:expansion}, thus since   $\mu = \lambda_{-1} -\epsilon$ and $\mu>\lambda_{-2}$ we obtain that
			\begin{equation}
			\Phi(t) =   \sum_{i\geq -1} c_i e^{-\lambda_i t} \psi_{i}.
			\end{equation}
			Notice that if $c_{-1}\neq 0$ then $\frac 1{c_{-1}} \Phi \in X_{k, \lambda_{-1}, \delta}^+(\psi_{-1}) $.
			We will show that $c_{-1}= 0$ is impossible. Indeed, suppose by contradiction that $c_{-1}=0$,  then  $\Phi(t) \in L^2_k$, and thus by Lemma~\ref{lemma:FromL2loctoL2} we obtain an element of $M^{\mathrm{red}}(N^*, [\gota_{\ntop,j}]) $, $j\geq 0$.
			This would show that $M^{\mathrm{red}}(N^*;[\gota_{\ntop,j}]) \neq \emptyset$ for some $j\geq 0$, but this is not possible because $M^{\mathrm{red}}(N^*;[\gota_{\ntop,j}])$ is a regular moduli space, thanks to  our assumption on $\pertp$, with a negative formal dimension thanks to Proposition~\ref{prop:DimOfModuli}. 
			
			Consequently,  rescaling by  $\C^\times$ the vector space of of solutions, isomorphic to $\C$,  guaranteed by the index theorem, we obtain one solution in $X_{k, \lambda_{-1}, \delta}^+(\psi_{-1}) $.
			Notice that we cannot have more solutions because solutions to the spinor equation form a vector space, therefore they   would make the dimension of the moduli space higher that the formal dimension which is zero.
			This shows that the moduli space is not empty and is a point.
			The part of the thesis about the reflection  $P_1(\Gamma,\mathcal{R})$ follows by the same argument used for $N$ as all the key properties of $P_0(\Gamma,\cR)$ are satisfied also by its
			reflection.
			This concludes the proof of Theorem~\ref{thm:ModuliIsPoint}.
		\end{proof}

%%%%%%%%%%%%%%%%%%%%%%%%%%%%%%%%%%%%%%%%%%%%%%%%%%%%%%%%%%%%%%%%%%%%%%%%%%%%%%%%
%%%%%%%%%%%%%%%%%%%%%%%%%%%%%%%%%%%%%%%%%%%%%%%%%%%%%%%%%%%%%%%%%%%%%%%%%%%%%%%%
%																						APPENDIX
%%%%%%%%%%%%%%%%%%%%%%%%%%%%%%%%%%%%%%%%%%%%%%%%%%%%%%%%%%%%%%%%%%%%%%%%%%%%%%%%
%%%%%%%%%%%%%%%%%%%%%%%%%%%%%%%%%%%%%%%%%%%%%%%%%%%%%%%%%%%%%%%%%%%%%%%%%%%%%%%%

\appendix
\section{Proof of Proposition~\ref{prop:MorseLikePerturbationPerp}.}\label{appendix:ProofMorseLike}

For the sake of this appendix, $f$ will denote a Morselike perturbation.
\paragraph{Proof sketch.} The simpler part of the proof is to achieve non-degeneracy of the critical points. In fact the proof of the book without modification gives perturbations in $\Pert^\perp$ in our setting (see Lemma~\ref{app:Lemma1MorseLike} below). The more delicate part is the regularity of the moduli spaces. 
The perturbations used in the book to achieve such result are  perturbations vanishing in a fixed neighbourhood of the critical points. 
The natural thing to do would be to use the subset of such perturbations with connection-component vanishing on the reducible locus (i.e. in $\PertZero$). 
The problem is that this is not sufficient to prove regularity in the case of \emph{irreducible} trajectories between two \emph{reducible} critical points. 
In this case we have to allow our perturbations to be non-constant on the reducible locus. At the same time it is important to us to obtain in the end a perturbation in  $h+\PertZero$ where $h$ is Morselike. To this end we define a second  space of perturbations $\Pert_f$. This consists of perturbations which are pull back of functions on the torus of reducibles (not necessarily Morse)  vanishing in a neighbourhood of the critical points of $f_\torus$ and are made constant in a neighbourhood of the critical points of $\cL +f $ with the help of a bump function.
Using the space of perturbations $\Pert_f\times \PertZero_\cO$ we manage to achieve generic regularity (Lemma~\ref{app:Lemma2MorseLike}).

\paragraph{Non-degeneracy} From \cite[pg. 212]{KM} it follows that a \emph{reducible} critical point $[(B,0,\psi)]\in\B_k^\sigma(Y,\sstruc)$ in the blow-up is non-degenerate iff and only if 
the perturbed Dirac operator $D_{B,\pert}$ is invertible and has simple spectrum over $\C$ and the operator $*d+\Differential_{(B,0)}\pert $ is invertible.
In particular, for a Morselike perturbation $\pert = \grad f$ the second condition is automatically satisfied thanks to $f_\torus$ being a Morse function.
\begin{lemma} \label{app:Lemma1MorseLike}The set of perturbations $\pert' \in  \Pert^\perp$ such that $(\operatorname{grad}(\cL+f) + \pert')^\sigma$ has  non-degenerate critical points (in the blow-up) is residual in $\Pert^\perp$.
\end{lemma}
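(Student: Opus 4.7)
The plan is to adapt the Sard--Smale argument behind \cite[Theorem 12.1.2]{KM} to the closed subspace $\Pert^\perp\subset \Pert$, treating reducible and irreducible critical points separately, since $\Pert^\perp$ behaves very differently on the two strata.

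For reducibles, any $\pert'\in \Pert^\perp$ vanishes identically on the reducible locus, so the reducible critical points of $\grad(\cL+f+\pert')$ coincide with those of $\grad(\cL+f)$, namely the critical points of $f_\torus$ regarded as points of $\torus\subset \B_k(Y,\sstruc)$. By the criterion recalled just before the lemma, non-degeneracy at $[(B,0,\psi)]$ in the blow-up amounts to (i) invertibility of $*d + \Differential_{(B,0)}(\grad f+\pert')^0$ on the Coulomb slice at $B$ and (ii) invertibility with simple complex spectrum of $D_{B,\pert}=D_B+\Differential_{(B,0)}(\pert')^1(0,\cdot)$. Condition (i) is automatic from the Morse--Smale hypothesis on $f_\torus$: since $(\pert')^0$ vanishes along the reducibles, its derivative in connection-tangent directions vanishes there too, reducing the operator to $*d + \operatorname{Hess}(f_\torus)\circ \PP_{\ker\Delta}$. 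For (ii), although $\pert'$ itself vanishes on the reducible locus, the spinor-direction linearisation $\Differential_{(B,0)}(\pert')^1(0,\cdot)$ can be made an essentially arbitrary self-adjoint perturbation of $D_B$: quadratic cylinder functions of the form $(A,\psi)\mapsto \int_Y \langle \psi, L_A\psi\rangle\, d\mathrm{vol}$, with $L_A$ a self-adjoint operator depending smoothly on $A$, vanish on the reducible locus and hence lie in $\Pert^\perp$, yet contribute $L_B$ to $D_{B,\pert}$. This is exactly the freedom needed to run verbatim the Sard--Smale step of \cite[Propositions 12.2.5, 12.2.6]{KM} inside $\Pert^\perp$, producing a residual subset on which $D_{B,\pert}$ is invertible with simple spectrum at each of the finitely many flat connections in question.

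For irreducibles the argument is more straightforward. Any tame perturbation supported in a neighbourhood of the irreducible locus disjoint from the reducible stratum automatically lies in $\Pert^\perp$, so the cylinder-function perturbations used in the parametrised-moduli-space argument of \cite[Section 12.4]{KM} can be multiplied by a cutoff without leaving $\Pert^\perp$. The associated parametrised zero set is then a Banach submanifold, its projection to $\Pert^\perp$ is Fredholm, and Sard--Smale yields a residual set of regular values corresponding to non-degenerate irreducible critical points. Intersecting this residual set with the one produced for reducibles yields the lemma.

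The main obstacle is the transversality assertion behind (ii): one has to verify that the self-adjoint operators realised as $\Differential_{(B,0)}(\pert')^1(0,\cdot)$ for $\pert'\in \Pert^\perp$ form a large enough family to perturb the Dirac spectrum simultaneously at all the finitely many relevant flat connections. This reduces to the density inside $\Pert^\perp$ of quadratic cylinder functions of the type above, which follows from the construction of the large Banach space $\Pert$ in \cite[Section 11]{KM} once one observes that the generating family of cylinder functions can be chosen quadratic in the spinor variable and hence zero on the reducibles. With this density in place, the rest of the Sard--Smale machinery of \cite{KM} carries through unchanged.
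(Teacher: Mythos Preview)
Your proposal is correct and follows essentially the same approach as the paper: both handle reducibles by noting that the connection-part non-degeneracy is automatic from the Morse hypothesis on $f_\torus$ while the Dirac-spectrum condition is achieved via the transversality of \cite[Lemma~12.6.2]{KM} (whose underlying cylinder functions are precisely the quadratic-in-spinor ones you describe, hence lie in $\Pert^\perp$), and both handle irreducibles by observing that the cylinder functions needed in the proof of \cite[Lemma~12.5.2]{KM} can be taken to vanish on the reducible locus since the critical point in question is irreducible. Your final paragraph slightly overstates what is required---you do not need density of quadratic cylinder functions in all of $\Pert^\perp$, only that the linearisation map $\pert'\mapsto \Differential_{(B,0)}(\pert')^1(0,\cdot)$ restricted to $\Pert^\perp$ remains transverse to the bad strata---but this is exactly what the quadratic cylinder functions provide, so the argument goes through.
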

\begin{proof}
	Firsly we take care of the reducibles critical points. 
	Let $Op^{sa}$ be the set of self-adjoint Fredholm maps defined in  \cite[pg. 215]{KM}. 
	Let $\Sigma \subset Op^{sa}$ be the subset of operators with non-simple spectrum union the set of non-injective operators.  $\Sigma$, is stratified by manifolds and the map 
	\begin{equation*}
	\pert\in \Pert^\perp \mapsto D_B + \Differential_{(B,0)}\pert^1(0,\cdot)\in Op^{sa}
	\end{equation*}
	is transverse to the stratification  by \cite[Lemma 12.6.2]{KM}. 
	Since each stratum has positive codimension and  the set of reducibles critical points in the blow-down,  $\{[(B,0)]\}$, is 0-dimensional it follows that  for generic $\pert'\in \Pert^\perp$ the perturbed Dirac operator will have simple spectrum and be invertible.
	
	Secondly, the set of such $\pert'$ such that all the \emph{irreducible} critical points are non-degenerate is residual. The proof of this fact follows the same course of the  proof of   \cite[Lemma 12.5.2]{KM}.
	Indeed $\Differential_\alpha \operatorname{grad}(\cL +f)$ has the same key properties of $\Differential_\alpha \operatorname{grad}(\cL)$ and  we may assume that the perturbation in Corollary 11.2.2 of \cite{KM}
	vanishes on the reducibles (hence can be approximated with elements of $\Pert^\perp$) because the critical point we are considering is irreducible.
\end{proof}

Now we want to prove the generic regularity of the moduli spaces of trajectories. 
We will introduce the first class of perturbations that we will use. These are similar to those used in the book (i.e. vanishing in a neighbouhood of the critical points) with the additional constraint that 
their  connection component vanishes over the reducibles.
\paragraph{Perturbations  $\PertZero_{\cO}$.}Now let $\pert'$ be a perturbation given by Lemma~\ref{app:Lemma1MorseLike} and let  $f'$ a primitive for it. Set
 \begin{equation*}
	\cL' = \cL + f + f'.
\end{equation*}
For each critical point $[\alpha]$ of $\cL'$ in the blow-down, we may find a gauge-invariant  open neighbourhood $\cO_{[\alpha]}\subset \B^o(Y,\sstruc)$ as defined at page  265 of \cite{KM}.
Here $\B^o(Y,\sstruc)$ is the based quotient configuration space (see pg 173 of \cite{KM}).
We then set $\cO :=  \bigsqcup_{[\alpha]} \cO_{[\alpha]} \subset \B^o(Y,\sstruc) $ where $[\alpha]$ varies on the set of critical points of $\cL'$.
Define  $\PertZero_{\cO}$as the closed subspace of  perturbations $\pert \in \PertZero$ vanishing over $\cO$.

\paragraph{Perturbations $\Pert_f$.}
The perturbations $\PertZero_\cO$ are not enough to prove regularity because in the case of irreducible trajectories between reducible critical points, we may need to use cylinder functions that are not constant on the reducibles. To remedy this,  we introduce another space of perturbations which will ensure us to get, in the end, a perturbation  which is still  Morselike  over the reducibles.

Let $\cO_\torus \subset\torus$ be a  union of  disjoint contractible open neighbourhoods of the \emph{reducible} critical points of $f|_\torus$. 
Withouth loss of generality we may assume that $\cO$ and $\cO_\torus$ are chosen in such a way   that for each $[\alpha]$ reducible critical point of $\cL'$, $\cO_{[\alpha]}\subset p_\torus^{-1}(\cO_\torus)$ where $p_\torus:\B^o(Y,\sstruc)\to \torus$ is a retraction to the torus of flat connections analogous to  \eqref{def:RetractionToTorus}. Up to shrinking $\cO_{[\alpha]}$, we may also assume that for  any $[\alpha]$  \emph{irreducible}, $\cO_{[\alpha]}$ does not intersect the reducible locus. 

\newcommand{\myp}{\mathcal{p}}
\newcommand{\myb}{b}
Now we consider a function $\myp: \B^o(Y,\sstruc)\to \R^n\times \torus\times \C^m$ with  $n, m \geq 0$
 and a collection of functions $\myb_{[\alpha]}:\R^n\times \torus\times \C^m\to \R$, indexed by $[\alpha]$ varying among \emph{irreducible} critical points,   such that
\begin{enumerate}
	\item $\myp$  is of the kind of functions appearing in the first item of \cite[Definition 11.1.1]{KM},
	\item $\myb_{[\alpha]}:\R^n\times \torus\times \C^m\to \R$ is of the kind of functions appearing in the second  item of \cite[Definition 11.1.1]{KM},
	\item setting  $\beta_{[\alpha]} :=\myb_{[\alpha]} \circ\myp$, it holds that 
	 $\beta_{[\alpha]}\equiv 0$ on $\cO_{[\alpha]}$ and  $\beta_{[\alpha]}\equiv 1$  outside of
  $ \cO'_{[\alpha]}$, some  slighly larger neighbourhoods of $[\alpha]$ with the same properties.
\end{enumerate}
It follows  that $\beta_{[\alpha]} $ is $\G(Y)$-invariant  cylinder function \cite[Definition 11.1.1]{KM}.
The existence of such functions relies on the assumption that the sets  $\cO_{[\alpha]}$ do not intersect the reducible locus. 

Next we choose a countable family $\{\tilde h_n\}_{n\in \N}$ of smooth functions $\tilde h_n:\torus\to  \R$ such that 
\begin{enumerate}
\item $\tilde h_n$ is \emph{constant} over $\cO_\torus$, 
\item  $\{\tilde h_n\}_{n\in \N}$ is dense  in the set of smooth functions $\torus\to  \R$ constant over $\cO_\torus$ endowed with the $C^\infty$ topology.
\end{enumerate}

\paragraph{}Define $\Pert_f$ to be the Banach space of tame perturbations \cite[Theorem 11.6.1]{KM} generated by the countable family of cylinder functions 
\begin{equation}\label{eqdefFunhPertf}
	   h_n(x) := 	\tilde{h}(p_\torus([x])) \sum_{[\alpha]}\beta_{[\alpha]}(x) = \tilde h_n(pr_2(\myp([x])))\sum_{[\alpha]} \myb_{[\alpha]}(\myp([x]))
\end{equation}
$n\in \N$, where $pr_2 : \R^n\times \torus \times \C^m\to \torus$ is the projection onto the second factor.

Notice that  this implies that  if $\pert \in \Pert_f$, then $\pert|_\cO \equiv 0$.  Indeed this is ensured by the functions $\beta_{[\alpha]}$ for $[\alpha]$ irreducible  and by the
assumption $\cO_{[\alpha]}\subset p_\torus^{-1}(\cO_\torus)$ together with the first condition  on $\tilde h_n$ for $[\alpha]$ reducible.

\begin{remark} As defined, $\Pert_f$ is not a subspace of $\Pert$ in general, however, once  $f$ and $f'$  are given we can form the large Banach space of tame perturbations $\Pert$ using a countable family of cylinder functions that includes $\{h_n\}_{n\in \N}$, in this way we have an inclusion $\Pert_f\subset \Pert$ as a closed subspace. This is the possible enlarging of $\Pert$ to which the statement of Proposition~\ref{prop:MorseLikePerturbationPerp} refers to.
\end{remark}

\begin{lemma} \label{app:Lemma2_0MorseLike} There exists a neighbourhood of zero $\mathcal N\subset \Pert_f\times  \PertZero_\cO$ such that 
if $(\pert_f, \pert'') \in \mathcal N$ and we denote by $h$ and  $f''$ a primitive of $\pert_f$ and $\pert''$ respectively, then 
the critical points of $\Lpert := \cL'+ f''+h $ are the same as the critical points of $\cL'$,  are non-degenerate and $(f+h)$ is Morselike in a neighbouhood of the reducible locus.

\end{lemma}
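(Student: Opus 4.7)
The plan is to verify the three assertions of the lemma (preservation of the critical point set, non-degeneracy, Morselikeness near the reducible locus) by combining two facts: the perturbations in both $\Pert_f$ and $\PertZero_\cO$ vanish identically on $\cO$, and a smallness/compactness argument forbids new critical points from appearing off $\cO$. The parameter $\mathcal N$ will be obtained as the intersection of several open conditions, each controlled by the $\Pert$-norm.

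Let me first treat the critical point set. Since $\pert_f|_\cO \equiv 0$ by construction of $\Pert_f$ (the cylinder functions $h_n$ in \eqref{eqdefFunhPertf} vanish on $\cO$ because of the $\beta_{[\alpha]}$ for irreducible $[\alpha]$ and because $\tilde h_n$ is locally constant on $p_\torus(\cO_{[\alpha]})\subset \cO_\torus$ for reducible $[\alpha]$) and $\pert''|_\cO\equiv 0$ by definition of $\PertZero_\cO$, the functional $\Lpert$ coincides with $\cL'$ over $\cO$, so every critical point of $\cL'$ is a critical point of $\Lpert$ with the same linearization and is therefore non-degenerate. What remains is to rule out new critical points in the complement of $\cO$. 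For this I would run a contradiction argument based on the properness property of tame perturbations \cite[Proposition 11.6.4]{KM}: if $(\pert_{f,n},\pert''_n)\to 0$ in $\Pert_f\times \PertZero_\cO$ admitted critical points $[\gamma_n]\notin \cO$ of the associated $\Lpert_n$, then up to gauge and subsequence $[\gamma_n]$ converges to a critical point of $\cL'$ lying in the complement of $\cO$, contradicting the construction of $\cO$. This gives a neighbourhood $\mathcal N_1\subset \Pert_f\times \PertZero_\cO$ such that the critical point set of $\Lpert$ equals that of $\cL'$ for $(\pert_f,\pert'')\in \mathcal N_1$.

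For the Morselike statement, I would use the key observation that on the reducible locus $\psi\equiv 0$, the cut-off functions $\beta_{[\alpha]}$ satisfy $\beta_{[\alpha]}\equiv 1$ for all \emph{irreducible} $[\alpha]$. Indeed, the sets $\cO'_{[\alpha]}$ for irreducible $[\alpha]$ can be taken disjoint from the reducible locus (since $\cO_{[\alpha]}$ is), so there is an open neighbourhood $\mathcal U$ of the reducible locus in which $\sum_{[\alpha]\text{ irr.}} \beta_{[\alpha]}\equiv N_{\mathrm{irr}}$ is a constant. Consequently, writing $\tilde h = \sum_n c_n \tilde h_n$ for $\pert_f = \grad h$ with $h = \sum c_n h_n$, one has
\begin{equation}
h(x) = N_{\mathrm{irr}}\,\tilde h(p_\torus([x])) \qquad \text{for all } x \in \mathcal U,
\end{equation}
so $(f+h)|_\mathcal U$ is the pullback via $p_\torus$ of $f_\torus + N_{\mathrm{irr}}\tilde h$. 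Since $f_\torus$ is Morse-Smale and the Morse-Smale condition is open in the $C^\infty$ topology on $\torus$, I can find a neighbourhood $\mathcal N_2\subset \Pert_f\times \PertZero_\cO$ such that $f_\torus + N_{\mathrm{irr}}\tilde h$ is Morse-Smale for $(\pert_f, \pert'')\in \mathcal N_2$; the property $\tilde h_n|_{\cO_\torus}=\mathrm{const}$ ensures that the reducible critical points are unperturbed. Taking $\mathcal N = \mathcal N_1\cap \mathcal N_2$ yields the desired neighbourhood.

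The only step that is not purely formal is the properness argument ruling out new critical points off $\cO$; everything else reduces to the vanishing of the perturbations on $\cO$ and to the openness of the Morse-Smale condition. The hard part, conceptually, is that elements of $\Pert_f$ are allowed to move the Morselike functional $\cL+f$ along the torus (this flexibility is what will later give transversality of irreducible trajectories between reducible critical points), so one must check that this movement does not create spurious critical points nor spoil the Morselike structure on the reducible locus; both are guaranteed by our careful choice of the generators $h_n$ and of $\cO$, $\cO_\torus$.
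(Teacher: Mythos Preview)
Your proof is correct and follows the same three-step structure as the paper: persistence and non-degeneracy of critical points from the vanishing of the perturbations on $\cO$; exclusion of new critical points off $\cO$ via the properness property \cite[Proposition 11.6.4]{KM}; and Morselikeness near the reducible locus by observing that $f+h$ restricts there to the pullback of a function on $\torus$, together with openness of the Morse--Smale condition. The paper's version is terser; your explicit identification of the constant $N_{\mathrm{irr}} = \sum_{[\alpha]\text{ irr.}}\beta_{[\alpha]}$ on the reducible neighbourhood is a nice clarification of what ``by construction (see \eqref{eqdefFunhPertf})'' means there, and your compactness-contradiction phrasing of the properness step matches what the paper invokes in one line.
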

\begin{proof}
Since $(\pert_f + \pert'')|_{\cO} = 0$ all the critical points of $\cL'$ will persist the perturbation, moreover they will still be non-degenerate since $\cO$ is open.
Finally, the properness property of tame perturbations  \cite[Proposition 11.6.4]{KM} ensures that if the perturbation $ \pert_f + \pert'' $ is small enough, it does not introduce any new critical point in the complement 
of $\cO$. 

It remains to show that for such small perturbations $(f+h)$ is Morselike in a neighbouhood of the reducible locus.
Notice that by construction (see \eqref{eqdefFunhPertf}), $f+h$ restricted to a neighbourhood of the reducible locus is the pullback of a function defined over the torus of reducibles, thus we only need to check that  $f+h$ is Morse-Smale on $\torus$.
 We already know that it is Morse, because $h$ does not introduce new  critical points and vanishes in a neighbourhood of the critical points of $f$. To conclude, we observe that the property regarding the intersection of stable and unstable manifolds of $f+h$ is preserved if the  perturbations is small enough.
 \end{proof}

\paragraph{}
In the following  we let $ \mathcal{N}\subset \Pert_f\times  \PertZero_\cO$ be given by Lemma~\ref{app:Lemma2_0MorseLike}. 
\begin{lemma} \label{app:Lemma2MorseLike}
The subset of $ \mathcal{N}$ consisting of  $(\pert_f, \pert'') $  such that
 all the moduli spaces of trajectories of $\left(\grad(\cL')  + \pert_f + \pert''\right)^\sigma$ between any two critical points are regular is residual in $  \mathcal N$.
\end{lemma}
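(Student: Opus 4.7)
The plan is a parametrized Sard--Smale argument following the scheme of \cite[Proposition 15.1.3]{KM}, adapted to the subspace of perturbations $\mathcal{N} \subset \Pert_f \times \PertZero_\cO$. For each ordered pair of critical points $([\gota],[\gotb])$ of $\cL'$ and each relative homotopy class $z$, I would form the universal moduli space of triples $(\gamma, \pert_f, \pert'')$ where $\gamma$ is a trajectory (modulo gauge and $\R$-translation) of $(\grad \cL' + \pert_f + \pert'')^\sigma$ from $[\gota]$ to $[\gotb]$ in class $z$. Once this universal zero locus is shown to be a Banach manifold, the projection to $\mathcal{N}$ is Fredholm and Sard--Smale yields a residual set of regular values; the countable intersection over all triples $([\gota],[\gotb],z)$ remains residual in $\mathcal{N}$, giving the lemma.

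The analytic heart is surjectivity of the full linearization (in $\gamma$ together with the perturbation variables) at each solution. Equivalently, a nonzero cokernel element $w$ for the ordinary gauge-fixed linearization at $\gamma$ must pair nontrivially with some variation of $(\pert_f, \pert'') \in \Pert_f \times \PertZero_\cO$. When $\gamma$ is irreducible with at least one irreducible endpoint, the standard argument of \cite[Proposition 15.1.3]{KM} applies: $\gamma$ has nonvanishing spinor on an open set in the complement of $\cO$, and the cylinder functions of \cite[Proposition 11.2.1]{KM} supported there lie in $\PertZero_\cO$ and pair nontrivially with $w$ after invoking unique continuation. When $\gamma$ is purely reducible, it projects on the blow-down to a gradient flow line of $f_\torus + h$ on $\torus$, and a generic choice of $h$ (via $\pert_f$) gives a Morse--Smale pair; transversality along the blown-up spinor directions follows from the simple-spectrum condition secured by Lemma~\ref{app:Lemma1MorseLike} together with spinor variations produced by $\Differential \pert''$.

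The main obstacle is the remaining case, in which $\gamma$ is irreducible but both endpoints are reducible. Here $\pert''$ vanishes on the reducible locus and therefore cannot directly perturb the asymptotic behaviour, while the $\Pert_f$-component is constant on $\torus$ near the reducible critical points. The resolution mirrors the argument used in the proof of Lemma~\ref{lemma:RegularPerturbationInPertZero}: by unique continuation, the spinor component of $\gamma$ is nonvanishing on an open time interval where $\gamma$ sits in the irreducible locus outside $\cO$, so $\PertZero_\cO$-cylinder functions supported on this open set can be built to pair nontrivially with any specified cokernel element $w$. Any component of $w$ concentrating near the reducible ends must belong to the kernel of the asymptotic operator in the blow-up, which vanishes by the non-degeneracy secured in Lemma~\ref{app:Lemma1MorseLike}. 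Combined with the two easier cases, this establishes the surjectivity, hence the Banach manifold structure on the universal moduli space, and Sard--Smale concludes.
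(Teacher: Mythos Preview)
Your overall Sard--Smale scheme and the handling of purely reducible trajectories are essentially the same as the paper's. The genuine gap is in your third case, the irreducible trajectory with both endpoints reducible, where you claim that $\PertZero_\cO$-cylinder functions alone suffice to kill any cokernel element. This is exactly the obstruction the paper isolates, and it is the sole reason the auxiliary space $\Pert_f$ was introduced.

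Concretely: suppose the cokernel element $\overline V$ is tangent to the blown-down trajectory $\underline\gamma$ at every time $t$ with $[\underline\gamma(t)]\notin\cO$. For any cylinder function $g$ with $\grad g\in\PertZero_\cO$, the pairing
\[
\int_{\R}\big\langle \grad g(\underline\gamma(t)),\,\overline V(t)\big\rangle\,dt
\]
is then a total derivative in $t$ and reduces to the difference of the values of $g$ at the two endpoints. Since $g$ is constant on the reducible locus and both endpoints are reducible, this difference vanishes. Thus no $\PertZero_\cO$-variation detects such a $\overline V$; your unique-continuation argument, which only produces perturbations supported where the spinor is nonzero, does not touch this direction, and the non-degeneracy of the asymptotic operator is irrelevant because $\overline V$ is the global reparametrisation direction, not something concentrated at the ends. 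The paper's fix is precisely to allow variations in $\Pert_f$, whose primitives are pulled back from functions on $\torus$ and may therefore take \emph{different} values at the two distinct reducible endpoints, making the boundary term nonzero.

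Your appeal to Lemma~\ref{lemma:RegularPerturbationInPertZero} is also misplaced: that lemma concerns moduli on the four-manifold $N^*$ asymptotic to a single critical point, and its key step exploits that the blown-down moduli space is already a single regular point; that mechanism is unavailable for cylinder trajectories between two distinct reducibles.
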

\begin{proof} Let $[\gota], [\gotb]\in\B^\sigma_k(Y,\sstruc)$ be critical points of $(\grad(\cL'))^\sigma$. 
Our proof  is a modification of the proof of \cite[Theorem 15.1.1.]{KM} which follows the standard approach of showing that the parametrized moduli spaces $\mathfrak{M}_z([\gota], [\gotb])\subset \B^\tau_k([\gota], [\gotb])\times \Pert_\mathcal{O}$ are regular (see  pg. 267 of the book). 
There are three cases to distinguish depending on $[\gota], [\gotb]$:
\begin{enumerate}
\item If $[\gota],[\gotb]$  are both reducibles and project to the same reducible in the blow-down moduli space, then the original proof goes through as  the regularity of  $M([\gota], [\gotb])$ follows from the non-degeneracy of the critical points.

\item  If $[\gota],[\gotb]$ are reducibles but do not project to the same reducible downstairs and the trajectory is reducible, then  the proof of Theorem 15.1.1. in \cite{KM}  boils down to ensuring that an element $\check V = (\omega, \psi)\in L^2_0(\R\times Y; iT^*Y\oplus S)$, orthogonal to the range of the map defining the parametrized moduli space,  is  zero (see pg 270-271 in \cite{KM}).
The proof that $\psi = 0$ actually uses only perturbations   that are gradient of functions which are constant on the reducible locus, thus belong to $\PertZero$ so we are left to showing that $\omega = 0$. Here our proof diverges from the book. In our case, $\omega = 0$ is ensured by the fact that over the reducibles, 
\begin{equation}
	(\grad(\cL')  + \pert_f + \pert'')^0 = (\grad(\cL')  + \pert_f)^0   =  (\grad\cL + \grad f + \grad h)^0,
\end{equation}
and $\cL + f + h$  is a Morselike perturbation. Hence  the operator $d^+ + \Differential_{\gamma(t)}{({ \operatorname{grad} (f+h)}}^0)$ acting  on $Y\times \R$ is surjective, and this  is the same operator shown in pg. 272 of \cite{KM}.

\item Suppose now that the trajectory considered is irreducible. Then reasoning as in the proof of Theorem 15.1.1 in \cite{KM}, we arrive at a situation where we have a curve $\underline{\gamma}$ downstairs, 
image of the trajectory $\gamma$, limiting to the critical points. Moreover, we have a never-vanishing vector field $\overline{V}$ along $\underline\gamma$, orthogonal to the gauge orbit and we want to find
a perturbation  $\mathfrak{p} = \pert_f + \pert''$ such that 
$\int_{\R}\langle \mathfrak{ p}, \overline{V}(t) \rangle_{\mathcal{T}_{0,\underline \gamma(t)}} dt > 0$.
Here is where we need to use $\Pert_f$.
Indeed if $\overline{V}$ is not tangent to $\gamma$ for some $t_0$ and  $[\underline \gamma(t_0)]\not \in \cO$, then we can find a cylinder function  with gradient vanishing outside a neighbourhood of $\underline{\gamma}(t_0)$ consisting of irreducibles  that satisfies the inequality above. In particular, this perturbation will be in $\PertZero_\mathcal{O}$.
However if $\overline{V}(t)$ is tangent to $\underline{\gamma}$ for all $t$ such that $[\underline \gamma(t)]\not \in \cO$, and in addition $[\gota],[\gotb]$ are reducibles, then we cannot find such a function. Indeed such a function, say $g$, must be constant on the reducibles, but along $\underline\gamma$, $g$  has to  be increasing, forcing  thus the values of $g$ at the endpoints to  be different. This is solved using a cylinder function with gradient $\pert_f \in \Pert_f$  which can be made increasing along $\underline{\gamma}$ because it is allowed to  take  different values at the two limiting reducibles.
\end{enumerate}
\end{proof}

 \section{Proof of Lemma~\ref{lemma:FromL2loctoL2}.}\label{app:ProofofLemmaL2locL2}
 	
		 Call $S$ the set  of pairs   satisfying \eqref{eq:monopoleCyl}. We will construct a map $M^{\mathrm{red}}(N^*; [\gotc])\to S/\G_{k+1, \delta}(N^*; 1)$. 
		Suppose $[(A,0, \R_+\Phi)] \in M^{\mathrm{red}}(N^*; [\gotc])$, so in particular $(A,\Phi) \in L^2_{k,loc}(N^*)$.
		Denote by $Z = [0,+\infty)\times Y \subset N^*$ the tube and define $\psi \in L^2_{k,loc}(Z, S^+)$ by
			 $\psi (t, y) = \Phi(t,y)/\norm{\Phi(t,\cdot)}_{L^2(Y)}$.
			
			$(A,0,\psi) \in \Conf^\tau_{k,loc}(Z)$ solves the perturbed flow equations  in \cite[Eq. (6.11)]{KM}.
			Since $[\gotc]$ is non-degenerate,  \cite[Proposition 13.6.1]{KM} ensures the existence of  ${\delta}>0$ (depending on $\gotc$) and  a gauge transformation $u\in \G_{k+1,loc}(Z)$ such that 
			\begin{equation}\label{eq:ODE_of_s0}
			(u\cdot A, 0, u\psi)-\gamma_\gotc \in e^{-{\delta} t} L^2_{k, B_\gotc}(Z),
			\end{equation}
			where $\gamma_\gotc\in \Conf^\tau_k(Z)$ 	represents the translation invariant solution induced by  $\gotc$.
			Since $H^1(N)\to H^1(Y)$ is an isomorphism, $u$ extends to $N^*$, so we can suppose that  $u\in \G_{k+1,loc}(N^*)$.

			Now we will show that up to rescaling $\Phi$,  $(u\cdot A, u\Phi) \in S$. Equation \eqref{eq:monopoleCyl} is clearly satisfied, what is not 
			obvious is that up to rescaling $u\Phi \in X^+_{k,\lambda_\gotc, \delta} (\psi_\gotc)$.
			Consider the function   $s(t) =  \norm{u\Phi(t)}_{L^2(Y)}$. Since $(uA,0,u\psi)$ satisfies the flow equations we have that 
			\begin{equation}
			\frac{d}{dt} s(t) = - \Lambda(t) s(t),
			\end{equation}
			where $ \Lambda (t) = \Re\langle D^+_{uA}u \psi, \rho_Z(dt)^{-1}u\psi \rangle_{L^2(\{t\}\times Y)} $. By 
			Lemma~\ref{app:lemma:LambdasSobolev} proved below,  $\Lambda(\cdot) -\lambda_\gotc \in L^2_{k-1, \delta}(\R_+)$ and by Lemma~\ref{LemmaDecayOfFunS}, $s(t) = e^{-\lambda_\gotc t} c + g(t)$ for some $c\in \R^\times, g\in L^2_{k, \lambda_\gotc + \delta}(\R_+)$. 
			It follows that, over the end
			\begin{spliteq}
			u\Phi(t,y) = s(t)u\psi(t,y) &=  s(t)\psi_\gotc(y) + s(t)(u\psi(t,y)- \psi_\gotc(y))\\
												& = e^{-\lambda t }c\psi_\gotc(y) + g(t)\psi_\gotc(y)  +s(t)(u\psi(t,y)- \psi_\gotc(y)).\
			\end{spliteq}
			Notice that since  $(u\psi(t,\cdot)- \psi_\gotc)\in L^2_{k,\delta}$ and $ s\in L^2_{k,\lambda_\gotc + \delta}$ their product lies in $L^2_{k,\lambda_\gotc +\delta}$. Consequently,  we obtain that $(u\cdot A, u/c\Phi)  \in A_\gotc +L^2_{k,\delta}(N^*,i\Lambda^1N^*)\times X^+_{k,\lambda_\gotc, \delta}(\psi_\gotc)$. 
			
			It can be showed as in the proof of \cite[Lemma 13.3.1]{KM} that any other gauge transformation $\tilde u\in L^2_{k+1, loc}(N^*)$
			such that \eqref{eq:ODE_of_s0} holds  satisfies $\tilde u^{-1} u \in \G_{k+1,\delta}(N^*;1)$, therefore we established an injection $M^{\mathrm{red}}(N^*; [\gotc])\to S/\G_{k+1, \delta}(N^*; 1)$.

			Now suppose that $[(A,\Phi)]\in S/\G_{k+1, \delta}(N^*; 1)$;  we want to prove that $[A,0, \Phi] \in M^{\mathrm{red}}(N^*; [\gotc])$.
			Equation \eqref{eq:monopoleCyl} implies that $(A,\Phi)$ satisfies the Seiberg-Witten equations, and clearly $L^2_{k}\subset L^2_{k,loc}$, therefore the only thing to check is that $\Phi$ has the correct asymptotics. This follows from 
			special properties of $ X^+_{k,\lambda_\gotc, \delta}(\psi_\gotc)$ as can be seen by a direct computation.
 	
 	\begin{lemma}\label{app:lemma:LambdasSobolev}	Let $Z= \R_+\times Y$.
	Let $(A,0, \psi) \in \Conf_{k,loc}^\tau(Z)$ and define $c\in L^2_k(\R_+)$  by $A(t) = \check{A}(t) + c(t)dt\otimes 1_S$.
	Let $(A_\gotb, 0, \psi_\gotb)\in \Conf_{k}^\sigma(Y)$ and denote by the same symbol $(A_\gotb, 0, \psi_\gotb)\in \Conf^\tau_{k,loc}(Z)$ the translation invariant 
	configuration induced by it.
	Suppose that  $(A-A_\gotb, \psi- \psi_\gotb) \in L^2_{k}(Z, iT^*Z) \times L^2_{k}(Z,S^+)$. 
	Then the map defined by 
	\begin{equation}\label{def:Difference_LambdaFun_Eigenvalue}
		\Lambda(t) = \left \langle D_{\check A(t), \pert} \psi(t) + c(t)dt\cdot \psi(t) ,  \psi(t) \right \rangle_{L^2(Y)} - \left \langle D_{\check A_\gotb, \pert} \psi_\gotb , \psi_\gotb \right \rangle_{L^2(Y)}
	\end{equation}
	is in $L^2_{k-1}(\R_+)$. 
			If in addition the data decays exponentially, i.e. $(A-A_\gotb, \psi- \psi_\gotb)\in  L^2_{k,\delta}$ for some, then  also  \eqref{def:Difference_LambdaFun_Eigenvalue} is in $ L^2_{k-1,\delta}$.
	\end{lemma}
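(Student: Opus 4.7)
The plan is to Taylor-expand the smooth map $F(B, \phi) := \langle D_{B,\pert}\phi, \phi\rangle_{L^2(Y)}$ around $(A_\gotb, \psi_\gotb)$ and to verify that each resulting term, viewed as a function of $t$, lies in $L^2_{k-1}(\R_+)$ (respectively $L^2_{k-1,\delta}(\R_+)$) by standard Sobolev-multiplication and slicewise-restriction arguments. Write $a := A - A_\gotb = \check a + c\,dt\otimes 1_S$ with $\check a \in L^2_k(Z, \pi^*(iT^*Y))$ and $c \in L^2_k(Z)$, and $\eta := \psi - \psi_\gotb \in L^2_k(Z, S^+)$. Since the Dirac operator is affine in the connection, $D_B = D_{A_\gotb} + \tfrac{1}{2}\rho(B - A_\gotb)$, and tameness of $\pert$ makes $B \mapsto \Differential_{(B,0)}\pert^1(0,\cdot)$ a smooth map into bounded operators $L^2_k(Y;S) \to L^2_{k-1}(Y;S)$, the function $F$ is smooth near $(A_\gotb, \psi_\gotb)$. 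By Taylor's formula,
\begin{equation*}
F(A_\gotb + \check a(t), \psi_\gotb + \eta(t)) - F(A_\gotb, \psi_\gotb) = \ell(\check a(t), \eta(t)) + R(\check a(t), \eta(t)),
\end{equation*}
with $\ell$ continuous linear and $R$ vanishing to second order at zero. Adding the Clifford term $c(t)\langle dt\cdot\psi(t), \psi(t)\rangle_{L^2(Y)}$ recovers $\Lambda(t)$.

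Next I would estimate each piece. The linear terms in $\ell$ have the schematic form $\langle \text{(operator in fixed smooth data)}\cdot \check a(t), \psi_\gotb\rangle_{L^2(Y)}$ or $\langle D_{A_\gotb,\pert}\eta(t), \psi_\gotb\rangle_{L^2(Y)} + \langle \eta(t), D_{A_\gotb,\pert}\psi_\gotb\rangle_{L^2(Y)}$; since $\psi_\gotb$ is smooth and fixed, pairing an $L^2_k(Z)$-class (or an $L^2_{k-1}(Z)$-class after applying a first-order operator) against it slicewise defines a bounded linear map into $L^2_{k-1}(\R_+)$, because $\partial_t^j$ commutes with the pairing and $\partial_t^j \eta \in L^2_{k-j}(Z) \hookrightarrow L^2(\R_+; L^2(Y))$ for $j \le k-1$. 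The quadratic remainder $R$ and the product $\rho(\check a(t))\eta(t)$ are handled by the Sobolev algebra property of $L^2_k(Z)$ (valid in dimension four for $k > 2$): such products lie in $L^2_k(Z)$, and the subsequent slicewise pairing with $\psi_\gotb$ loses at most one derivative, giving $L^2_{k-1}(\R_+)$. The Clifford term is the product of $c\in L^2_k(\R_+)$ with $\langle \rho_Z(dt)\psi(\cdot), \psi(\cdot)\rangle_{L^2(Y)}$, which by the previous analysis lies in $L^2_{k-1}(\R_+)$ plus a constant; hence the Clifford contribution is in $L^2_{k-1}(\R_+)$ as well.

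For the exponentially weighted case, observe that every estimate above is multiplicative: each term is of the form $(\text{fixed smooth datum on }Y)\cdot (\text{polynomial expression in }\check a, c, \eta)$. Multiplication by $e^{\delta t}$ commutes through such products and simply transfers the weight from the variable factor to the answer, so the same inequalities give $\Lambda \in L^2_{k-1, \delta}(\R_+)$ whenever $(a,\eta) \in L^2_{k,\delta}(Z)$. The main technical obstacle is not conceptual but bookkeeping: one must verify that the $k-1$ time derivatives distribute correctly across Taylor terms and confirm that the mixed-norm embedding $L^2_k(\R_+\times Y) \hookrightarrow L^2_k(\R_+; L^2(Y))$ combines with the Sobolev algebra property on $Y$ to produce exactly the claimed regularity — delicate but routine once the multilinear decomposition above is in hand.
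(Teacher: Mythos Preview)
Your proposal is correct and follows essentially the same route as the paper. The paper packages the argument more abstractly: instead of Taylor-expanding and estimating term by term, it observes that $\Lambda(t) = H(\check a(t), c(t), \eta(t))$ for a smooth map $H$ with $H(0)=0$, notes that $t\mapsto(\check a(t),c(t),\eta(t))$ lies in $L^2_{k-1}(\R_+;X)$ for a suitable Banach space $X$, and then invokes the general lemma that $F\circ g \in L^2_r(\R)$ whenever $F$ is smooth with $F(0)=0$ and $g\in L^2_r(\R;X)$; your explicit Taylor expansion and Sobolev-multiplication bookkeeping is precisely what underlies that lemma.
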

	
	\begin{proof}	
	The hypothesis $(A-A_\gotb, \psi- \psi_\gotb) \in L^2_{k}(Z, iT^*Z) \times L^2_{k}(Z,S^+)$ implies that 
	\begin{equation}
		t\mapsto \psi(t)- \psi_\gotb 
	\end{equation}
	belongs to $L^2_{k-1}(\R_+, L^2_1(Y, S))$. Indeed the Sobolev spaces for Hilbert valued functions are defined by requiring that the weak derivatives (defined by integrating against $C_0^\infty(\R,\R)$) lie in $L^2$ and
	this is clearly seen to be implied by the hypothesis using the Pettis integral. 
	Similarly, the map $t\mapsto A(t)-A_\gotb$ belongs to $L^2_{k-1}(\R_+, L^2_1(Y; iT^*Y\oplus \R))$.
	Now
	\begin{spliteq}
	\Lambda(t) &= \left \langle D_{\check A(t), \pert} \psi(t) + c(t)dt\cdot \psi(t) ,  \psi(t) \right \rangle_{L^2(Y)} -\left \langle D_{\check A_\gotb, \pert} \psi_\gotb , \psi_\gotb \right \rangle_{L^2(Y)}\\
					& = \left \langle D_{\check A(t), \pert} \psi(t) + c(t)dt\cdot \psi(t) ,  \psi(t) \right \rangle_{L^2(Y)} - \left \langle D_{\check A_\gotb, \pert} \psi_\gotb , \psi(t) \right \rangle_{L^2(Y)}\\
					& \phantom{aaaaaaaaaaaaaaaaa} +\left \langle D_{\check A_\gotb, \pert} \psi_\gotb , \psi(t) \right \rangle_{L^2(Y)}-\left \langle D_{\check A_\gotb, \pert} \psi_\gotb , \psi_\gotb \right \rangle_{L^2(Y)}\\
					&= \left \langle D_{\check A(t), \pert} \psi(t) + c(t)dt\cdot \psi(t)-D_{\check A_\gotb, \pert} \psi_\gotb  ,  \psi(t) \right \rangle_{L^2(Y)} 
					+\left \langle D_{\check A_\gotb, \pert} \psi_\gotb , \psi (t)- \psi_\gotb \right \rangle_{L^2(Y)}.\\
	\end{spliteq}
	We will use the following lemma: if $X$ is a Banach space, $g\in L^2_r(\R, X)$, $r>1$ and $F:X\to \R$ is a \emph{ smooth} function such that $F(0) = 0$ then  $F\circ g \in L^2_r(\R)$. The proof of this fact
	for vector valued functions is analogous to the case when $X = \R$.
	The map $L^2(Y,S)\to \R$ defined by $\varphi \mapsto \left \langle D_{\check A_\gotb, \pert} \psi_\gotb , \varphi\right \rangle_{L^2(Y)}$ clearly satisfies these hypotheses, therefore 
	$t\mapsto \left \langle D_{\check A_\gotb, \pert} \psi_\gotb , \psi (t)- \psi_\gotb \right \rangle_{L^2(Y)}$ belongs to $L^2_{k-1}(\R_+, \R)$. 
	Now define  
					\begin{equation}
						Q(t) := D_{\check A(t), \pert} \psi(t) + c(t)dt\cdot \psi(t)-D_{\check A_\gotb, \pert} \psi_\gotb.
					\end{equation}
	More precisely: 
	\begin{equation}\label{eq:ProofLambda(t)Sobolev1}
		Q(t) = D_{A_\gotb} (\psi(t) -\psi_\gotb) + \rho_Y (A(t)-A_\gotb) \left(\psi(t)\right) + \Differential_{(\check{A}(t),0)}\pert^1 (\psi(t)) - \Differential_{(A_\gotb,0)}\pert^1 (\psi_\gotb).
	\end{equation}
	We can write 
	\begin{equation}
		\langle Q(t), \psi(t)\rangle_{L^2(Y)} = \langle  G(\check A(t)-A_\gotb, c(t),  \psi(t)-\psi_\gotb), \psi(t)\rangle_{L^2(Y)}
	\end{equation}	
	 where
	   $G: L^2(Y,iT^*Y\oplus \R)\oplus L^2(Y,S)\to L^2(Y,S)$ is defined as
	\begin{equation}
			G(a,c, \phi) = D_{A_\gotb}\phi + \rho_Y(a+c)\cdot (\phi+\psi_\gotb) + \Differential_{(a+A_\gotb,0)}\pert^1 (\phi+\psi_b) - \Differential_{(A_\gotb,0)}\pert^1 (\psi_\gotb).
	\end{equation}	
	$G$ is smooth and $G(0) = 0$ as is the map $(a,c,\phi)\mapsto \langle G(a,c,\phi), \phi+\psi_\gotb\rangle_{L^2(Y)}$.
	 Consequently $t\mapsto \langle Q(t), \psi(t)\rangle_{L^2(Y)}$ is in  $L_{k-1}^2(\R_+)$.
	To prove the statement about exponential decay we apply the same argument using Sobolev spaces with weights.
	 
 	\end{proof}

	\begin{lemma}\label{LemmaDecayOfFunS} Let $\lambda \in \R, \delta >0$ and $\Lambda\in  \lambda + L^2_{k-1, \delta}(\R_+)$ with $k>2$. Set $s(t) = \exp(-\int_0^t \Lambda(\tau) d\tau): \R_+ \to \R$. 
	Then $s(t) = e^{-\lambda t} c + g(t)$ where $c = e^{-\int_0^{+\infty} (\Lambda(\tau) -\lambda) d\tau} $ and $g\in L^2_{k, \lambda + \delta}(\R_+)$. In particular  $s \in L^2_{k, \lambda}(\R_+)$.
	\end{lemma}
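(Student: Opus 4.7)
The plan is to write $\Lambda = \lambda + h$ with $h := \Lambda - \lambda \in L^2_{k-1,\delta}(\R_+)$, integrate $s$ explicitly, and split off the leading $e^{-\lambda t}$ term. Since $k-1\geq 2$, the Sobolev embedding $L^2_{k-1}(\R_+)\hookrightarrow L^\infty(\R_+)$ applied to $e^{\delta t}h$ gives the pointwise bound $|h(t)|\leq C e^{-\delta t}$ for some constant $C$, so $h$ is integrable. Setting $H_\infty := \int_0^{+\infty} h$, $r(t) := \int_t^{+\infty} h$ (so that $\int_0^t h = H_\infty - r(t)$), and $c := e^{-H_\infty}$, one obtains the identity
\begin{equation*}
s(t) \;=\; e^{-\lambda t}\exp\!\bigl(-\textstyle\int_0^t h\bigr) \;=\; c\, e^{-\lambda t}\, e^{r(t)},
\end{equation*}
whence $g(t) := s(t) - c e^{-\lambda t} = c\, e^{-\lambda t}(e^{r(t)} - 1)$, and the desired decomposition reduces, after pulling out the weight $e^{-\lambda t}$, to showing $e^{r}-1 \in L^2_{k,\delta}(\R_+)$.

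The main technical point is to establish $r \in L^2_{k,\delta}(\R_+)$. For $j\geq 1$ this is automatic, since $r^{(j)} = -h^{(j-1)} \in L^2_{k-j,\delta}$; only the $L^2$-weighted bound on $r$ itself requires work. Writing
\begin{equation*}
e^{\delta t}r(t) \;=\; \int_0^{+\infty} e^{-\delta u}\bigl[e^{\delta(t+u)} h(t+u)\bigr]\,du
\end{equation*}
and applying Minkowski's integral inequality in the $L^2_t(\R_+)$-norm yields
\begin{equation*}
\|e^{\delta\cdot} r\|_{L^2(\R_+)} \;\leq\; \int_0^{+\infty} e^{-\delta u}\,\|e^{\delta\cdot} h\|_{L^2(\R_+)}\,du \;=\; \frac{1}{\delta}\|e^{\delta\cdot} h\|_{L^2(\R_+)} \;<\; \infty,
\end{equation*}
as required.

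To upgrade $r \in L^2_{k,\delta}\cap L^\infty(\R_+)$ to $e^r - 1 \in L^2_{k,\delta}(\R_+)$, I would apply a standard nonlinear (Moser-type) composition estimate: factor $e^r - 1 = r\cdot \psi(r)$ with $\psi(x) = \int_0^1 e^{sx}\,ds$ smooth and bounded on the bounded range of $r$, then expand derivatives via Leibniz and the chain rule and control each resulting term using $L^\infty$ bounds on $r, r',\dots, r^{(k-1)}$ (these come from the Sobolev embedding of $L^2_{k,\delta}$, with no loss of weight since each $L^\infty$-factor may be pulled out). The main obstacle is precisely the weighted $L^2$-bound on $r$ at the critical decay rate $\delta$: the pointwise estimate $|r(t)|\leq (C/\delta)e^{-\delta t}$ alone gives boundedness but not integrability of $e^{2\delta t}|r(t)|^2$, and it is the Minkowski argument above that recovers the full $L^2$-weighted inequality without any loss of exponential rate.
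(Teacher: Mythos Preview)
Your proof is correct and follows essentially the same route as the paper's: both write $h=\Lambda-\lambda$, introduce the tail integral $r(t)=\int_t^\infty h$, and reduce the claim to showing that $e^{\lambda t}s(t)-c$ lies in $L^2_{k,\delta}$ via a composition argument applied to $e^{r}-1$ (the paper's factorization $e^{-\int_0^t h}(1-e^{-r})$ is literally equal to your $c(e^{r}-1)$). Your explicit Minkowski argument for $r\in L^2_\delta$ is a welcome addition, as the paper only records the pointwise bound $|r(t)|\le Ce^{-\delta t}$ and otherwise defers to Sobolev multiplication.
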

	\begin{proof}
		Since $L^2_{1,\delta}(\R_+)\subset L^1(\R_+)$,   the integral $\int_0^{+\infty} (\Lambda(\tau) - \lambda) d\tau$ is finite and    $c\in \R^\times$ is well defined. 
		Now we have that 
		\begin{spliteq}\label{eq:explambdast}
			e^{\lambda t} s(t) - c &= e^{-\int_0^t (\Lambda(\tau) - \lambda) d\tau} - e^{-\int_0^{+\infty} (\Lambda(\tau) - \lambda) d\tau} \\
											&= e^{-\int_0^t (\Lambda(\tau) - \lambda) d\tau}( 1  -  e^{-\int_t^{+\infty} (\Lambda(\tau) - \lambda) d\tau}) \\
		\end{spliteq}
		Using Sobolev multiplication theorems \cite[Section 13.2]{KM} and that 
		$|\int_t^{+\infty} (\Lambda(\tau) - \lambda) dt| \leq \text{const.} e^{-\delta t}$, we find that
		\begin{spliteq}
		 & ( 1  -  e^{-\int_t^{+\infty} (\Lambda(\tau) - \lambda) d\tau})\in L^2_{k, \delta}\\
		 & e^{-\int_0^t (\Lambda(\tau) - \lambda) d\tau}\in L^\infty_2(\R_+) \\
		 & \partial_t (e^{-\int_0^t (\Lambda(\tau) - \lambda) d\tau})\in L^2_{k-1, \delta}(\R_+).\\
		\end{spliteq}
		Consequently, $e^{\lambda t} s(t) - c \in L^2_{k, \delta}(\R_+)$ by applying  Sobolev multiplication theorems  to  \eqref{eq:explambdast}.
		Therefore there exists $\tilde g \in L^2_{k, \delta}(\R_+)$, such that 
		$e^{\lambda t} s(t)   =c+  \tilde g(t) $.
	\end{proof}

\bibliographystyle{alpha}
\bibliography{pcorkPaper}

\begin{thebibliography}{AKMR17}

\bibitem[Akb77]{Akbulut1977}
Selman Akbulut.
\newblock On 2-dimensional homology classes of 4-manifolds.
\newblock {\em Mathematical Proceedings of the Cambridge Philosophical
  Society}, 82(1):99--106, 1977.

\bibitem[Akb91]{Akbulut91}
Selman Akbulut.
\newblock {A fake compact contractible 4-manifold}.
\newblock {\em Journal of Differential Geometry}, 33(2):335 -- 356, 1991.

\bibitem[Akb16]{AkbulutBook}
Selman Akbulut.
\newblock {\em 4-manifolds}, volume~25.
\newblock Oxford University Press, 2016.

\bibitem[AKMR17]{AKMREquivariantCorks}
Dave Auckly, Hee~Jung Kim, Paul Melvin, and Daniel Ruberman.
\newblock {Equivariant corks}.
\newblock {\em Algebraic \& Geometric Topology}, 17(3):1771 -- 1783, 2017.

\bibitem[APS75]{APS1}
M.~Atiyah, V.~Patodi, and I.~Singer.
\newblock Spectral asymmetry and {R}iemannian geometry {I}.
\newblock {\em Math. Proc. Cambridge Philos. Soc.}, 77:425--461, 1975.

\bibitem[BD95]{BraamDonaldson}
P.J. Braam and S.K. Donaldson.
\newblock Floer's work on instanton homology, knot and surgery.
\newblock {\em in {The Floer Memorial Volume}, Progr. Math., vol. 133, pp.
  195–256, Birkh\"{a}user, Basel}, 1995.

\bibitem[Boy86]{boyer_1986}
Steven Boyer.
\newblock Simply-connected 4-manifolds with a given boundary.
\newblock {\em Trans. Am. Math. Soc.}, 298:331--357, 1986.

\bibitem[Cer68]{Cerf}
Jean Cerf.
\newblock Sur les diff{\'e}omorphismes de la sph{\`e}re de dimension trois
  ($\gamma$4 = o).
\newblock 1968.

\bibitem[CFHS96]{CHFS}
C.L. Curtis, M.H. Freedman, W.~Hsiang, and R.~Stong.
\newblock A decomposition theorem for h-cobordant smooth simply-connected
  compact 4-manifolds.
\newblock {\em Inventiones mathematicae}, 123(2):343--348, 1996.

\bibitem[DH14]{IntersectionRingOfGraphManifolds_Doig_Horn}
Margaret Doig and Peter Horn.
\newblock On the intersection ring of graph manifolds.
\newblock {\em Transactions of the American Mathematical Society}, 369, 12
  2014.

\bibitem[FQ90]{FreedmanQuinn}
Michael~H. Freedman and Frank Quinn.
\newblock {\em Topology of 4-Manifolds (PMS-39)}.
\newblock Princeton University Press, 1990.

\bibitem[Fre82]{Freedman}
Michael~Hartley Freedman.
\newblock {The topology of four-dimensional manifolds}.
\newblock {\em Journal of Differential Geometry}, 17(3):357 -- 453, 1982.

\bibitem[GS99]{GompfStipsicz}
Robert~E. Gompf and Andr\'{a}s~I. Stipsicz.
\newblock {\em 4-manifolds and {K}irby calculus}.
\newblock American Mathematical Society, graduate studies in {M}athematics,
  1999.

\bibitem[Kir78]{KirbyCalculus}
Robion Kirby.
\newblock A calculus for framed links in {$S^3$}.
\newblock {\em Inventiones mathematicae}, 45:35--56, 1978.

\bibitem[KM07]{KM}
P.B. Kronheimer and T.S. Mrowka.
\newblock {\em Monopoles and Three-Manifolds}.
\newblock New Mathematical Monographs. Cambridge University Press, 2007.

\bibitem[LRS18]{LinRubermanSaveliev2018}
Jianfeng Lin, Daniel Ruberman, and Nikolai Saveliev.
\newblock On the {Fr{\o}yshov} invariant and monopole {L}efschetz number.
\newblock {\em Geometry \& Topology}, 22(5):2865--2942, 2018.

\bibitem[Mat95]{Matveyev}
Rostislav Matveyev.
\newblock A decomposition of smooth simply-connected h-cobordant 4-manifolds.
\newblock {\em Journal of Differential Geometry}, 44:571--582, 1995.

\bibitem[Mil56]{MilnorPlumbings}
J.~Milnor.
\newblock On simply-connected 4-manifolds.
\newblock {\em {Symposium Internacional de Topología Algebraica, Universidad
  Nacional Aut\'{o}noma de Mexico and UNESCO, Mexico City}}, pages 122--128,
  1956.

\bibitem[Mil65]{MilnorLecturesHCob}
John Milnor.
\newblock {\em Lectures on the h-cobordism theorem}.
\newblock Princeton University Press, 1965.

\bibitem[MS99]{MorganSzabo99}
J.~W. Morgan and Z.~Szab\'{o}.
\newblock Complexity of 4-dimensional h-cobordisms.
\newblock {\em Inventiones mathematicae}, 136:273--286, 1999.

\bibitem[Nic00]{NicolaescuBook}
L.I. Nicolaescu.
\newblock {\em Notes on Seiberg-Witten Theory}.
\newblock Graduate studies in mathematics. American Mathematical Soc., 2000.

\bibitem[Sma62]{SmaleHCob}
S.~Smale.
\newblock On the structure of manifolds.
\newblock {\em American Journal of Mathematics}, 84(3):387--399, 1962.

\bibitem[Sun10]{SunukjianThesis}
Nathan Sunukjian.
\newblock Group actions, cobordisms, and other aspects of 4-manifold theory
  through the eyes of {F}loer homology.
\newblock {\em PhD thesis, {Michigan State University}}, 2010.

\bibitem[Wal64]{Wall}
C.T.C. Wall.
\newblock On simply-connected 4-manifolds.
\newblock {\em J. London Math. Soc.}, (39):141--149, 1964.

\bibitem[Wal16]{WallBook}
C.~T.~C. Wall.
\newblock {\em Differential Topology}.
\newblock Cambridge Studies in Advanced Mathematics. Cambridge University
  Press, 2016.

\end{thebibliography}

\paragraph{}
\textsc{Department of Mathematics, Imperial College London}\newline
\phantom{aa} E-mail address:  \texttt{r.ladu19@imperial.ac.uk}

\end{document}